\newtheorem*{thA}{Theorem A}
\newtheorem*{thC}{Theorem C}
\newtheorem*{thD}{Theorem D}
\newtheorem{theorem}{Theorem}[section]
\newtheorem{claim}[theorem]{Claim}
\newtheorem{lemma}[theorem]{Lemma}
\newtheorem{corollary}[theorem]{Corollary}
\newtheorem{definition}[theorem]{Definition}
\newcommand{\aI}{{\alpha_{1}}}
\newcommand{\aII}{{\alpha_{2}}} 
\newcommand{\aIII}{{\alpha_{3}}}
\newcommand{\half}{\tfrac{1}{2}}
\newcommand{\llangle}{{\langle \! \langle}}
\newcommand{\rrangle}{{\rangle \! \rangle}}
\newcommand{\cE}{{\mathcal E}}
\newcommand{\cG}{{\mathcal G}}
\newcommand{\cH}{{\mathcal H}}
\newcommand{\cJ}{{\mathcal J}}
\newcommand{\cL}{{\mathcal L}}
\newcommand{\cM}{{\mathcal M}}
\newcommand{\cR}{{\mathcal R}}
\newcommand{\cW}{{\mathcal W}}
\newcommand{\cV}{{\mathcal V}}
\newcommand{\cX}{{\mathcal X}}
\newcommand{\cY}{{\mathcal Y}}
\newcommand{\cZ}{{\mathcal Z}}
\newcommand{\nocontentsline}[3]{}
\newcommand{\tocless}[2]{\bgroup\let\addcontentsline=\nocontentsline#1{#2}\egroup}
\newcommand{\labitem}[2]{%
\def\@itemlabel{({#1})}
\item
\def\@currentlabel{#1}%\phantomsection
\label{#2}}
\newcommand{\Rmnum}[1]{\expandafter\@slowromancap\romannumeral #1@}
\newcommand{\aIna}{\llangle \aI n \rrangle}
\newcommand{\wX}{\widetilde{X}}
\renewcommand{\qed}{\qquad\hspace*{\fill}$\Box$}
\begin{document}

\title{Topics in Graph Colouring and Graph Structures}
\author{David Ferguson}

\begin{titlepage}
\begin{center}

\textsc{\LARGE \bf }\\[3cm]

{\huge \bf The Ramsey number of \\[12pt] mixed-parity cycles III}

\vspace{20mm}

{\Large David G. Ferguson} 
\end{center}

\vspace{40mm}

\abstract{
\noindent
Denote by $R(G_1, G_2, G_3)$ the minimum integer $N$ such that any three-colouring of the edges of the complete graph on $N$ vertices contains
a monochromatic copy of a graph $G_i$ coloured with colour~$i$ for some $i\in{1,2,3}$.
In a series of three papers of which this is the third, we consider the case where $G_1, G_2$ and $G_3$ are cycles of mixed parity. Specifically, in this in this paper, we consider~$R(C_n,C_m,C_{\ell})$, where $n$ is even and $m$ and $\ell$ are odd.
Figaj and \L uczak determined an asymptotic result for 
this case, which we improve upon to give an exact result. 
We prove that for~$n,m$ and $\ell$ sufficiently large
$$R(C_n,C_m,C_\ell)=\max\{4n-3, n+2m-3, n+2\ell-3\}.$$
}

\end{titlepage}

%\maketitle

\pagenumbering{arabic}

\let\L\defaultL

\setcounter{page}{2}
\renewcommand{\baselinestretch}{1.25}\small\normalsize
\

For graphs $G_1,G_2,G_3$, the Ramsey number $R(G_1,G_2,G_3)$ is the smallest integer~$N$ such that every edge-colouring of the complete graph on~$N$ vertices with up to three colours results in the graph having, as a subgraph, a copy of~$G_{i}$ coloured with colour $i$ for some~$i$. %$1\leq i \leq r$. 
We consider the case when $G_1,G_2$ and $G_3$ are~cycles.

In 1973, Bondy and Erd\H{o}s~\cite{BonErd} conjectured that, if~$n>3$ is odd, then $$R(C_{n},C_{n},C_{n})=4n-3.$$ 
Later, \L uczak~\cite{Lucz} proved that, for~$n$ odd,  $R(C_{n},C_{n},C_{n})=4n+o(n)$ as $n\rightarrow \infty$. Kohayakawa, Simonovits and Skokan~\cite{KoSiSk}, expanding upon the work of \L uczak, confirmed the Bondy-Erd\H{o}s conjecture for sufficiently large odd values of~$n$ by proving that there exists a positive integer~$n_0$ such that, for all odd $n,m,\ell>n_{0}$, 
$$R(C_{n},C_{m},C_{\ell})=4 \max\{n,m,\ell\} -3.$$   
In the case where all three cycles are of even length, Figaj and \L uczak~\cite{FL2007} proved the following asymptotic. Defining $\llangle x\rrangle$ to be the largest even integer not greater than~$x$, they proved that, for all $\aI,\aII,\aIII>0$,
$$R(C_{\llangle \aI n \rrangle},C_{\llangle \aII n\rrangle},C_{\llangle \aIII n \rrangle})=\half\big(\aI + \aII +\aIII+\max \{\aI,\aII,\aIII\}\big)n+o(n),$$
as $n \rightarrow \infty$. 

Thus, in particular, for even~$n$,
$$R(C_{n},C_{n},C_{n})=2n+o(n),\text{ as }n\rightarrow \infty.$$
Independently, Gy\'{a}rf\'{a}s, Ruszink\'{o}, S\'{a}rk\"{o}zy and Szem\'{e}redi~\cite{GyarSzem} proved a similar, but more precise, result for paths, namely that there exists a positive integer~$n_{1}$ such that, for $n>n_{1}$,
$$R(P_{n},P_{n},P_{n})=\begin{cases} 2n-1, & n \text{ odd,} \\ 2n-2, & n \text{ even.} \end{cases}$$
More recently, Benevides and Skokan~\cite{BenSko} proved that there exists~$n_{2}$ such that, for even $n>n_{2}$, 
$$R(C_{n},C_{n},C_{n})=2n.$$

We look at the mixed-parity case, for which, 
defining $\langle x \rangle$ to be the largest odd number not greater than~$x$, Figaj and \L uczak~\cite{FL2008} proved that, for all $\aI,\aII,\aIII>0$,
\begin{align*}
&\text{(i)}\ R(C_{\llangle \aI n \rrangle },C_{\llangle \aII n \rrangle },C_{\langle \aIII n\rangle  }) = \max \{2\aI+\aII, \aI+2\aII, \half\aI  + \half\aII +\aIII \}n +o(n),\\
&\text{(ii)}\ R(C_{\llangle \aI n \rrangle },C_{\langle \aII n \rangle  },C_{\langle \aIII n\rangle  }) = \max \{4\aI,\aI+2\aII, \aI  +2\aIII \}n +o(n),
\end{align*}
as $n\rightarrow \infty$.

In \cite{DF1} and \cite{DF2}, improving on  the result of Figaj and \L uczak, in the case when exactly one of the cycles is of odd length and the others are even, we proved the following:

\phantomsection
\hypertarget{thA}
\phantomsection
\begin{thA}
\label{thA}

For every $\alpha_{1}, \alpha_{2}, \alpha_{3}>0$ such that $\aI \geq \aII$, there exists a positive integer $n_{A}=n_{A}(\aI,\aII,\aIII)$ such that, for $n> n_{A}$,
 \begin{align*}
 R(C_{\llangle \alpha_{1} n \rrangle},C_{\llangle \alpha_{2} n \rrangle}, C_{\langle \alpha_{3} n \rangle }) = \max\{ 2\llangle \alpha_{1} n \rrangle \!+\! \llangle \alpha_{2} n \rrangle  \!-\!\text{\:}3,\text{\:}\half\llangle  \alpha_{1} n \rrangle  \!+\! \half\llangle \alpha_{2} n \rrangle  \!+\! \langle \alpha_{3} n \rangle \!-\! \text{\:}2\}.
\end{align*}
\end{thA}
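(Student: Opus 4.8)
The plan is to establish the two inequalities separately. Throughout write $a=\llangle\aI n\rrangle$, $b=\llangle\aII n\rrangle$, $c=\langle\aIII n\rangle$ and $N=\max\{2a+b-3,\ \half a+\half b+c-2\}$; since $\aI\ge\aII$, we have $a\ge b$ once $n$ is large, and $C_a,C_b$ are even while $C_c$ is odd. The lower bound $R(C_a,C_b,C_c)\ge N$ requires a $3$-colouring of $K_{N-1}$ with no $C_a$ in colour~$1$, no $C_b$ in colour~$2$ and no $C_c$ in colour~$3$, and I would give one colouring for each term of the maximum. For the term $2a+b-3$: split the $2a+b-4$ vertices into classes $X,Y$ of size $a+\half b-2$ each, colour every edge between $X$ and $Y$ with colour~$3$, and colour each of $K_X,K_Y$ by the optimal $2$-colouring witnessing the classical value $R(C_a,C_b)=a+\half b-1$ (so, no $C_a$ in colour~$1$ and no $C_b$ in colour~$2$); then colour~$3$ is complete bipartite and has no odd cycle at all, while every colour-$1$ or colour-$2$ cycle lies inside $X$ or inside $Y$. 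For the term $\half a+\half b+c-2$: split the $\half a+\half b+c-3$ vertices as $V_1\cup V_2\cup V_3$ with $|V_1|=\half a-1$, $|V_2|=\half b-1$, $|V_3|=c-1$; colour every edge meeting $V_1$ with colour~$1$, every remaining edge meeting $V_2$ with colour~$2$, and every edge inside $V_3$ with colour~$3$; a colour-$i$ cycle with $i\in\{1,2\}$ has no two consecutive vertices outside $V_i$, so has length at most $2|V_i|<a$ (resp.\ $<b$), and colour~$3$ spans only $K_{c-1}$.

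For the upper bound $R(C_a,C_b,C_c)\le N$ I would use \L uczak's regularity/connected-matching scheme in the refined form that yields exact cycle Ramsey numbers. Apply the multicolour Regularity Lemma to a $3$-colouring of $K_N$, pass to a reduced graph $\cR$ on $t$ clusters which is essentially complete and has each edge coloured by its majority colour, and use the transference lemma that a colour-$i$ connected matching on $v$ vertices in $\cR$ produces, in $K_N$, monochromatic colour-$i$ cycles of every admissible even length up to $(1-o(1))v\cdot\tfrac Nt$, and of every admissible \emph{odd} length in that range provided the matching lies in a \emph{non-bipartite} colour-$i$ component. Thus it suffices to prove, in stability form, the reduced statement: every $3$-colouring of $K_t$ with $t$ large contains a colour-$1$ connected matching on more than $\tfrac aN t$ vertices, or a colour-$2$ connected matching on more than $\tfrac bN t$ vertices, or a \emph{non-bipartite} colour-$3$ connected matching on more than $\tfrac cN t$ vertices, and any colouring missing all three lies within $o(t)$ vertices of a blow-up of one of the two colourings above. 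The decisive point forced by the parity of $C_c$ is that a colour-$3$ pattern with no large non-bipartite connected matching must be either globally almost-bipartite — whereupon the two sides carry a near-extremal $2$-colouring for even cycles and one recovers $2a+b-3$ — or split into $o(t)$-sized components, recovering $\half a+\half b+c-2$; here the large colour-$3$ connected matching present in the first extremal colouring is bipartite, hence yields only even colour-$3$ cycles and is irrelevant, which is exactly what the non-bipartiteness clause must exploit.

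It then remains to replace ``$o(n)$'' by the exact constants. If the colouring of $K_N$ is far from both extremal patterns, the stability form of the reduced statement, fed back through regularity, already produces a target cycle with room to spare. If it is close to one of them, one argues directly on $K_N$ without regularity: identify the classes of the relevant extremal configuration, bound the $o(n)$ exceptional vertices and mis-coloured edges by a counting argument, and then either complete the desired cycle by rotation--extension or contradict the exact class sizes ($a-1$, $\half b-1$, $c-1$, and so on). \emph{The main obstacle} is the exact stability analysis of the $3$-coloured reduced complete graph carried out while tracking non-bipartiteness of the colour-$3$ component: this parity bookkeeping is precisely what makes the mixed-parity case harder than the all-even case of \cite{BenSko} or the all-odd case of \cite{KoSiSk}, and it is compounded by the delicate cleaning needed to pass from the blown-up extremal structure to the exact count.
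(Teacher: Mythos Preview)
Theorem~A is not proved in this paper. It is quoted from the author's earlier papers \cite{DF1}, \cite{DF2} (see also Theorem~\ref{thBc}, which is the connected-matching stability result underlying Theorem~A, imported here as a tool). The present paper proves the complementary Theorem~C (one even, two odd cycles), so there is no ``paper's own proof'' of Theorem~A to compare against.

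That said, your outline is the right framework and agrees with the method of this series of papers, as one can infer from the proof of Theorem~C here and from the shape of Theorem~\ref{thBc}. Your lower-bound constructions are correct. Your upper-bound plan --- regularity, reduced graph, connected matchings with parity tracking for the odd colour, a stability dichotomy, and then an exact endgame on near-extremal colourings --- is exactly the scheme used. One refinement: your dichotomy ``colour~3 is globally almost-bipartite versus colour~3 splits into $o(t)$-sized components'' is too coarse. The Figaj--\L uczak decomposition (Lemma~\ref{l:decomp}) gives a bipartite part and a union of odd components, and the quadratic edge-count comparison (analogous to (\ref{ubnew})--(\ref{lbnew}) here) forces the odd part to be either almost all of the graph or almost none of it; in the former case one is reduced to an effectively two-colour problem on each side, and the extremal outcome is not a single pattern but the family $\cH_1^B\cup\cH_2^B$ of Theorem~\ref{thBc}, with the choice governed by whether $\aI$ or $\aII$ dominates. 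Your second extremal picture (nested $V_1,V_2,V_3$) does arise, but only after this finer analysis.

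What you have written is an accurate plan, not a proof: you correctly identify the hard step (the exact stability analysis with parity bookkeeping, and the cleaning from near-extremal reduced structure to exact counts in $K_N$), but you do not carry it out. Given that the paper itself defers Theorem~A to \cite{DF1}, \cite{DF2}, this is as much as one could reasonably expect here.
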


In this paper, we consider the complementary case, that is where exactly one of the cycles is of even length and the others are odd. Specifically, again improving on the result of Figaj and \L uczak, we prove the following:

\phantomsection
\hypertarget{thC}
\phantomsection
\begin{thC}
\label{thC}

For every $\alpha_{1}, \alpha_{2}, \alpha_{3}>0$, there exists a positive integer $n_{C}=n_{C}(\aI,\aII,\aIII)$ such that, for $n> n_{C}$,
 \begin{align*}
 R(C_{\llangle \alpha_{1} n \rrangle},C_{\langle \alpha_{2} n \rangle}, C_{\langle \alpha_{3} n \rangle }) = \max\{
4\llangle \aI n \rrangle,
\llangle \aI n \rrangle+2\langle \aII n \rangle,
\llangle \aI n \rrangle+2\langle \aIII n \rangle\}-3
.
 \end{align*}
\end{thC}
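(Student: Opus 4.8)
The plan is to prove matching lower and upper bounds. Throughout write $n_1=\llangle\alpha_1 n\rrangle$, $n_2=\langle\alpha_2 n\rangle$ and $n_3=\langle\alpha_3 n\rangle$, so that $n_1$ is even, $n_2,n_3$ are odd, all three are of order $\Theta(n)$ with mutually bounded ratios, and put $R:=\max\{4n_1,\,n_1+2n_2,\,n_1+2n_3\}-3$; by the symmetry between colours $2$ and $3$ we may assume $n_2\ge n_3$, so that $R=\max\{4n_1,\,n_1+2n_2\}-3$. The lower bound $R(C_{n_1},C_{n_2},C_{n_3})\ge R$ is proved by explicit extremal colourings; the upper bound $R(C_{n_1},C_{n_2},C_{n_3})\le R$, which is the substance of the argument, follows the regularity/connected-matching method reinforced by a stability analysis, along the same lines as the proof of Theorem~A.

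For the lower bound I would exhibit, for each of the two surviving terms of the maximum, a $3$-colouring of $K_{R-1}$ with no colour-$1$ copy of $C_{n_1}$, no colour-$2$ copy of $C_{n_2}$ and no colour-$3$ copy of $C_{n_3}$. For $4n_1-3$: partition $V(K_{4n_1-4})$ into classes $V_1,V_2,V_3,V_4$ of size $n_1-1$, colour the edges inside each $V_i$ with colour~$1$ (so every colour-$1$ component has only $n_1-1<n_1$ vertices), colour $E(V_1\cup V_2,\,V_3\cup V_4)$ with colour~$2$, and colour $E(V_1,V_2)\cup E(V_3,V_4)$ with colour~$3$ (colours~$2$ and~$3$ are bipartite, hence have no odd cycle at all). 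For $n_1+2n_2-3$: partition $V(K_{n_1+2n_2-4})$ into classes $B_1,B_2$ of size $n_2-1$ and $A_1,A_2$ of size $\half n_1-1$, colour $E(B_1)\cup E(B_2)$ with colour~$2$ (each colour-$2$ component is a $K_{n_2-1}$), colour $E(B_1\cup A_1,\,B_2\cup A_2)$ with colour~$3$ (bipartite), and colour everything else with colour~$1$; then the colour-$1$ graph is a disjoint union of two copies of ``$K_{\half n_1-1}$ with an independent set completely joined to it'', in which every cycle has length at most $2(\half n_1-1)=n_1-2<n_1$. Since $n_2\ge n_3$ these two colourings realise the maximum, giving $R(C_{n_1},C_{n_2},C_{n_3})\ge R$ (the remaining term $n_1+2n_3-3$ comes from the colour-$2$/colour-$3$ swap of the second construction).

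For the upper bound I would first apply Szemer\'edi's Regularity Lemma to an arbitrary $3$-colouring of $K_R$, obtaining a reduced graph $\cR$ on $t$ clusters (with $t$ large but bounded) of minimum degree $(1-o(1))t$, each pair $\varepsilon$-regular and carrying the colours of positive density. The combinatorial core is then the appropriate connected-matching statement --- Theorem~4.1 and its refinement Theorem~4.1$^{\prime}$ --- to the effect that any such $3$-coloured almost-complete graph on $t$ vertices contains a colour-$1$ \emph{connected matching} covering more than $\tfrac{n_1}{R}\,t$ vertices, or a colour-$2$ \emph{odd connected matching} (a matching contained in a non-bipartite monochromatic component) covering more than $\tfrac{n_2}{R}\,t$ vertices, or a colour-$3$ odd connected matching covering more than $\tfrac{n_3}{R}\,t$ vertices; these thresholds are calibrated so that $\max\{4n_1,n_1+2n_2,n_1+2n_3\}$ is exactly the tight value, the blow-ups of the two colourings above being the configurations closest to violating it. I would prove this by a direct structural analysis of $3$-edge-colourings of almost-complete graphs: examine the connected components of each colour, use K\"onig/Gallai-type bounds to turn the absence of a large matching into a small vertex cover, and use the bipartite-versus-non-bipartite dichotomy to locate odd connected matchings in colours~$2$ and~$3$.

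To finish, a connected matching in $\cR$ is turned into an actual monochromatic cycle of the \emph{exact} required length in $K_R$: a colour-$i$ connected matching covering $s$ clusters yields, via $\varepsilon$-regularity, colour-$i$ cycles of every length up to $(1-o(1))\,s\cdot (R/t)$, of any parity for colour~$1$ since $n_1$ is even and --- crucially --- of odd length for colours~$2$ and~$3$ as long as the host component is non-bipartite, which is precisely what the ``odd'' connected matching provides. Since the regularity reduction loses $o(n)$, the \emph{exact} value $R$ is reached by a stability dichotomy: if the colouring of $K_R$ is $\delta$-far, in edit distance, from the blow-up of each of the two extremal colourings, then a robust form of Theorem~4.1 yields a connected matching with $\Omega(n)$ slack and the embedding goes through with room to spare; and if the colouring is $\delta$-close to one of them, one discards the reduced graph and argues directly on $K_R$ that the extremal configurations are rigid --- that no $3$-colouring within $\delta n$ edits of them, now on $R$ rather than $R-1$ vertices, can avoid all three forbidden cycles. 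I expect this stability/rigidity step to be the main obstacle: it splits into the two extremal families and, within each, into several sub-cases according to where the perturbation is absorbed and which cycle is thereby forced, and that is where most of the work lies; a persistent secondary difficulty is the parity bookkeeping needed to guarantee that the relevant monochromatic components are simultaneously large enough and non-bipartite enough to realise the exact odd lengths $n_2$ and $n_3$ rather than merely lengths of the correct order of magnitude.
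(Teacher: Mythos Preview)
Your proposal is correct and follows essentially the same approach as the paper: explicit extremal colourings for the lower bound, and for the upper bound the regularity method combined with a connected-matching stability result (the paper's Theorem~D) whose non-matching outcomes are specific near-extremal structures ($\cH$, $\cJ$, $\cL$) that are then transferred to the original graph and analysed directly to force the required cycle. Your identification of the stability/rigidity step as the main obstacle, and of parity bookkeeping for the odd cycles as a persistent secondary difficulty, matches the paper exactly.
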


\section{Lower bounds}
\label{ram:low}
\setlength{\parskip}{0.1in plus 0.05in minus 0.025in}

The first step in proving Theorem C is to exhibit three-edge-colourings of the complete graph on $$ \max\{
4\llangle \aI n \rrangle,
\llangle \aI n \rrangle+2\langle \aII n \rangle,
\llangle \aI n \rrangle+2\langle \aIII n \rangle\}-4
$$ vertices which do not contain any of the relevant coloured cycles, thus proving that 
$$ R(C_{\llangle \alpha_{1} n \rrangle},C_{\langle \alpha_{2} n \rangle}, C_{\langle \alpha_{3} n \rangle }) \geq \max\{
4\llangle \aI n \rrangle,
\llangle \aI n \rrangle+2\langle \aII n \rangle,
\llangle \aI n \rrangle+2\langle \aIII n \rangle\}-3
.$$
For this purpose, the well-known colourings shown in Figure~\ref{fig:lb0} suffice:

  \begin{figure}[!h]
\centering{
\mbox{\hspace{-8mm}{\includegraphics[width=54mm, page=17]{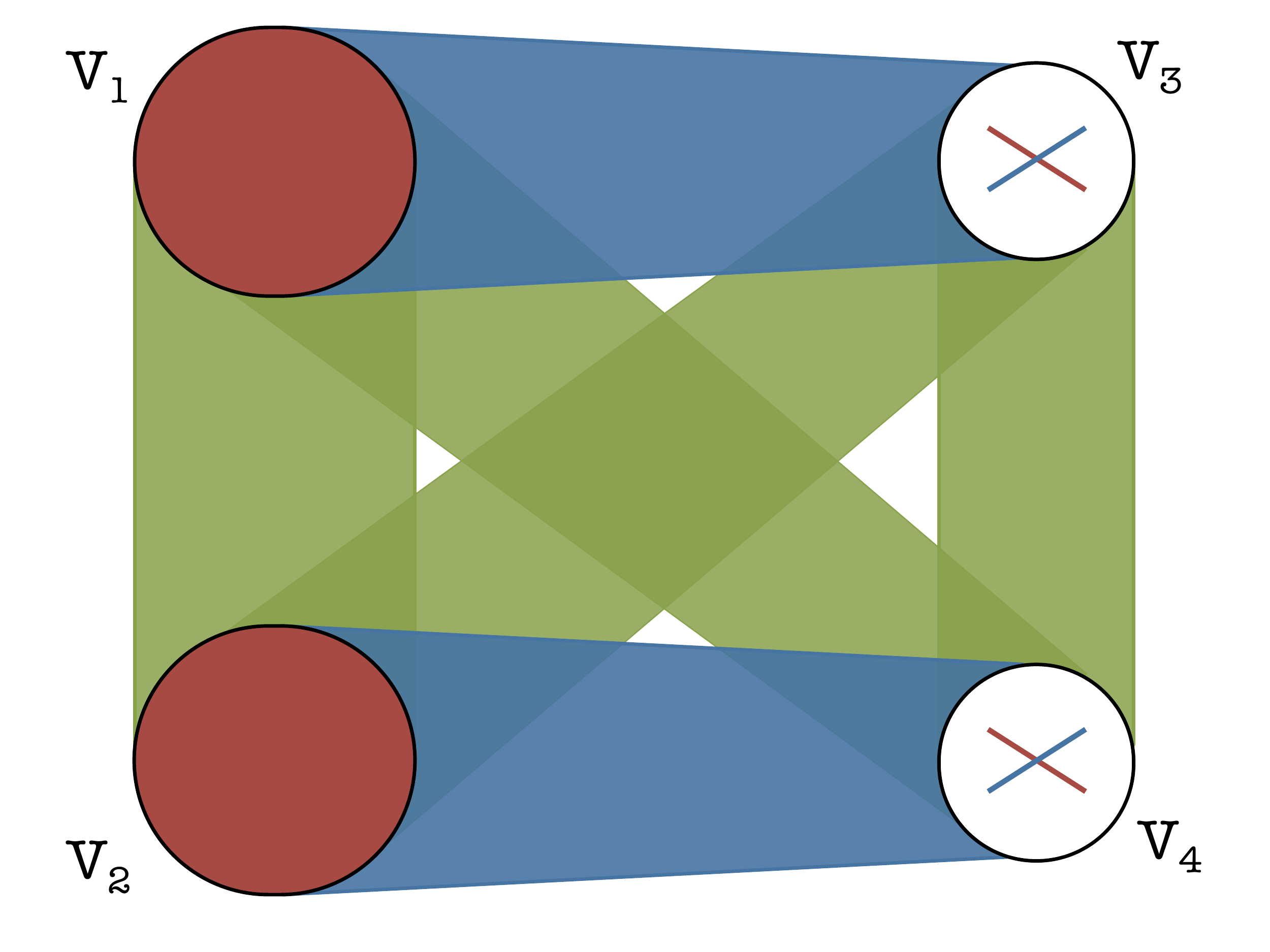}}\quad{\includegraphics[width=54mm, page=18]{CH2-Figs.pdf}}\quad{\includegraphics[width=54mm, page=27]{CH2-Figs.pdf}}}}
\setcounter{figure}{0}
\vspace{-4mm}
\caption{Extremal colouring for Theorem~C.}   
\label{fig:lb0}
\setcounter{figure}{0}
\end{figure}
\setcounter{figure}{1}

The first graph shown in Figure \ref{fig:lb0} has $4\llangle \aI n \rrangle -4$ vertices, divided into four equally-sized classes $V_{1}, V_{2}, V_{3}$ and $V_{4}$ such that all edges in $G[V_1]$, $G[V_2]$, $G[V_3]$ and $G[V_4]$ are coloured red, all edges in $G[V_1,V_3]$ and $G[V_2,V_4]$ are coloured blue and all edges in $G[V_1 \cup V_3,V_2 \cup V_4]$ are coloured green. 

The second graph shown in Figure \ref{fig:lb0} has $\llangle \aI n \rrangle + 2\langle \aII n \rangle- 4$ vertices, divided into four classes $V_{1}, V_{2}, V_{3}$ and $V_{4}$ with $|V_{1}|=|V_{2}|=\langle \alpha_{2} n \rangle-1$ and $|V_{3}|=|V_{4}|=\half\llangle \alpha_{1} n \rrangle-1$ such that all edges in $G[V_1,V_3]$ and $G[V_2,V_4]$ are coloured red, all edges in $G[V_1]$ and $G[V_2]$ are coloured blue, all edges in $G[V_1\cup V_3, V_2\cup V_4 ]$ are coloured green and all edges in $G[V_3]$ and $G[V_4]$ are coloured red or blue.

The third graph shown in Figure \ref{fig:lb0} has $\llangle \aI n \rrangle + 2\langle \aIII n \rangle- 4$ vertices, divided into four classes $V_{1}, V_{2}, V_{3}$ and $V_{4}$ with $|V_{1}|=|V_{2}|=\langle \alpha_{3} n \rangle-1$ and $|V_{3}|=|V_{4}|=\half\llangle \alpha_{1} n \rrangle-1$ such that all edges in $G[V_1,V_3]$ and $G[V_2,V_4]$ are coloured red, all edges in $G[V_1]$ and $G[V_2]$ are coloured green, all edges in $G[V_1\cup V_3, V_2\cup V_4 ]$ are coloured blue and all edges in $G[V_3]$ and $G[V_4]$ are coloured red or green.

Thus, it remains to prove the corresponding upper-bound. To do so, we combine regularity (as used in~\cite{Lucz},~\cite{FL2007},~\cite{FL2008}) with stability methods using a similar approach to~\cite{GyarSzem}, \cite{BenSko}, \cite{KoSiSk}, \cite{KoSiSk2}.

\section{Key steps in the proof}
\label{ram:key}

In order to complete the proof of Theorem C, we must show that, for $n$ sufficiently large, any three-colouring of~$G$, the complete graph on $$N= \max\{
4\llangle \aI n \rrangle,
\llangle \aI n \rrangle+2\langle \aII n \rangle,
\llangle \aI n \rrangle+2\langle \aIII n \rangle 
\}-3$$ vertices, will result in either a red cycle on $\llangle \aI n \rrangle$ vertices, a blue cycle on $\llangle \aII n \rrangle$ or a green cycle on~$\langle \aIII n \rangle$ vertices. 

The main steps of the proof are as follows: Firstly, we apply a version of the Regularity Lemma (Theorem~\ref{l:sze}) to give a partition $V_0\cup V_1\cup\dots\cup V_K$ of the vertices which is simultaneously regular for the red, blue and green spanning subgraphs of~$G$. Given this partition, we define the three-multicoloured reduced-graph $\cG$ on vertex set $V_1,V_2,\dots V_K$ whose edges correspond to the regular pairs. We colour the edges of the  reduced-graph with all those colours for which the corresponding pair has density above some threshold.
 \L uczak~\cite{Lucz} showed that, if the threshold is chosen properly, then the existence of a matching in a monochromatic connected-component of  the  reduced-graph implies the existence of a monochromatic cycle of the corresponding length in the original graph.

Thus, the key step in the proof of Theorem C will be to prove a Ramsey-type stability result for so-called connected-matchings (Theorem D). Defining a \textit{connected-matching} to be a matching with all its edges belonging to the same component, 
this result essentially says that, for every $\alpha_{1},\alpha_{2},\alpha_{3}>0$
 and every sufficiently large $k$, every three-multicolouring of a graph $\cG$ on slightly fewer than $K=\max\{4\aI, \aI+2\aII, \aI+2\aIII\}k$ vertices with sufficiently large minimum degree results in either a connected-matching on at least~$\aI k$ vertices in the red subgraph of $\cG$, a connected-matching on at least~$\aII k$ vertices in a non-bipartite component of the blue subgraph of $\cG$, a connected-matching on at least~$\aIII k$ vertices in a non-bipartite component of the green subgraph of $\cG$ or one of a list of particular structures which will be defined later.

In the case that $\cG$ contains a suitably large connected-matching in one of its coloured subgraphs, a blow-up result of Figaj and \L uczak (see Theorem~\ref{th:blow-up}) can be used to give a monochromatic cycle of the same colour in~$G$. If $\cG$ does not contain such a connected-matching, then the stability result gives us information about the structure of $\cG$. We then show that~$G$ has essentially the same structure which we exploit to force the existence of a monochromatic cycle.

In the next section, given a three-colouring of the complete graph on~$N$ vertices, we will define its three-multicoloured  reduced-graph. We will also discuss a version of the blow-up lemma of Figaj and \L uczak, which motivates this approach.
In Section~\ref{ram:defn}, we will deal with some notational formalities before proceeding in Section~\ref{s:struct} to define the structures we need and to give a precise formulation of the connected-matching stability result which we shall call Theorem~D.

In Section~\ref{s:pre1}, we give a number of technical lemmas needed for the proofs of Theorem~C and Theorem~D. Among these is a decomposition result of Figaj and \L uczak  which provides insight into the structure of the  reduced-graph. 
The hard work is done in Section~\ref{s:stabp}, where we prove Theorem~D, and in Sections~\mbox{\ref{s:p10}--\ref{s:p13}}, where we translate this result for connected-matchings into one for cycles, thus completing the proof of Theorem~C. 

\section{Cycles, matchings and regularity}
\label{ram:cmr}

Szemer\'{e}di's Regularity Lemma~\cite{SzemRegu} asserts that any sufficiently large graph can be approximated by the union of a bounded number of random-like bipartite graphs. 

Given a pair $(A,B)$ of disjoint subsets of the vertex set of a graph~$G$, we write $d(A,B)$ for the \textit{density} of the pair, that is, $d(A,B)=e(A,B)/|A||B|$ and say that such a pair  is $(\epsilon,G)$\textit{-regular} for some~$\epsilon>0$ if, for every pair $(A',B')$ with $A'\subseteq A$, $|A'|\geq \epsilon |A|$, $B' \subseteq B$, $|B'|\geq \epsilon |B|$, we have $ \left| d(A',B')-d(A,B) \right| <\epsilon.$
 
We will make use of a generalised version of Szemer\'edi's Regularity Lemma~in order to move from considering monochromatic cycles to considering monochromatic connected-matchings, the version below being a slight modification of one found, for instance, in~\cite{KomSim}: %\hl{[Th1.18]}:

\begin{theorem}
\label{l:sze}
For every $\epsilon>0$ and every positive integer~$k_{0}$, there exists $K_{\ref{l:sze}}=K_{\ref{l:sze}}(\epsilon,k_{0})$ such that the following holds: For all graphs $G_{1},G_{2},G_{3}$ with $V(G_{1})=V(G_{2})=V(G_{3})=V$ and $|V|\geq K_{\ref{l:sze}}$, there exists a partition $\Pi =(V_{0},V_{1},\dots,V_{K})$ of $V$ such that
\begin{itemize}
\item [(i)]$k_{0}\leq K \leq K_{\ref{l:sze}}$;
\item [(ii)]$|V_0|\leq \epsilon |V|$;   
\item [(iii)]$|V_1|=|V_2|=\dots =|V_K|$; and
\item [(iv)] for each $i$, all but at most $\epsilon K$ of the pairs $(V_i,V_j)$, $1\leq i<j\leq K$, are simultaneously ($\epsilon,G_r$)-regular for $r=1,2,3$. 
\end{itemize}
\end{theorem}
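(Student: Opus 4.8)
The plan is to obtain this as the three-coloured analogue of Szemer\'edi's Regularity Lemma, via the usual energy-increment argument run simultaneously for $G_1,G_2,G_3$, followed by a standard clean-up that passes from the average form of condition~(iv) to the stated per-class form.

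\textbf{Combined index and increment.} For a partition $P=(W_1,\dots,W_m)$ of $V$ and a graph $H$ on $V$ write $q_H(P)=\sum_{i,j}\frac{|W_i|\,|W_j|}{|V|^2}\,d_H(W_i,W_j)^2$ for the usual mean-square density, and set $q(P)=q_{G_1}(P)+q_{G_2}(P)+q_{G_3}(P)$, so $0\le q(P)\le 3$ for every $P$. By the defect form of the Cauchy--Schwarz inequality, refining $P$ never decreases any $q_{G_r}$, hence never decreases $q$. Starting from an equitable partition into $2k_0$ parts (plus a small exceptional class), suppose the current equitable partition $P$ into $m$ parts has more than $\epsilon' m^2$ pairs $(W_i,W_j)$ that fail to be $(\epsilon',G_r)$-regular for at least one $r\in\{1,2,3\}$, where $\epsilon'=\epsilon'(\epsilon)$ is small (of order $\epsilon^2$). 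For each such pair fix a colour $r$ and subsets witnessing the failure in $G_r$; the common refinement generated by all these witnesses raises $q$ by at least some fixed $\delta(\epsilon')>0$, the gains coming from different colours all contributing positively to $q$. Re-subdividing to restore equitability multiplies the number of parts by a bounded factor and moves boundedly many leftover vertices into the exceptional class, at negligible energy cost. Since $q\le 3$ and increases by $\ge\delta(\epsilon')$ each round, after boundedly many rounds we reach an equitable partition with $m$ bounded, exceptional class of size $\le\epsilon'|V|$ (for $|V|$ large and the base partition fine enough), in which all but at most $\epsilon' m^2$ of the pairs are simultaneously $(\epsilon',G_r)$-regular for $r=1,2,3$; the resulting bound on $m$ and on the number of subdivisions per round yields $K_{\ref{l:sze}}$.

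\textbf{From the average form to the per-class form.} Call $W_i$ \emph{bad} if it lies in more than $\epsilon m$ pairs that are not simultaneously $(\epsilon',\cdot)$-regular. Double counting gives at most $2\epsilon' m^2/(\epsilon m)$ bad classes, which is $\le\tfrac12\epsilon m$ once $\epsilon'\le\epsilon^2/4$. Move all bad classes into the exceptional class: its size grows by at most $\tfrac12\epsilon m\cdot(|V|/m)=\tfrac12\epsilon|V|$, so stays below $\epsilon|V|$; the number $K$ of surviving classes satisfies $K\ge m/2$, so each survivor meets at most $\epsilon m\le 2\epsilon K$ non-regular pairs. Carrying out the whole argument with $\epsilon/2$ in place of $\epsilon$ (and correspondingly smaller $\epsilon'$, and $2k_0$ as the base count so that $K\ge k_0$) then delivers (i)--(iv) exactly, since $(\epsilon',G_r)$-regularity implies $(\epsilon,G_r)$-regularity and the surviving pairs were regular for all three colours at level $\epsilon'$.

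\textbf{Main obstacle.} The increment mechanics are identical to the single-graph case once one notes that the combined index $q$ is bounded and monotone under refinement, so a refinement witnessing irregularity in \emph{any one} of $G_1,G_2,G_3$ already forces a fixed energy gain; the only genuine care needed is the bookkeeping of the internal constants $\epsilon'$, the base fineness and the threshold on $|V|$, so that the clean-up step, which simultaneously enlarges $V_0$ and shrinks $K$, leaves (i)--(iv) all satisfied. An alternative is to apply the one-graph Regularity Lemma to $G_1$, then to $G_2$ through a refinement, then to $G_3$, using that a balanced refinement of a regular pair keeps most sub-pairs regular with a slightly worse parameter; the combined-index route avoids this inheritance lemma and is the one I would use.
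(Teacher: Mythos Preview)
The paper does not prove this statement; it is quoted as a known variant of Szemer\'edi's Regularity Lemma, with a reference to~\cite{KomSim}. Your outline is the standard energy-increment proof adapted to several graphs simultaneously, and it is correct in substance: the combined index $q=q_{G_1}+q_{G_2}+q_{G_3}$ is bounded by~$3$ and monotone under refinement, each round of witnessing irregularity in any colour gives a fixed gain, and the clean-up from the average form to the per-class form via discarding bad classes is the usual step. Since the paper offers no proof to compare against, there is nothing further to contrast; your argument is exactly the one a reader would supply, and the alternative iterated-refinement route you mention is also standard and equivalent.
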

Note that, given $\epsilon>0$ and graphs $G_1,G_2$ and $G_3$ on the same vertex set $V$, we call a partition $\Pi=(V_0,V_1,\dots,V_K)$ satisfying (ii)--(iv) \textit{$(\epsilon,G_1,G_2,G_3)$-regular}.

In what follows, given a three-coloured graph~$G$, we will use $G_1, G_2, G_3$ to refer to its monochromatic spanning subgraphs. That is $G_1$ (resp. $G_2, G_3$) has the same vertex set as~$G$ and includes, as an edge, any edge which in~$G$ is coloured red (resp. blue, green).
Then, given a three-coloured graph~$G$, we can use Theorem~\ref{l:sze} to define a partition which is simultaneously regular for $G_1$, $G_2$, $G_3$ and then define the three-multicoloured  reduced-graph $\cG$ as follows: 

\begin{definition}
\label{reduced}
Given $\epsilon>0$, $\xi>0$, a three-coloured graph $G=(V,E)$  and an $(\epsilon,G_1,G_2,G_3)$-regular partition $\Pi =(V_{0},V_{1},\dots,V_{K})$, we define the three-multicoloured $(\epsilon,\xi,\Pi)$-reduced-graph $\cG=(\cV,\cE)$ by:
\begin{align*}
\mathcal{V}&=\{V_{1},V_{2},\dots ,V_{K}\}, \text{\quad\quad}
\mathcal{E}&=\{ V_{i}V_{j} : (V_{i},V_{j}) \text{ is simultaneously } (\epsilon,G_{r})\text{-regular for }r=1,2,3\},
\end{align*}
where $V_{i}V_{j}$ is coloured with all colours~$r$ such that $d_{G_r}(V_i,V_j)\geq\xi$.
\end{definition}

One well known fact about regular pairs is that they contain long paths. This is summarised in the following lemma, which is a slight modification of one found in~\cite{Lucz}:

\begin{lemma}
\label{longpath}
For every~$\epsilon$ such that $0\leq \epsilon < 1/600 $ and every $k\geq1/\epsilon$, the following holds: Let~$G$ be a bipartite graph with bipartition $V(G)=V_1\cup V_2$ such that $|V_1|,|V_2|\geq k$, the pair $(V_1,V_2)$ is $\epsilon$-regular and $e(V_1,V_2)\geq \epsilon^{1/2} |V_1||V_2|$. Then, for every integer~$\ell$ such that $1\leq \ell \leq k-2\epsilon^{1/2} k$ and every $v' \in V_1$, $v'' \in V_2$ such that $d(v'),d(v'')\geq \tfrac{2}{3}\epsilon^{1/2}k$,~$G$ contains a path of length $2\ell +1$ between~$v'$ and~$v''$.
\end{lemma}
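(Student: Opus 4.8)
The plan is to prove Lemma~\ref{longpath} by a greedy path-building argument that repeatedly exploits the defining property of $\epsilon$-regularity. First I would set up notation: write $|V_1|=n_1$, $|V_2|=n_2$ (both at least $k$) and note that the density hypothesis $e(V_1,V_2)\geq\epsilon^{1/2}|V_1||V_2|$ means $d(V_1,V_2)\geq\epsilon^{1/2}$, which is comfortably above $\epsilon$ for $\epsilon<1/600$, so regularity is usable. The key observation is the standard one: if $(V_1,V_2)$ is $\epsilon$-regular with density $d\geq\epsilon^{1/2}$, then for any $A\subseteq V_1$ with $|A|\geq\epsilon n_1$ and $B\subseteq V_2$ with $|B|\geq\epsilon n_2$, the pair $(A,B)$ still has density at least $d-\epsilon\geq\epsilon^{1/2}-\epsilon>\tfrac12\epsilon^{1/2}$; in particular all but at most $\epsilon n_1$ vertices of $V_1$ have at least $(\epsilon^{1/2}-\epsilon)|B|$ neighbours in any such $B$, and symmetrically. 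I would isolate this as a sub-claim and use it as the engine for the whole argument.

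Next I would build the path from both endpoints simultaneously, or equivalently build it from $v'$ and extend toward $v''$. The construction maintains a partially built path $v'=u_0, u_1, \dots, u_t$ alternating between $V_1$ and $V_2$, together with the sets $V_1', V_2'$ of still-unused vertices on each side; as long as fewer than, say, $2\epsilon^{1/2}k$ vertices have been used on each side, these residual sets have size at least $k-2\epsilon^{1/2}k\geq\epsilon n_i$ (using $k\geq1/\epsilon$ to absorb the $V_0$-type slack and the fact that $\epsilon<1/600$), so the sub-claim applies. At each step, the current endpoint $u_t$ is either one of the designated high-degree vertices $v',v''$ (whose degree hypothesis $d(\cdot)\geq\tfrac23\epsilon^{1/2}k$ guarantees many neighbours in the residual set directly) or a vertex chosen at the previous step to be a "typical" vertex, i.e. one with at least $(\epsilon^{1/2}-\epsilon)|V_{3-i}'|$ neighbours on the other side — there are at most $\epsilon n_i$ atypical vertices, which we simply forbid from ever being chosen. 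This lets us always extend, and the residual-neighbourhood bound $(\epsilon^{1/2}-\epsilon)|V_{3-i}'| \gg \epsilon n_{3-i}$ is large enough to find a typical continuation vertex among the neighbours. I would run this until the path has the correct length $2\ell$ or $2\ell+1$ short of completion, then close it up to $v''$: the final one or two steps need $v''$ to have a neighbour adjacent (via the residual bipartite graph) to the current endpoint, which follows again from the regularity sub-claim applied to $N(v'')\cap V_i'$ (size at least $\tfrac23\epsilon^{1/2}k-2\epsilon^{1/2}k$, still $\geq\epsilon n_i$ after checking the constant $\tfrac23-2<0$ — hmm, this needs care) against the neighbourhood of the current endpoint.

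The main obstacle, and the place I would spend the most care, is exactly that endpoint-closing step and the bookkeeping of how large the residual sets and residual neighbourhoods stay throughout: one must verify that the $\tfrac23\epsilon^{1/2}k$ degree lower bound on $v',v''$, minus the at most $2\epsilon^{1/2}k$ vertices consumed by the path, still leaves enough room, and that the two ends can be joined without conflict. The honest fix is to reserve, from the outset, a small pool of $V_i'$ of size $\approx\tfrac13\epsilon^{1/2}k$ that is untouched by the middle of the path and used only for the final connection; then the endpoint $v''$ has many neighbours in this reserved pool, the pool meets $\epsilon n_i$ comfortably, and a single application of regularity to $N(v'')\cap(\text{pool})$ versus the penultimate endpoint's residual neighbourhood finishes the path. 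The length constraint $1\leq\ell\leq k-2\epsilon^{1/2}k$ is precisely what makes the "at most $2\epsilon^{1/2}k$ vertices used on each side" promise consistent with building a path on up to $2\ell+2$ vertices. I would close by remarking that the numerical constant $1/600$ and the slightly awkward constants $\tfrac23$ and $\epsilon^{1/2}$ are chosen exactly to make all of these inequalities hold with room to spare, so the verification is routine once the reservation trick is in place; citing the analogous lemma in~\cite{Lucz} covers the details.
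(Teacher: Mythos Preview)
The paper does not prove this lemma; it merely states it as ``a slight modification of one found in~\cite{Lucz}'' and moves on. Your greedy path-building strategy using regularity is indeed the standard route to such statements, so the overall plan is correct.

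There is, however, a genuine bookkeeping error that runs through the sketch. You assert that the constraint $\ell\leq k-2\epsilon^{1/2}k$ ensures ``at most $2\epsilon^{1/2}k$ vertices used on each side'', but this is backwards: a path of length $2\ell+1$ uses $\ell+1$ vertices on each side, and $\ell+1$ can be as large as $k-2\epsilon^{1/2}k+1$, not $2\epsilon^{1/2}k$. What the constraint actually guarantees is that the \emph{residual} sets $V_i'$ stay of size at least $n_i-\ell\geq n_i-k+2\epsilon^{1/2}k\geq\epsilon n_i$ throughout (the last inequality holding because $(1-\epsilon)n_i\geq(1-\epsilon)k>k-2\epsilon^{1/2}k$), which is what you need for regularity to keep applying. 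This reversal is exactly why your closing-step calculation produced the alarming quantity $\tfrac23\epsilon^{1/2}k-2\epsilon^{1/2}k<0$: you subtracted the residual size instead of the number of used vertices.

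The reservation idea is the right repair, but it must be set up with the corrected numbers. Before building, fix a subset $R\subseteq N(v'')\cap V_1$ of size a little above $\epsilon n_1$ (possible since $d(v'')\geq\tfrac23\epsilon^{1/2}k$, at least when $n_1$ is comparable to $k$ as in the paper's applications) and forbid the greedy construction from ever choosing a vertex of $R$; since $|R|$ is small compared to the typical residual neighbour counts $(\epsilon^{1/2}-\epsilon)|V_1'\setminus R|$, this costs nothing. At the penultimate step, regularity applied to the pair $(R,\,N(u_{2\ell-1})\cap V_2')$---both sides now of size at least $\epsilon n_i$---produces a vertex in $R$ adjacent to $u_{2\ell-1}$, and that vertex is adjacent to $v''$ by construction. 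With the residual arithmetic corrected, the argument goes through.
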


A \textit{matching} is a collection of pairwise vertex-disjoint edges. Note that, in what follows, we will sometimes abuse terminology and, where appropriate, refer to a matching by its vertex set rather than its edge set.
We call a matching with all its vertices in the same component of~$G$ a \textit{connected-matching} and note that we say a connected-matching is \textit{odd} if the component containing the matching also contains an odd cycle. 

The following theorem makes use of the lemma~above to blow up large connected-matchings in the  reduced-graph to cycles (of appropriate length and parity) in the original. This facilitates our approach to proving Theorem~C in that it allows us to shift our attention away from cycles to connected-matchings, which turn out to be somewhat easier to find. Figaj and \L uczak~\cite[Lemma~3]{FL2008} proved a more general version of this theorem in a slightly different context (they considered any number of colours and any combination of parities and used a different threshold for colouring the reduced-graph):

\begin{theorem}
\label{th:blow-up}

For all $c_1,c_2,c_3,d, \eta>0$ such that
$0<\eta<\min\{0.01, (64c_1+64c_2+64c_3)^{-1}\}$, there exists $n_{\ref{th:blow-up}}=n_{\ref{th:blow-up}}(c_1,c_2,c_3,d,\eta)$ such that, for $n>n_{\ref{th:blow-up}}$, the following holds:

Given $\aI, \aII, \aIII$  such that $0<\aI,\aII,\aIII\leq2$, and $\xi$ such that $\eta\leq \xi\leq\tfrac{1}{3}$, a complete three-coloured graph $G=(V,E)$ on
$$N=c_{1}\llangle \aI n\rrangle+c_{2}\langle \aII n \rangle +c_{3}\langle\alpha_{3}n\rangle-d$$
vertices and an $(\eta^4,G_1,G_2,G_3)$-regular partition $\Pi =(V_{0},V_{1},\dots,V_{K})$ for some $K>8(c_1+c_2+c_3)^2/\eta$, letting $\cG=(\cV,\cE)$ be the three-multicoloured $(\eta^4,\xi,\Pi)$-reduced-graph of~$G$ on~$K$ vertices, and letting $k$ be an integer such that 
$$
c_{1}\alpha_{1}k+c_{2}\aII k+c_{3}\aIII k - \eta k \leq  K \leq c_{1}\alpha_{1}k+c_{2}\aII k+c_{3}\aIII k - \half\eta k,$$
\begin{itemize}
\item[(i)] if $\cG$ contains a red connected-matching on at least $\alpha_{1}k$ vertices, then~$G$ contains a red cycle on $\llangle \alpha_{1} n\rrangle$ vertices;

\item[(ii)] if $\cG$ contains a blue odd connected-matching on at least $\alpha_{2}k$ vertices, then~$G$ contains a blue cycle on $\langle \alpha_{2} n\rangle$ vertices;

\item[(iii)] if $\cG$ contains a green odd connected-matching on at least $\alpha_{3}k$ vertices, then~$G$ contains a green cycle on $\langle\alpha_{3}n\rangle $ vertices.
\end{itemize}
\end{theorem}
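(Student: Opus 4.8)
The plan is to prove Theorem~\ref{th:blow-up} by following the now-standard argument that a connected-matching in the reduced-graph lifts to a long cycle of the same parity in the original graph, using Lemma~\ref{longpath} as the engine. I will treat the three cases uniformly, the only difference being that the parity/length bookkeeping is slightly more delicate in the odd cases (ii) and (iii). Throughout, set $\epsilon=\eta^4$, so that the partition $\Pi$ is $(\epsilon,G_1,G_2,G_3)$-regular with cluster size $L=|V_i|\geq(|V|-\epsilon|V|)/K$, and note that the hypotheses $K>8(c_1+c_2+c_3)^2/\eta$ and the sandwich bound on $k$ force $L$ to be large (of order $k$) and, crucially, $\epsilon^{1/2}=\eta^2$ to be small relative to $1/L$; the numerical condition $\eta<(64c_1+64c_2+64c_3)^{-1}$ is exactly what is needed so that the slack $\tfrac12\eta k$ in $K$ translates, after dividing by the relevant $c_i$, into enough spare clusters to absorb the $O(\epsilon^{1/2})$ losses incurred when applying Lemma~\ref{longpath} on each regular pair.

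\textbf{Case (i).} Suppose $\cG$ has a red connected-matching $M=\{A_1B_1,\dots,A_tB_t\}$ with $2t\geq\alpha_1 k$, all of whose clusters lie in one red component $\cC$ of $\cG$. Because $\cC$ is connected in the red reduced-graph, I can fix, for each consecutive pair of matching edges, a red path in $\cC$ joining them, and concatenate to obtain a single closed red walk in $\cC$ visiting $A_1,B_1,A_2,B_2,\dots$ in order; discarding repetitions and using that each edge of this walk is a $(\epsilon,G_1)$-regular pair of density $\geq\xi\geq\eta\geq\epsilon^{1/2}$ (after shrinking $\xi$-density to the $\epsilon^{1/2}$ threshold of Lemma~\ref{longpath}, which is legitimate since $\xi\geq\eta=\epsilon^{1/4}>\epsilon^{1/2}$), I build the cycle greedily cluster by cluster. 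Concretely, I process the clusters along the walk, at each regular pair selecting a sub-path of prescribed length via Lemma~\ref{longpath} and keeping track of the two endpoints used so far, always maintaining that only a $\leq 2\epsilon^{1/2}$-fraction of each cluster has been consumed so that the degree condition $d(v')\geq\tfrac23\epsilon^{1/2}L$ and the length condition $\ell\leq L-2\epsilon^{1/2}L$ in Lemma~\ref{longpath} remain satisfiable at every step. The total number of vertices I can route through is at least $(1-3\epsilon^{1/2})\cdot 2tL\geq(1-3\eta^2)\alpha_1 kL$, which, by the lower bound on $L$ and the choice of $k$ relative to $K$ and $N$, exceeds $\llangle\alpha_1 n\rrangle$; a final application of Lemma~\ref{longpath} on one of the matching pairs, choosing the sub-path length so as to hit the exact target, closes the walk into a red cycle on precisely $\llangle\alpha_1 n\rrangle$ vertices. (The target length is even, which is automatic here since every cycle so produced alternates between the two sides of each regular pair an even number of times per matching edge.)

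\textbf{Cases (ii) and (iii).} These are identical to each other, so I describe (ii). Now $M$ is a blue \emph{odd} connected-matching: the blue component $\cC$ containing $M$ also contains an odd closed walk $W_{\mathrm{odd}}$. I run the same construction as in Case~(i) to get a long blue path threading the matching, but I now additionally route the path once around $W_{\mathrm{odd}}$ — splicing $W_{\mathrm{odd}}$ in at a cluster shared with the main walk — which changes the parity of the total number of clusters traversed and hence lets me realise \emph{both} parities of cycle length above a certain threshold. Since $\langle\alpha_2 n\rangle$ is odd and lies below $(1-3\eta^2)\alpha_2 kL$, I can tune the sub-path lengths (each contributes an odd number $2\ell+1$ of new vertices per pair, so the parity is governed by the number of pairs used, which I adjust using $W_{\mathrm{odd}}$) to close up into a blue cycle on exactly $\langle\alpha_2 n\rangle$ vertices. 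The only extra care needed is that $W_{\mathrm{odd}}$ itself consumes some clusters and some vertex-budget within them; but $W_{\mathrm{odd}}$ has bounded length (at most $K$) and each cluster it uses is tapped for only $O(1)$ vertices, so the overall budget bookkeeping is unchanged up to lower-order terms absorbed by the $\tfrac12\eta k$ slack.

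\textbf{Main obstacle.} The genuinely delicate point is the quantitative accounting that guarantees the greedy path-building never runs out of room in any cluster: one must show that, summed over the (at most $K$, hence bounded by $8(c_1+c_2+c_3)^2k/\eta$ or so) clusters visited, the repeated $2\epsilon^{1/2}$-fraction withdrawals plus the extra withdrawals for the connecting paths between matching edges and for $W_{\mathrm{odd}}$ still leave every cluster at least, say, half-full, while simultaneously the accumulated path is long enough to overshoot $\llangle\alpha_1 n\rrangle$ (resp.\ $\langle\alpha_2 n\rangle$, $\langle\alpha_3 n\rangle$). This is where the precise inequality $0<\eta<\min\{0.01,(64c_1+64c_2+64c_3)^{-1}\}$ and the two-sided sandwich on $K$ in terms of $k$ are used in an essential way, and verifying it is mostly a careful but routine computation; I would isolate it as a sub-claim and discharge it by a direct estimate, deferring the arithmetic. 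Everything else is a faithful adaptation of \L uczak's original connected-matching-to-cycle lemma and of \cite[Lemma~3]{FL2008}.
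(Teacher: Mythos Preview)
Your approach is correct and is precisely the standard \L uczak/Figaj--\L uczak argument that the paper has in mind. Note that the paper does not actually supply its own proof of Theorem~\ref{th:blow-up}: it simply remarks that ``Figaj and \L uczak~\cite[Lemma~3]{FL2008} proved a more general version of this theorem in a slightly different context,'' and later refers back to ``(as in the proof of Theorem~\ref{th:blow-up})'' only to extract the inequality $|V_i|\geq(1+\eta/24)n/k$. So there is nothing to compare against beyond the cited lemma, and your sketch is a faithful outline of that argument.

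One minor slip worth fixing: in your ``main obstacle'' paragraph you write that the number of clusters visited is ``at most $K$, hence bounded by $8(c_1+c_2+c_3)^2k/\eta$ or so''. The hypothesis $K>8(c_1+c_2+c_3)^2/\eta$ is a \emph{lower} bound on $K$, not an upper bound; the upper bound you actually want comes from the sandwich $K\leq(c_1\alpha_1+c_2\alpha_2+c_3\alpha_3-\tfrac12\eta)k\leq 2(c_1+c_2+c_3)k$ (using $\alpha_i\leq 2$). This does not affect the argument, but the numerics in your eventual sub-claim should use the latter inequality.
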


\begin{comment}
A version of the below theorem formed the main result of the paper of Figaj and \L uczak~\cite{FL2008}. They used this theorem on connected-matchings and their version of Theorem~\ref{th:blow-up} above to prove the asymptotic Ramsey result for connected-matchings given in Section~\ref{ram:intro}.

\begin{theorem}
\label{ThA}

Given $\aI,\aII,\aIII>0$, there exists $\eta_{\ref{ThA}}=\eta_{\ref{ThA}}(\aI,\aII,\aIII)$ and $k_{\ref{ThA}}=k_{\ref{ThA}}(\aI,\aII,\aIII,\eta)$ such that, if $0<\eta<\eta_{\ref{ThA}}, k\geq k_{\ref{ThA}}$, then:

For every three-colouring of a graph~$G$, a $(1-\eta^4)$-complete graph on 
$$K\geq (\max \{ 2\alpha_{1}+\alpha_{2}, \half \alpha_{1}+\half\alpha_{2}+\aIII \} +10\eta^{1/4})k$$ vertices, at least one of the following occurs:

\begin{itemize}
\item [(i)]~$G$ contains a red connected-matching on at least $\alpha_{1}k$ vertices;
\item [(ii)]~$G$ contains a blue connected-matching on at least $\alpha_{2}k$ vertices; 
\item [(iii)]~$G$ contains a green odd connected-matching on at least $\alpha_{3}k$ vertices.
\end{itemize}

\end{theorem}
Much of the remainder of this paper will be dedicated to setting up a result analogous to this. Our version of this result uses a similar approach but considers a graph on slightly fewer vertices, the result being either a monochromatic connected-matching or a particular structure which can then be exploited to force a cycle. 
\end{comment}

\section{Definitions and notation}
\label{ram:defn}

Recall that, given a three-coloured graph~$G$, we refer to the first, second and third colours as red, blue and green respectively and use $G_1, G_2, G_3$ to refer to the monochromatic spanning subgraphs of $G$. In what follows, if~$G_1$ contains the edge $uv$, we say that $u$ and $v$ are \textit{red neighbours} of each other in $G$. Similarly, if $uv\in E(G_2)$, we say that $u$ and $v$ are \textit{blue neighbours} and, if $uv\in E(G_3)$, we say that that $u$ and $v$ are \textit{green neighbours}.

We say that a graph~$G=(V,E)$ on~$N$ vertices is $a$-\emph{almost-complete} for $0\leq a\leq N-1$ if its minimum degree~$\delta(G)$ is at least $(N-1)-a$. Observe that, if~$G$ is $a$-almost-complete and $X\subseteq V$, then $G[X]$ is also $a$-almost-complete.
We say that a graph~$G$ on~$N$ vertices is $(1-c)$-\emph{complete} for $0\leq c\leq 1$ if it is $c(N-1)$-almost-complete, that is, if $\delta(G)\geq (1-c)(N-1)$. Observe that, for $c\leq \half$, any $(1-c)$-complete graph is connected.

We say that a bipartite graph~$G=G[U,W]$ is $a$-\emph{almost-complete} if every $u\in U$ has degree at least $|W|-a$ and every $w\in W$ has degree at least $|U|-a$. Notice that, if~$G[U,W]$ is $a$-almost-complete and $U_1\subseteq U, W_1\subseteq W$, then $G[U_1,W_1]$ is $a$-almost-complete. 
We say that a bipartite graph~$G=G[U,W]$ is $(1-c)$-\emph{complete} if every $u\in U$ has degree at least $(1-c)|W|$ and every $w\in W$ has degree at least $(1-c)|U|$. Again, notice that, for $c< \half$, any $(1-c)$-complete bipartite graph $G[U,W]$ is connected, provided that~$U,W\neq \emptyset$.

We say that a graph~$G$ on~$N$ vertices is $c$-\emph{sparse} for $0<c<1$ if its maximum degree is at most $c(N-1)$. We say a bipartite graph~$G=G[U,W]$ is $c$-\emph{sparse} if every $u\in U$ has degree at most $c|W|$ and every vertex $w\in W$ has degree at most $c|U|$.

For vertices~$u$ and~$v$ in a graph~$G$, we will say that the edge $uv$ is \emph{missing} if~$uv\notin E(G)$.

\section {Connected-matching stability result}
\label{s:struct}

Before proceeding to state Theorem~D, %connected-matching 
%stability result, 
we define the coloured structures we will need.

\begin{definition}
\label{d:H}

For $x_{1}, x_{2}, c_1,c_2$ positive, $\gamma_1,\gamma_2$ colours, let $\cH(x_{1},x_{2}, c_1,c_2, \gamma_1,\gamma_2)$ be the class of edge-multicoloured graphs defined as follows:  A given two-multicoloured graph $H=(V,E)$ belongs to~$\cH$ if its vertex set can be partitioned into $X_{1}\cup X_{2}$ such that
\begin{itemize}
\item[(i)] $|X_{1}|\geq x_{1}, |X_{2}|\geq x_{2} $;
\item[(ii)] $H$ is $c_1$-almost-complete; and
\item[(iii)] defining $H_1$ to be the spanning subgraph induced by the colour $\gamma_1$ and $H_2$ to be the subgraph induced by the colour $\gamma_2$,
\begin{itemize}
\item[(a)] $H_1[X_{1}]$ is $(1-c_2)$-complete and $H_2[X_{1}]$ is $c_2$-sparse, 
\item[(b)] $H_2[X_1,X_2]$ is $(1-c_2)$-complete and $H_1[X_1,X_2]$ is $c_2$-sparse.
\end{itemize}
\end{itemize}
\end{definition}

\begin{definition}
\label{d:J}

For $x, c$ positive, $\gamma_1,\gamma_2$ colours, let $\cJ(x,c,\gamma_1,\gamma_2)$ be the class of edge-multicoloured graphs defined as follows:  A given two-multicoloured graph $H=(V,E)$ belongs to~$\cJ$ if the vertex set of~$V$ can be partitioned into $X_{1}\cup X_{2}$ such that

\begin{itemize}
\item[(i)] $|X_{1}|,|X_{2}|\geq x$;
\item[(ii)] $H$ is $c$-almost-complete; and
\item[(iii)] (a) all edges present in $H[X_1], H[X_2]$ are coloured exclusively with colour $\gamma_1$, 
\item[{~}] (b) all edges present in $H[X_1,X_2]$ are coloured exclusively with colour $\gamma_2$. 
\end{itemize}
\end{definition}

  \begin{figure}[!h]
\centering{
\mbox{\hspace{-4mm}
{\includegraphics[height=22mm, page=1]{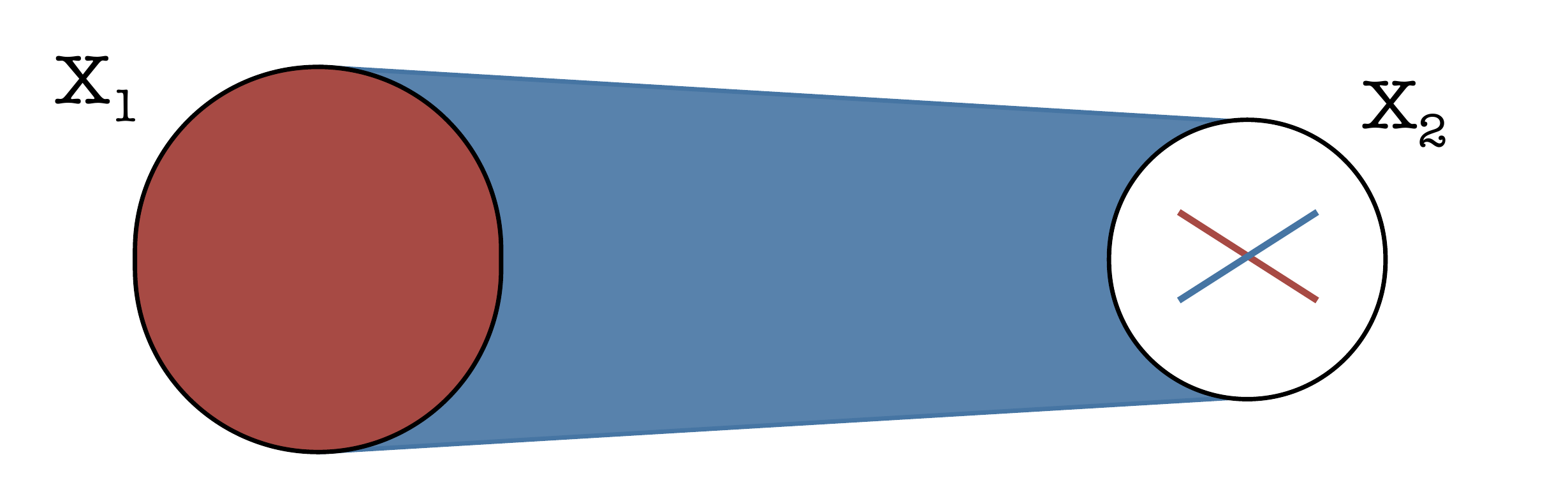}}
\quad{\includegraphics[height=22mm, page=3]{CH2-Figs-wide.pdf}}}}
\caption{$H\in\cH(x_1,x_2,c_1,c_2,\text{red},\text{blue})$ and $H\in\cJ(x,c,\text{red},\text{green})$.}   
\label{fig:lb1}
\end{figure}
\begin{definition}
\label{d:L}

For $x, c$ positive, $\gamma_1, \gamma_2, \gamma_3$ colours, let $\cL(x, c, \gamma_1, \gamma_2,\gamma_3)$ be the class of edge-multicoloured graphs defined as follows:  A given three-multicoloured graph $H=(V,E)$ belongs to~$\cL$, if its vertex set can be partitioned into $X_{1}\cup X_{2}\cup Y_{1}\cup Y_{2}$ such that
\begin{itemize}
\item[(i)] $|X_{1}|, |X_{2}|, |Y_{1}|, |Y_{2}|\geq x$;
\item[(ii)] $H$ is $c$-almost-complete; and
\item[(iii)] (a) all edges present in $H[X_1]$, $H[X_2]$, $H[Y_1]$ and $H[Y_2]$ are coloured exclusively with colour $\gamma_1$,
\item[{~}] (b) all edges present in $H[X_1,Y_1]$ and $H[X_2,Y_2]$ are coloured exclusively with colour $\gamma_2$,
\item[{~}] (c) all edges present in $H[X_1,X_2]$ and $H[Y_1,Y_2]$ are coloured exclusively with colour $\gamma_3$,
\item[{~}] (d) all edges present in $H[X_1,Y_2]$ and $H[X_2,Y_1]$ are coloured colours $\gamma_2$ or $\gamma_3$ only.
\end{itemize}
\end{definition}

\begin{figure}[!h]
\vspace{-1mm}
\centering
\includegraphics[width=64mm, page=1]{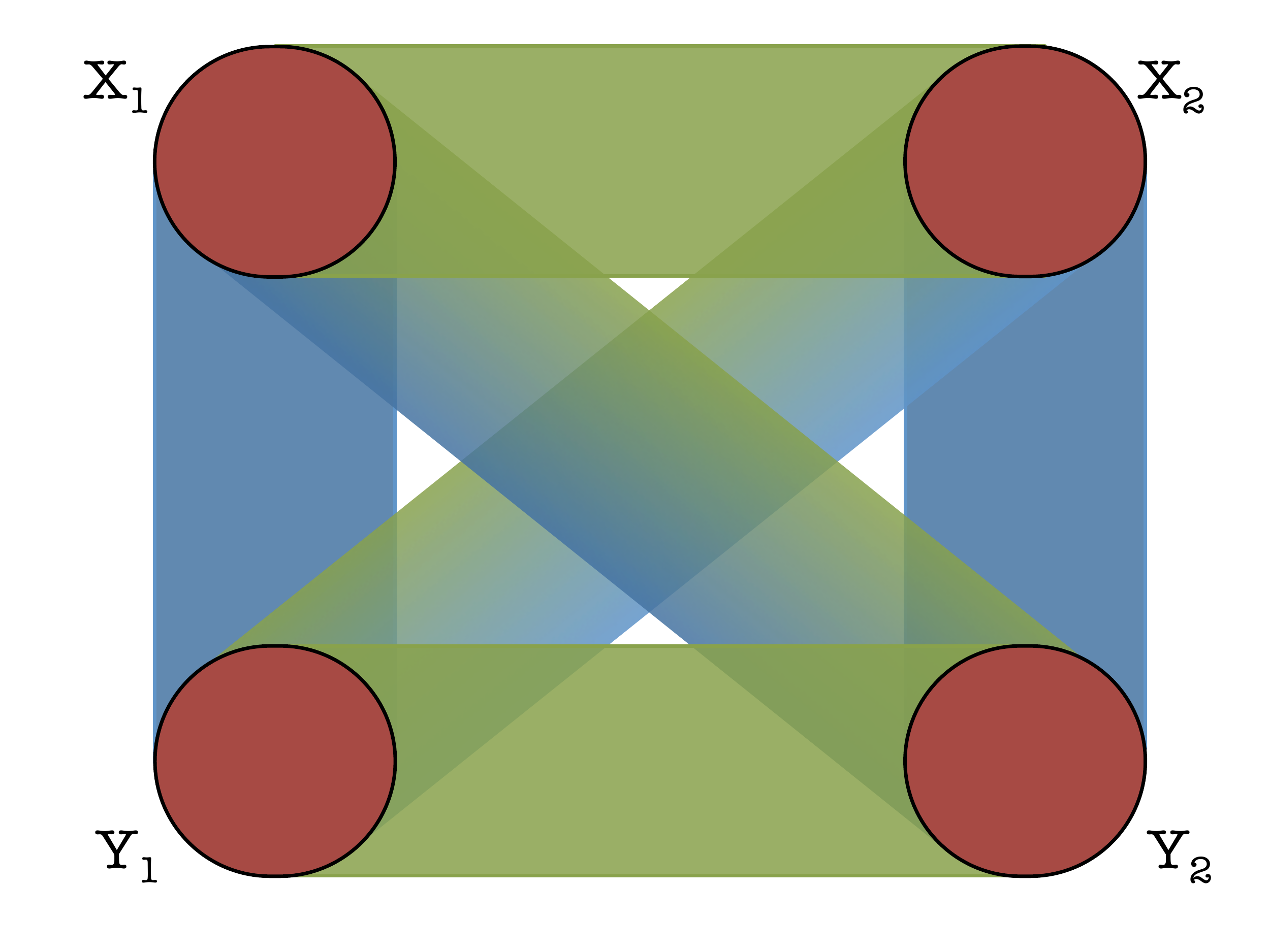}
\vspace{-3mm}\caption{$H\in \cL(x_1,c,\text{red},\text{blue},\text{green})$.}
\end{figure}

Having defined the coloured structures, we are in a position to state the main technical result, that is, the connected-matching stability result. The proof of this result follows in Section~\ref{s:stabp}.

\begin{thD}
\label{thD}
\label{th:stabnew}
For every $\alpha_{1},\alpha_{2},\alpha_{3}>0$, letting $$c=\max\{4\aI, \aI+2\aII,\aI+2\aIII\},$$
there exists $\eta_{D}=\eta_{D}(\aI,\aII,\aIII)$ and $k_{D}=k_{D}(\aI,\aII,\aIII,\eta)$ 
such that, for every $k>k_{D}$ and 
every~$\eta$ such that $0<\eta<\eta_{D}$, every 
three-multicolouring of~$G$, a $(1-\eta^4)$-complete graph on 
$$(c-\eta)k\leq K\leq(c-\half\eta)k$$ vertices  results in the graph containing at least one of the following:
\vspace{-2mm}
\begin{itemize}
\item [(i)]	a red connected-matching on at least $\aI
 k$ vertices;
\item [(ii)]  a blue odd connected-matching on at least $\aII k$ vertices;
\item [(iii)]  a green odd connected-matching on at least $\aIII k$ vertices;
\item [(iv)] subsets of vertices $W$, $X$ and $Y$ such that $X\cup Y\subseteq W$, $X\cap Y=\emptyset$, $|W|\geq(c-\eta^{1/2})k$, every $\gamma$-component of $G[W]$ is odd, $G[X]$ contains a two-coloured spanning subgraph $H$ from $\cH_{1}\cup\cH_{2}$ and $G[Y]$ contains a a two-coloured spanning subgraph $K$ from $\cH_{1}\cup\cH_{2}$, where~{\phantom{nnn}}
\vspace{-2mm}
\begin{align*}
\cH_1&=\cH\left((\aI-2\eta^{1/64})k,(\half\alpha_{*}-2\eta^{1/64})k,4\eta^2 k,\eta^{1/64},\text{red},\gamma\right),\text{ } 
\\ \cH_2&=\cH\left((\alpha_{*}-2\eta^{1/64})k,(\half\aI-2\eta^{1/64})k,4\eta^2 k,\eta^{1/64},\gamma,\text{red}\right),
\end{align*}

\vspace{-3mm}
for $(\alpha_{*},\gamma)\in\{(\aII,\text{blue}),(\aIII,\text{green})\}$;
\item[(v)] 
disjoint subsets of vertices $X$ and $Y$ such that $G[X]$ contains a two-coloured spanning subgraph~$H$ from $\cH_{2}^*\cup\cJ_b$ and $G[Y]$ contains a two-coloured spanning subgraph $K$ from $\cH_{2}^*\cup\cJ_b$, where
\vspace{-2mm}
\begin{align*}
\cH_{2}^*&=\cH\left((\beta-2\eta^{1/32})k,(\half\aI-2\eta^{1/32})k,4\eta^4 k,\eta^{1/32},\gamma,\text{red}\right),\text{ } 
\\ \cJ_b&=\cJ\left((\aI-18\eta^{1/2}), 4\eta^4 k, \text{red}, \gamma\right),
\end{align*}

\vspace{-3mm}
for $\beta=\max\{\aII,\aIII\}$ and $\gamma\in\{\text{blue}, \text{green}\}$;
\item[(vi)]  a subgraph $H$ from $\cL=\cL\left((\half\alpha+\tfrac{1}{4}\eta)k,4\eta^4k, \text{red}, \text{blue}, \text{green}\right).$
\end{itemize}

Furthermore, 
\begin{itemize}
\item[(iv)] occurs only if $\aI\leq\max\{\aII,\aIII\}\leq\alpha_{*}+24\eta^{1/4}$. Additionally,  $H$ and $K$ belong to $\cH_{1}$ if $\alpha_{*}\leq(1-\eta^{1/16})\aI$ and belong to $\cH_{2}$ if $\aI\leq(1-\eta^{1/16})\alpha_{*}$;
 \item[(v)] occurs only if $\aI\leq\beta$. Additionally, $H$ and $K$ may belong to $\cH_2^*$ only if $\aI\leq(1-\eta^{1/16})\beta$ and may belong to $\cJ$ only if $\beta<(\tfrac{3}{2}+2\eta^{1/4})\aI$; and
 \item[(vi)] occurs only if $\aI\geq\max\{\aII,\aIII\}$.
 \end{itemize} 
\end{thD}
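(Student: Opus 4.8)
The plan is to argue by contradiction: suppose $G$ contains none of the connected-matchings in (i)--(iii), and deduce that it contains one of the structures (iv)--(vi) with $\aI,\aII,\aIII$ constrained as in the ``Furthermore'' clause; if instead one of (i)--(iii) already holds we are done. The engine is the decomposition result of Figaj and \L uczak recalled earlier among the technical lemmas: applied to the $(1-\eta^4)$-complete three-coloured graph $G$, it says that after discarding a negligible set of vertices, either some colour already carries a large connected-matching of the required parity, or $V(G)$ breaks into a bounded number of blocks on which the colour classes behave like well-structured (bipartite or non-bipartite) monochromatic components, with the cross-edges between blocks monochromatic in a controlled way. To interpret each block I would use a matching/parity dichotomy: failure of (i) forces every red component to be matching-bounded below $\aI k$, so by the Gallai--Edmonds structure a \emph{large} red component carries a large red-independent set, which in the almost-complete host $G$ means a block on which red is essentially absent; while the absence of a large \emph{odd} connected-matching in blue (resp.\ green) leaves two options for a large component of that colour --- it is matching-bounded, hence again carries a large independent remainder, or it is essentially bipartite, in which case its two classes are near-equal because of the minimum-degree hypothesis. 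Tracking which option holds in each colour, and on which block, is what ultimately separates (iv), (v) and (vi).

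Next I would do the bookkeeping and the case split. Put $\beta=\max\{\aII,\aIII\}$. Because $K\geq(c-\eta)k$ with $c=\max\{4\aI,\aI+2\aII,\aI+2\aIII\}$, the matching bounds force the large monochromatic components to tile almost all of $V(G)$ with negligible waste, which essentially pins down the block sizes; the split is then governed by the sizes of $\aI$ and $\beta$, and within $\aI\leq\beta$ by whether $\beta$ lies below or above $\tfrac{3}{2}\aI$, that is, by which term of $c$ dominates. If $\aI\geq\beta$ (so $c=4\aI$), red is globally matching-bounded below $\aI k$ while the blue and green giant components each cover almost everything and, to avoid (ii) and (iii), must be near-bipartite; intersecting a blue bipartition with a green bipartition yields four blocks $X_1,X_2,Y_1,Y_2$, each red-dense internally and of size $\approx\aI k$, i.e.\ an $\cL$-structure --- this is (vi), and $\aI\geq\beta$ as required. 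If $\aI\leq\beta$, the colour $\gamma$ achieving $\beta$ has a near-bipartite giant component giving two near-equal classes of size $\approx\beta k$; analysing the red pattern inside one class and between that class and the remaining vertices, and using the absence of a red matching of size $\aI k$, forces an $\cH$-pattern --- conclusion (iv), with $\cH_1$ or $\cH_2$ according to whether the red part or the $\gamma$-part is smaller (this is where the tests $\alpha_*\leq(1-\eta^{1/16})\aI$ and $\aI\leq(1-\eta^{1/16})\alpha_*$ enter) --- unless the red part is too small to sustain the cross-structure, in which case the degenerate variant built from $\cJ_b$ appears, conclusion (v), and only near the transition $\beta<(\tfrac{3}{2}+2\eta^{1/4})\aI$. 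All the $\eta$-powers are absorbed by choosing $\eta_D$ small and $k_D$ large.

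The hard part will be this last step: extracting $\cH_1,\cH_2,\cH_2^*,\cJ_b$ and $\cL$ with exactly the stated size and sparsity parameters. The genuinely delicate points are (1) \emph{parity} --- proving that the relevant giant blue/green components really are odd-cycle-free rather than merely matching-bounded, and arranging the near-spanning set $W$ in (iv) so that \emph{every} $\gamma$-component of $G[W]$ is odd while $G[X]$ and $G[Y]$ still contain spanning subgraphs from $\cH_1\cup\cH_2$ of the prescribed sizes; (2) the borderline regimes, where two terms of $c$ nearly coincide so that two of the target structures are almost simultaneously admissible and one must decide which actually occurs and with which of $\cH_1,\cH_2,\cJ_b$; and (3) propagating the error terms --- the $\eta^4$-completeness defect, the vertices discarded by the decomposition, and the rounding in $\llangle\cdot\rrangle$ and $\langle\cdot\rangle$ --- through all of this without eroding the margins that the parameters $4\eta^2 k$, $\eta^{1/64}$ and $4\eta^4 k$ are there to provide.
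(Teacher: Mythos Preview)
Your high-level picture---decompose one colour, then do case analysis---is right, but the actual engine of the paper's proof is quite different from what you propose, and your mechanisms would leave substantial gaps.

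The paper's proof is essentially a \emph{reduction to three previously established theorems}, one per structural outcome. For (vi), when $\aI\geq\max\{\aII,\aIII\}$, the paper simply invokes Lemma~\ref{kss07a-7} (the Kohayakawa--Simonovits--Skokan stability result for three odd cycles) and is done; your proposal to ``intersect a blue bipartition with a green bipartition'' would amount to reproving that lemma from scratch. For (iv), the paper lands in a regime where almost all vertices lie in odd blue components (so any blue matching found there is automatically odd), and then applies Theorem~\ref{thBc}---the authors' own even-even-odd stability theorem from the prior papers in the series---to $G[W]$; this is itself a major result, not something one derives in passing. For (v), the paper applies Lemma~\ref{l:SkAB}, a two-colour stability lemma, separately to $G[X]$ and $G[Y]$.

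The case split that separates (iv) from (v) is not what you describe. After assuming $\max\{\aII,\aIII\}\geq\aI$, the paper applies the Figaj--\L uczak decomposition (Lemma~\ref{l:decomp}) to the \emph{blue} graph alone, producing $V=W\cup X\cup Y$ where $G_2[X\cup Y]$ is bipartite with classes $X,Y$, and $W$ collects the odd blue components. An edge-counting argument (comparing the lower bound on $e(G_2)$ from average-degree considerations with the upper bound from the decomposition) forces either $|W|>(c-\eta^{1/2})k$ (Case~F, leading to (iv)) or $|W|<(\aI+\eta^{1/2})k$ (Case~G, leading to (v) after further subcases). So the (iv)/(v) dichotomy is driven by the \emph{size of the odd-component part}, not by whether ``the red part is too small to sustain the cross-structure''. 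In Case~G the paper then invokes Lemma~\ref{l:thirteen} to bound $|X\cup W|$, splits into subcases G.i--G.iv by the relative sizes of $\aI,\aIII,w$, and in the surviving subcases applies Lemma~\ref{l:SkAB} to $G[X]$ and $G[Y]$; Case~G.iv alone then requires a further seven-way case analysis (G.a--G.g) on what matchings or $\cJ$-structures appear in $G[X]$ and $G[Y]$.

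Your proposed tools---Gallai--Edmonds for red, and a parity/matching dichotomy for blue and green---do not appear in the paper at all. The red side is handled by the Erd\H{o}s--Gallai average-degree argument (Corollary~\ref{l:eg}) and Lemma~\ref{l:thirteen}; the blue side by the single application of Lemma~\ref{l:decomp}. Without invoking Lemma~\ref{kss07a-7}, Theorem~\ref{thBc} and Lemma~\ref{l:SkAB} as black boxes, you would be reproving three substantial stability results inside the proof of Theorem~D, which is not realistic.
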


This result forms a partially strengthened analogue of the main technical result of the paper of Figaj and \L uczak~\cite{FL2008}. In that paper, Figaj and \L uczak considered a similar graph but on slightly more than $\max\{4\aI, \aI+2\aII, \aI+2\aIII\}k$ vertices and proved the existence of a connected-matching, whereas we consider a graph on slightly fewer vertices and prove the existence of either a monochromatic connected-matching or a particular structure.

\section{{Tools}}
\label{s:pre1}
In this section, we summarise results that we shall use later in our proofs beginning with some results on Hamiltonicity including Dirac's Theorem, which gives us a minimum-degree condition for Hamiltonicity:

\begin{theorem}[Dirac's Theorem~\cite{Dirac52}]
\label{dirac}
If~$G$ is a graph on~$n\geq3$ vertices such that every vertex has degree at least $\half n$, then~$G$ is Hamiltonian, that is, $G$ contains a cycle of length exactly $n$.
\end{theorem}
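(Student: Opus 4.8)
The plan is to run the classical longest-path (rotation) argument due to Dirac. First I would dispose of connectivity: if $G$ had a component on $s$ vertices, then every vertex of that component has degree at most $s-1$, so the hypothesis forces $s-1\geq\half n$, i.e.\ $s\geq\half n+1$; two disjoint sets of that size cannot both fit inside a vertex set of size $n$, so $G$ is connected. (The case $n=3$ is immediate, since the hypothesis then forces $G=K_3$, which contains a $3$-cycle, so from now on one may think of $n\geq4$.)

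Next I would pick a path $P=v_0v_1\cdots v_k$ of maximum length in $G$. By maximality, every neighbour of $v_0$ and every neighbour of $v_k$ lies on $P$ — otherwise $P$ extends to a longer path. The heart of the proof is to show that $V(P)$ carries a cycle. To this end, set $A=\{\,i\in\{0,1,\dots,k-1\}: v_0v_{i+1}\in E(G)\,\}$ and $B=\{\,i\in\{0,1,\dots,k-1\}: v_iv_k\in E(G)\,\}$. Since all neighbours of $v_0$ lie among $v_1,\dots,v_k$ and all neighbours of $v_k$ lie among $v_0,\dots,v_{k-1}$, we get $|A|=\deg(v_0)\geq\half n$ and $|B|=\deg(v_k)\geq\half n$, whence $|A|+|B|\geq n>k\geq|A\cup B|$, so there is some $i\in A\cap B$. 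For that $i$, the edges $v_0v_{i+1}$ and $v_iv_k$ together with the path edges give the cycle $C=v_0v_1\cdots v_iv_kv_{k-1}\cdots v_{i+1}v_0$, which visits exactly the vertices of $P$, so $V(C)=V(P)$.

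Finally I would show $C$ is Hamiltonian. If not, then $V(C)\subsetneq V(G)$, and since $G$ is connected there is a vertex $w\notin V(C)$ with a neighbour $v_j\in V(C)$. Traversing $C$ from $v_j$ and prepending $w$ yields a path on $|V(C)|+1=k+2$ vertices, contradicting the maximality of $P$. Hence $V(C)=V(G)$ and $G$ is Hamiltonian.

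The step I expect to be the crux is the counting argument producing the cycle $C$ on $V(P)$ — the pigeonhole inequality $|A|+|B|>k$ is exactly where the minimum-degree hypothesis $\delta(G)\geq\half n$ is used, and everything surrounding it (connectivity, path-extension, closing the cycle into all the index cases $i=0$, $0<i<k-1$, $i=k-1$) is routine bookkeeping.
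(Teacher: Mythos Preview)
Your argument is correct and is the classical longest-path proof of Dirac's theorem. Note, however, that the paper does not supply its own proof of this statement: it is quoted as a known tool with a citation to Dirac's original 1952 paper, so there is no in-paper proof to compare against.
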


Observe then that, by Dirac's Theorem, any $c$-almost-complete graph on $n$ vertices is Hamiltonian, provided that $c\leq \half n-1$. Then, since almost-completeness is a hereditary property, we may prove the following corollary:

\begin{corollary}
\label{dirac1a}
If $G$ is a $c$-almost-complete graph on $n$ vertices, then, for any integer~$m$ such that $2c+2\leq m\leq n$, $G$ contains a cycle of length $m$.
\end{corollary}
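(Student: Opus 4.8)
The plan is to reduce immediately to Dirac's Theorem by passing to an induced subgraph of the correct order. First I would fix an arbitrary subset $X\subseteq V(G)$ with $|X|=m$; this is possible since $m\le n$. As observed right after the definition of almost-completeness, being $c$-almost-complete is a hereditary property, so $G[X]$ is a $c$-almost-complete graph on exactly $m$ vertices, that is, $\delta(G[X])\ge (m-1)-c$.

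Next I would verify the hypothesis of Dirac's Theorem (Theorem~\ref{dirac}) for $G[X]$. The assumption $m\ge 2c+2$ rearranges to $c\le \tfrac12 m-1$, whence $\delta(G[X])\ge (m-1)-c\ge \tfrac12 m$, so every vertex of $G[X]$ has degree at least $\tfrac12 m$; moreover $m\ge 2c+2\ge 2$, and $m\ge 3$ whenever $c>0$ (the case $c=0$, in which $G$ is complete, being immediate for $m\ge 3$). Dirac's Theorem then yields a Hamilton cycle of $G[X]$, which is precisely a cycle of length exactly $m$ in $G$. Equivalently, one simply invokes the remark recorded after Theorem~\ref{dirac} — that any $c$-almost-complete graph is Hamiltonian as soon as $c\le\tfrac12 m-1$ — applied to $G[X]$.

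There is essentially no obstacle: the only points to keep straight are that almost-completeness passes to induced subgraphs and that the inequality $m\ge 2c+2$ is exactly what converts the almost-completeness degree bound of $G[X]$ into Dirac's $\tfrac12 m$ threshold.
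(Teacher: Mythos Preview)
Your proof is correct and follows essentially the same approach as the paper: the paper explicitly notes, just before stating the corollary, that Dirac's Theorem makes any $c$-almost-complete graph on $m$ vertices Hamiltonian once $c\le\tfrac12 m-1$, and that almost-completeness is hereditary, which is exactly your argument of restricting to an arbitrary $m$-subset $X$ and applying Dirac to $G[X]$.
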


Dirac's Theorem may be used to assert the existence of Hamiltonian paths in a given graph as follows:

\begin{corollary}
\label{dirac2}
If $G=(V,E)$ is a simple graph on~$n\geq4$ vertices such that every vertex has degree at least $\half n+1$, then any two vertices of~$G$ are joined by a Hamiltonian path.
\end{corollary}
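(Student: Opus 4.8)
The plan is to deduce the corollary from Dirac's Theorem (Theorem~\ref{dirac}) by means of a standard auxiliary-graph trick. Fix distinct vertices $u,v\in V$ and form $H$ from $G$ by adding one new vertex $z$ whose only neighbours are $u$ and $v$. Since $z$ has degree exactly $2$ in $H$, any cycle through $z$ must enter and leave it along the edges $zu$ and $zv$; hence a Hamiltonian cycle of $H$ is precisely a Hamiltonian $u$--$v$ path of $G$ together with these two edges. So it suffices to prove that $H$, a graph on $n+1\ge5$ vertices in which every vertex other than $z$ has degree at least $\half n+1=\tfrac{n+2}{2}$, is Hamiltonian.

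The graph $H$ just fails the hypothesis of Dirac's Theorem, the single obstruction being the low-degree vertex $z$, so I would run the classical longest-path argument by hand. First note that $H$ is connected, since a graph on $n$ vertices with minimum degree at least $\half n$ is connected and $z$ is joined to $u$. Let $P=p_0p_1\cdots p_\ell$ be a longest path in $H$; maximality forces every neighbour of $p_0$ and of $p_\ell$ to lie on $P$, and in particular $\ell\le n$. If neither endpoint of $P$ is $z$, then $\deg_H(p_0)+\deg_H(p_\ell)\ge n+2>\ell$, so comparing the index sets $\{\,i:p_0p_i\in E(H)\,\}$ and $\{\,i:p_\ell p_{i-1}\in E(H)\,\}$, both contained in $\{1,\dots,\ell\}$, yields an index $i$ with $p_0p_i,\,p_\ell p_{i-1}\in E(H)$; then $p_0p_ip_{i+1}\cdots p_\ell p_{i-1}p_{i-2}\cdots p_1p_0$ is a cycle on the whole vertex set $V(P)$. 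If $V(P)\ne V(H)$, connectedness of $H$ supplies an edge from $V(P)$ to a vertex outside it, and cutting the cycle there gives a path longer than $P$, a contradiction; so $V(P)=V(H)$ and $H$ is Hamiltonian.

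It remains to handle the case where the chosen longest path has $z$ as an endpoint, say $p_0=z$ with $p_1=u$. The other neighbour $v$ of $z$ must occur on $P$ (otherwise prepending $v$ lengthens $P$), say $v=p_j$ with $2\le j\le\ell$. If $j=\ell$ then $zp_1\cdots p_\ell z$ is a cycle on $V(P)$ and we finish exactly as above. If $j<\ell$, then $Q=p_{j-1}p_{j-2}\cdots p_1\,z\,p_jp_{j+1}\cdots p_\ell$ is again a path (using the edges $p_1z$ and $zp_j$) on the same vertex set $V(P)$, hence also a longest path, but now with both endpoints $p_{j-1}$ and $p_\ell$ different from $z$; this reduces to the previous case.

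The only genuinely delicate point is this last reduction: one must verify that a longest path with an endpoint at $z$ can always be re-routed into a longest path whose two endpoints both have large degree, after which the rotation-type pigeonhole step applies verbatim. Everything else is routine bookkeeping with Dirac's Theorem and elementary connectivity.
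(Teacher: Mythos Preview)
Your proof is correct. The paper states this result as a corollary of Dirac's Theorem without giving any argument, so there is nothing to compare against in detail; your auxiliary-vertex construction (add $z$ adjacent only to $u$ and $v$, then find a Hamiltonian cycle in $H$) is exactly the standard way to deduce Hamilton-connectedness from a Dirac-type condition, and you carry out the longest-path/rotation argument cleanly, including the re-routing step when $z$ happens to be an endpoint of the chosen longest path.

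One remark that may shorten your write-up: once you have formed $H$ on $n+1$ vertices with degree sequence $2,\,d_2,\dots,d_{n+1}$ where each $d_i\ge \tfrac{n}{2}+1\ge 3$ for $n\ge 4$, the hypothesis of Chv\'atal's theorem (Theorem~\ref{chv}) is vacuously satisfied (since $d_1=2>1$ and $d_k>k$ for all $k\ge 2$ in the relevant range), so $H$ is Hamiltonian immediately. In the paper's ordering Chv\'atal's theorem appears after this corollary, so your self-contained rotation argument is the appropriate choice here; but it is worth knowing that the ``delicate'' endpoint-reduction you flag can be bypassed entirely.
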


For balanced bipartite graphs, we make use of the following result of Moon and Moser:

\begin{theorem}[\cite{moonmoser}]
\label{moonmoser}
If~$G=G[X,Y]$ is a simple bipartite graph on~$n$ vertices such that $|X|=|Y|=\half n$ and $d(x)+d(y)\geq \half n+1$ for every $xy\notin E(G)$, then~$G$ is Hamiltonian. 
\end{theorem}

Observe that, by the above, any $c$-almost-complete balanced bipartite graph on $n$ vertices is Hamiltonian, provided that $c\leq \tfrac{1}{4}n-\half$. Then, since almost-completeness is a hereditary property, we may prove the following corollary:

\begin{corollary}
\label{moonmoser2}
If $G=G[X,Y]$ is $c$-almost-complete bipartite graph, then, for any even integer $m$ such that $4c+2\leq m\leq 2\min\{|X|,|Y|\}$, $G$ contains a cycle on $m$ vertices.
\end{corollary}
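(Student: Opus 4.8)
The plan is to copy the pattern by which Corollary~\ref{dirac1a} was deduced from Dirac's Theorem, replacing Dirac's Theorem by the Moon--Moser Theorem (Theorem~\ref{moonmoser}) and using that almost-completeness of bipartite graphs is a hereditary property.

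First I would isolate the balanced case, which is exactly the observation recorded just after Theorem~\ref{moonmoser}: if $H=H[A,B]$ is $c$-almost-complete with $|A|=|B|=\half n$, then every vertex of $A$ and every vertex of $B$ has degree at least $\half n-c$, so $d(a)+d(b)\geq n-2c$ for every non-edge $ab$ with $a\in A$, $b\in B$; hence $H$ is Hamiltonian as soon as $n-2c\geq\half n+1$, i.e. as soon as $c\leq\tfrac{1}{4}n-\half$, equivalently $n\geq 4c+2$.

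Now, given an even integer $m$ with $4c+2\leq m\leq 2\min\{|X|,|Y|\}$, I would pick arbitrary subsets $X_1\subseteq X$ and $Y_1\subseteq Y$ with $|X_1|=|Y_1|=\half m$; this is legitimate because $m$ is even, so $\half m$ is an integer, and $\half m\leq\min\{|X|,|Y|\}$. Since the bipartite graph $G[X,Y]$ is $c$-almost-complete and this property passes to induced sub-bipartite-graphs (as noted in Section~\ref{ram:defn}), the graph $G[X_1,Y_1]$ is $c$-almost-complete, has $m$ vertices, and is balanced; and $m\geq 4c+2$, so the balanced case above produces a Hamilton cycle of $G[X_1,Y_1]$, which is a cycle on exactly $m$ vertices of $G$.

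There is no real obstacle here: the whole proof is the reduction to a balanced bipartite subgraph of the right size together with the elementary equivalence $m\geq 4c+2\iff c\leq\tfrac{1}{4}m-\half$ needed to invoke Theorem~\ref{moonmoser}. The only mild subtlety is the degenerate range of very small $m$ (where $m=2$ admits no cycle in a simple graph), which is harmless in practice since in every application of this corollary $c$ is large and hence $m\geq 4c+2\geq 4$.
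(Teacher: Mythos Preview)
Your proof is correct and is exactly the argument the paper intends: it records the same observation (a $c$-almost-complete balanced bipartite graph on $n$ vertices is Hamiltonian once $c\leq\tfrac14 n-\tfrac12$, via Theorem~\ref{moonmoser}) and then passes to an arbitrary balanced sub-bipartite-graph of size $m$ using the hereditary property of almost-completeness, just as Corollary~\ref{dirac1a} was deduced from Dirac's Theorem.
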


For bipartite graphs which are not balanced, we make use of the Lemma below:
\begin{lemma}
\label{bp-dir}
If $G=G[X_1,X_2]$ is a simple bipartite graph on $n\geq 4$ vertices such that $|X_1|>|X_2|+1$ and every vertex in~$X_2$ has degree at least $\half n +1$, then any two vertices $x_1,x_2$ in $X_1$ such that $d(x_2)\geq 2$ are joined by a path which visits every vertex of~$X_2$.
\end{lemma}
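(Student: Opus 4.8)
The plan is to mimic the standard proof of Dirac-type results for Hamiltonian paths, adapted to the unbalanced bipartite setting. Write $|X_1|=p$, $|X_2|=q$, so $n=p+q$ and by hypothesis $p>q+1$, i.e. $p\geq q+2$. Fix the two target vertices $x_1,x_2\in X_1$; the only condition on them is $d(x_2)\geq 2$ (of course $d(x_1)\geq 1$ is automatic since otherwise there is nothing to connect, but in fact we will need $d(x_1)\geq 1$ and it is harmless to assume it). We want a path from $x_1$ to $x_2$ passing through all of $X_2$; such a path necessarily has the form $x_1,\,w_1,\,z_1,\,w_2,\,z_2,\dots,w_{q-1},\,z_{q-1},\,w_q,\,x_2$ where $w_1,\dots,w_q$ is an enumeration of $X_2$ and $z_1,\dots,z_{q-1}$ are distinct vertices of $X_1\setminus\{x_1,x_2\}$ — this uses exactly $q+1$ vertices of $X_1$, which is possible since $p\geq q+2 \geq q+1$, actually we only need $p\geq q+1$ here but the strict inequality $p>q+1$ will matter below.

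\textbf{Key steps.} First I would pick a subset $Z\subseteq X_1\setminus\{x_1,x_2\}$ of size exactly $q-1$ and set $X_1' = \{x_1,x_2\}\cup Z$, so that $|X_1'|=q+1=|X_2|+1$; this requires $p-2\geq q-1$, i.e. $p\geq q+1$, which holds. Consider the bipartite graph $G'=G[X_1',X_2]$. It has $2q+1$ vertices, is "almost balanced" (sides differ by one), and every vertex of $X_2$ still has degree at least $\half n+1 = \half(p+q)+1 \geq \half(2q+1)+1$ \emph{within the larger graph}, but restricting to $X_1'$ may destroy this. So the first real issue is that naively passing to a subset kills the degree condition. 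The standard fix is \emph{not} to restrict blindly but to add a dummy vertex: form $G''$ on vertex classes $X_1\cup\{*\}$ (size $p+1$) and $X_2$, or better, reduce $X_1$ instead. The cleaner route: add to $X_2$ a set of $p-q-1$ new vertices, each joined to all of $X_1$, obtaining a balanced bipartite graph $\widehat G$ on $2p$ vertices with parts $X_1$ and $\widehat{X_2}=X_2\cup D$, $|D|=p-q-1\geq 1$. Every vertex of $\widehat{X_2}$: the new vertices have degree $p = \half(2p)$; the old vertices of $X_2$ have degree $\geq \half n+1 = \half(p+q)+1$; we need this to be useful relative to $2p$.

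\textbf{Making the degree bookkeeping work.} Rather than forcing Moon–Moser directly, I would instead prove the existence of a Hamiltonian path in $\widehat G$ between $x_1$ and $x_2$ by a Moon–Moser / Chvátal–Erdős–style argument: first use Corollary~\ref{dirac2} (Ore–Dirac for Hamiltonian paths) on a suitable auxiliary graph. Concretely, the classical trick to turn "Hamiltonian path between $u,v$" into "Hamiltonian cycle" is to add a new vertex $s$ adjacent only to $u$ and $v$; a Hamiltonian cycle through $s$ in $\widehat G+s$ corresponds exactly to a Hamiltonian $u$–$v$ path in $\widehat G$. One then checks the Moon–Moser pair-degree condition $d(a)+d(b)\geq \half|V|+1$ for non-adjacent pairs in $\widehat G+s$ on its (now unbalanced by one, because of $s$) vertex classes; the pairs involving $s$ are the delicate ones, which is precisely why we assumed $d(x_2)\geq 2$ (so $x_2$ has a neighbour besides the one forced in the path) and why the strict inequality $p>q+1$ gives the slack $|D|\geq 1$. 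I expect this degree verification for the pairs $\{s,\,w\}$ with $w\in \widehat{X_2}$ to be the main obstacle: one must show every such $w$ has $d_{\widehat G}(w)\geq \half(2p)+\text{(constant)}$, i.e. that the padding to balance the graph did not dilute the degree condition below threshold — this uses $\half n+1 = \half(p+q)+1$ together with $p\leq$ (something), and it is exactly here that one needs $p$ not too much larger than $q$; if that failed the statement would be false, so the hypothesis $|X_1|>|X_2|+1$ must be combined with a hidden upper bound coming from $\delta(X_2)\geq\half n+1$ forcing $p\leq 2(\half n+1)=n+2$, hence $p\leq q+p$... in fact $d(w)\leq p$ forces $\half(p+q)+1\leq p$, i.e. $p\geq q+2$, consistent. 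So the degrees land: $d_{\widehat G}(w)\geq \half(p+q)+1 \geq \half\cdot\half(p+p)+1$... this needs care and is the crux.

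\textbf{Finishing.} Once $\widehat G$ (or $\widehat G+s$) is shown to have the required Hamiltonian path, restrict it back: the Hamiltonian $x_1$–$x_2$ path in $\widehat G$ visits every vertex of $\widehat{X_2}=X_2\cup D$; deleting the dummy vertices $D$ (each of which lies between two vertices of $X_1$ on the path) splits the path into $|D|+1$ subpaths, the first from $x_1$ and the last to $x_2$; but that is \emph{not} a single path. So the honest approach is the reverse one: \emph{shrink} $X_1$ to $X_1'$ of size $q+1$ as in my first paragraph, but do so greedily while preserving, for each $w\in X_2$, at least $2$ neighbours in $X_1'$ — possible because $\delta_{X_2}\geq \half n+1$ is large and we only discard $p-q-1$ vertices. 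On $G'=G[X_1',X_2]$ with $|X_1'|=|X_2|+1$, apply Lemma~\ref{bp-dir}'s own conclusion inductively? No — better: $G'$ has parts of sizes $q+1$ and $q$; attach dummy vertex $s$ adjacent to $x_1,x_2$ only, to the smaller-by-one side logic, get a balanced-ish graph on $2q+2$ vertices and invoke Corollary~\ref{dirac2} or Moon–Moser. I would present it this way: pass to $X_1'$ preserving $\delta_{X_2\to X_1'}\geq 2$ and a near-Moon–Moser degree bound, add the connector vertex $s$, verify the pair-degree hypothesis (the one genuinely computational step), extract the Hamiltonian cycle through $s$, delete $s$ to get the Hamiltonian $x_1$–$x_2$ path of $G'$, and observe it visits all of $X_2$ by construction — this is the desired path in $G$. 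The single hardest point remains checking that discarding $p-q-1$ vertices from $X_1$ can be done while keeping every $w\in X_2$ with $\geq 2$ surviving neighbours; a counting/defect-Hall argument handles it, using that each $w$ had $\geq\half n+1$ neighbours and $p-q-1 = p - q - 1 \leq \half n - 1 < \half n + 1 - 2$.
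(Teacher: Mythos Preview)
Your approach is far more elaborate than needed and contains a genuine gap. The paper's proof is a two-line greedy argument you missed: since every $w\in X_2$ has at most $|X_1|-(\half n+1)=\half(|X_1|-|X_2|)-1$ non-neighbours in $X_1$, any two vertices of $X_2$ have at least $|X_1|-(|X_1|-|X_2|-2)=|X_2|+2$ common neighbours in $X_1$, hence at least $|X_2|$ common neighbours in $X_1\setminus\{x_1,x_2\}$. Now order $X_2$ as $w_1,\dots,w_q$ with $w_1\in N(x_1)$ and $w_q\in N(x_2)$ (the hypothesis $d(x_2)\geq 2$ lets you avoid a collision when $q\geq 2$), and greedily choose $z_i\in N(w_i)\cap N(w_{i+1})\setminus\{x_1,x_2,z_1,\dots,z_{i-1}\}$ for $i=1,\dots,q-1$: at each step at least $q-(i-1)\geq 2$ choices remain, so the path $x_1 w_1 z_1 w_2 \cdots z_{q-1} w_q x_2$ exists.

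Your route via shrinking $X_1$, padding with dummies, and invoking Moon--Moser is both unnecessary and broken. The inequality you lean on at the end, $p-q-1\leq\half n-1$, is equivalent to $p\leq 3q$, which is \emph{not} a hypothesis of the lemma; nothing prevents $|X_1|$ from being arbitrarily large relative to $|X_2|$, so your defect-Hall step fails as written. Even if you could arrange that every $w\in X_2$ keeps two neighbours in $X_1'$, minimum degree $2$ is nowhere near the pair-degree threshold required for Moon--Moser or Corollary~\ref{dirac2}, so the next step is also incomplete. The common-neighbour count above is the missing idea: once you see that any two vertices of $X_2$ share at least $|X_2|$ neighbours away from $x_1,x_2$, the whole machinery of balancing, dummy vertices, and Hamiltonian theorems becomes irrelevant.
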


\begin{proof}
Observe that $\half n+1=\half|X_1|+\half|X_2|+1=|X_1|-(\half|X_1|-\half|X_2|-1)$ so any pair of vertices in~$X_2$ have at least $|X_1|-(|X_1|-|X_2|-2)$ common neighbours and, thus, at least $|X_1|-(|X_1|-|X_2|)\geq |X_2| $ common neighbours distinct from~$x_1,x_2$. 
Then, ordering the vertices of~$X_2$ such that the first vertex is a neighbour of~$x_1$ and the last is a neighbour of~$x_2$, greedily construct the required path from~$x_1$ to~$x_2$.
\end{proof} 

\begin{corollary}
\label{bp-dir2}
If $G=G[X_1,X_2]$ is a simple bipartite graph on $n\geq 5$ vertices such that 
$|X_1|>|X_2|$ 
and every vertex in~$X_2$ has degree at least $\half(n +1)$, then any two vertices $x_1\in X_1$ and $x_2\in X_2$ such that $d(x_1),d(x_2)\geq 2$ are joined by a path which visits every vertex of~$X_2$.
\end{corollary}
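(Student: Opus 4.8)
The plan is to adapt the greedy construction from the proof of Lemma~\ref{bp-dir} directly, rather than invoking that lemma as a black box. A path from $x_1$ to $x_2$ visiting every vertex of $X_2$ necessarily alternates between the two sides, and since it starts in $X_1$ and ends in $X_2$ it has the form $x_1=z_1,v_1,z_2,v_2,\dots,z_m,v_m=x_2$, where $m=|X_2|$, the sequence $(v_1,\dots,v_m)$ is an ordering of $X_2$ with $v_m=x_2$, and $z_1,\dots,z_m$ are distinct vertices of $X_1$ with $z_i$ adjacent to both $v_{i-1}$ and $v_i$ (and $z_1=x_1$ adjacent to $v_1$). So it suffices to produce such an ordering and such a choice of the $z_i$.

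First I would record two preliminary observations. Since every neighbour of $x_1$ lies in $X_2$ and $d(x_1)\geq 2$, we have $m\geq 2$ and $x_1$ has a neighbour $v_1\in X_2\setminus\{x_2\}$; fix such a $v_1$, let $v_2,\dots,v_{m-1}$ be an arbitrary ordering of $X_2\setminus\{x_2,v_1\}$, and set $v_m:=x_2$. Next, for any two vertices $u,w\in X_2$ both $N(u)$ and $N(w)$ are contained in $X_1$, so
$$|N(u)\cap N(w)|\ \geq\ d(u)+d(w)-|X_1|\ \geq\ (n+1)-|X_1|\ =\ |X_2|+1\ =\ m+1;$$
in particular $u$ and $w$ have at least $m$ common neighbours distinct from $x_1$.

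Now put $z_1:=x_1$, and for $i=2,\dots,m$ in turn choose $z_i$ to be a common neighbour of $v_{i-1}$ and $v_i$ lying in $X_1\setminus\{x_1,z_2,\dots,z_{i-1}\}$. This is possible because $v_{i-1}$ and $v_i$ have at least $m$ common neighbours other than $x_1$, of which at most $i-2\leq m-2$ have already been used, leaving at least two available. Since the $v_i$ exhaust $X_2$ and the $z_i$ are chosen distinct (and the two colour classes are disjoint), the resulting walk $x_1=z_1,v_1,z_2,\dots,z_m,v_m=x_2$ is a path from $x_1$ to $x_2$ visiting every vertex of $X_2$, as required.

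I do not expect a serious obstacle, but one point is worth flagging. The tempting alternative is to delete $x_2$, apply Lemma~\ref{bp-dir} in $G-x_2$ to join $x_1$ to some neighbour $x_1'$ of $x_2$ in $X_1$ by a path covering $X_2\setminus\{x_2\}$, and then append the edge $x_1'x_2$. This works in most cases but can fail precisely when $x_1\sim x_2$, $d(x_1)=2$, and every neighbour of $x_2$ in $X_1\setminus\{x_1\}$ has just one neighbour in $X_2\setminus\{x_2\}$, so that no admissible $x_1'$ of degree at least $2$ survives in $G-x_2$. Re-running the greedy argument with $x_1$ excluded from the interior vertices throughout avoids this; the only thing to verify is that the common-neighbourhood bound is $|X_2|+1$ rather than merely $|X_2|$, which is exactly what leaves room to avoid $x_1$ at every step.
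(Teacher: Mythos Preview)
Your proof is correct and takes essentially the same approach as the paper: the paper states this as a corollary of Lemma~\ref{bp-dir} without giving a separate proof, and your argument is precisely the natural adaptation of that lemma's greedy construction (ordering $X_2$ so the first vertex is a neighbour of $x_1$ and the last is $x_2$ itself, then picking common neighbours in $X_1$ one by one). The common-neighbour count $|X_2|+1$ you obtain is exactly what is needed to exclude $x_1$ from the interior at every step, matching how the lemma's count $|X_2|+2$ excludes both endpoints; your final paragraph correctly identifies why a black-box reduction to the lemma is more awkward than simply rerunning the argument.
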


For graphs with %many vertices of large degree but 
a few vertices of small degree, we make use of the following result of Chv\'atal:
\begin{theorem}[\cite{Chv72}]
\label{chv}
If $G$ is a simple graph on $n\geq3$ vertices with degree sequence $d_1\leq d_2 \leq \dots \leq d_n$ such that $$d_k\leq k \leq \frac{n}{2} \implies d_{n-k} \geq n-k,$$ then~$G$ is Hamiltonian.
\end{theorem}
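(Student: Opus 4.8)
The plan is to run the classical extremal / path-rotation argument, by contradiction. Suppose some simple graph on $n\ge 3$ vertices satisfies the stated degree condition but is not Hamiltonian; among all non-Hamiltonian graphs on the same vertex set that contain it, pass to an edge-maximal one, which I will again call $G$, so that adding any missing edge of $G$ creates a Hamilton cycle. First I would check that the hypothesis survives this reduction: adding edges only raises every entry of the sorted degree sequence, so if $d_k'\le k\le n/2$ in the larger graph then $d_k\le d_k'\le k$ in $G$, whence $d_{n-k}\ge n-k$ and therefore $d_{n-k}'\ge d_{n-k}\ge n-k$; also $G\ne K_n$ since $K_n$ with $n\ge 3$ is Hamiltonian.

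Next I would choose non-adjacent vertices $u,v$ with $d(u)+d(v)$ maximum, and set $h=d(u)\le d(v)$ (WLOG). Since $G+uv$ is Hamiltonian via a cycle through $uv$, deleting $uv$ gives a Hamilton path $v_1v_2\cdots v_n$ of $G$ with $v_1=u$, $v_n=v$ (in particular $h\ge 1$). Put $S=\{j:v_1v_j\in E(G)\}$ and $T=\{j:v_jv_n\in E(G)\}$, so $|S|=d(u)$, $|T|=d(v)$, and $S,\,T+1\subseteq\{2,\dots,n\}$ where $T+1=\{j+1:j\in T\}$. The key observation is that $S$ and $T+1$ are disjoint: if $j$ lay in both, then $v_1v_j,v_{j-1}v_n\in E(G)$ and $v_1v_2\cdots v_{j-1}v_nv_{n-1}\cdots v_jv_1$ would be a Hamilton cycle of $G$ (the boundary indices $j=2,n$ cause no problem since $v_1v_n=uv\notin E(G)$). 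Hence $d(u)+d(v)=|S|+|T+1|\le n-1$, so $h\le (n-1)/2$.

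Then I would exploit the maximality of $d(u)+d(v)$ twice. For each $j\in S$, reversing the initial segment gives a Hamilton path $v_{j-1}v_{j-2}\cdots v_1v_jv_{j+1}\cdots v_n$ from $v_{j-1}$ to $v$; disjointness of $S$ and $T+1$ forces $v_{j-1}v_n\notin E(G)$, so $\{v_{j-1},v\}$ is a non-adjacent pair and maximality yields $d(v_{j-1})\le d(u)=h$. As $j$ runs over $S$ these are $h$ distinct vertices, so $d_h\le h$. On the other hand, every non-neighbour $w$ of $u$ is in a non-adjacent pair with $u$, so $d(w)\le d(v)\le n-1-h$; since also $d(u)=h\le n-1-h$, the vertex $u$ together with its $n-1-h$ non-neighbours gives $n-h$ vertices of degree $\le n-h-1$, hence $d_{n-h}\le n-h-1$. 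Applying the hypothesis with $k=h$ (legitimate since $d_h\le h\le (n-1)/2\le n/2$) forces $d_{n-h}\ge n-h$, contradicting $d_{n-h}\le n-h-1$.

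There is no deep obstacle here — the argument is short — so the "hard part" is really the bookkeeping: one must verify that the single index $k=h=d(u)$ simultaneously certifies $d_k\le k$ (from the rotated Hamilton paths ending at $v$) and $d_{n-k}<n-k$ (from $u$ and its non-neighbourhood), and that the bound $d(u)\le (n-1)/2$ is precisely what lets $u$'s own degree be counted among the second batch of $n-h$ low-degree vertices. Care is also needed to confirm the degree condition is preserved under the edge-maximal reduction and to treat the extreme indices in the Hamilton-cycle construction, but both are routine.
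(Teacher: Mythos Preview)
Your argument is the standard proof of Chv\'atal's theorem and is correct; the bookkeeping you flag (preservation of the degree condition under edge-maximalisation, the bound $h\le(n-1)/2$ letting $u$ count among the $n-h$ low-degree vertices, and the treatment of the endpoints in the crossing argument) is handled properly.

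As for comparison: the paper does not prove this statement at all. It is quoted as a tool from the literature with a citation to Chv\'atal's original paper~\cite{Chv72}, and is then applied (in Parts~I.A, I.C, II.B and III) in the specific situation where a graph on $m$ vertices has most vertices of degree at least $m-O(\eta^{1/c}n)$ and only $O(\eta^{1/c}n)$ vertices of smaller but still non-trivial degree, which is precisely the regime in which Dirac's theorem fails but Chv\'atal's degree-sequence condition holds. So there is no ``paper's own proof'' to compare against; what you have supplied is essentially Chv\'atal's original argument, and it is fine.
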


We also make extensive use of the theorem of Erd\H{o}s and Gallai:

\begin{theorem}[\cite{ErdGall59}]
\label{th:eg}
Any graph on~$K$ vertices with at least~$\frac{1}{2}(m-1)(K-1)+1$ edges, where $3\leq m \leq K$, contains a cycle of length at least~$m$.
\end{theorem}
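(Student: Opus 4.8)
The plan is to prove the equivalent contrapositive: if $G$ is a graph on $n$ vertices containing no cycle of length $\ge m$ (where $3\le m$), then $e(G)\le\tfrac12(m-1)(n-1)$. I would argue by strong induction on $n$. When $n\le m-1$ this is immediate, since $e(G)\le\binom n2=\tfrac12 n(n-1)\le\tfrac12(m-1)(n-1)$, so suppose $n\ge m$.

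If $G$ is not $2$-connected, the block decomposition does the work: writing $B_1,\dots,B_s$ for the blocks of $G$, each $B_i$ has fewer than $n$ vertices and, being a subgraph of $G$, has no cycle of length $\ge m$, so the induction hypothesis gives $e(B_i)\le\tfrac12(m-1)(|B_i|-1)$; summing and using $\sum_i(|B_i|-1)\le n-1$ yields the bound. So it remains to treat $G$ $2$-connected with $n\ge m$, and here I would suppose for contradiction that $e(G)\ge\tfrac12(m-1)(n-1)+1$. Repeatedly deleting a vertex of degree at most $\lfloor\tfrac{m-1}2\rfloor$ preserves the strict inequality $e\ge\tfrac12(m-1)(n'-1)+1$ relative to the current order $n'$ (each such deletion removes at most $\lfloor\tfrac{m-1}2\rfloor\le\tfrac{m-1}2$ edges), so this process terminates at a non-empty subgraph $H\subseteq G$ with $\delta(H)\ge\lfloor\tfrac{m-1}2\rfloor+1$, and since $\binom{|H|}2\ge e(H)\ge\tfrac12(m-1)(|H|-1)+1$ one gets $|H|\ge m$. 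In particular $2\delta(H)\ge m$.

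Now split on whether $H$ is $2$-connected. If it is, the classical theorem of Dirac that every $2$-connected graph on $p$ vertices with minimum degree $\delta$ contains a cycle of length at least $\min\{p,2\delta\}$ produces a cycle of length $\ge\min\{|H|,2\delta(H)\}\ge m$ in $H\subseteq G$, contradicting the hypothesis on $G$. If $H$ is not $2$-connected, its blocks $C_1,\dots,C_t$ satisfy $\sum_j e(C_j)=e(H)>\tfrac12(m-1)(|H|-1)=\tfrac12(m-1)\sum_j(|C_j|-1)$, so some block $C_j$ has $e(C_j)>\tfrac12(m-1)(|C_j|-1)$; since there are at least two blocks, $|C_j|<|H|\le n$, and applying the induction hypothesis contrapositively to $C_j$ (whether $|C_j|\le m-1$, handled by the base estimate, or $|C_j|\ge m$) shows $C_j$, hence $G$, contains a cycle of length $\ge m$ — again a contradiction. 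This closes the induction.

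The only non-elementary ingredient is Dirac's long-cycle bound for $2$-connected graphs; if a fully self-contained argument is wanted, the one step to establish is exactly that bound, via the standard longest-cycle argument (take a longest cycle $C$; if $|C|<2\delta$ and $|C|<n$, use the degree condition together with $2$-connectivity to reroute a segment of $C$ through a vertex outside $C$ and obtain a strictly longer cycle). I expect that longest-cycle rerouting to be the only real obstacle — everything else is bookkeeping in the induction and the degree-deletion reduction.
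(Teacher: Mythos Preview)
Your argument is essentially correct and is one of the standard routes to the Erd\H{o}s--Gallai theorem: reduce to the $2$-connected case via block decomposition, then peel off low-degree vertices to force $\delta(H)\ge\lfloor(m-1)/2\rfloor+1$ and $|H|\ge m$, and finish with Dirac's circumference bound $c(H)\ge\min\{|H|,2\delta(H)\}$ for $2$-connected $H$. The bookkeeping you describe (the edge inequality is preserved under each deletion; the process cannot empty the graph since $e(H)>\tfrac12(m-1)(|H|-1)$ forces $|H|\ge m$; if $H$ fails to be $2$-connected, pass to a dense block and invoke the induction hypothesis) is sound.

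Note, however, that the paper does not prove this statement at all: it is quoted as a classical result of Erd\H{o}s and Gallai and used as a black box (primarily to derive Corollary~\ref{l:eg} on connected-matchings and to feed into the decomposition lemma of Figaj and \L uczak). So there is nothing to compare your approach against in the paper itself. If you want a proof for a self-contained write-up, what you have sketched is perfectly adequate; the only external input is Dirac's long-cycle theorem for $2$-connected graphs, and the longest-cycle rerouting argument you outline for that is the standard one.
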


Observing that a cycle on $m$ vertices contains a connected-matching on at least~$m-1$ vertices, the following is an immediate consequence of the above.

\begin{corollary}
\label{l:eg}
For any graph~$G$ on~$K$ vertices and any~$m$ such that $3 \leq m \leq K$, if the average degree $d(G)$ is at least $m$, then~$G$ contains a connected-matching on at least~$m$ vertices.
\end{corollary}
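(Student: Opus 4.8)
The plan is to deduce this directly from the Erd\H{o}s--Gallai Theorem (Theorem~\ref{th:eg}) together with the stated observation that a cycle on $\ell$ vertices contains a connected-matching on at least $\ell-1$ vertices; the only point requiring a little care is the choice of parameter. Since $G$ is a simple graph on $K$ vertices, its average degree cannot exceed $K-1$, so the hypothesis $d(G)\geq m$ already forces $m\leq K-1$, i.e. $3\leq m+1\leq K$. In particular Theorem~\ref{th:eg} may legitimately be applied with parameter $m+1$ in place of $m$, and no separate treatment of the boundary case $m=K$ is needed.

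First I would bound the number of edges: from $d(G)=2e(G)/K$ we get $e(G)=\tfrac12 d(G)K\geq\tfrac12 mK$, and since $m\geq3>2$ this gives $e(G)\geq\tfrac12 m(K-1)+\tfrac12 m\geq\tfrac12\big((m+1)-1\big)(K-1)+1$. Applying Theorem~\ref{th:eg} with $m+1$ then produces a cycle $C$ in $G$ of length $\ell\geq m+1$. Finally, $C$ is connected, so any maximum matching of $C$ consists of pairwise disjoint edges all lying in a single component of $G$ and covers $2\lfloor\ell/2\rfloor\geq\ell-1\geq m$ vertices; this is the desired connected-matching on at least $m$ vertices. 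There is no substantive obstacle here --- the proof is a one-line edge count followed by a citation --- the single thing one must get right is to invoke Erd\H{o}s--Gallai at level $m+1$ rather than $m$, because a cycle of length exactly $m$ with $m$ odd would only guarantee a connected-matching on $m-1$ vertices.
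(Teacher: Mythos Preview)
Your proof is correct and follows the same route the paper intends: deduce the corollary directly from the Erd\H{o}s--Gallai theorem and then read off a matching from the resulting cycle. The paper merely records the observation that a cycle on $m$ vertices yields a connected-matching on at least $m-1$ vertices and declares the corollary ``an immediate consequence''; you have supplied the missing detail---namely the shift to parameter $m+1$ (legitimate because $d(G)\leq K-1$ forces $m+1\leq K$)---that makes this observation actually deliver $m$ rather than $m-1$ vertices.
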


The following decomposition lemma~of Figaj and \L uczak~\cite{FL2008} also follows from the theorem of Erd\H{o}s and Gallai and is crucial in establishing the structure of a graph not containing large connected-matchings of the appropriate parities:

\begin{lemma}[{\cite[Lemma~9]{FL2008}}]
\label{l:decomp}
For any graph~$G$ on~$K$ vertices and any~$m$ such that $3\leq m \leq K$, if no odd component of~$G$ contains a matching on at least~$m$ vertices, then there exists a partition $V=V'\cup V''$ such that
\begin{itemize}
\item [(i)] $G[V']$ is bipartite;
\item [(ii)] every component of $G''=G[V'']$ is odd;
\item [(iii)] $G[V'']$ has at most $\half m |V(G'')|$ edges; and
\item [(iv)] there are no edges in $G[V',V'']$.
\end{itemize}
\end{lemma}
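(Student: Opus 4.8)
The plan is to iterate the Erd\H{o}s--Gallai theorem component by component. First I would restate what needs to be done: we must split $V(G)$ into a part $V'$ on which $G$ is bipartite and a part $V''$ on which every component is odd, with no edges between the two parts and with $e(G[V''])$ bounded by $\tfrac12 m|V''|$. The ``no edges between the parts'' requirement forces $V'$ and $V''$ to be unions of connected components of $G$, so the natural strategy is to classify each connected component of $G$ as either bipartite or non-bipartite, put all the bipartite ones into $V'$ and all the non-bipartite (equivalently, odd) ones into $V''$. Conditions (i), (ii) and (iv) are then immediate: $G[V']$ is a disjoint union of bipartite graphs hence bipartite; every component of $G[V'']$ is a non-bipartite component of $G$, hence contains an odd cycle, hence is odd; and there are no edges between distinct components of $G$, in particular none between $V'$ and $V''$.

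The content is therefore condition (iii). Fix a non-bipartite component $C$ of $G$, say on $k_C = |V(C)|$ vertices; I claim $e(C) \le \tfrac12 m k_C$. Suppose not, so $e(C) > \tfrac12 m k_C \ge \tfrac12 (m-1)(k_C - 1) + 1$ once $k_C \ge m$ (and if $k_C < m$ the trivial bound $e(C) \le \binom{k_C}{2} < \tfrac12 m k_C$ already does the job, so we may assume $k_C \ge m$). Wait --- I should be careful: I want to apply Theorem~\ref{th:eg} to get a long cycle, but a long cycle need not be odd. The correct move is to observe that $C$ is a \emph{non-bipartite} connected graph, so it contains an odd cycle $O$; and a connected graph with many edges contains a cycle through any prescribed... no. Let me instead argue directly: by Theorem~\ref{th:eg} applied to $C$, since $e(C) \ge \tfrac12(m-1)(k_C-1)+1$, $C$ contains a cycle $Z$ of length at least $m$. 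If $Z$ is odd we are done: an odd cycle on $\ge m$ vertices contains a matching on $\ge m-1$ vertices, contradicting the hypothesis that no odd component has a matching on $\ge m$ vertices (note $C$ is an odd component and ``matching on at least $m$ vertices'' means $\lceil m/2\rceil$ edges; an odd cycle of length $\ell \ge m$ has a matching with $\lfloor \ell/2 \rfloor \ge \lfloor m/2 \rfloor$ edges --- here I need to be slightly careful about the off-by-one and may need $\ell \ge m+1$, i.e.\ replace $m$ by $m+1$ in the Erd\H{o}s--Gallai application, which only strengthens the edge bound I am contradicting). If $Z$ is even, I would instead use that $C$ is connected and non-bipartite to find a \emph{closed odd walk}, then extract from the high edge-count a long \emph{odd} cycle --- this is the delicate point.

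The cleanest way around the parity issue, and the step I expect to be the main obstacle, is the following: in a connected non-bipartite graph $C$, if $C$ has a cycle of length $\ge m$ then it has an \emph{odd} cycle whose length is $\ge$ something comparable to $m$, or alternatively a long \emph{path} which, together with a short odd ``ear'', yields a long odd cycle and hence a large matching in the odd component $C$. Concretely: take a longest path $P$ in $C$; since $C$ is non-bipartite and connected one can close up a segment of $P$ into an odd cycle of length comparable to $|P|$, and a cycle of length $\ell$ (of either parity) always contains a matching of $\lfloor \ell/2\rfloor$ edges, i.e.\ on $2\lfloor \ell/2 \rfloor \ge \ell-1$ vertices --- but a matching inside $C$ is automatically a matching inside the odd component $C$, so parity of the cycle is in fact irrelevant for producing the matching. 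That is the key realisation: we do not need an odd cycle, only a cycle, because \emph{any} matching found inside the odd component $C$ is ``a matching on $\ge m$ vertices in an odd component.'' So from $e(C) \ge \tfrac12(m-1)(k_C - 1) + 1$, Theorem~\ref{th:eg} gives a cycle on $\ge m$ vertices in $C$, which contains a matching on $\ge m-1$ vertices; to get ``at least $m$ vertices'' one applies Erd\H{o}s--Gallai with $m+1$ in place of $m$, i.e.\ one shows $e(C) \le \tfrac12 m k_C$ by contradiction with the bound $\tfrac12(m+1-1)(k_C-1)+1 = \tfrac12 m(k_C-1)+1 \le \tfrac12 m k_C$ valid for $m \le k_C$ (and the case $k_C \le m$ is trivial as above). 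Summing $e(C) \le \tfrac12 m k_C$ over all odd components $C$ and using that $V(G'')$ is their disjoint union gives $e(G[V'']) = \sum_C e(C) \le \tfrac12 m \sum_C k_C = \tfrac12 m |V(G'')|$, which is (iii). This completes the proof.
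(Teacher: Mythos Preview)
Your argument is correct and is precisely the standard proof of this lemma (the paper does not give its own proof but cites \cite{FL2008}): take $V'$ to be the union of the bipartite components and $V''$ the union of the non-bipartite ones, then bound the edges in each odd component via Erd\H{o}s--Gallai applied with parameter $m+1$, using that any matching found inside an odd component already contradicts the hypothesis regardless of the parity of the cycle producing it. Two small clean-ups: the inequality $\tfrac12 m(k_C-1)+1\le \tfrac12 m k_C$ holds simply because $m\ge 2$ (not ``$m\le k_C$''), and the condition you actually need for invoking Theorem~\ref{th:eg} with parameter $m+1$ is $k_C\ge m+1$, with the case $k_C\le m$ handled by the trivial bound $\binom{k_C}{2}\le \tfrac12 m k_C$ as you note.
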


The following pair of lemmas allow us to find large connected-matchings in almost-complete bipartite graphs:

\begin{lemma}[{\cite[Lemma~10]{FL2008}}]
\label{l:ten}
Let $G=G[V_1,V_2]$ be a bipartite graph with bipartition $(V_{1},V_{2})$, where $|V_{1}|\geq|V_{2}|$, which has at least $(1-\epsilon)|V_{1}||V_{2}|$ edges for some $\epsilon$ such that $0<\epsilon<0.01$. Then,~$G$ contains a connected-matching on at least $2(1-3\epsilon)|V_{2}|$ vertices.
\end{lemma}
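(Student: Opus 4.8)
The plan is to show that a bipartite graph $G=G[V_1,V_2]$ with $|V_1|\geq|V_2|$ and edge-density at least $1-\epsilon$ not only contains a large matching but one whose edges all lie in a single component. First I would bound the number of ``bad'' vertices: call a vertex $w\in V_2$ \emph{deficient} if its degree in $G$ is at most $(1-\sqrt{\epsilon})|V_1|$. Since $e(V_1,V_2)\geq(1-\epsilon)|V_1||V_2|$, the total non-edge count is at most $\epsilon|V_1||V_2|$, so the number of deficient vertices in $V_2$ is at most $\sqrt{\epsilon}|V_2|$; symmetrically the number of deficient vertices in $V_1$ (those with degree at most $(1-\sqrt{\epsilon})|V_2|$) is at most $\sqrt{\epsilon}|V_1|$. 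Delete all deficient vertices to obtain $G'=G[V_1',V_2']$ with $|V_2'|\geq(1-\sqrt{\epsilon})|V_2|$ and $|V_1'|\geq(1-\sqrt{\epsilon})|V_1|\geq|V_2'|$, in which every vertex of $V_2'$ still has degree (back in $G$, hence at least) $(1-\sqrt{\epsilon})|V_1|-\sqrt\epsilon|V_1|\geq(1-2\sqrt\epsilon)|V_1|$ into $V_1'$, and every vertex of $V_1'$ has degree at least $(1-2\sqrt\epsilon)|V_2|\geq(1-2\sqrt\epsilon)|V_2'|$ into $V_2'$.

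The second step is connectivity: since $\epsilon<0.01$ we have $2\sqrt\epsilon<\tfrac14<\tfrac12$, so $G'$ is a $(1-2\sqrt\epsilon)$-complete bipartite graph on nonempty parts and is therefore connected by the observation following the definition of $(1-c)$-complete bipartite graphs. The third step is to extract a large matching inside the single component $G'$. Each of the $|V_2'|$ vertices of $V_2'$ has at most $2\sqrt\epsilon|V_1'|$ non-neighbours, so by a greedy argument — repeatedly pick an unmatched vertex of $V_2'$ and match it to any unused neighbour in $V_1'$, which is possible as long as fewer than $|V_1'|-2\sqrt\epsilon|V_1'|$ vertices of $V_1'$ have been used — we obtain a matching saturating at least $(1-2\sqrt\epsilon)|V_2'|$ vertices of $V_2'$, hence a connected-matching on at least $2(1-2\sqrt\epsilon)(1-\sqrt\epsilon)|V_2|\geq 2(1-3\sqrt\epsilon)|V_2|$ vertices. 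This is slightly weaker than the stated $2(1-3\epsilon)|V_2|$, so to get the claimed bound I would instead argue directly without the intermediate $\sqrt\epsilon$ loss: keep \emph{all} of $V_1,V_2$, observe that at most $\epsilon|V_1||V_2|/1 = \epsilon|V_1||V_2|$ non-edges means at most (say) $3\epsilon|V_2|$ vertices of $V_2$ lie in non-edges with a $\geq(1-3\epsilon)|V_1|$-fraction of $V_1$ — more carefully, apply Lemma~\ref{l:decomp}-free reasoning: take a maximum matching $M$, and if $|M|<(1-3\epsilon)|V_2|$ then by König/greedy there is a large independent set of non-edges forcing more than $\epsilon|V_1||V_2|$ missing edges, a contradiction; finally merge matched edges into one component using that the ``core'' induced subgraph on the non-deficient vertices is connected and every matching edge has an endpoint adjacent to this core.

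The main obstacle is the bookkeeping to get the sharp constant $2(1-3\epsilon)$ rather than a $\sqrt\epsilon$-type bound, and simultaneously ensuring all matching edges lie in one component; the clean way is to run the deletion-of-deficient-vertices argument with threshold parameter chosen as $3\epsilon$ (splitting the ``$\epsilon|V_1||V_2|$ missing edges'' budget so that at most $\epsilon|V_2|$ vertices of $V_2$ miss more than an $\epsilon$-fraction of $V_1$ while the rest are nearly complete, and similarly on the $V_1$ side, which after a short computation yields a connected $(1-\epsilon)$-complete bipartite core on all but $\leq\epsilon|V_2|$ of $V_2$ into which the remaining $\leq 2\epsilon|V_2|$ vertices can be greedily matched), giving the bound $|M|\geq(1-\epsilon-2\epsilon)|V_2|=(1-3\epsilon)|V_2|$, i.e.\ a connected-matching on at least $2(1-3\epsilon)|V_2|$ vertices. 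Everything else is routine counting plus the connectivity observation already recorded in the paper.
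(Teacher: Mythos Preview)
The paper does not prove this lemma; it is quoted verbatim from Figaj and {\L}uczak \cite{FL2008} and used as a black box, so there is no in-paper argument to compare against.

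On your proposal itself: the first pass (delete vertices of degree below $(1-\sqrt\epsilon)$-fraction, observe the remainder is $(1-2\sqrt\epsilon)$-complete bipartite hence connected, then match greedily) is clean and correct, but as you note it only yields $2(1-3\sqrt\epsilon)|V_2|$. Your attempt to upgrade to the stated $2(1-3\epsilon)|V_2|$ is not a proof, and the obstruction is structural, not just bookkeeping: if you delete from $V_2$ the vertices of degree below $(1-a)|V_1|$, the non-edge budget $\epsilon|V_1||V_2|$ lets through at most $\frac{\epsilon}{a}|V_2|$ such vertices, so to lose only $O(\epsilon)|V_2|$ you must take $a$ bounded away from $0$ --- but then the surviving graph is only $(1-a)$-complete and the greedy matching loses an $a$-fraction, not an $O(\epsilon)$-fraction. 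Balancing forces $a\sim\sqrt\epsilon$; your ``split the budget with threshold $3\epsilon$'' sketch does not escape this trade-off.

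A route that does give the linear constant avoids the deletion step entirely: first use K\"onig's theorem to show the whole graph has a matching of size at least $(1-\epsilon)|V_2|$ (a vertex cover of size $s$ bounds the edge count by $s\max(|V_1|,|V_2|)=s|V_1|$), and separately show that one component $C$ already satisfies $|C\cap V_2|\geq(1-\epsilon)|V_2|$ (since $\sum_i a_ib_i\geq(1-\epsilon)|V_1||V_2|$ and $\sum_i a_ib_i\leq(\max_i b_i)|V_1|$); then rerun the K\"onig bound inside $C$. Both halves lose only an $O(\epsilon)$-fraction, with no $\sqrt\epsilon$ anywhere.
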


Notice that, if~$G$ is a $(1-\epsilon)$-complete bipartite graph with bipartition $(V_1,V_2)$, then we may immediately apply the above to find a large connected-matching in~$G$.

\begin{lemma}
\label{l:eleven}
Let $G=G[V_1,V_2]$ be a bipartite graph with bipartition $(V_1,V_2)$. If $\ell$ is a positive integer such that $|V_1|\geq|V_2|\geq \ell$ and~$G$ is $a$-almost-complete for some $a$ such that $0<a/\ell<0.5$, then~$G$ contains a connected-matching on at least $2|V_2|-2a$ vertices.
\end{lemma}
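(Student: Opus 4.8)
\textbf{Proof plan for Lemma~\ref{l:eleven}.}
The plan is to reduce this unbalanced, $a$-almost-complete case to the balanced, high-density case already handled by Lemma~\ref{l:ten}. Write $|V_1|=s\geq t=|V_2|\geq\ell$. First I would isolate a balanced bipartite subgraph: pick an arbitrary subset $V_1'\subseteq V_1$ with $|V_1'|=t$, and consider $G'=G[V_1',V_2]$. Since $G$ is $a$-almost-complete and almost-completeness is hereditary for bipartite graphs (as remarked in Section~\ref{ram:defn}), $G'$ is still $a$-almost-complete with both sides of size $t$, so it has at least $t^2-at=(1-a/t)t^2$ edges. Setting $\epsilon=a/t$, the hypothesis $0<a/\ell<0.5$ together with $t\geq\ell$ does not immediately give $\epsilon<0.01$, so I cannot quote Lemma~\ref{l:ten} as a black box directly; instead I would either (a) observe that the relevant range of parameters in the application forces $a$ small enough, or, more robustly, (b) prove the conclusion directly by a greedy/defect-version argument, which is cleaner here.

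Approach (b), which I would make the main line: build the connected-matching greedily. Order $V_2=\{w_1,\dots,w_t\}$ arbitrarily. Each $w_i$ has at least $t-a$ neighbours in $V_1'$ (indeed in $V_1$), and each vertex of $V_1'$ has at least $t-a$ neighbours in $V_2$. Process $w_1,w_2,\dots$ in turn, at each step matching $w_i$ to a previously unused neighbour in $V_1$; since at most $i-1\le t-1$ vertices of $V_1$ are used and $w_i$ has $\ge t-a$ neighbours, a free neighbour exists as long as $t-a>i-1$, i.e. for the first $t-a$ vertices $w_i$. This yields a matching $M$ on $t-a$ edges, i.e. $2(t-a)=2|V_2|-2a$ vertices. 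It remains to check connectivity of (the component containing) $M$: this is where the almost-completeness on \emph{both} sides and the bound $a/\ell<0.5$, hence $a<t/2$, are used. Any two matched pairs $(w_i,u_i)$ and $(w_j,u_j)$ can be joined because $w_i$ and $u_j$ have a common neighbour structure: $u_j$ has $\ge t-a>t/2$ neighbours in $V_2$ and $w_i$ has $\ge t-a>t/2$ neighbours in $V_1'$, so by inclusion–exclusion within a set of size $t$ they are linked by a path of length $2$ or $3$ through an unused vertex; more simply, since $t-a>t/2$, every vertex of $V_2$ and every vertex of $V_1'$ lie in a single component of $G'$ (a bipartite graph with both-sided minimum degree exceeding half the opposite side is connected, as noted for $(1-c)$-complete bipartite graphs with $c<\tfrac12$). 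Hence all of $M$ lies in one component, so $M$ is a connected-matching on $2|V_2|-2a$ vertices, as required.

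\textbf{Main obstacle.}
The only real subtlety is the connectivity claim: $a$-almost-completeness with $a<t/2$ does not literally say each side has degree $>t/2$ unless one passes to the balanced subgraph $G[V_1',V_2]$, where it does — on the $V_1'$-side the degree into $V_2$ is $\ge t-a$, and on the $V_2$-side the degree into $V_1'$ is $\ge |V_1'|-a=t-a$ as well. With both one-sided minimum degrees exceeding $t/2$, the standard argument (any two vertices on the same side share a neighbour; any two on opposite sides are adjacent or share a length-$3$ connection) shows $G[V_1',V_2]$ is connected, and the greedy matching above lives entirely inside it. Everything else is a short counting argument, so I would keep the write-up to a half-page, spending most of it on making the degree bookkeeping for the greedy step and the connectivity step precise.
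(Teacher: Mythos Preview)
Your approach (b) is correct and is essentially the paper's proof: a greedy matching argument yields $|V_2|-a$ edges, and the degree condition $a/\ell<1/2$ forces connectivity. The only difference is that the paper avoids your detour through a balanced subgraph $V_1'$: it simply observes that $G$ itself is $(1-a/\ell)$-complete as a bipartite graph (since $|V_1|,|V_2|\ge\ell$), hence connected by the remark in Section~\ref{ram:defn}, and then runs the greedy step directly in $G$ using that each unmatched $v_2\in V_2$ has at least $|V_1|-a$ neighbours in $V_1$, more than the at most $k<|V_2|-a\le|V_1|-a$ already-used vertices.
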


\begin{proof}
Observe that~$G$ is $(1-a/\ell)$-complete. Therefore, since $a/\ell<0.5$,~$G$ is connected. Thus, it suffices to find a matching of the required size. Suppose that we have found a matching with vertex set~$M$ such that $|M|=2k$ for some $k<|V_2|-a$ and consider a vertex $v_2\in V_2\backslash M$. Since $G$ is $a$-almost-complete, $v_2$ has at least $|V_1|-a$ neighbours in~$|V_1|$ and thus at least one neighbour in $v_1\in V_1\backslash M$. Then, the edge $v_1v_2$ can be added to the matching and thus, by induction, we may obtain a matching on $2|V_2|-2a$ vertices.
\end{proof}

We recall two further results of Figaj and \L uczak: The first is a technical result from~\cite{FL2008}. 
The second is part of the main result from~\cite{FL2008}. 
Note that these results can be immediately extended to multicoloured graphs:

\begin{lemma}[{\cite[Lemma~13]{FL2008}}]
\label{l:thirteen}
For every $\alpha, \beta>0$, $v\geq0$ and $\eta$ such that
$0<\eta<0.01 \min\{ \alpha,\beta\}$, 
there exists $k_{\ref{l:thirteen}}=k_{\ref{l:thirteen}}(\alpha,\beta,v,\eta)$ such that, for every $k>k_{\ref{l:thirteen}}$, the following holds:

Let $G=(V,E)$ be a graph obtained from a $(1-\eta^4)$-complete graph on at least $$ \half\Big(\max\Big\{\alpha+\beta+\max \left\{ 2v, \alpha, \beta \right\},3\alpha+\max\left\{2v,\alpha\right\} \Big\} + 10\eta^{1/2} \Big)k $$
vertices by removing all edges contained within a subset $W \subseteq V$ of size at most~$vk$. Then, every two-multicolouring of the edges of~$G$ results in {either} a red connected-matching on at least $(\alpha+\eta)k$ vertices {or} a blue odd connected-matching on at least $(\beta+\eta)k$ vertices.
\end{lemma}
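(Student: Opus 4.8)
The plan is to argue by contradiction: suppose the two‑multicolouring of $G$ has no red connected‑matching on $(\alpha+\eta)k$ vertices and no blue odd connected‑matching on $(\beta+\eta)k$ vertices. I would first pass to an induced subgraph so that $N$ is \emph{exactly} $\bigl\lceil\tfrac12(\max\{A,B\}+10\eta^{1/2})k\bigr\rceil$, where $A=\alpha+\beta+\max\{2v,\alpha,\beta\}$ and $B=3\alpha+\max\{2v,\alpha\}$: since "$a$‑almost‑complete'' is hereditary and deleting vertices only shrinks $W$, this is legitimate, and it forces $N=\Theta(k)$, so that $\eta^4N$, any additive constants, and $|W|^2/N$ are all $o(\eta^{1/2}k)$, i.e.\ swallowed by the $10\eta^{1/2}k$ slack.

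Next I would apply the decomposition lemma (Lemma~\ref{l:decomp}) to the blue graph $G_2$ with $m=\lceil(\beta+\eta)k\rceil$: this gives a partition $V=V'\cup V''$ with $G_2[V']$ bipartite, every blue component inside $V''$ odd, $e(G_2[V''])\le\tfrac12(\beta+\eta)k\,|V''|$, and no blue edge between $V'$ and $V''$. Fix a $2$‑colouring $V'=P\cup Q$ witnessing that $G_2[V']$ is bipartite. The point of this step is that it pins down where red is dense: because $G$ is $(1-\eta^4)$‑complete with only the hole $W$ missing, \emph{all} edges of $G$ inside $P\setminus W$, inside $Q\setminus W$, and between $V'\setminus W$ and $V''\setminus W$ are red, while at most $\tfrac12(\beta+\eta)k\,|V''|$ edges of $G[V'']$ are blue. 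Hence $G_1[P\setminus W]$ and $G_1[Q\setminus W]$ are $O(\eta^4N)$‑almost‑complete, $G_1[P\setminus W,V''\setminus W]$ and $G_1[Q\setminus W,V''\setminus W]$ are $O(\eta^4N)$‑almost‑complete bipartite, and $G_1[V''\setminus W]$ has average degree at least $|V''|-(\beta+\eta)k-o(\eta^{1/2}k)$ (discarding the $\le|W|^2/|V''|$ loss from the hole, or treating $|V''|\le|W|$ as a trivially small case). Chasing red edges from $W$ to $V\setminus W$ shows moreover that $G_1$ is connected, so a red connected‑matching is simply a large matching of $G_1$.

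The bulk of the argument is then a case analysis on the relative sizes of $|P|$, $|Q|$ and $|V''|$ — which is precisely the dichotomy between the two terms $A$ and $B$ inside the maximum. In each case I would build, inside the (single) red component, a matching on $\ge(\alpha+\eta)k$ vertices out of: near‑perfect matchings of the almost‑complete graphs $G_1[P\setminus W]$ and $G_1[Q\setminus W]$ (via the longest cycle, Corollary~\ref{dirac1a}); matchings of the almost‑complete bipartite graphs $G_1[P\setminus W,V''\setminus W]$ and $G_1[Q\setminus W,V''\setminus W]$ (Lemmas~\ref{l:ten} and~\ref{l:eleven}); and a connected‑matching of the dense graph $G_1[V''\setminus W]$ (Corollary~\ref{l:eg}) — with the hole‑vertices of $W$ re‑absorbed by matching them across these pieces rather than wasting them. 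When $V''$ carries almost all of $V$ and is blue‑heavy (the regime governed by the $\tfrac12(\alpha+2\beta)$ and $\tfrac12(\alpha+\beta)+v$ summands of $\tfrac12A$), the few‑edges bound on $G_2[V'']$ forces each odd blue component of $V''$ to be either short or to have a dense red complement, so that matching $P$ and $Q$ across $V''$ together with a red matching of the red‑complement part still reaches $(\alpha+\eta)k$ vertices. In every branch the arithmetic closes precisely because $N\ge\tfrac12(\max\{A,B\}+10\eta^{1/2})k$, all error terms being $o(\eta^{1/2}k)$.

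The main obstacle I expect is the case analysis itself, in two places. First, the hole: $|W|$ may be as large as $vk$, which in the $2v$‑dominated regime is comparable with $N$, and $W$ may sit entirely inside $V'$ or entirely inside $V''$; one must track carefully that the red matching one assembles really lies in one component and that the hole‑vertices get matched rather than discarded. Second, showing that $A$ and $B$ genuinely suffice — that no counterexample survives above $\tfrac12\max\{A,B\}k$ — amounts to checking that the several size inequalities forced in each branch (such as $2|V'\setminus W|<(\alpha+\eta)k$, $|V''\setminus W|<(\tfrac\alpha2+\beta)k+o(k)$, and their analogues) cannot be satisfied simultaneously with $N$ at the threshold; this is where essentially all of the bookkeeping lives, and it is why the slack is taken as generous as $10\eta^{1/2}k$.
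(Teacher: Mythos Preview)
The paper does not prove this lemma: it is stated in Section~\ref{s:pre1} as a quotation of \cite[Lemma~13]{FL2008} and is used as a black box (notably in Case~G of the proof of Theorem~D, via inequality~(\ref{z8})). So there is no ``paper's own proof'' to compare against; what you have written is a plausible reconstruction of how the Figaj--\L uczak argument might go.

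Your overall plan --- apply the decomposition lemma (Lemma~\ref{l:decomp}) to the blue graph, obtain the bipartite piece $V'=P\cup Q$ and the odd piece $V''$, and then assemble a large red matching from the almost-complete red pieces $G_1[P\setminus W]$, $G_1[Q\setminus W]$, $G_1[V'\setminus W,V''\setminus W]$ together with the red-dense part of $G_1[V'']$ --- is the natural one and is almost certainly what the original proof does. One point to be careful about: your assertion that ``$G_1$ is connected, so a red connected-matching is simply a large matching of $G_1$'' is too quick. If, say, $V''$ is empty and the blue bipartite graph $G_2[P,Q]$ is essentially complete, then $G_1$ splits into two components supported on $P$ and on $Q$; the hole $W$ does not help bridge them. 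In that regime you must take the larger of the two red matchings (in $G_1[P]$ or $G_1[Q]$) rather than their union, and this is exactly where the term $3\alpha+\max\{2v,\alpha\}$ (as opposed to $\alpha+\beta+\max\{2v,\alpha,\beta\}$) becomes the binding constraint. So the case analysis is not just bookkeeping on sizes --- it also tracks which red pieces lie in a common component --- and you should expect that to drive the split between the $A$ and $B$ branches rather than being a consequence of it.
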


\begin{lemma}[{\cite[Theorem~1ii]{FL2008}}]
\label{l:fourteen}
For every $\aI, \aII, \aIII>0$, there exists $\eta_{\ref{l:fourteen}}=\eta_{\ref{l:fourteen}}(\aI,\aII,\aIII)$ such that, for every $0<\eta<\eta_{\ref{l:fourteen}}$, 
there exists $k_{\ref{l:fourteen}}=k_{\ref{l:fourteen}}(\alpha,\beta,v,\eta)$ such that the following holds:

For every $k>k_{\ref{l:thirteen}}$ and every $(1-\eta^4)$-complete graph $G$ on $$ K\geq\left(\max\left\{
\aI+2\aII,
2\aI+\aII,
\half\aI+\half\aII+\aIII
\right\}
+10\eta^{1/4}\right)k$$
vertices, every three-colouring of the edges of $G$ results in one of the following:
\begin{itemize}
\item [(i)]	a red connected-matching on at least $\alpha
 k$ vertices;
\item [(ii)]  a blue connected-matching on at least $\alpha
 k$ vertices;
\item [(iii)]  a green odd connected-matching on at least $\alpha
 k$ vertices.
\end{itemize}
\end{lemma}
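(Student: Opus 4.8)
The plan is the contradiction argument standard in this area (see~\cite{FL2008}). Write $\mu=\max\{\alpha_1+2\alpha_2,\,2\alpha_1+\alpha_2,\,\tfrac12\alpha_1+\tfrac12\alpha_2+\alpha_3\}$, so the hypothesis reads $K\ge(\mu+10\eta^{1/4})k$, and suppose that none of the connected-matchings in (i)--(iii) occurs; the aim is to deduce $K<(\mu+10\eta^{1/4})k$. (The parameters $\eta_{\ref{l:fourteen}}$ and $k_{\ref{l:fourteen}}$ are fixed at the very end, small resp.\ large enough that the error terms of the lemmas invoked are absorbed into the $10\eta^{1/4}k$ slack; I will not track them.) Since (iii) fails, $G_3$ has no odd component containing a matching on $\alpha_3 k$ vertices, so Lemma~\ref{l:decomp} (with $m=\lceil\alpha_3 k\rceil$) supplies a partition $V=V'\cup V''$ in which $G_3[V']$ is bipartite --- say with bipartition $A\cup B$, $|A|\ge|B|$ --- every component of $G_3[V'']$ is odd, $e(G_3[V''])\le\tfrac12\alpha_3 k|V''|$, and there are no green edges between $V'$ and $V''$. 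Thus green is confined: $G[A]$, $G[B]$, $G[A,V'']$ and $G[B,V'']$ carry only red and blue, while inside $G[V'']$ green is sparse.

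The second step reduces the problem on the green-poor pieces to two colours. Each of $G[A]$ and $G[B]$ is a $(1-\eta^4)$-complete graph coloured with red and blue only, so since neither (i) nor (ii) holds, the two-colour connected-matching bound for two even cycles --- namely that a $(1-\eta^4)$-complete red/blue graph on at least $\bigl(\tfrac12(\alpha_1+\alpha_2+\max\{\alpha_1,\alpha_2\})+O(\eta^{1/4})\bigr)k$ vertices contains a red connected-matching on $\alpha_1 k$ vertices or a blue one on $\alpha_2 k$ vertices, this being the matching analogue of the classical value $R(C_n,C_m)=n+\tfrac12 m-1$ for even $n\ge m$, which I would record as a preliminary lemma --- forces $|A|,|B|<\bigl(\tfrac12(\alpha_1+\alpha_2+\max\{\alpha_1,\alpha_2\})+O(\eta^{1/4})\bigr)k$. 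If $V''=\emptyset$ this already gives $K=|A|+|B|<\bigl(\max\{2\alpha_1+\alpha_2,\,\alpha_1+2\alpha_2\}+O(\eta^{1/4})\bigr)k\le(\mu+10\eta^{1/4})k$, the required contradiction; and if $|V''|$ is tiny, say $|V''|\le\eta^{1/8}k$, one runs the same argument on $G[A\cup V'']$ and $G[B\cup V'']$ after deleting the at most $\tfrac12\alpha_3 k|V''|$ green edges inside $V''$, using the version of the two-colour bound that tolerates removal of all edges inside a small vertex set (analogous to Lemma~\ref{l:thirteen}).

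It remains to treat the case in which $|V''|$ is linear in $k$ but not tiny, which is the genuinely delicate one, since then green persists on $V''$ without being confined to a small vertex set. Here I would exploit the structure of $G_3[V'']$: its components are odd and, by Corollary~\ref{l:eg}, each has average degree below $\alpha_3 k$, so after discarding the few vertices inside $V''$ whose green-degree exceeds, say, $\eta^{1/2}k$, the red/blue subgraph of $G[V'']$ is essentially complete multipartite with parts of size $O(\alpha_3 k)$; applying Corollary~\ref{l:eg} to its red and blue subgraphs, or Lemmas~\ref{l:ten}--\ref{l:eleven} to induced bipartite pieces, and using the failure of (i) and (ii), bounds $|V''|$ from above by a quantity of the right shape. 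Feeding the three bounds on $|A|$, $|B|$ and $|V''|$ into $|A|+|B|+|V''|=K$ --- a short case analysis, essentially a linear-programming computation in $|A|$, $|B|$, $|V''|$ and the further red/blue decomposition pieces inside them --- then yields $K<(\mu+O(\eta^{1/4}))k$.

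The main obstacle is exactly this last step: one must show that the region of feasible piece-sizes, subject to ``no red connected-matching on $\alpha_1 k$ vertices, no blue one on $\alpha_2 k$, no green odd one on $\alpha_3 k$'', never forces more than $\mu k$ vertices, and in particular that the value $\tfrac12\alpha_1+\tfrac12\alpha_2+\alpha_3$ --- which is the relevant one precisely when $\alpha_3\ge\tfrac12(\alpha_1+\alpha_2)+\max\{\alpha_1,\alpha_2\}$, and corresponds to the extremal colouring in which $V''$ is a green odd (almost-)clique on $\approx\alpha_3 k$ vertices joined in red to a set $A$ of size $\approx\tfrac12\alpha_1 k$, in blue to a set $B$ of size $\approx\tfrac12\alpha_2 k$, with $G[A,B]$ green --- is never exceeded. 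This is where the $\tfrac12$-coefficients come from (through the bipartite bound ``matching on $2\min(\cdot,\cdot)$ vertices'' of Lemma~\ref{l:ten}), and where the crude estimate $e(G)\le\tfrac12(\alpha_1+\alpha_2)kK+|A||B|+\tfrac12\alpha_3 k|V''|$ must be refined using the structural information above rather than used on its own.
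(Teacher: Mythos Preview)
The paper does not prove this lemma: it is stated in Section~\ref{s:pre1} as a tool imported verbatim from Figaj and \L uczak~\cite[Theorem~1ii]{FL2008}, with no proof given. So there is no ``paper's own proof'' to compare your attempt against.

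That said, your outline is broadly the one Figaj and \L uczak use in~\cite{FL2008}: apply the decomposition lemma (Lemma~\ref{l:decomp} here) to the green graph, splitting $V$ into a green-bipartite part $V'=A\cup B$ and an odd-component part $V''$, then exploit that $G[A]$, $G[B]$ and $G[A,V'']$, $G[B,V'']$ carry only red and blue to reduce to two-colour matching bounds. Your sketch is honest about where the real work lies --- the ``linear but not tiny $V''$'' case and the linear-programming-style case analysis that pins down the coefficient $\tfrac12\aI+\tfrac12\aII+\aIII$ --- and you correctly identify that this is where Lemma~\ref{l:thirteen}-type results (two-colour bounds tolerating a hole) are needed. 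What you have is a plausible plan rather than a proof; the passage from ``bounds on $|A|$, $|B|$, $|V''|$'' to ``$K<(\mu+O(\eta^{1/4}))k$'' hides several pages of case analysis in~\cite{FL2008}, and your treatment of the interaction between $A$, $B$ and $V''$ (in particular the red/blue edges across $G[A\cup B,V'']$, which you do not use) is too vague to stand on its own.
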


We also make use of the following pair of results dealing with different combinations of parities. The first is an immediate consequence of the main technical result of \cite{DF1} and \cite{DF2} which can be also be found (with additional detail) in \cite{FERT}. The second is an immediate consequence of the key technical result from \cite{KoSiSk}:

\begin{theorem}[{\cite[Theorem~B]{FERT}}]
\label{thBc}
For every $0<\alpha_{1},\alpha_{2},\alpha_{3}\leq 1$ such that $\aIII\leq \tfrac{3}{2}\max\{\aI,\aII\}+\half\min\{\aI,\aII\}-11\eta^{1/2},$ letting $c=\max\{2\aI+\aII, \aI+2\aII, \half\aI+\half\aII+\aIII\},$
there exists $\eta_{\ref{thBc}}=\eta_{\ref{thBc}}(\aI,\aII,\aIII)$ and $k_{\ref{thBc}}=k_{\ref{thBc}}(\aI,\aII,\aIII,\eta)$ 
such that, for every $k>k_{\ref{thBc}}$ and 
every~$\eta$ such that $0<\eta<\eta_{\ref{thBc}}$, every 
three-multicolouring of~$G$, a $(1-\eta^4)$-complete graph on 
$(c-\eta)k\leq K\leq(c-\half\eta)k$ vertices,  results in the graph containing at least one of the following:
\begin{itemize}
\item [(i)]	a red connected-matching on at least $\aI
 k$ vertices;
\item [(ii)]  a blue connected-matching on at least $\aII k$ vertices;
\item [(iii)]  a green odd connected-matching on at least $\aIII k$ vertices;
\item [(iv)] disjoint subsets of vertices $X$ and $Y$ such that $G[X]$ contains a two-coloured spanning subgraph~$H$ from $\cH_1^B\cup\cH_2^B$ and $G[Y]$ contains a two-coloured spanning subgraph $K$ from $\cH_1^B\cup\cH_2^B$ where 
\vspace{-2mm}
\begin{align*}
\cH_1^B&=\cH\left((\aI-2\eta^{1/32})k,(\half\aII-2\eta^{1/32})k,4\eta^4 k,\eta^{1/32},\text{red},\text{blue}\right),\text{ } 
\\ \cH_2^B&=\cH\left((\aII-2\eta^{1/32})k,(\half\aI-2\eta^{1/32})k,4\eta^4 k,\eta^{1/32},\text{blue},\text{red}\right).
\end{align*}
\end{itemize}
Furthermore, in (iv) $H_1,H_2\in \cH_1$ if $\aII\leq\aI-\eta^{1/16}$ and $H_1,H_2\in \cH_2$ if $\aI\leq\aII-\eta^{1/16}$.
\end{theorem}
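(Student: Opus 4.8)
The statement is quoted from Theorem~B of~\cite{FERT}, where it is derived from the main technical results of~\cite{DF1} and~\cite{DF2}; so it is imported wholesale and there is nothing new to do here. For orientation---and because the same philosophy underlies Theorem~D---we sketch the argument carried out in those papers.

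Suppose that the three-multicoloured graph~$G$ on $K$ vertices, with $(c-\eta)k\le K\le(c-\half\eta)k$, contains none of the connected-matchings in~(i)--(iii); the task is to exhibit the structure in~(iv). Since no odd component of the green subgraph~$G_3$ carries a matching on $\aIII k$ vertices, Lemma~\ref{l:decomp} applied to~$G_3$ gives a partition $V(G)=V'\cup V''$ in which $G_3[V']$ is bipartite, every component of $G_3[V'']$ is odd and of small matching number, and there are no green edges between $V'$ and $V''$. The green edges can then be set aside: no deletion of them can be undone to give a long green odd connected-matching, and---this is precisely where the hypothesis $\aIII\le\tfrac{3}{2}\max\{\aI,\aII\}+\half\min\{\aI,\aII\}-11\eta^{1/2}$ is used---the loss incurred by deleting them is small enough that the vertex budget $(c-\eta)k$, now governed by $\max\{2\aI+\aII,\aI+2\aII\}$ and not by $\half\aI+\half\aII+\aIII$, still suffices to run a two-colour argument on the bulk of~$G$. (The green bipartition of $V'$ survives only as a complete-bipartite-like block, which becomes the green link between the two final pieces and which, being bipartite, contributes no green odd connected-matching.)

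The core is a two-colour stability analysis. Using the Erd\H{o}s--Gallai bound (Corollary~\ref{l:eg}) and the bipartite connected-matching lemmas (Lemmas~\ref{l:ten} and~\ref{l:eleven}), one shows that a two-coloured almost-complete graph on roughly $\max\{\aI+\half\aII,\half\aI+\aII\}k$ vertices containing neither a red connected-matching on $\aI k$ vertices nor a blue connected-matching on $\aII k$ vertices must be within $o(k)$ of the extremal pattern: a red clique on about $\aI k$ vertices, a further vertex class on about $\half\aII k$ vertices, and all edges between these two classes blue---or the same with the colours interchanged. In the terminology of Definition~\ref{d:H} this is a spanning subgraph from $\cH(\cdot,\cdot,\cdot,\cdot,\mathrm{red},\mathrm{blue})$ or from $\cH(\cdot,\cdot,\cdot,\cdot,\mathrm{blue},\mathrm{red})$. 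Applying this to the bulk of~$G$ and reconciling it with the green decomposition, one extracts the two disjoint $\cH$-blocks $X$ and $Y$ of conclusion~(iv); which of $\cH_1^B$, $\cH_2^B$ arises, and whether $H$ and $K$ are pinned into it, is then dictated by which of $\aI,\aII$ is larger, a near-balanced pair allowing either orientation.

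The main obstacle is this last stability step. One has to enumerate the finitely many near-extremal red/blue colourings of a graph on $\max\{\aI+\half\aII,\half\aI+\aII\}k$ vertices, verify that each forces an $\cH$-block up to the stated error terms (shrinkages of order $\eta^{1/32}k$ in the class sizes, $4\eta^4k$ missing edges, and $\eta^{1/32}$ in the sparseness), and then control the interaction with the green decomposition $V'\cup V''$---in particular ruling out that a nonempty $V'$, or the green edges inside the odd components of $V''$, could be repackaged into a red, blue or green-odd connected-matching of the forbidden size. Keeping the whole hierarchy of powers of~$\eta$ mutually consistent is the real labour, and it is discharged in full in~\cite{DF1},~\cite{DF2} and~\cite{FERT}.
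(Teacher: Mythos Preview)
Your assessment is correct: the paper does not prove this statement at all. It is presented in Section~\ref{s:pre1} as a tool, introduced with the remark that it ``is an immediate consequence of the main technical result of~\cite{DF1} and~\cite{DF2} which can be also be found (with additional detail) in~\cite{FERT},'' and then simply stated without proof. Your opening sentence captures this exactly, and the orientation sketch you add is supplementary material that the paper itself does not provide.
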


\begin{lemma}[{\cite[Theorem 6]{KoSiSk}}]
\label{kss07a-7}
There exists $\eta_{\ref{kss07a-7}}>0$ such that, for every $0<\alpha\leq1$ and $\eta$ such that $0<\eta<\eta_{\ref{kss07a-7}}$, there exists $k_{\ref{kss07a-7}}=k_{\ref{kss07a-7}}(\alpha,\eta)$ such that the following holds: For every $k>k_{\ref{kss07a-7}}$ and every $(1-\eta^4)$-complete graph $G$ on $(4\alpha-\eta)k\leq K \leq (4\alpha+\eta)k$ vertices, every three-colouring of the edges of $G$ results in the graph containing at least one of the following:
\begin{itemize}
\item [(i)]	a red connected-matching on at least $\alpha k$ vertices;
\item [(ii)]  a blue odd connected-matching on at least $\alpha k$ vertices;
\item [(iii)]  a green odd connected-matching on at least $\alpha k$ vertices;
\item [(iv)]  a subgraph from
 $\cL\left((\half\alpha+\tfrac{1}{4}\eta)k,4\eta^4k, \text{red}, \text{blue}, \text{green}\right).$
 \end{itemize}
\end{lemma}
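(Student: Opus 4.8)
This is Theorem~6 of~\cite{KoSiSk}; I would prove it as follows. Suppose $G$, a $3$-coloured $(1-\eta^4)$-complete graph on $K$ vertices with $(4\alpha-\eta)k\le K\le(4\alpha+\eta)k$, realises none of (i)--(iii); the aim is to produce inside $G$ a member of $\cL\big((\half\alpha+\tfrac14\eta)k,\,4\eta^4k,\,\text{red},\text{blue},\text{green}\big)$, which will be the subgraph induced on a core set $A$ of at least $(\tfrac72\alpha+\tfrac14\eta)k$ vertices. First I would record that, since no red component carries a matching on $\alpha k$ vertices, Corollary~\ref{l:eg} applied componentwise gives $e(G_1)\le\half\alpha kK$, so almost every edge is blue or green; since $K$ comfortably exceeds the $3\alpha k$-threshold of Lemma~\ref{l:fourteen} (applied with $\aI=\aII=\aIII=\alpha$ and with each colour in turn in the ``odd'' role), the blue and green graphs each contain a connected-matching on at least $\alpha k$ vertices, which --- (ii) and (iii) being excluded --- must lie in bipartite components. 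The efficient way to record this is via the decomposition lemma of Figaj and \L uczak (Lemma~\ref{l:decomp}): applying it to $G_2$ with $m=\lceil\alpha k\rceil$ gives $V=A_2\cup B_2$ with $G_2[A_2]$ bipartite, with classes $P_2,Q_2$ say, every component of $G_2[B_2]$ odd, $e(G_2[B_2])\le\half\alpha k|B_2|$, and no blue edge between $A_2$ and $B_2$; the same for $G_3$ gives $V=A_3\cup B_3$ with classes $P_3,Q_3$ on $A_3$.

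The hard part --- and the step I expect to be the main obstacle --- is to bound the exceptional parts sharply: one needs $|B_2|+|B_3|\le(\half\alpha-\tfrac54\eta)k$, so that the core $A:=A_2\cap A_3$ satisfies $|A|\ge K-|B_2|-|B_3|\ge(\tfrac72\alpha+\tfrac14\eta)k$ and both $G_2,G_3$ are bipartite on $A$. A crude count already gives something: inside $B_2$ the blue graph has at most $\half\alpha k|B_2|$ edges and $G$ is almost-complete, so once $|B_2|$ is at least, say, $2\alpha k$ the red-and-green subgraph of $G[B_2]$ is dense, whereupon Corollary~\ref{l:eg} (for red) together with the green decomposition of $G_3$ restricted to $B_2$ and Lemmas~\ref{l:ten}--\ref{l:eleven} (for green) force one of (i),(iii); hence $|B_2|,|B_3|<2\alpha k$. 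Getting down to $\tfrac14\alpha k$, however, requires re-running the structural dichotomy inside $G[B_2]$ --- where blue is now sparse, so $G[B_2]$ is effectively two-coloured, while the part $B_2\cap A_3$ carries a bipartite green graph --- and iterating, tracking how the red, blue and green densities interact across the four sets $A_2\cap A_3$, $A_2\cap B_3$, $B_2\cap A_3$, $B_2\cap B_3$. This is precisely the delicate stability bookkeeping of Kohayakawa, Simonovits and Skokan~\cite{KoSiSk}.

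Once the core $A$ is in hand, I would take the common refinement of the two bipartitions on it: $X_1=P_2\cap P_3\cap A$, $X_2=P_2\cap Q_3\cap A$, $Y_1=Q_2\cap P_3\cap A$, $Y_2=Q_2\cap Q_3\cap A$. Inside any one cell there is neither a blue edge (both ends on one blue side) nor a green edge (both ends on one green side), so each cell induces a $4\eta^4k$-almost-complete red graph; a cell on $\ge(1+o(1))\alpha k$ vertices would, by Corollary~\ref{dirac1a}, contain a red cycle and hence a red connected-matching on $\alpha k$ vertices, a contradiction, so each cell has fewer than $(1+o(1))\alpha k$ vertices, and as the four cells partition $A$ each has size between $(1-o(1))\alpha k$ and $(1+o(1))\alpha k$, in particular (for $\eta$ below an absolute constant) at least $(\half\alpha+\tfrac14\eta)k$. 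Next, no edge between two distinct cells $C,C'$ can be red: each of $G_1[C],G_1[C']$ is $4\eta^4k$-almost-complete on $\ge\half\alpha k$ vertices, hence connected and, greedily, carries a matching covering all but $o(\alpha k)$ of its vertices while avoiding any one prescribed vertex, so a red edge $uv$ with $u\in C$, $v\in C'$ would combine with such matchings of $C\setminus\{u\}$ and $C'\setminus\{v\}$ into a red connected-matching on $|C|+|C'|-o(\alpha k)\ge\alpha k$ vertices. Hence every cross-pair edge is blue or green, and bipartiteness of $G_2,G_3$ on $A$ forces exactly the colour pattern of Definition~\ref{d:L} with $\gamma_1=\text{red}$, $\gamma_2=\text{blue}$, $\gamma_3=\text{green}$: the pairs $(X_1,X_2)$ and $(Y_1,Y_2)$ (equal blue side) carry only green, the pairs $(X_1,Y_1)$ and $(X_2,Y_2)$ (equal green side) carry only blue, and the pairs $(X_1,Y_2),(X_2,Y_1)$ carry only blue or green. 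Therefore $G[A]\in\cL\big((\half\alpha+\tfrac14\eta)k,\,4\eta^4k,\,\text{red},\text{blue},\text{green}\big)$, which is outcome (iv); the only non-routine ingredient is the size control of $B_2,B_3$ flagged in the middle paragraph.
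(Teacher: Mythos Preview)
The paper does not prove this lemma; it is quoted verbatim as Theorem~6 of \cite{KoSiSk} and used as a black box in the proof of Theorem~D (see the sentence ``Observe that, for $\aI\geq\max\{\aII,\aIII\}$, \dots\ the result follows immediately from Lemma~\ref{kss07a-7}''). There is therefore no in-paper proof to compare your attempt against.

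As a standalone sketch, your outline is the natural one and the endgame is sound: once you have a core $A$ of size at least $(\tfrac{7}{2}\alpha+\tfrac{1}{4}\eta)k$ on which both $G_2$ and $G_3$ are bipartite, the four-cell refinement, the upper bound $|C|\le\alpha k$ on each cell via Corollary~\ref{dirac1a}, and the exclusion of red cross-edges all go through exactly as you describe, yielding the $\cL$-structure. The genuine content, as you yourself flag, is getting $|B_2|+|B_3|\le(\half\alpha-\tfrac{5}{4}\eta)k$; your crude bound of $2\alpha k$ on each exceptional set is far too weak, and the iteration you gesture at (``re-running the structural dichotomy inside $G[B_2]$'') is not spelled out. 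Since you explicitly defer that step back to \cite{KoSiSk}, your proposal is not an independent proof but rather a correct identification of the skeleton of the argument together with an honest admission of where the work lies.
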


We also make use of the following pair of stability results for cycles from~\cite{KoSiSk2}, which, given a sufficiently large two-coloured almost-complete graph allow us to find either a large matching or a particular structure.

\begin{lemma}[\cite{KoSiSk2}]
\label{l:SkB}

For every~$\eta$ such that $0<\eta<10^{-20}$, there exists  $k_{\ref{l:SkB}}=k_{\ref{l:SkB}}(\eta)$ such that, for every $k>k_{\ref{l:SkB}}$ and every $\alpha,\beta>0$ such that $\alpha \geq \beta \geq 100\eta^{1/2}\alpha$, if $K>(\alpha + \half\beta-\eta^{1/2}\beta)k$ and $G=(V,E)$ is a red-blue-multicoloured $\beta \eta^2 k$-almost-complete graph on $K$ vertices, then at least one of the following occurs:
\begin{itemize}
\item[(i)]~$G$ contains a red connected-matching on at least $(1+\eta^{1/2})\alpha k$ vertices;
\item[(ii)]~$G$ contains a blue connected-matching on at least $(1+\eta^{1/2})\beta k$ vertices;
\item[(iii)] the vertices of~$G$ can be partitioned into  $W$, $V'$, $V''$ such that
\begin{itemize}
\item[(a)] $|V'| < (1+\eta^{1/2})\alpha k$, 
$|V''|\leq \half(1+\eta^{1/2})\beta k$,
$|W|\leq \eta^{1/16} k$,
\item[(b)] $G_1[V']$ is $(1-\eta^{1/16})$-complete and $G_2[V']$ is $\eta^{1/16}$-sparse,
\item[(c)] $G_2[V',V'']$ is $(1-\eta^{1/16})$-complete and $G_1[V',V'']$ is $\eta^{1/16}$-sparse;
\end{itemize}
\item[(iv)] we have $\beta > (1-\eta^{1/8})\alpha$ and the vertices of~$G$ can be partitioned into $W$, $V'$ and $V''$ such that
\begin{itemize}
\item[(a)] $|V'| < (1+\eta^{1/2})\beta k$,
$|V''|\leq \half(1+\eta^{1/2})\alpha k$,
$|W|\leq \eta^{1/16} k$,
\item[(b)] $G_2[V']$ is $(1-\eta^{1/16})$-complete and $G_1[V']$ is $\eta^{1/16}$-sparse, 
\item[(c)] $G_1[V',V'']$ is $(1-\eta^{1/16})$-complete and $G_2[V',V'']$ is $\eta^{1/16}$-sparse.
\end{itemize}
\end{itemize}
Furthermore, if $\alpha+ \half\beta \geq 2(1+\eta^{1/2})\beta$, then we can replace (i) with
\begin{itemize}
\item[(i')]~$G$ contains a red odd connected-matching on $(1+\eta^{1/2})\alpha k$ vertices.
\end{itemize}
\end{lemma}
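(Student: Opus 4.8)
This is a two-colour connected-matching stability result, so the plan is to assume that neither~(i) nor~(ii) holds and to extract one of the structural alternatives. Write $\alpha'=(1+\eta^{1/2})\alpha$ and $\beta'=(1+\eta^{1/2})\beta$, so failure of~(i) means every red component of~$G$ has fewer than $\half\alpha' k$ disjoint edges and failure of~(ii) the same for blue. First I would observe that, since~$G$ is near-complete, whenever the red graph~$G_1$ is disconnected every edge of~$G$ between two of its components must be blue, so in that case~$G_2$ is spanning and connected; hence one of~$G_1$,~$G_2$ is spanning and connected, and I would treat the red case (the blue case being symmetric and leading to alternative~(iv) rather than~(iii)).

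Next I would exploit the deficiency. Since~$G_1$ is connected on $K\approx(\alpha+\half\beta)k$ vertices with matching number below $\half\alpha' k$, the Berge--Tutte formula supplies a set~$S$ for which $G_1-S$ has at least $|S|+(K-\alpha' k)$ odd components, and since $\beta\ge100\eta^{1/2}\alpha$ this surplus is at least roughly $|S|+\half\beta k$. Every edge of~$G$ between two components of $G_1-S$ is blue, and inside any component~$D$ the edges missing a minimum red vertex-cover span an almost-complete blue graph on at least $|D|-2\nu(D)$ vertices; using Corollary~\ref{dirac1a} and Lemma~\ref{l:eleven}, a surplus of components, an oversized component, or an oversized~$S$ can each be converted into a blue connected-matching on at least $\beta' k$ vertices, contradicting~(ii). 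This should pin down the coarse picture: exactly one component~$D_0$ of $G_1-S$ with $|D_0|<\alpha' k$, all other components singletons forming a set~$I$ with $|I|<\half\beta' k$, and everything outside $D_0\cup I$ (including~$S$) of total size at most $\eta^{1/16}k$. I would then set $V'=D_0$, $V''=I$ and let~$W$ absorb the leftovers.

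For the fine structure --- $G_1[V']$ $(1-\eta^{1/16})$-complete, $G_2[V']$ $\eta^{1/16}$-sparse, $G_2[V',V'']$ $(1-\eta^{1/16})$-complete, $G_1[V',V'']$ $\eta^{1/16}$-sparse --- I would use a single bootstrapping move: any violation yields $\eta^{1/16}k$ disjoint blue edges which, because every vertex of~$V''$ is blue-adjacent to almost all of~$V'$, lie in the blue component spanned by $V'\cup V''$ and can be merged with a blue matching of~$V''$ into~$V'$ to overshoot $\beta' k$, contradicting~(ii); the gap between the coarse $\eta^{1/2}$-bounds and the fine $\eta^{1/16}$-bounds is exactly the room needed. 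The side conditions then drop out arithmetically: running the argument with the colours swapped gives $|V'|+|V''|$ near $\beta k+\half\alpha k$, which fits into $K\approx\alpha k+\half\beta k$ only when $\beta>(1-\eta^{1/8})\alpha$, precisely the hypothesis of~(iv); and when $\alpha+\half\beta\ge2\beta'$ a bipartite~$G_1$ would make both of its sides blue cliques of order below $\beta' k$, forcing $K<2\beta' k$, which is impossible --- so~$G_1$ is non-bipartite and the red connected-matching in~(i) can be taken odd, i.e.~(i$'$).

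The conceptual skeleton is short, and I expect the main obstacle to be the bookkeeping in the middle two steps: one has to verify that \emph{every} deviation from the claimed structure amplifies into a monochromatic connected-matching that beats the relevant threshold by exactly the $(1+\eta^{1/2})$ multiplicative slack on offer, while keeping the several powers of~$\eta$ ($\eta^2$, $\eta^{1/16}$, $\eta^{1/8}$, $\eta^{1/2}$) in an order that lets them telescope. The clean way to manage this, and what I would do first, is to isolate three reusable ``amplification'' sub-claims --- a surplus of components gives a large blue matching, an oversized red component gives a large blue matching, a blue edge inside~$V'$ lengthens the blue matching --- and prove each once, so that the rest of the argument is just combining them. (The full details are carried out in~\cite{KoSiSk2}.)
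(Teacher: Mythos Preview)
The paper does not prove this lemma: it is quoted verbatim from~\cite{KoSiSk2} and used as a black box (together with Lemma~\ref{l:SkA}) in the short derivation of Lemma~\ref{l:SkAB}. So there is no proof in the paper to compare your proposal against; you have in fact already flagged this yourself in the final parenthetical.

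As to the substance of your sketch: the Berge--Tutte/Gallai--Edmonds route you outline is the natural one for a two-colour connected-matching stability result, and the coarse picture you arrive at --- one large red component~$D_0$ playing the role of~$V'$, a residual independent set~$I$ playing the role of~$V''$, and a small exceptional set~$W$ --- is exactly the shape of alternative~(iii). Two places deserve care if you carry this out. First, your connectivity step (``if $G_1$ is disconnected then $G_2$ is spanning connected'') uses that every cross-component edge is blue, which is fine, but $G$ is only $\beta\eta^2 k$-almost-complete rather than complete, so you still need a short argument (e.g.\ via Lemma~\ref{l:dgf0}) that the missing edges do not disconnect $G_2$; this is routine but should be said. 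Second, the ``amplification'' sub-claims you propose are the right modular pieces, but the bookkeeping that forces \emph{all} non-$D_0$ components to be singletons (rather than merely small) is where the $\eta^{1/16}$ versus $\eta^{1/2}$ hierarchy really bites, and is usually the longest part of such proofs; your sketch acknowledges this but does not yet pin it down.
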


\begin{lemma}[\cite{KoSiSk2}]
\label{l:SkA}

For every $0<\alpha\leq 1$ and every~$\eta$ such that $0<\eta<0.001\alpha$, there exists $k_{\ref{l:SkA}}=k_{\ref{l:SkA}}(\eta)$ such that, for every $k>k_{\ref{l:SkA}}$, if $K>(\tfrac{3}{2}\alpha + 80\eta)k$ and $G=(V,E)$ is a red-blue-multicoloured $\eta^2 k$-almost-complete graph on $K$ vertices, then at least one of the following occurs:
\begin{itemize}
\item[(i)]~$G$ contains a red connected-matching on at least $(1+\eta^{1/2})\alpha k$ vertices;
\item[(ii)]~$G$ contains a blue odd connected-matching on at least $(1+\eta^{1/2})\alpha k$ vertices;
\item[(iii$\ast$)] the vertices of~$G$ can be partitioned into $V'$, $V''$ such that
\begin{itemize}
\item[(a)] $|V'|,|V''| < (\alpha+\eta)k$, 
\item[(b)] all edges present in $G[V']$ and $G[V'']$ are coloured exclusively red and all edges present in $G[V',V'']$ are coloured exclusively blue. \end{itemize}
\end{itemize}
\end{lemma}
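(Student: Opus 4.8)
The plan is to argue by contradiction against the simultaneous failure of (i) and (ii), using the decomposition Lemma~\ref{l:decomp} to expose the coarse structure of the blue graph and then ``stitching'' red almost-cliques together along red almost-complete bipartite bridges to manufacture an over-large red connected-matching whenever the clean split structure (iii$\ast$) does not already hold.

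First I would set $m=(1+\eta^{1/2})\alpha k$ and assume neither (i) nor (ii) holds. Since no odd component of the blue graph $G_2$ then contains a matching on $m$ vertices, Lemma~\ref{l:decomp} gives a partition $V=P\cup Q$ with $G_2[P]$ bipartite, every component of $G_2[Q]$ odd, $e(G_2[Q])\le\tfrac12 m|Q|$, and no blue edge between $P$ and $Q$; fix a bipartition $P=A\cup B$ of $G_2[P]$ (grouping the bipartition classes of the various components, using the freedom in how the classes are assigned to $A$ or $B$). Because $G$ is $\eta^2k$-almost-complete, the absence of blue edges forces the red graph $G_1$ to be $\eta^2k$-almost-complete on each of $A$ and $B$ and — since $e(G_2[Q])$ is small — to be missing only $O((m+\eta^2k)|Q|)$ pairs inside $Q$; likewise $G_1[A,Q]$, $G_1[B,Q]$ are $\eta^2k$-almost-complete bipartite, and $G_1[A,B]$ is almost-complete bipartite away from the genuinely-blue pairs.

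The engine is a stitching observation, proved from Lemmas~\ref{l:ten} and~\ref{l:eleven}, Corollary~\ref{dirac1a} and greedy path/matching extension: if $S_1,\dots,S_t$ are pairwise disjoint sets, each of size at least $\eta^{1/2}k$, such that consecutive $S_i$ are joined by a red almost-complete bipartite graph (or each $S_i$ induces a red almost-complete graph and consecutive ones share a red edge), then the red component of $\bigcup_i S_i$ has a connected-matching covering all but $O(t\,\eta^{1/2}k)$ of $\bigcup_i S_i$; so if $\bigl|\bigcup_i S_i\bigr|>m+O(\eta^{1/2}k)$ we contradict failure of (i). Now the case analysis: if $Q\neq\emptyset$ and $|A|,|B|>\eta^{1/2}k$ then $A,Q,B$ form such a chain, giving a red connected-matching on $\approx K>m$ vertices — contradiction; so $Q=\emptyset$ or one of $|A|,|B|$ is $\le\eta^{1/2}k$. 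If, say, $|B|\le\eta^{1/2}k$, absorb $B$ into $Q$: the newly blue pairs at $B$ all meet a set of size $O(\eta^{1/2}k)$, so $G_1$ on the enlarged $A\cup Q$ still misses only $O((m+\eta^{1/2}k)|A\cup Q|)$ pairs, and Corollary~\ref{l:eg} produces a red connected-matching on $\ge m$ vertices unless $|A\cup Q|$, hence $K$, is pushed below $(\tfrac32\alpha+80\eta)k$ — contradiction. Finally, if $Q=\emptyset$ then $G_2$ is bipartite with parts $A,B$; if $G_2[P]$ had two components, or even a single red edge between $A$ and $B$, stitching glues $A$ to $B$ into one red component with a connected-matching on $\approx K>m$ vertices; hence $G_2[A,B]$ is (almost-)complete with no red $A$--$B$ edge, so $G_1$ is a disjoint union of two red almost-cliques $G_1[A],G_1[B]$ and $G_2=G_2[A,B]$. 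Moving the $\le\eta^2k$ non-neighbours of each vertex into a negligible set yields the partition required by (iii$\ast$), the size bounds on the two parts following from failure of (i) applied to the near-Hamiltonian (Corollary~\ref{dirac1a}) red almost-cliques $G_1[A]$ and $G_1[B]$.

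The main obstacle I expect is the bookkeeping in the boundary cases: controlling the vertex loss when stitching across a bridge (Lemmas~\ref{l:ten},~\ref{l:eleven} cost a $(1+o(1))$ factor and leave a bounded slice of each $S_i$ unmatched), when absorbing a small leftover part into $Q$, and when cleaning an almost-complete configuration into the exactly-structured (iii$\ast$); every such loss has to be charged against the $80\eta k$ of slack in $K>(\tfrac32\alpha+80\eta)k$ and against the gap between $(\alpha+\eta)k$ and $(1+\eta^{1/2})\alpha k$, and keeping the various powers of $\eta$ consistent is where the real work lies. One could instead try to specialise the finer stability Lemma~\ref{l:SkB} to $\beta=\alpha$ (rescaling $\eta$ to match the almost-completeness parameter $\eta^2k$), but the parameter windows do not line up over the whole range $0<\eta<0.001\alpha$, so the self-contained argument above seems preferable.
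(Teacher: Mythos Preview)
The paper does not prove this lemma at all --- it is quoted from~\cite{KoSiSk2} and only used (alongside Lemma~\ref{l:SkB}) to derive Lemma~\ref{l:SkAB}. So there is no ``paper's own proof'' to compare against; I will just assess your sketch on its merits.

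Your skeleton (apply Lemma~\ref{l:decomp} to the blue graph, analyse the bipartite part $P=A\cup B$ versus the odd part $Q$, and in the clean case $Q=\emptyset$ read off the split structure~(iii$\ast$)) is the right shape, and your treatment of the endpoint $Q=\emptyset$ is essentially correct. The genuine gap is the regime where $Q$ is large. In your case ``$|B|\le\eta^{1/2}k$, absorb $B$ into $Q$ and apply Corollary~\ref{l:eg}'', you assert that the red average degree on $A\cup Q$ is at least $m$ unless $K$ drops below $(\tfrac{3}{2}\alpha+80\eta)k$. That is not what the arithmetic gives: the only control on blue edges inside $Q$ is $e(G_2[Q])\le\tfrac{1}{2}m|Q|$, so the red average degree on $V$ is at least roughly $K-m\cdot|Q|/K$, and Corollary~\ref{l:eg} needs this to exceed $m$, i.e.\ $K\ge m(1+|Q|/K)$. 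With $K\approx\tfrac{3}{2}\alpha k$ and $m\approx\alpha k$ this forces $|Q|\le\tfrac{1}{2}K$, not $|Q|\le K$; in particular when $|A|,|B|$ are both tiny and $|Q|\approx K$, the average red degree is only about $\tfrac{1}{2}\alpha k<m$ and the argument collapses. The same issue bites in your case ``$A,Q,B$ chain'' once $|Q|>|A|+|B|$: matching all of $A\cup B$ into $Q$ covers only $2(|A|+|B|)=2(K-|Q|)$ vertices, which can be well below $m$, and you have said nothing about how to match the leftover $|Q|-|A|-|B|$ vertices of $Q$ among themselves.

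What is missing is a finer use of the component structure inside $Q$, not just the global edge bound. Distinct blue components of $Q$ have no blue edges between them, so the red graph between any two of them is $\eta^2k$-almost-complete bipartite; one must either pair up components to build the red matching, or show that a single huge odd blue component with matching number below $m/2$ already forces a red connected-matching on $m$ vertices (this last step needs a structural extremal argument, not average-degree counting). A smaller issue: your final size bound on the parts of (iii$\ast$) comes from ``no red connected-matching on $(1+\eta^{1/2})\alpha k$ vertices inside a red almost-clique'', which yields $|V'|,|V''|<(1+\eta^{1/2})\alpha k+O(1)$, not the sharper $(\alpha+\eta)k$ stated; you would need an extra step (or a different route) to tighten this.
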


Combining the two results above, we obtain:

\begin{lemma}[\cite{KoSiSk2}]
\label{l:SkAB}
For every $0<\alpha,\beta\leq1$ such that $\beta\geq\alpha\geq100\eta^{1/2}\beta$, for every $0<\eta<\min\{10^{-20},0.001\alpha,(\alpha/2)^8\}$, there exists $k_{\ref{l:SkAB}}=k_{\ref{l:SkAB}}(\eta)$ such that, for every $k>k_{\ref{l:SkAB}}$, if $K>(\max\{2\alpha,\half\alpha + \beta\}-\eta^{1/2}\alpha)k$ and $G=(V,E)$ is a red-blue-multcoloured~$\eta^{3}k$-almost-complete graph on $K$ vertices, then at least one of the following occurs:
\begin{itemize}
\item[(i)]~$G$ contains a red connected-matching on at least $(1+\eta^{1/2})\alpha k$ vertices;
\item[(ii)]~$G$ contains a blue odd connected-matching on at least $(1+\eta^{1/2})\beta k$ vertices;
\item[(iii)] we have $\alpha\leq(1-\eta^{1/8})\beta$ and the vertices of~$G$ can be partitioned into  $W$, $V'$, $V''$ such that
\begin{itemize}
\item[(a)] $|V'| < (1+\eta^{1/2})\beta k$, 
$|V''|\leq \half(1+\eta^{1/2})\alpha k$,
$|W|\leq \eta^{1/16} k$,
\item[(b)] $G_2[V']$ is $(1-\eta^{1/16})$-complete and $G_1[V']$ is $\eta^{1/16}$-sparse,
\item[(c)] $G_1[V',V'']$ is $(1-\eta^{1/16})$-complete and $G_2[V',V'']$ is $\eta^{1/16}$-sparse;
\end{itemize}
\item[(iii$\ast$)] we have $\beta < (\tfrac{3}{2}+2\eta^{1/2})\alpha$  and the vertices of~$G$ can be partitioned into $V'$, $V''$ such that
\begin{itemize}
\item[(a)] $|V'|,|V''| < (\alpha+\eta)k$, 
\item[(b)] all edges present in $G[V']$ and $G[V'']$ are coloured exclusively red and all edges present in $G[V',V'']$ are coloured exclusively blue. \end{itemize}
\end{itemize}
\end{lemma}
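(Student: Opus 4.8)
This lemma is a synthesis of Lemmas~\ref{l:SkB} and~\ref{l:SkA}, and the plan is to apply the former with its colours and size-parameters interchanged, to fall back on the latter in the one case this does not settle, and to reconcile the resulting structures by size-counting together with one connectivity argument. Throughout I would take $k_{\ref{l:SkAB}}$ larger than the thresholds supplied by the lemmas invoked and $\eta_{\ref{l:SkAB}}$ small enough for the estimates below.

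First I would apply Lemma~\ref{l:SkB} to $G$ with the colours red and blue swapped and the parameters $\alpha$ and $\beta$ swapped. This is legitimate because the hypotheses then read $\beta\geq\alpha\geq100\eta^{1/2}\beta$, $K>(\beta+\half\alpha-\eta^{1/2}\alpha)k$ (which holds since $\max\{2\alpha,\half\alpha+\beta\}\geq\half\alpha+\beta$), $0<\eta<10^{-20}$, and $\alpha\eta^2k$-almost-completeness (implied by $\eta^3k\leq\alpha\eta^2k$, as $\eta<\alpha$). Translating the conclusion back gives one of: (a)~a red connected-matching on $\geq(1+\eta^{1/2})\alpha k$ vertices; (b)~a blue connected-matching $M$ on $(1+\eta^{1/2})\beta k$ vertices, lying in a blue component $D_B$; (c)~a partition $V(G)=W\cup V'\cup V''$ with $|V'|<(1+\eta^{1/2})\beta k$, $|V''|\leq\half(1+\eta^{1/2})\alpha k$, $|W|\leq\eta^{1/16}k$, where $G_2[V']$ is $(1-\eta^{1/16})$-complete, $G_1[V']$ is $\eta^{1/16}$-sparse, $G_1[V',V'']$ is $(1-\eta^{1/16})$-complete and $G_2[V',V'']$ is $\eta^{1/16}$-sparse; or (d)~the same kind of partition with the roles of $G_1$ and $G_2$ exchanged and $|V'|<(1+\eta^{1/2})\alpha k$, $|V''|\leq\half(1+\eta^{1/2})\beta k$, this last option occurring only when $\alpha>(1-\eta^{1/8})\beta$. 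Case~(a) is conclusion~(i). In case~(b), if $D_B$ is non-bipartite then $M$ is a blue \emph{odd} connected-matching on $(1+\eta^{1/2})\beta k$ vertices, i.e.\ conclusion~(ii); the sub-case $D_B$ bipartite is postponed. In case~(d), $\alpha>(1-\eta^{1/8})\beta$ gives $\max\{2\alpha,\half\alpha+\beta\}=2\alpha$, so $K>(2\alpha-\eta^{1/2}\alpha)k$ and the bounds on $|V''|,|W|$ force $|V'|>(\tfrac32\alpha-c\eta^{1/16})k$ for an absolute constant~$c$; for $\eta$ small this exceeds $(1+\eta^{1/2})\alpha k$, and since $G_1[V']$ is $(1-\eta^{1/16})$-complete it is Hamiltonian by Corollary~\ref{dirac1a}, yielding a red connected-matching on $|V'|-1>(1+\eta^{1/2})\alpha k$ vertices, i.e.\ conclusion~(i). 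In case~(c), combining the size bounds with $K>(\max\{2\alpha,\half\alpha+\beta\}-\eta^{1/2}\alpha)k$ forces $\alpha\leq(1-\eta^{1/8})\beta$ (immediate from $\alpha\leq\tfrac23\beta$ if $\beta\geq\tfrac32\alpha$; otherwise $\max=2\alpha$ and the size inequality yields $\beta>(\tfrac32-c'\eta^{1/16})\alpha$, which suffices for $\eta$ small), so the partition of~(c) is exactly conclusion~(iii).

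It then remains to treat case~(b) with $D_B$ bipartite. Here I would apply Lemma~\ref{l:SkA} with parameter~$\alpha$ (legitimate since $K\geq(2\alpha-\eta^{1/2}\alpha)k>(\tfrac32\alpha+80\eta)k$ for large~$k$, $\eta<0.001\alpha$, and $G$ is $\eta^2k$-almost-complete). Its red-matching output is conclusion~(i); its structural output, a partition $V(G)=V'\cup V''$ with $|V'|,|V''|<(\alpha+\eta)k$, $G[V'],G[V'']$ all red and $G[V',V'']$ all blue, gives $K<2(\alpha+\eta)k$, which with the lower bound on~$K$ forces $\beta<(\tfrac32+2\eta^{1/2})\alpha$ and hence is conclusion~(iii$\ast$). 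In the remaining case Lemma~\ref{l:SkA} yields a blue odd connected-matching on $(1+\eta^{1/2})\alpha k$ vertices in a non-bipartite blue component~$D_A$. Since $D_A$ is non-bipartite and $D_B$ is bipartite, $D_A\neq D_B$; being distinct blue components, every edge of~$G$ between them carries only the colour red, so (as $G$ is $\eta^3k$-almost-complete and $|D_A|,|D_B|\geq(1+\eta^{1/2})\alpha k$) the bipartite graph $G_1[D_A,D_B]$ is $\eta^3k$-almost-complete with both parts of size $\geq(1+\eta^{1/2})\alpha k$. By Lemma~\ref{l:eleven} (with $\ell=\lfloor(1+\eta^{1/2})\alpha k\rfloor$, so that $a/\ell=\eta^3k/\ell<0.5$) this bipartite graph contains a red connected-matching on $\geq2(1+\eta^{1/2})\alpha k-2\eta^3k>(1+\eta^{1/2})\alpha k$ vertices, giving conclusion~(i). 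All cases are covered.

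The main obstacle is the near-balanced regime $\alpha\approx\beta$: one must verify that the ``complete/sparse'' structures produced by Lemmas~\ref{l:SkB} and~\ref{l:SkA} --- which carry error tolerances of order $\eta^{1/16}$ together with their own size constraints --- are genuinely compatible with the required side conditions $\alpha\leq(1-\eta^{1/8})\beta$ and $\beta<(\tfrac32+2\eta^{1/2})\alpha$ and with the lower bound on~$K$, and that merging two large distinct monochromatic components discards at most $O(\eta^3k)$ vertices. The exponents built into the hypotheses ($\eta^3$-almost-completeness, $\eta<\min\{10^{-20},0.001\alpha,(\alpha/2)^8\}$) are tuned so that all the attendant inequalities hold once $\eta$ is small and $k$ is large; each such estimate is routine, but there are several to check.
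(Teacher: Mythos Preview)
Your overall strategy---apply Lemma~\ref{l:SkB} once with colours swapped, then clean up with Lemma~\ref{l:SkA}---is close in spirit to the paper's, but there is a genuine gap in your handling of cases~(c) and~(d). The problem is the exceptional set $W$: Lemma~\ref{l:SkB} only guarantees $|W|\leq\eta^{1/16}k$, and under the stated hypotheses $\eta^{1/16}$ need \emph{not} be small relative to $\alpha$. Concretely, with $\eta=10^{-20}$ one has $\eta^{1/16}\approx 0.056$, so for (say) $\alpha=0.1$ the bound $|V'|>(\tfrac32\alpha-c\eta^{1/16})k$ you derive in case~(d) gives only $|V'|>(0.15-0.11)k$, which is far below the $(1+\eta^{1/2})\alpha k\approx 0.1k$ you need---in fact the upper bound $|V'|<(1+\eta^{1/2})\alpha k$ is perfectly compatible with the lower bound and no contradiction results. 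The same failure occurs in case~(c): the size inequality yields $\beta>\tfrac32\alpha-O(\eta^{1/16})$, but this does not force $\alpha\leq(1-\eta^{1/8})\beta$ when $\alpha$ is of order $\eta^{1/16}$. Your appeal to ``$\eta$ small'' does not help, since the lemma must hold for every $\eta$ in the stated range, and the bound $\eta<(\alpha/2)^8$ only gives $\eta^{1/16}<(\alpha/2)^{1/2}$, which for small $\alpha$ is much larger than $\alpha$.

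The paper sidesteps this entirely by case-splitting \emph{before} invoking either auxiliary lemma: if $\alpha>(1-\eta^{1/8})\beta$ it applies Lemma~\ref{l:SkA} directly with parameter~$\beta$ (not $\alpha$), so that the red/blue-odd matchings already have the right size and the structural outcome is immediately~(iii$\ast$); only when $\alpha\leq(1-\eta^{1/8})\beta$ does it invoke Lemma~\ref{l:SkB}, and then the side condition for~(iii) is already in hand. For the residual bipartite blue-matching case the paper takes the bipartition of the matching itself, finds large red matchings on each side via Corollary~\ref{dirac1a}, and extends to a partition of all of~$V$; your alternative route through Lemma~\ref{l:SkA} and the component-separation argument in that sub-case is correct and arguably cleaner, but it does not repair the earlier gap.
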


\begin{proof}
Suppose that $\beta\geq\alpha>(1-\eta^{1/8})\beta$.
Then, since $\eta\leq\min\{(\alpha/2)^8,0.001\alpha\}$, we have $(\max\{2\alpha,\half\alpha+\beta\}-\eta^{1/2}\alpha)k\geq(\tfrac{3}{2}\beta+80\eta)k$. Thus, since $\eta<0.001\alpha\leq0.001\beta$, we may apply Lemma~\ref{l:SkA} (with $\beta$ taking the role of $\alpha$) to find that $G$ either contains a red connected-matching on at least $(1+\eta^{1/2})\beta k\geq(1+\eta^{1/2})\alpha k$ vertices or a blue odd connected-matching on at least $(1+\eta^{1/2})\beta k$ vertices or admits a partition satisfying~(iii$\ast$).

Thus, we may assume that $\alpha\leq(1-\eta^{1/8})\beta$. Applying Lemma~\ref{l:SkB} (with the roles of $\alpha$ and $\beta$ and the colours exchanged) we find that $G$ either contains a red connected-matching on at least $(1+\eta^{1/2})\alpha k$ vertices, a blue connected-matching on at least $(1+\eta^{1/2})\beta k$ vertices or admits a partition $W\cup V'\cup V''$ satisfying (iii). Except in the second case, the proof is complete. 

Thus, we may assume that $G$ contains a blue connected-matching $M$ on at least $(1+\eta^{1/2})\beta k$ vertices which is not odd and that $\beta+\half\alpha<2(1+\eta^{1/2})\alpha$. Partitioning the vertices spanned by $M$ into $V' \cup V''$ such that the edges of $M$ belong to $G[V',V'']$, we then have $|V'|,|V''|\geq\half(1+\eta^{1/2})\beta k$ and, since~$M$ is not odd, know that all edges present in $G[V']$ and $G[V'']$ must be red. Then, since $G$ is $\eta^3 k$-almost-complete, by Corollary \ref{dirac1a}, $G[V']$ and $G[V'']$ each contain a red connected-matching on at least $\half(1+\eta^{1/2})\beta k-1\geq (\half\alpha+\tfrac{1}{4}\eta^{1/2})k$ vertices. Thus, the presence of a red edge in $G[V',V'']$ would imply the existence of a red connected-matching on at least~$\alpha k$ vertices. Thus, we may assume that all edges present in  $G[V',V'']$ are coloured blue. 

Thus, the partition of $V(M)$ obtained, resembles that described in (iii$\ast$). To complete the proof in this case, we attempt to extend this into a partition of $V(G)$: Recall that $G[V',V'']$ contains a blue connected-matching $M$ on at least $\beta k$ vertices but that this connected-matching is not odd. Then, consider a vertex $v\in V\backslash (V'\cup V'')$. Such a vertex cannot have blue edges to both $V'$ and $V''$ since this would allow $M$ to be extended into an odd connected-matching on at least $\beta k$ vertices. Thus, either every edge present in $G[v,V']$ is red or every edge present in $G[v,V'']$ is red. In the former case, every edge present in $G[v,V'']$ must be blue (to avoid having a red connected-matching on at least $\alpha k$ vertices). In the latter case, every edge present in $G[v,V']$ must be blue.

Thus, $v$ can be added to either $V'$ or $V''$ while maintaining the property that all edges present in $G[V']$ and $G[V'']$ are coloured red and all edges in $G[V',V'']$ are coloured blue. Therefore, assigning the vertices of $V\backslash (V'\cup V'')$ in turn to either $V'$ or $V''$, we obtain a partition satisfying (iii$\ast$), completing the proof.
\end{proof}

It is a well-known fact that either a graph is connected or its complement is. We now prove
three
simple extensions of this fact for two-coloured almost-complete graphs, all of which can be immediately extended to two-multicoloured almost-complete graphs. 

\begin{lemma}\label{l:dgf0}
For every $\eta$ such that $0<\eta<1/3$ and every $K\geq 1/\eta$, if $G=(V,E)$ is a two-coloured $(1-\eta)$-complete graph on~$K$ vertices and~$F$ is its largest monochromatic component, then $|F|\geq (1-3\eta)K$.
\end{lemma}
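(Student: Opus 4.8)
The plan is to show that if no monochromatic component is large, then some vertex has too low a degree, contradicting $(1-\eta)$-completeness. Fix any vertex $v\in V$. In $G$, the vertex $v$ is non-adjacent to at most $\eta(K-1)$ other vertices, so it is adjacent (in one of the two colours) to at least $K-1-\eta(K-1)$ vertices. Let $F_1(v)$ be the red component containing $v$ and $F_2(v)$ the blue component containing $v$ (if $v$ has no red edge, take $F_1(v)=\{v\}$, similarly for blue). Every neighbour of $v$ lies in $F_1(v)\cup F_2(v)$, so $|F_1(v)|+|F_2(v)|\geq |F_1(v)\cup F_2(v)|+1\geq (K-\eta(K-1))+1\geq(1-\eta)K$. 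Hence at least one of $|F_1(v)|,|F_2(v)|$ is at least $\tfrac12(1-\eta)K$; call the larger one $F(v)$, so $F(v)$ is a monochromatic component with $|F(v)|\geq\tfrac12(1-\eta)K$.

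First I would upgrade this crude bound. The point is that there can be at most one monochromatic component of size exceeding $\tfrac12 K$ \emph{in each colour} — indeed two disjoint red components cannot both exceed half the vertices — but more usefully, I claim there is a \emph{single} monochromatic component $F$ that captures almost all vertices. Take $v$ with $F=F(v)$ as above of size $\geq\tfrac12(1-\eta)K$, say $F$ is red. Now consider any vertex $u\notin F$. The red component of $u$ is disjoint from $F$, hence has size at most $K-|F|\leq K-\tfrac12(1-\eta)K=\tfrac12(1+\eta)K$; but running the displayed inequality for $u$ forces the blue component of $u$ to have size at least $(1-\eta)K-\tfrac12(1+\eta)K=\tfrac12(1-3\eta)K$. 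So every vertex outside $F$ lies in a \emph{blue} component of size $\geq\tfrac12(1-3\eta)K$.

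Next I would argue these blue components are in fact all the same, and large. Suppose $u,u'\notin F$ lie in blue components $B,B'$ with $B\cap B'=\emptyset$. Then $|B|+|B'|\leq K$, while each has size $\geq\tfrac12(1-3\eta)K$, which is consistent, so I need to push harder: pick any third vertex $w\in B$; since $w$'s red component is disjoint from $F$ it has size $\leq\tfrac12(1+\eta)K$, fine; but now count edges from $B$ to $B'$ — every such edge is red (being between distinct blue components), and in $G$ each vertex of $B$ misses at most $\eta(K-1)$ vertices, so if $|B'|>\eta(K-1)$ there is a red edge between $B$ and $B'$. Such an edge merges the red components of $u$ and $u'$, both disjoint from $F$, into one red component of size $\geq |B|\geq\tfrac12(1-3\eta)K > \eta K$ for $\eta<1/3$, which is fine too — so instead I conclude $V\setminus F$ is itself connected in blue: any two vertices of $V\setminus F$, if in different blue components, would (since each misses $\leq\eta(K-1)$ vertices) have a common red neighbour inside $V\setminus F$ unless $|V\setminus F|$ is tiny, giving a red path, a contradiction with $F$ being a red component disjoint from them only if that path stays outside $F$. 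Cleanly: if $|V\setminus F|>2\eta K$ then $G[V\setminus F]$ is connected in the majority colour restricted there, which must be blue, so $B:=V\setminus F$ is a single blue component and we are done since then $\max\{|F|,|B|\}\geq\tfrac12 K\geq(1-3\eta)K$ is false — so actually I must take $F$ to be whichever of $F$, $V\setminus F$ is larger, and since $|F|+|V\setminus F|=K$ the larger is $\geq\tfrac12 K$, which still falls short of $(1-3\eta)K$ when $\eta$ is small.

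The main obstacle is thus obtaining the sharp constant $1-3\eta$ rather than $\tfrac12$: the honest route is to observe that the vertex $v$ achieving $F(v)=F$ can be chosen to maximise $|F(v)|$, and then every vertex $u$ is adjacent to at most $\eta(K-1)$ missing vertices, so all but at most $\eta(K-1)$ vertices of $V$ send an edge of $F$'s colour into $F$ (otherwise $u$'s entire neighbourhood, of size $\geq(1-\eta)(K-1)$, lies in the other colour's component through $u$, which would then exceed $|F|$, contradicting maximality once $|F|<(1-3\eta)K$). Those vertices sending an $F$-coloured edge into $F$ all join $F$, so $|F|\geq K-\eta(K-1)-(\text{the at most }\eta(K-1)\text{ exceptions})\geq(1-3\eta)K$. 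I would write the final proof in this last form, as it is the shortest and gives the stated bound directly; the delicate point to get right is the case analysis ensuring the "other colour's component through $u$" really would be forced larger than $|F|$, which uses exactly $|F|<(1-3\eta)K<(1-\eta)K-\eta(K-1)$.
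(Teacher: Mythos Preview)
Your proposal does not reach a correct proof. The exploratory middle section openly fails (you say yourself the bound ``still falls short''), and the final argument contains a real gap. You assert that if a vertex $u$ sends no edge of $F$'s colour (say red) into $F$, then ``$u$'s entire neighbourhood lies in the other colour's component through $u$''. This is false: $u$ may have red neighbours \emph{outside} $F$, in $u$'s own red component $R_u\neq F$, and those need not lie in $u$'s blue component. So the bound $|B_u|\geq 1+(1-\eta)(K-1)$ is unsupported. Worse, since $F$ is a red component, a vertex sends a red edge into $F$ if and only if it lies in $F$; hence your claim ``all but at most $\eta(K-1)$ vertices send a red edge into $F$'' is equivalent to $|F|\geq K-\eta(K-1)$, which is stronger than the conclusion you are trying to prove, not an intermediate step toward it.

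The paper's proof is a short bipartition argument: if every red component has fewer than $(1-3\eta)K$ vertices, group the red components into parts $A,B$ each of size at least $3\eta K$; since any two vertices together miss at most $2\eta(K-1)<3\eta K$ others and all $A$--$B$ edges are blue, any two vertices of $A$ share a blue neighbour in $B$ and vice versa, forcing $V=A\cup B$ to be a single blue component. Your degree-counting approach \emph{can} be rescued, but it needs a genuine case split: either $|F|>2\eta K$, in which case every $u\notin F$ has more than $|F|/2$ blue neighbours inside $F$, so all such $u$ share one blue component $B$ with $|B|\geq (K-|F|)+(|F|-\eta(K-1))\geq(1-\eta)K$; or $|F|\leq 2\eta K$, in which case every red component is small and $|B_u|\geq 1+d(u)-|R_u|\geq 1+(1-\eta)(K-1)-2\eta K\geq(1-3\eta)K$ directly. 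You carried out neither branch.
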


\begin{proof}
If the largest monochromatic (say, red) component in~$G$ has at least $(1-3\eta)K$ vertices, then we are done. Otherwise, we may partition the vertices of~$G$ into sets~$A$ and~$B$ such that $|A|,|B|\geq3\eta K\geq 2$ and there are no red edges between~$A$ and~$B$. Since~$G$ is $(1-\eta)$-complete, any two vertices in~$A$ have a common neighbour in~$B$, and any two vertices in~$B$ have a common neighbour in~$A$. Thus, $A\cup B$ forms a single blue component.
\end{proof}

The following lemmas form analogues of the above, the first concerns the structure of two-coloured almost-complete graphs with one hole and the second concerns the structure of two-coloured almost-complete graphs with two holes, that is, bipartite graphs. 

\begin{lemma}
\label{l:dgf1} 

For every $\eta$ such that $0<\eta<1/20$ and every $K\geq 1/\eta$, the following holds. For~$W$, any subset of~$V$ such that $|W|,|V\backslash W|\geq 4\eta^{1/2}K$, let $G_{W}=(V,E)$ be a two-coloured graph obtained from~$G$, a $(1-\eta)$-complete graph on~$K$ vertices with vertex set~$V$ by removing all edges contained entirely within~$W$. Let~$F$ be the largest monochromatic component of $G_W$ and define the following two sets:
\begin{align*} 
 W_{r}&= \{\text{$w \in W$ : $w$ has red edges to all but at most  $3\eta^{1/2} K$ vertices in $V \backslash W$}\};
\\ 
 W_{b}&=\{ w \in W : w \text{ has blue edges to all but at most } 3\eta^{1/2} K \text{ vertices in } V \backslash W\}.
\end{align*}
Then, at least one of the following holds:
\begin{itemize}
\item [(i)] $|F|\geq (1-2\eta^{1/2})K$; 
\item [(ii)] $|W_{r}|,|W_{b}|>0$.
\end{itemize}
\end{lemma}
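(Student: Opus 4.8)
The plan is to mimic the structure of the proof of Lemma~\ref{l:dgf0}, but to take care of the hole~$W$ by working mostly inside $V\backslash W$, where $G_W$ still has almost-complete structure. First I would apply Lemma~\ref{l:dgf0} (or rather its proof) to $G_W[V\backslash W]$: since $|V\backslash W|\geq 4\eta^{1/2}K\geq 1/\eta$ (for $\eta$ small and $K\geq 1/\eta$) and $G[V\backslash W]$ is still $(1-\eta)$-complete, there is a monochromatic — say, without loss of generality, red — component $F_0$ of $G_W[V\backslash W]$ with $|F_0|\geq(1-3\eta)|V\backslash W|$. Set $A=(V\backslash W)\backslash F_0$, so $|A|\leq 3\eta|V\backslash W|\leq 3\eta K$, and $F_0$ is a red-connected set that misses only the few vertices of $A$ within $V\backslash W$.

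\textbf{Classifying the vertices of $W$.} Now consider any $w\in W$. All of $w$'s non-missing edges in $G_W$ go to $V\backslash W$, and since $G$ is $(1-\eta)$-complete, $w$ has at most $\eta(K-1)$ missing edges, hence $w$ is joined (in some colour) to all but at most $\eta K$ vertices of $V\backslash W$. In particular $w$ is joined to all but at most $\eta K + |A| \leq 4\eta K \leq 3\eta^{1/2}K$ vertices of $F_0$. If \emph{every} $w\in W$ has a red edge to $F_0$, then all of $W$ lies in the red component containing $F_0$, giving a monochromatic component of size at least $|W|+|F_0|\geq |W| + (1-3\eta)(K-|W|)\geq (1-3\eta)K\geq(1-2\eta^{1/2})K$, so (i) holds. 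Otherwise, some $w_b\in W$ has \emph{no} red edge to $F_0$; since $w_b$ misses at most $3\eta^{1/2}K$ vertices of $F_0 \supseteq$ (most of) $V\backslash W$, every non-missing edge from $w_b$ to $F_0$ is blue, and $w_b$ fails to have a blue edge to at most $(|V\backslash W| - |F_0|) + (\text{missing edges of } w_b) \leq 3\eta K + \eta K \leq 3\eta^{1/2}K$ vertices of $V\backslash W$; hence $w_b\in W_b$, so $W_b\neq\emptyset$.

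\textbf{Getting $W_r\neq\emptyset$ as well.} It remains to produce a vertex of $W_r$ in this second case. Here I would use the existence of $w_b\in W_b$ together with almost-completeness one more time. Consider the set $B$ of vertices in $V\backslash W$ joined to $w_b$ by a blue edge; $|B|\geq(1-3\eta^{1/2})K$ (counting against $|V\backslash W|\geq(1-|W|/K)K$ and adjusting — here one uses $|W|\leq K - 4\eta^{1/2}K$). Apply the argument of Lemma~\ref{l:dgf0} to $G_W[V\backslash W]$ again but now tracking the \emph{blue} component $F_1$ through $w_b$: either $F_1$ together with $w_b$ already has $\geq(1-2\eta^{1/2})K$ vertices (case (i)), or there is a second vertex $w'\in W$ with no blue edge into $F_1$, whence by the same dichotomy $w'$ has red edges to all but $\leq 3\eta^{1/2}K$ vertices of $V\backslash W$, i.e. $w'\in W_r$, giving (ii). Finally one checks the degenerate subcase where $F_0$ itself is split — but since a two-colouring of an almost-complete graph on $V\backslash W$ has (by Lemma~\ref{l:dgf0}) a monochromatic component covering all but $3\eta|V\backslash W|$ vertices, there is always a dominant colour to run this argument with.

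\textbf{Main obstacle.} The delicate point is the bookkeeping of the various small error sets — $W$, the missing edges ($\leq\eta K$ per vertex), and the leftover $A$ (and later $B$) from Lemma~\ref{l:dgf0} — and verifying that their sum stays below the $3\eta^{1/2}K$ threshold in the definitions of $W_r,W_b$ and below the $2\eta^{1/2}K$ deficiency in (i); this is exactly why the hypothesis is phrased with $\eta^{1/2}$ rather than $\eta$. The one genuinely non-routine step is ensuring that \emph{both} $W_r$ and $W_b$ are nonempty simultaneously (rather than just one of them), which forces the two-stage argument above: first locate $w_b$, then feed it back in to locate $w_r$. I expect the write-up to be short once the inequality $|V\backslash W|\geq 4\eta^{1/2}K$ is used to absorb all the error terms.
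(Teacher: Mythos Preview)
Your approach is essentially identical to the paper's: apply Lemma~\ref{l:dgf0} inside $V\backslash W$ to get a large (say red) component, then run the two-stage dichotomy on $W$ --- either all of $W$ attaches red (giving (i)), or some $w_b\in W_b$ exists; then either all of $W$ attaches blue to the large blue neighbourhood $B$ of $w_b$ (giving (i)), or some $w_r\in W_r$ exists. One technical slip: $G[V\backslash W]$ is \emph{not} $(1-\eta)$-complete --- each vertex still misses up to $\eta(K-1)$ edges but now out of only $|V\backslash W|-1\geq 4\eta^{1/2}K-1$ vertices, so it is only $(1-\tfrac{1}{4}\eta^{1/2})$-complete; the paper makes exactly this computation, and Lemma~\ref{l:dgf0} then yields $|F_0|\geq |V\backslash W|-\tfrac{3}{4}\eta^{1/2}|V\backslash W|$ rather than your $(1-3\eta)|V\backslash W|$. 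This does not break your argument (the error terms still fit under $3\eta^{1/2}K$ and $2\eta^{1/2}K$), but your stated bounds and the side remark ``$4\eta^{1/2}K\geq 1/\eta$'' need adjusting accordingly.
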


\begin{proof}
Consider $G[V\backslash W]$. 
Since~$G$ is $(1-\eta)$-complete, $|V\backslash W|\geq 4\eta^{1/2}K$ and $\eta<1/20$, we see that every vertex in $G[V\backslash W]$ has degree at least $|V\backslash W|-\eta(K-1)\geq(1-\tfrac{1}{4}\eta^{1/2})(|V\backslash W|-1)$, that is, $G[V\backslash W]$ is $(1-\tfrac{1}{4}\eta^{1/2})$-complete. Thus, provided $4\eta^{1/2}K \geq 1/(\tfrac{1}{4}\eta^{1/2})$, that is, provided $K\geq 1/\eta$, we can apply Lemma~\ref{l:dgf0}, which tells us that the largest monochromatic component in $G[V\backslash W]$ contains at least $|V\backslash W|-\eta^{1/2} K$ vertices. We assume, without loss of generality, that this large component is red and call it~$R$.

Now,~$G$ is $(1-\eta)$-complete so either every vertex in~$W$ has a red edge to~$R$ (giving a monochromatic component of the required size) or there is a vertex $w\in W$ with at least $|R|-2\eta K$ {blue neighbours} in~$R$, that is, a vertex $w\in W_{b}$. Denote by~$B$ the set of $u\in R$ such that $uw$ is blue. Then, $|B|\geq |V\backslash W|-2\eta^{1/2} K$ and either every point in~$W$ has a blue edge to~$B$, giving a blue component of size at least $|B\cup W|>(1-2\eta^{1/2})K$, or there is a vertex $w_1\in W_{r}$.
\end{proof}

\section{Proof of the stability result}
\label{s:stabp}

In order to prove Theorem~D, we need to show that any three-multicoloured graph on slightly fewer than $$\left(\max\{4\aI,\aI+2\aII,\aI+2\aIII\}\right)k$$ vertices with sufficiently large minimum degree will contain a red connected-matching on at least~$\alpha_{1}k$ vertices, a blue odd connected-matching on at least~$\alpha_{2}k$ vertices or a green odd connected-matching on at least~$\alpha_{3}k$ vertices, or will have a particular structure.

Thus, given $\aI, \aII, \aIII$, we set 
$$c=\max\{4\aI,\aI+2\aII,\aI+2\aIII\}=\aI+\max\{3\aI,2\aII,2\aIII\},$$ let
$$\eta_{D}(\aI,\aII,\aIII)=\min \left\{10^{-40},\left(\frac{\aI}{50}\right)^{16}, \left(\frac{\aII}{50}\right)^{16}, \left(\frac{\aIII}{50}\right)^{16}, 
\left(\frac{\min\{\aI,\aII,\aIII\}}{100\max\{\aI,\aII,\aIII\}}\right)^4
 \right\},$$
 chose $\eta$ such that 
 $$\eta<\min\left\{\eta_D(\aI,\aII,\aIII),\half\eta_{\ref{l:fourteen}}(\aI,\aII,\aIII),\left(\eta_{\ref{thBc}}(\aI,\aII,\aIII)\right)^2, \eta_{\ref{kss07a-7}}(\alpha_1),\right\}$$
and consider $G=(V,E)$, a $(1-\eta^4)$-complete graph on~$K\geq 100/\eta$ vertices, where
$$(c - \eta)k \leq K \leq (c - \half\eta)k$$ for some integer $k>k_{D}$%>\tfrac{2}{\eta}$
, where $k_{D}=k_{D}(\aI,\aII,\aIII,\eta)$ will be defined implicitly during the course of this section, in that, on a finite number of occasions, we will need to bound~$k$ below in order to apply results from Section~\ref{s:pre1}.

Note that, by scaling, we may assume that $\aI,\aII,\aIII\leq1$. Notice, then, that $G$ is~$4\eta^4k$-almost-complete and, thus, for any $X\subset V$, $G[X]$ is also $4\eta^4k$-almost-complete. 

In this section, we seek to prove that~$G$ contains at least one of the following:
\begin{itemize}
\item [(i)]	  a red connected-matching on at least $\aI k$ vertices;
\item [(ii)]  a blue odd connected-matching on at least $\alpha_{2}k$ vertices;
\item [(iii)]  a green odd connected-matching on at least $\alpha_{3}k$ vertices; 
\item [(iv)] subsets of vertices $W$, $X$ and $Y$ such that $X\cup Y\subseteq W$, $X\cap Y=\emptyset$, $|W|\geq(c-\eta^{1/2})k$, every $\gamma$-component of $G[W]$ is odd, $G[X]$ contains a two-coloured spanning subgraph $H$ from $\cH_{1}\cup\cH_{2}$ and $G[Y]$ contains a a two-coloured spanning subgraph $K$ from $\cH_{1}\cup\cH_{2}$, where~{\phantom{nnn}}
\vspace{-2mm}
\begin{align*}
\cH_1&=\cH\left((\aI-2\eta^{1/64})k,(\half\alpha_{*}-2\eta^{1/64})k,4\eta^2 k,\eta^{1/64},\text{red},\gamma\right),\text{ } 
\\ \cH_2&=\cH\left((\alpha_{*}-2\eta^{1/64})k,(\half\aI-2\eta^{1/64})k,4\eta^2 k,\eta^{1/64},\gamma,\text{red}\right),
\end{align*}

\vspace{-3mm}
for $(\alpha_{*},\gamma)\in\{(\aII,\text{blue}),(\aIII,\text{green})\}$;
\item[(v)] 
disjoint subsets of vertices $X$ and $Y$ such that $G[X]$ contains a two-coloured spanning subgraph~$H$ from $\cH_{2}^*\cup\cJ_b$ and $G[Y]$ contains a two-coloured spanning subgraph $K$ from $\cH_{2}^*\cup\cJ_b$, where
\vspace{-2mm}
\begin{align*}
\cH_{2}^*&=\cH\left((\beta-2\eta^{1/32})k,(\half\aI-2\eta^{1/32})k,4\eta^4 k,\eta^{1/32},\gamma,\text{red}\right),\text{ } 
\\ \cJ_b&=\cJ\left((\aI-18\eta^{1/2}), 4\eta^4 k, \text{red}, \gamma\right),
\end{align*}

\vspace{-3mm}
for $\beta=\max\{\aII,\aIII\}$ and $\gamma\in\{\text{blue}, \text{green}\}$;
\item[(vi)]  a subgraph $H$ from $\cL=\cL\left((\half\alpha+\tfrac{1}{4}\eta)k,4\eta^4k, \text{red}, \text{blue}, \text{green}\right).$
\end{itemize}

Observe that, for $\aI\geq\max\{\aII,\aIII\}$, since $\eta_{\ref{kss07a-7}}(\alpha_1)$,  the result follows immediately from Lemma~\ref{kss07a-7}. Thus, in what follows, we may assume that $\max\{\aII,\aIII\}\geq \aI$.

We consider the average degrees of the coloured spanning subgraphs. Notice that, if $d(G_1)\geq\aI k$, then, by Corollary~\ref{l:eg}, $G$ contains a red connected-matching on $\aI k$ vertices. Thus, since the number of missing edges at each vertex can be bounded above, we see that either $d(G_2)>\half(c-\aI-2\eta)k$ or $d(G_3)>\half(c-\aI-2\eta)k$. Without loss of generality, we assume the former and, thus, have
\begin{equation}
\label{ubnew}
e(G_2)>\tfrac{1}{4}(c-\aI-2\eta)(c-\eta)k^2.
\end{equation}

If $G$ contained a blue odd connected-matching on at least $\aII k$ vertices, the proof would be complete, thus we may instead use Lemma~\ref{l:decomp} to decompose the blue graph and, thus, partition the vertices of~$G$ into $W, X$ and $Y$ such that
\begin{itemize}
\item[(i)] $X$ and $Y$ contain only red and green edges; 
\item[(ii)] $W$ has at most $\half \aII k|W|$ blue edges; and 
\item[(iii)] there are no blue edges between $W$ and $X\cup Y$. 
\end{itemize}

\begin{figure}[!h]
\vspace{-2mm}
\centering
\includegraphics[width=64mm, page=3]{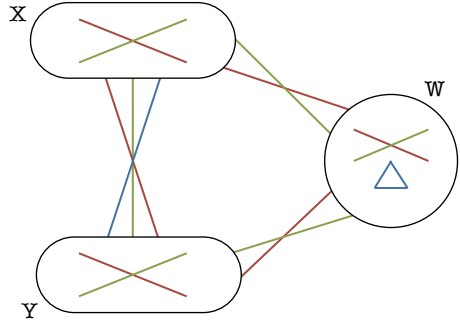}
\vspace{-3mm}\caption{Decomposition of the blue graph.}
  \end{figure}

Thus, writing $wk$ for $|W|$ and noticing that $e(G[X,Y])$ is maximised when~$X$ and~$Y$ are equal in size, we find that
\begin{equation}
\label{lbnew}
e(G_2)\leq\half\aII wk^2+\tfrac{1}{4}(c-w)^2 k^2.
\end{equation}

Comparing (\ref{ubnew}) and (\ref{lbnew}), we obtain a quadratic inequality in $w$, solving which, since $\eta\leq\eta_D$, results in two possibilities:
\begin{itemize}
\item[(F)] $w>c-\eta^{1/2}$;
\item[(G)] $w<\aI+\eta^{1/2}$.
\end{itemize}

 In {\bf Case F}, almost all of the vertices of $G$ belong to $W$. Since $G[W]$ is the union of the odd blue-components of $G$, any blue matching found there is, by definition, odd. Thus, in $G[W]$, any result which provides a blue connected-matching of unspecified parity can be used to provide a blue odd connected-matching. Thus, we consider Theorems~\ref{l:fourteen} and~\ref{thBc} which relate to the even-even-odd case.

Since $G$ is $(1-\eta^4)$-complete and $|W|>(c-\eta^{1/2})k%\geq K-4\eta
\geq \tfrac{9}{10}K$, $G[W]$ is $(1-2\eta^4)$-complete.
Thus, since $\eta\leq\half\eta_{\ref{l:fourteen}}(\aI,\aII,\aIII)$, provided $k\geq k_{\ref{l:fourteen}}(\aI,\aII,\aIII,2^{1/4}\eta)$ and $$c-\eta^{1/2}\geq\max\left\{
\aI+2\aII,
2\aI+\aII,
\half\aI+\half\aII+\aIII
\right\}
+14\eta^{1/4},$$ by Theorem~\ref{l:fourteen},~$G$ contains either a red connected-matching on at least $\aI k$ vertices, a blue connected-matching on at least $\aII k$ vertices (which by the nature of the decomposition is odd) or a green odd connected-matching on at least $\aIII k$ vertices.

Thus, in the case that $$\max\{4\aI,\aI+2\aII,\aI+2\aIII\}\geq\max\{2\aI+\aII,\aI+2\aII,\half\aI+\half\aII+\aIII\}+15\eta^{1/4},$$ the proof is complete.
Since $\eta\leq\eta_D$, this condition holds provided $\aIII\geq\aII+24\eta^{1/4}$. Thus, we may instead assume that $\aIII\leq\aII+24\eta^{1/4}$. Also, since $\eta\leq\eta_D$, we have 
$$\aIII\leq 
\aII+24\eta^{1/4} \leq \tfrac{3}{2}\aII \leq \tfrac{3}{2}\max\{\aI,\aII\}
\leq \tfrac{3}{2}\max\{\aI,\aII\}+\half\min\{\aI,\aII\}-12\eta^{1/4}$$
and, since $\eta\leq\left(\eta_{\ref{thBc}}(\aI,\aII,\aIII)\right)^2$, provided $k\geq k_{\ref{thBc}}(\aI,\aII,\aIII,\eta^{1/2})$, we may apply Theorem~\ref{thBc}, to find that $G[W]$ contains either a red connected-matching on at least~$\aI
 k$ vertices, a blue connected-matching on at least $\aII k$ vertices (which by the nature of the decomposition is odd), a green odd connected-matching on at least $\aIII k$ vertices or
two disjoint subsets of vertices $X$ and $Y$ such that $G[X]$ contains a two-coloured spanning subgraph~$H$ from $\cH_1^B\cup\cH_2^B$ and $G[Y]$ contains a two-coloured spanning subgraph $K$ from $\cH_1^B\cup\cH_2^B$ where 
\vspace{-2mm}
\begin{align*}
\cH_1=&\cH\left((\aI-2\eta^{1/64})k,(\half\aII-2\eta^{1/64})k,4\eta^2 k,\eta^{1/64},\text{red},\text{blue}\right),\text{ } 
\\ \cH_2=&\cH\left((\aII-2\eta^{1/64})k,(\half\aI-2\eta^{1/64})k,4\eta^2 k,\eta^{1/64},\text{blue},\text{red}\right).
\end{align*}
Furthermore, $H,K\in \cH_1$ if $\aII\leq\aI-\eta^{1/32}$ and $H,K\in \cH_2$ if $\aI\leq\aII-\eta^{1/32}$, thus completing Case~F.

Moving on to {\bf Case G}, recall that we have a decomposition of the vertices of $G$ into $X\cup Y\cup W$ such that $G[X], G[Y], G[X,W]$ and $G[Y,W]$ contain only red and green edges and that we have $|W|=wk<(c-2\aII+\eta^{1/2})k$.

We assume that $|X|\geq|Y|$ and consider the subgraph $G_1[X\cup W] \cup G_3[X\cup W]$, that is, the subgraph of~$G$ on $X\cup W$ induced by the red and green edges. Recall that $G$, is $(1-\eta^4)$-complete and notice that $|X\cup W|\geq\half K$. Thus, $G[X\cup W]$, is $(1-2\eta^4)$-complete. Therefore, since $\eta\leq\eta_{D}$, provided that $k>k_{\ref{l:thirteen}}(\aI,\aII,w,2^{1/4}\eta)$, by Lemma~\ref{l:thirteen}, if 
\begin{equation*}
|X|+|W| \geq \half\Big(\max\Big\{\aI+\aIII+\max \left\{ 2w, \aI, \aIII \right\},3\aI+\max\left\{2w,\aI\right\} \Big\} + 11\eta^{1/2} \Big)k,
\end{equation*}
then $G[X]\cup G[X, W]$ has a red connected-matching on at least $(\aI+\eta)k$ vertices or a green odd connected-matching on at least $(\alpha_{3}+\eta)k$ vertices.

We may therefore assume that
\begin{equation}
\label{z8}
|X|+|W|< \half\Big(\max\Big\{\aI+\aIII+\max \left\{ 2w, \aI, \aIII \right\},3\aI+\max\left\{2w,\aI\right\} \Big\} + 11\eta^{1/2} \Big)k.
\end{equation}
Also, since $K=|X|+|Y|+|W|$ and $|X|\geq|Y|$, we have
\begin{equation}
\label{z9}
|X|+|W|\geq \frac{K+|W|}{2}=\frac{(c-\eta)k+wk}{2} = 
\half\left(\aI +\max\{3\aI,2\aII,2\aIII\}-\eta+w \right)k.
\end{equation}
We consider four subcases:
\begin{itemize}
\item[(G.i)] $\aI\geq\aIII,2w$; {\bf or} $\tfrac{3}{2}\aI\geq\aIII\geq\aI\geq 2w$;
\item[(G.ii)] $\aIII\geq\tfrac{3}{2}\aI\geq\aI\geq 2w$; {\bf or} $\aIII\geq 2w\geq \aI$ and $\aIII\geq\aI +w$;
\item[(G.iii)] $2w\geq\aIII\geq 2\aI$;
\item[(G.iv)] $\aI+w\geq\aIII\geq 2w\geq \aI$; {\bf or} $2w\geq\aI,\aIII$ and $\aIII\leq 2\aI$.
\end{itemize}

In {\bf Cases G.i} and {\bf G.ii} it can easily be shown that, together, equations (\ref{z8}) and (\ref{z9}) result in a contradiction unless $w\leq \eta+11\eta^{1/2}$ in which case almost all the vertices of~$G$ belong to~$X\cup Y$. In that case, (\ref{z9}) gives $$|X|\geq\half(c-\eta)-\half|W|\geq \left( \max\left\{2\aI,\half\aI+\aII,\half\aI+\aIII\right\}-6\eta^{1/2}\right) k.$$

Recalling (\ref{z8}), in {\bf Case G.i}, we have 
$|X|\leq|X|+|W|<(2\aI+\tfrac{11}{2}\eta^{1/2})k$.

Then, since $Y=K-|X|-|W|$, we have
\begin{align*}
|X|\geq|Y|\geq(c-\eta)k - |X| - |W| &\geq (\max\{2\aI,2\aII-\aI,2\aIII-\aI\}-17\eta^{1/2})k\\&
\geq(\max\{2\aI,\half\aI+\aII,\half\aI+\aII\}-17\eta^{1/2})k.
\end{align*}

Similarly, in {\bf Case G.ii}, we have $|X|\leq|X|+|W|<(\half\aI+\aIII+\tfrac{11}{2}\eta^{1/2})k$, giving
\begin{align*}
|X|\geq|Y|\geq(c-\eta)k - |X| - |W| &\geq (\max\{\tfrac{7}{2}\aI-\aIII,\half\aI+2\aII-\aIII,\half\aI+\aIII\}-17\eta^{1/2})k\\&
\geq(\max\{2\aI,\half\aI+\aII,\half\aI+\aIII\}-17\eta^{1/2})k.
\end{align*}

Thus, provided $\eta<(\aI/17)^4$, letting $\beta=\max\{\aII,\aIII\}$, in each of the {\bf Cases G.i-G.ii}, we have $$|X|\geq|Y|\geq (\max\{2\aI,\half\aI+\beta\}-\aI\eta^{1/4})k.$$

Thus, since $\eta<\min\{10^{-40},(\aI/1000)^4,(\aI/100\beta)^4\}$, provided $k>k_{\ref{l:SkAB}}(\eta^{1/2})$, we may apply Corollary~\ref{l:SkAB} to each of~$G[X]$ and $G[Y]$ to find that $G$ contains either a red connected-matching on at least~$\aI k$ vertices, a green odd connected-matching on at least~$\aIII k$ vertices or two 
disjoint subsets of vertices $X$ and $Y$ such that $G[X]$ contains a two-coloured spanning subgraph~$H$ from $\cH_{2}^*\cup\cJ_b$ and $G[Y]$ contains a two-coloured spanning subgraph $K$ from $\cH_{2}^*\cup\cJ_b$, where
\begin{align*}
\cH_2^*=&\cH\left((\beta-2\eta^{1/32})k,(\half\aI-2\eta^{1/32})k,4\eta^4 k,\eta^{1/32},\text{green},\text{red}\right),\text{ } 
\\ \cJ_b=&\cJ\left((\aI-18\eta^{1/2}), 4\eta^4 k, \text{red}, \text{green}\right).
\end{align*}
Furthermore, $H, K$ may belong to $\cH_2^*$ only if $\aI\leq(1-\eta^{1/16})\beta$ and may belong to $\cJ$ only if $\beta<(\tfrac{3}{2}+2\eta^{1/4})\aI$.

In {\bf Case G.iii}  by (\ref{z8}) and (\ref{z9}) we have 
$$w>\max\{3\aI-\aIII,2\aII-\aIII,\aIII\}-11.5\eta^{1/2}\geq\half(3\aI-\aIII)+\half\aIII-11.5\eta^{1/2}k\geq\tfrac{3}{2}\aI-11.5\eta^{1/2}k,$$
contradicting the assumption that $w<\aI+\eta^{1/2}$.

In case {\bf G.iv}, by (\ref{z8}) and (\ref{z9}), we have $w>\max\{3\aI,2\aII,2\aIII\}-2\aI-11.5\eta^{1/2}$.

Thus, since $\aI\leq\max\{\aII,\aIII\}$ and $w<\aI+\eta^{1/2}$, in what follows we may assume that
\begin{equation}
\label{z10-}
\aI\leq\max\{\aII,\aIII\}\leq\tfrac{3}{2}\aI+6\eta^{1/2}.\end{equation}
Then, recalling (\ref{z8}) and (\ref{z9}), since $K=|X|+|Y|+|W|$ and $|X|\geq|Y|$, it follows that 
\begin{equation}
\label{z10}
\left.
\begin{aligned}
\,\,\,\quad\quad
\left(\half\max\{3\aI,2\aII,2\aIII\}-\eta^{1/2}\right)k
 \leq |X| & <  
 (\tfrac{3}{2}\aI+6\eta^{1/2})k,
 \quad\quad\quad\quad\quad\,\,\,\,\,
 \\
 \left(\half\max\{3\aI,2\aII,2\aIII\}-8\eta^{1/2}\right)k
 \leq |Y| & <   
 (\tfrac{3}{2}\aI+6\eta^{1/2})k,
\\
(\aI-11.5\eta^{1/2})k \leq |W| & <   (\aI+4\eta)k. 
\end{aligned}
\right\}\!
\end{equation}
Thus, $W$ contains around $\aI k$ vertices and each of $X$ and $Y$ contain close to half the remaining
vertices. By scaling, we may assume that $\half \leq \aI, \max\{\aII,\aIII\}\leq 1$. Recall that $G$ is $(1-\eta^4)$-complete and that for any $V'\subseteq V(G)$, $G[V']$ is $4\eta^4 k$-almost-complete.

By~(\ref{z10}), letting $\beta=\max\{\aII,\aIII\}$, recalling that $|X|\geq|Y|$, we have $$|X|,|Y|\geq\left(\max\left\{\tfrac{3}{2}\aI,\tfrac{1}{4}(\tfrac{3}{2}\aI)+\tfrac{3}{4}\beta\right\}-8\eta\right)k\geq\tfrac{3}{4}(\max\left\{2\aI,\half\aI+\beta\right\}-\half(10^4\eta)^{1/2} )k$$
and may prove the following claim:
\begin{claim}
Either $G[X]$ contains a red connected-matching on at least $(\tfrac{3}{4}\aI+100\eta^{1/2})k$ vertices,  $G[X]$ contains a green odd connected-matching on at least $(\tfrac{3}{4}\beta-3\eta^{1/16})k$ vertices or $G[X]\in\cJ(\tfrac{3}{4}(\aI-102\eta^{1/2})k,4\eta^4k,\text{red}, \text{green})$.
\end{claim}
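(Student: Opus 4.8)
The Claim is, in effect, a $\tfrac34$-rescaled instance of the red–green stability machinery of Lemmas~\ref{l:SkA}--\ref{l:SkAB}, applied to the induced subgraph $G[X]$. Recall that, in Case~G, $G[X]$ carries only the colours red and green and is $4\eta^{4}k$-almost-complete, and that the displayed lower bound immediately preceding the Claim, together with $|X|\ge|Y|$, gives
\[
  |X|\ \ge\ \bigl(\tfrac34\max\{2\aI,\tfrac12\aI+\beta\}-8\eta\bigr)k
  \ =\ \bigl(\max\{2\alpha^{*},\tfrac12\alpha^{*}+\beta^{*}\}-8\eta\bigr)k,
\]
where I write $\alpha^{*}=\tfrac34\aI$ and $\beta^{*}=\tfrac34\beta$; note $\alpha^{*}\le\beta^{*}$ by~(\ref{z10-}). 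The point is that this is precisely a $\tfrac34$-scaled copy of the order hypothesis of Lemma~\ref{l:SkAB}.

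The main step is to apply Lemma~\ref{l:SkAB} to $G[X]$ with these $\alpha^{*},\beta^{*}$ and with its ``$\eta$'' taken to be the \emph{thickened} parameter $\eta_{*}=\eta^{1/2}$; thickening the error parameter is what turns the $(1+\eta_{*}^{1/2})\alpha^{*}$ slack of conclusion~(i) into something comfortably above $\tfrac34\aI+100\eta^{1/2}$. One checks the hypotheses: $\beta^{*}\ge\alpha^{*}\ge100\eta_{*}^{1/2}\beta^{*}$ reduces to $\aI\ge100\eta^{1/4}\beta$, which holds since $\eta\le\bigl(\tfrac{\min\{\aI,\aII,\aIII\}}{100\max\{\aI,\aII,\aIII\}}\bigr)^{4}\le(\aI/100\beta)^{4}$; the constraint $\eta_{*}<\min\{10^{-20},0.001\alpha^{*},(\alpha^{*}/2)^{8}\}$ is immediate from $\eta<10^{-40}$ and $\eta<(\aI/50)^{16}$; $G[X]$ is $\eta_{*}^{3}k$-almost-complete because $4\eta^{4}\le\eta^{3/2}$; and the order bound above exceeds $(\max\{2\alpha^{*},\tfrac12\alpha^{*}+\beta^{*}\}-\eta_{*}^{1/2}\alpha^{*})k$ since $8\eta<\tfrac34\aI\eta^{1/4}$. (The requirement $k>k_{\ref{l:SkAB}}(\eta^{1/2})$ is folded into the implicit lower bound on $k_{D}$.)

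Now I would read off the four possible conclusions. Conclusion~(i) is a red connected-matching on at least $(1+\eta^{1/4})\tfrac34\aI k\ge(\tfrac34\aI+100\eta^{1/2})k$ vertices (using $\tfrac34\aI\eta^{1/4}\ge100\eta^{1/2}$, valid since $\eta\le(\aI/50)^{16}$) --- the first alternative of the Claim. Conclusion~(ii) is a green odd connected-matching on at least $(1+\eta^{1/4})\tfrac34\beta k\ge(\tfrac34\beta-3\eta^{1/16})k$ vertices --- the second. Conclusion~(iii$^{\ast}$), whose precondition $\beta<(\tfrac32+2\eta^{1/4})\aI$ is automatic here because $\beta\le\tfrac32\aI+6\eta^{1/2}$ by~(\ref{z10-}), yields a partition $V'\cup V''$ with $G[V'],G[V'']$ monochromatically red, $G[V',V'']$ monochromatically green and $|V'|,|V''|<(\tfrac34\aI+\eta^{1/2})k$; since $|V'|+|V''|=|X|\ge(\tfrac32\aI-8\eta)k$ this forces $|V'|,|V''|>(\tfrac34\aI-8\eta-\eta^{1/2})k\ge\tfrac34(\aI-102\eta^{1/2})k$, so (using that $G[X]$ is $4\eta^{4}k$-almost-complete) $G[X]\in\cJ\bigl(\tfrac34(\aI-102\eta^{1/2})k,4\eta^{4}k,\text{red},\text{green}\bigr)$ --- the third.

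The only case left is conclusion~(iii) of Lemma~\ref{l:SkAB}: the near-extremal configuration $W_{*}\cup V'\cup V''$ in which $V'$ is green-$(1-\eta_{*}^{1/16})$-complete, $|V''|\le\tfrac12(1+\eta_{*}^{1/2})\alpha^{*}k$, and the ``waste'' set satisfies $|W_{*}|\le\eta_{*}^{1/16}k$; its precondition forces $\aI\le(1-\eta^{1/16})\beta$, so only the second and third alternatives of the Claim are in play. Here $G_{2}[V']$, by Corollary~\ref{dirac1a}, contains an odd cycle (so its green component is non-bipartite) and a cycle on $|V'|$ vertices, hence a green odd connected-matching on at least $|V'|-2$ vertices; bounding $|V'|\ge|X|-|V''|-|W_{*}|$ from below with the stated estimates then gives the matching required by the second alternative. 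I expect the only genuine obstacle to be the bookkeeping in this last case: with $\eta_{*}=\eta^{1/2}$ the set $W_{*}$ can have as many as $\eta^{1/32}k$ vertices, which overspends the $\eta^{1/16}k$ slack the Claim allows, so one must sharpen the estimate --- either by first absorbing each vertex of $W_{*}$ into $V'$ or $V''$ according to whether it sends almost only green or almost only red edges to the large green-complete set $V'$ (feasible since $G[X]$ is almost complete and $V'$ is large), or by re-running this configuration through Lemma~\ref{l:SkB} with the un-thickened parameter $\eta_{*}=\eta$ (whose waste set is only $\eta^{1/16}k$), using that we are not in conclusions~(i) or~(ii) of the thickened application and splitting according to whether $\beta>(1-\eta^{1/8})\aI$ so as to place the correct colour in the ``major'' role; in the sub-case where Lemma~\ref{l:SkB} returns a non-odd green connected-matching one finishes exactly as in the proof of Lemma~\ref{l:SkAB}, via the absorbing argument that produces the $\cJ$-structure. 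The statement with $G[Y]$ in place of $G[X]$ follows by the identical argument, since $|Y|$ obeys the same lower bound.
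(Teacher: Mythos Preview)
Your approach is exactly the paper's: apply Lemma~\ref{l:SkAB} to the red--green graph $G[X]$ with the $\tfrac34$-rescaled parameters $\alpha^{*}=\tfrac34\aI$, $\beta^{*}=\tfrac34\beta$, then read off the four conclusions. Conclusions~(i), (ii) and~(iii$^{\ast}$) translate into the three alternatives of the Claim just as you say, and in conclusion~(iii) one observes that $V'$ is large and green-almost-complete, so Corollary~\ref{dirac1a} produces the required odd green connected-matching inside $G[V']$.

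The one place you diverge is in the choice of thickened parameter: you take $\eta_{*}=\eta^{1/2}$, whereas the paper takes $\eta_{*}=10^{4}\eta$ (a constant multiple, not a root). This single choice is exactly what causes the ``genuine obstacle'' you identify in conclusion~(iii): with $\eta_{*}=\eta^{1/2}$ the waste set $W_{*}$ has size up to $\eta_{*}^{1/16}k=\eta^{1/32}k$, which overspends the $\eta^{1/16}$ budget in the Claim. With the paper's $\eta_{*}=10^{4}\eta$ one gets $\eta_{*}^{1/16}=10^{1/4}\eta^{1/16}<2\eta^{1/16}$ directly, and then $|V'|\ge|X|-|V''|-|W_{*}|>(\tfrac34\beta-2\eta^{1/16})k$ follows from the stated bounds with no further work; Corollary~\ref{dirac1a} then yields a green odd connected-matching on at least $(\tfrac34\beta-2\eta^{1/16})k-1\ge(\tfrac34\beta-3\eta^{1/16})k$ vertices. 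All the hypotheses you checked for $\eta_{*}=\eta^{1/2}$ hold a fortiori for $\eta_{*}=10^{4}\eta$ (the almost-completeness becomes $4\eta^{4}\le(10^{4}\eta)^{3}$, the order bound becomes $8\eta<100\eta^{1/2}\cdot\tfrac34\aI$, etc.), and conclusion~(i) still clears the $(\tfrac34\aI+100\eta^{1/2})k$ target up to an inessential constant in front of $\eta^{1/2}$. So your proposed workarounds --- absorbing $W_{*}$ vertex-by-vertex, or re-running Lemma~\ref{l:SkB} with the unthickened $\eta$ --- are unnecessary; the difficulty is an artefact of over-thickening.
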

\begin{proof}
Since $\eta\leq10^{-24}$, provided $k\geq \tfrac{4}{3} k_{\ref{l:SkAB}}(10^4\eta)$, applying Lemma~\ref{l:SkAB} (with $\alpha=\alpha_1$ and green taking them place of blue), we find that at least one of the following occurs:
\begin{itemize}
\item[(i)] $G[X]$ contains a red connected-matching on at least $(\tfrac{3}{4}\aI+100\eta^{1/2})k$ vertices;
\item[(ii)] $G[X]$ contains a green odd connected-matching on at least $(\tfrac{3}{4}\beta+100\eta^{1/2})k$ vertices; 
\item[(iii)] $G[X]$ admits a partition of its vertices into  $W$, $V'$, $V''$ such that
\begin{itemize}
\item[(a)] $|V'| < \tfrac{3}{4}(1+100\eta^{1/2})\beta k$, 
$|V''|\leq \tfrac{3}{8}(1+100\eta^{1/2})\aI k$,
$|W|\leq \tfrac{3}{2}\eta^{1/16} k$,
\item[(b)] $G_2[V']$ is $(1-2\eta^{1/16})$-complete and $G_1[V']$ is $2\eta^{1/16}$-sparse,
\item[(c)] $G_1[V',V'']$ is $(1-2\eta^{1/16})$-complete and $G_2[V',V'']$ is $2\eta^{1/16}$-sparse; 
\end{itemize}
\item[(v)] $G[X]$ admits a partition of its vertices into $V'$, $V''$ such that
\begin{itemize}
\item[(a)] $\tfrac{3}{4}(\aI-102\eta^{1/2})k<|V'|,|V''| < \tfrac{3}{4}(\aI+10^4\eta)k$, 
\item[(b)] all edges present in $G[V']$ and $G[V'']$ are coloured red and all edges present in $G[V',V'']$ are coloured blue. 
\end{itemize}
\end{itemize}
Furthermore, (iii) only occurs if $1\geq\max\{\aII,\aIII\}\geq\aI\geq \half$ and (iv) only occurs if $\half\aI+ \beta < 2(1+\eta^{1/2})\aI$. Note that the remaining situation in Lemma~\ref{l:SkAB}, cannot occur since \mbox{$\half\leq \aI, \max\{\aII,\aIII\}\leq1$}. 

In case (iii), since $|X|=|V'|+|V''|+|W|$, we have $|V'|>(\tfrac{3}{4}\beta-2\eta^{1/16})k$. Then, since $G$ is $(10^4\eta)^3(\tfrac{3}{4})k$-almost-complete, by Corollary~\ref{dirac1a} $G[V']$ contains a green cycle of length~$m$ for every $2(10^4\eta)^3(\tfrac{3}{4})k+2\leq m\leq|V'|$. Thus, provided $k\geq \eta^{-1/16}$, $G[V']$ contains a green odd connected-matching on at least $(\tfrac{3}{4}\beta-2\eta^{1/16})k-1\geq(\tfrac{3}{4}\beta-3\eta^{1/16})k$ vertices, completing the proof of the claim.
\end{proof}

The same result applies to $G[Y]$, thus, letting $M_1$ be the largest monochromatic connected-matching in $G[X]$ and $M_2$ the largest monochromatic connected-matching in $G[Y]$, we may distinguish a number of subcases as follows:
 
\begin{itemize}
\item[(G.a)] 
$M_1, M_2$ are both red, 
$|V(M_1)|, |V(M_2)|\geq (\tfrac{3}{4}\aI+100\eta^{1/2})k$,
\\ \hphantom{~} $M_1$ and $M_2$ each share vertices with odd component(s) of the green graph;
\item[(G.b)] 
$M_1, M_2$ are both red, 
$|V(M_1)|, |V(M_2)|\geq (\tfrac{3}{4}\aI+100\eta^{1/2})k$,
\\ \hphantom{~} $M_1$ does not share any vertices with any odd component of the green graph;
\item[(G.c)] 
$M_1, M_2$ are both green (and odd), 
$|V(M_1)|, |V(M_2)|\geq(\tfrac{3}{4}\beta-3\eta^{1/16})k$;
\item[(G.d)] 
$M_1$ is red, $|V(M_1)|\geq (\tfrac{3}{4}\aI+100\eta^{1/2})k$,
$M_2$ is green (and odd), $|V(M_2)|\geq(\tfrac{3}{4}\beta-3\eta^{1/16})k$;
\item[(G.e)] 
$M_1$ is red, $|V(M_1)|\geq (\tfrac{3}{4}\aI+100\eta^{1/2})k$,
 $G[Y]\in\cJ(\tfrac{3}{4}(\aI-102\eta^{1/2})k,4\eta^4k,\text{red}, \text{green})$;
\item[(G.f)] 
$G[X]\in\cJ(\tfrac{3}{4}(\aI-102\eta^{1/2})k,4\eta^4k,\text{red}, \text{green})$, 
$M_2$ is green (and odd), $|V(M_2)|\geq(\tfrac{3}{4}\beta-3\eta^{1/16})k$;
\item[(G.g)] $G[X],G[Y]\in\cJ(\tfrac{3}{4}(\aI-102\eta^{1/2})k,4\eta^4k,\text{red}, \text{green})$. 
\end{itemize}
 
{\bf Case G.a:} There cannot exist a triple of vertices $w\in W, x\in M_{1}$ and $y\in M_{2}$ such that the edges $wx$ and $wy$ are both coloured red, since such an edge would imply the existence of a red connected-matching on at least $\aI k$ vertices. 
Thus, we may partition $W$ into $W_{1} \cup W_{2}$, such that all edges present in $G[V(M_1),W_2]$ and $G[V(M_2),W_1]$ are coloured exclusively green. Thus, in particular, all vertices in $V(M_1)$ belong to a single green component which, by assumption, is odd.

\begin{figure}[!h]
\centering
\vspace{-2mm}
\includegraphics[width=64mm, page=31]{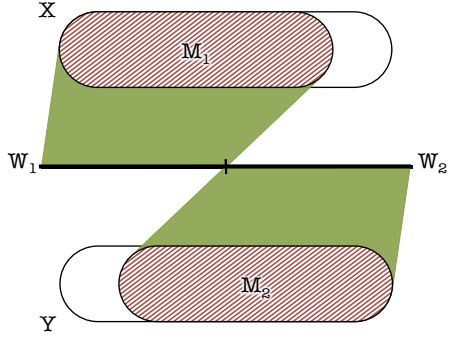}
\vspace{-3mm}\caption{Partition of $W$ into $W_1\cup W_2$.}
\label{redMgreenP}
\end{figure}

Suppose, then, that $|W_1|\geq (\half\aIII +8\eta^4)k$ and recall that $G[W,V(M_1)]$ is $4\eta^4 k$-almost-complete. Since $\eta<\eta_D$, we have $|V(M_1)|\geq (\tfrac{3}{4}\aI+100\eta^{1/2})k\geq(\tfrac{1}{2}\aIII+90\eta^{1/2})k$ and so we may apply Lemma~\ref{l:eleven} to $G[V(M_1),W_1]$ with $\ell=(\half\aIII +8\eta^4) k$ and $a=4\eta^4 k$ to give a green connected-matching on at least $\aIII k$ vertices. This connected-matching is odd since we know that $V(M_1)$ belongs to an odd green component.  

The result is the same in the event that $|W_2|\geq(\half\aIII +8\eta^4)k$. Thus, we may assume that 
$|W_{1}|, |W_{2}|\leq (\half\aIII+8\eta^4) k\leq|V(M_1)|,|V(M_2)|$.
 In that case, we have, by (\ref{z10}),  $|W_{1}|=|W|-|W_{2}|\geq (\aI-11.5\eta^{1/2})k-(\half\aIII+8\eta^4)k\geq(\tfrac{1}{6}\aIII-24\eta^{1/2})k$ and, likewise, $|W_{2}|\geq (\tfrac{1}{6}\aIII-24\eta^{1/2})k$. Recall that $G$ is $4\eta^4k$-almost-complete. Then, since $\eta\leq(\aIII/300)^2$, we have $4\eta^4 k < \half(\tfrac{1}{6}\aIII-24\eta^{1/2})k\leq \half |W_1|$ and so, by Lemma~\ref{l:eleven}, $G[V(M_1),W_1]$ has a green connected-matching on at least $2|W_1|-8\eta^4k$ vertices. Similarly, $G[V(M_2),W_2]$ has a green connected-matching on at least $2|W_2|-8\eta^4k$ vertices. By (\ref{z10-}) and (\ref{z10}), we have $2|W_1|+2|W_2|-16\eta^4 k = 2|W|-16\eta^4 k\geq 2(\aI-11.5\eta^{1/2})k-16\eta^4 k\geq\aIII k$. Thus, since these connected-matchings are odd, they must belong to different components of the green graph. Therefore, we may assume that all edges present in $G[V(M_1),W_2]$ and $G[V(M_2),W_1]$ are coloured red. 
 
\begin{figure}[!h]
\centering
\vspace{2mm}
\includegraphics[width=64mm, page=32]{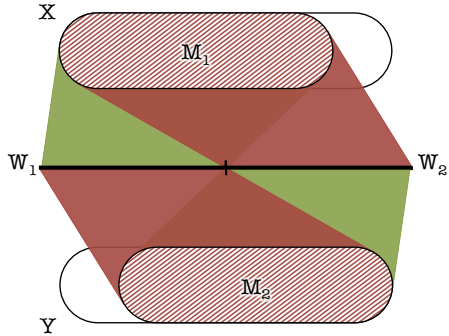}
\vspace{-3mm}\caption{Colouring of the edges of $G[M_1,W_2]\cup G[M_2,W_1]$.}
\label{2ndRedGreen}
  \end{figure}

Observe, then, that, without loss of generality, $|W_1|\geq|W_2|\geq\half(|W_1|+|W_2|)\geq(\half\aI-6\eta^{1/2})k$. Now, choose any set~$R_1$ of $8\eta^{1/2} k$ of the edges from the matching~$M_1$, let $M_1'=M\backslash R_1$ and consider $G[V(M_1'),W_2]$. We have $|V(M_1')|\geq (\tfrac{3}{4}\aI+84\eta^{1/2})k\geq(\half\aI  - 6\eta^{1/2})k$ and, thus, may apply Lemma~\ref{l:eleven} to $G[V(M_1'),W_2]$ to obtain a collection $R_2$ of edges from $G[V(M_1'),W_2]$ which form a red connected-matching on at least $(\aI-14\eta^{1/2})k$ vertices. Since $R_1$ and $R_2$ do not share any vertices but do belong to the same red-component of~$G$, the collection of edges~$R_1\cup R_2$ forms a red connected-matching on at least $\aI k$ vertices, completing this case.

{\bf Case G.b:} Again, there can be no triple of vertices $w\in W, x\in M_{1}$ and $y\in M_{2}$ such that $wx$ and $wy$ are both coloured red. Thus, we may partition $W$ into $W_{1} \cup W_{2}$, where $W_1,W_2$ are defined as in Case G.a, giving the situation illustrated in Figure~\ref{redMgreenP}, specifically all edges present in $G[V(M_1), W_1]$ are coloured exclusively green.

Thus, in particular, every vertex in $V(M_1)$ belongs to the same green component, which is assumed not to be odd. Therefore, no vertex in $P=X\backslash V(M_1)$ can have more than one green edge to $V(M_1)$ and instead each such vertex must have red edges to all but at most one of the vertices of $V(M_1)$. By maximality of $M_1$, there can be no red edges in $G[P]$, that is, all edges present in $G[P]$ are coloured exclusively green. Then, since $|P|\geq(\half\aI-10\eta^{1/2})k$ and~$G$ is $4\eta^4 k$-almost-complete, $G[P]$ contains a triangle and has a single green component. Thus, all edges present in $G[V(M_1),P]$ must, in fact,  be coloured red so as to avoid having $V(M_1)$ belonging to an odd green component.

Given this colouring, it can easily be shown that $G[X]$ contains a red connected-matching on at least $\aI k$ vertices. Indeed, firstly, choose any set~$R_1$ of $6\eta^{1/2} k$ of the edges from the matching~$M_1$, let $M_1'=M\backslash R_1$ and consider $G[V(M_1'),P]$. Since $|V(M_1')|,|P|\geq(\half\aI-\eta^{1/2})k$, by Lemma~\ref{l:eleven}, there exists, in $G[V(M_1'),P]$, a collection $R_2$ of edges  which form a red connected-matching on at least $(\aI-4\eta^{1/2})k$ vertices. Since $R_1$ and $R_2$ do not share any vertices but do belong to the same red-component of~$G$, the collection of edges $R_1\cup R_2$ forms a red connected-matching on at least $\aI k$ vertices, completing this case.

{\bf Case G.c:} Suppose that there exists a triple $w\in W, x\in M_{1}$ and $y\in M_{2}$ such that $wx$ and $wy$ are both green. Such a triple would give a green odd connected-matching on at least $\aIII k$ vertices. 
 
Thus, we may partition $W$ into $W_{1} \cup W_{2}$, such that all edges present in $G[V(M_1),W_2]$ and $G[V(M_2),W_1]$ are coloured exclusively red. %Thus, in  
Suppose, then, that $|W_1|\geq (\half\aI +8\eta^4)k$. Recalling that $G[V(M_2),W_1]$ is $4\eta^4 k$-almost-complete, since $\eta<\eta_D$, we have $|V(M_2)|\geq  (\half\aI +8\eta^4)k$ and we may apply Lemma~\ref{l:eleven} with $\ell=(\half\aI +8\eta^4)k$ and $a=4\eta^4 k$ to give a red connected-matching on at least $\aI k$ vertices. The result is the same in the event that $|W_2|\geq(\half\aI +6\eta^4)k$ with the matching being found in $G[V(M_1), W_2]$.

Therefore, we may assume that $|W_{1}|, |W_{2}|\leq (\half\aI+8\eta^4) k$. In that case, we have $|W_{1}|=|W|-|W_{2}|\geq (\half\aI-12\eta^{1/2})k$ and, likewise, $|W_{2}|\geq (\half\aI-12\eta^{1/2})k$. Thus, since $\eta<(\aI/100)^2$, Lemma~\ref{l:eleven} gives a red connected-matching on at least $({\aI}-26\eta^{1/2})k$ vertices in each of $G[V(M_1),W_1]$ and $G[V(M_2),W_2]$.

Then, since $\eta<(\aI/100)^2$, $V(M_1)\cup W_1$ and $V(M_2)\cup W_2$ must belong to different red components (so as to avoid having a red connected-matching on at least $(2\aI-52\eta^{1/2})k\geq \aI k$ vertices). Thus, all edges present in $G[V(M_{1}),W_{2}]$ and $G[V(M_{2}),W_{1}]$ must be coloured exclusively green. 

\begin{figure}[!h]
\centering
\vspace{2mm}
\includegraphics[width=64mm, page=30]{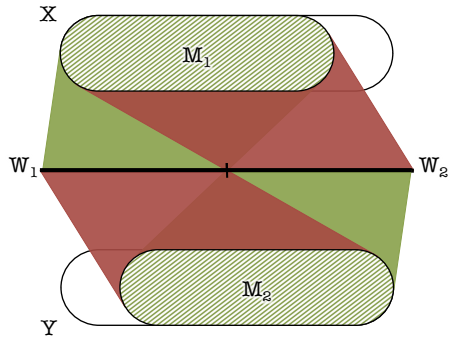}
\vspace{-3mm}\caption{Colouring of the edges of $G[M_1,W_2]\cup G[M_2,W_1]$.}
  
\end{figure}

Given this colouring, it can easily be shown that $G$ contains a green odd connected-matching on at least $\aIII k$ vertices. Indeed, provided $k\geq \eta^{-1/2}$, we can choose a set $E_1$ of edges from $M_1$ such that $(\tfrac{1}{6}\aIII+20\eta^{1/2})k\leq |E_1|\leq (\tfrac{1}{6}\aIII+22\eta^{1/2})k$. Then, the edges of $E_1$ form a matching on at least $(\tfrac{1}{3}\aIII+40\eta^{1/2})k$ vertices and letting $M_1'=M_1\backslash E_1$, since $\eta<(\aIII/50)^{16}$, we have 
$$
|V(M_1')|\geq  (\tfrac{3}{4}\aIII-3\eta^{1/16})-(\tfrac{1}{3}\aIII+44\eta^{1/2})k\geq  (\tfrac{5}{12}\aIII-4\eta^{1/16})k\geq(\tfrac{1}{3}\aIII-18\eta^{1/2})k.
$$
Then, recalling that $|W_1|\geq(\half\aI-12\eta^{1/2})k\geq(\tfrac{1}{3}\aIII-18\eta^{1/2})k$, by Lemma~\ref{l:eleven}, $G[V(M_1'),W_1]$ contains a collection of edges $E_2$, which form a connected-matching on at least $(\tfrac{2}{3}\aIII-38\eta^{1/2})k$ vertices. Then, since $E_1$ and $E_2$ do not share any vertices but do belong to the same odd green-component of~$G$, the collection of edges $E_1\cup E_2$ forms a green odd connected-matching on at least $\aIII k$ vertices, completing this case.

{\bf Case G.d:}

By (\ref{z10-}) and (\ref{z10}), we have $|X|+|W|, |Y|+|W|\geq \half K$, so $G[X\cup W]$ and $G[Y\cup W]$ are each $(1-2\eta^4)$-complete.
Additionally, $|W|,|V\backslash W|\geq 4(2\eta^{4})^{1/2}|X\cup W|$. Thus, provided that $\half K\geq 1/2\eta^4$, we may apply Lemma~\ref{l:dgf1} separately to $G[X\cup W]$ and $G[Y\cup W]$ with the result being that at least one of the following occurs:
\begin{itemize}
\item[(i)] $X\cup W$ has a connected red 
component~$F$ on at least $|X \cup W| - \eta k$ vertices; %can get 18\eta^4
\item[(ii)] $X\cup W$ has a connected green
 component~$F$ on at least $|X \cup W| - \eta k$ vertices; 
\item[(iii)] $Y\cup W$ has a connected red
 component~$F$ on at least $|Y \cup W| - \eta k$ vertices;
\item[(iv)] $Y\cup W$ has a connected green
component~$F$ on at least $|Y \cup W| - \eta k$ vertices;
\item[(v)] there exist points $w_{1}, w_{2}, w_{3}, w_{4} \in W$ such that the following hold:
\begin{itemize}
\item[(a)] $w_1$ has red edges to all but at most $\eta k$ vertices in $X$,%can get 22\eta^4
\item[(b)] $w_2$ has green edges to all but at most $\eta k$ vertices in $X$,
\item[(c)] $w_3$ has red edges to all but at most $\eta k$ vertices in $Y$,
\item[(d)] $w_4$ has green edges to all but at most $\eta k$ vertices in $Y$.
\end{itemize}
\end{itemize}

In subcase (i), we discard from~$W$ the at most~$\eta k$ vertices not contained in~$F$ and consider~$G[W,Y]$. Either there are at least~$\tfrac{1}{8}\aI k$ mutually independent red edges present in~$G[W,Y]$ (which can be used to augment $M_1$) or we may obtain $W'\subset W$, $Y' \subset Y$ with $|W'|,|Y'| \geq (\tfrac{7}{8} \aI  - 12\eta^{1/2})k\geq(\tfrac{7}{12}\aIII-24\eta^{1/2})k$ such that all the edges present in $G[W',Y']$ are coloured exclusively green. Since~$G$ is $4\eta^4 k$-almost-complete, so is $G_3[W',Y']$ and we may apply Lemma~\ref{l:eleven} to obtain a green connected-matching on at least $\aIII k$ vertices in $G[W',Y]$ which is odd by virtue of sharing vertices with $M_2$.

In subcase (ii), suppose there exists a green edge in $G[M_2,F]$. Then, at least $|M_2\cup W|-\eta k$ of the vertices of $M_2\cup W$ would belong to the same green component in~$G$.  Discard the at most $\eta k$ vertices of $W$ not contained in that component and consider $G[X,W]$. Either there are at least $(\tfrac{1}{8}\aIII+2\eta^{1/16})k$ mutually independent green edges in $G[X,W]$ which can be used to augment $M_2$ or we may obtain $W'\subset W$, $X' \subset X$ with $|W'|,|X'| \geq (\tfrac{7}{8} \aI  - 3\eta^{1/16})k$ such that all the edges present in $G[W',X']$ are coloured exclusively red. Then, since~$G_1[W',X']$ is $4\eta^4 k$-almost-complete, we may apply Lemma~\ref{l:eleven} to obtain a red connected-matching on at least $\aI k$ vertices in $G[W',X']$.

Thus, we may instead, after discarding at most $\eta k$ vertices from $W$, assume that all edges present in $G[V(M_2),W]$ are coloured exclusively red and apply Lemma~\ref{l:eleven} to obtain a red connected-matching on at least~$\aI k$ vertices in $G[V(M_2),W]$.

In subcase (iii), if there exists a red edge in $G[M_1,W]$, then the same argument as given in case (i) gives a red connected-matching on at least $\aI k$ vertices. Thus, we may assume that, after deleting at most $\eta k$ vertices from $W$, all edges present in $G[M_1,W]$ are coloured green.

Since $\eta<\eta_D$, we have $|V(M_1)|,|W|\geq (\half\aIII+90\eta^{1/2})k$.
Thus, by Lemma~\ref{l:eleven}, there exists a green connected-matching on at least $\aIII k$ vertices in $G[V(M_1),W]$. However, this connected-matching is not necessarily odd. The existence of a green edge in $G[V(M_2),W]$ would suffice to complete the proof. Thus, we may assume that all edges present in $G[V(M_2),W]$ are be coloured exclusively red. But, then, we may apply Lemma~\ref{l:eleven} to obtain a red connected-matching on at least~$\aI k$ vertices in $G[V(M_2),W]$.

In subcase (iv), we can then discard the at most $\eta k$ vertices from of $W$ not contained in $F$ and and consider $G[X,W]$. Either there are at least $(\tfrac{1}{8}\aIII+2\eta^{1/16})k$ mutually independent green edges in $G[X,W]$ which can be used to augment $M_2$ or we may obtain $W'\subset W$, $X' \subset X$ with $|W'|,|X'| \geq (\tfrac{7}{8} \aI  - 3\eta^{1/16})k$ such that all the edges present in $G[W',X']$ are coloured exclusively red. In the latter case, we  apply Lemma~\ref{l:eleven} to obtain a red connected-matching on at least $\aI k$ vertices in $G[W',X']$.

In subcase (v), there exist points $w_{1}, w_{2}, w_{3}, w_{4} \in W$ such that $w_1$ has red edges to all but at most $\eta k$ vertices in $X$, $w_2$ has green edges to all but at most $\eta k$ vertices in $X$, $w_3$ has red edges to all but at most $\eta k$ vertices in $Y$, and $w_4$ has green edges to all but at most $\eta k$ vertices in $Y$.
\begin{figure}[!h]
\centering
\vspace{2mm}
\includegraphics[width=68mm, page=33]{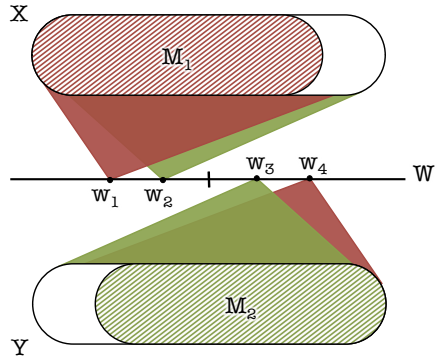}
\vspace{-3mm}\caption{Vertices $w_1,w_2,w_3$ and $w_4$ in case (e).}
\vspace{-3mm}
  
\end{figure}

Thus, defining
\begin{align*}
X_{S}&=\{x\in X \text{ such that } xw_1 \text{ is red and } xw_2 \text{ is green}\},\\
Y_{S}&=\{y\in Y \text{ such that } yw_3 \text{ is red and } yw_4 \text{ is green}\},
\end{align*}
by (\ref{z10}), we have $|X_S|,|Y_S|\geq(\tfrac{3}{2}\aI-10\eta^{1/2})k$. Suppose there exists $w\in W$, $x\in X_S$,~$y\in Y_S$ such that~$wx$ and~$wy$ are red. In that case, $X_S\cup Y_S$ belong to the same red component of~$G$. Recall that $M_1\subseteq G[X]$ is a red connected-matching on $(\tfrac{3}{4}\aI+100\eta^{1/2})k$ vertices and consider $G[W,Y_S]$. Either we can find $\tfrac{1}{8} \aI k$ mutually independent red edges in $G[W,Y_S]$ (which together with $M_1$ give a red connected-matching on at least $\aI k$ vertices) or we may obtain $W'\subset W$, $Y' \subset Y_S$ with $|W'|,|Y'| \geq (\tfrac{7}{8}\aI  - 12\eta^{1/2})k$ such that all the edges present in $G[W',Y']$ are coloured exclusively green. Then, as in case (i), we may apply Lemma~\ref{l:eleven} to obtain a green odd connected-matching on at least $\aIII k$ vertices.

Thus, we may assume that no such triple exists and, similarly, we may assume there exists no triple $w\in W$, $x\in X_S$, $y\in Y_S$ such that $wx$ and $wy$ are both green. Therefore, we may partition~$W$ into $W_1 \cup W_2$ such that all edges present in $G[W_1,X_S]$ and $G[W_2,Y_S]$ are coloured exclusively red and all edges present in $G[W_1,Y_S]$ and $G[W_2,X_S]$ are coloured exclusively green. Thus, we may assume that $|W_1|,|W_2|\leq(\half\aI+\eta^{1/2})k$ (else Lemma~\ref{l:eleven} could be used to give a red connected-matching on at least $\aI k$ vertices) and therefore also that $|W_1|,|W_2|\geq(\half\aI-12\eta^{1/2})k$, in which case we can easily show that there exists a red connected-matching on at least $\aI k$ vertices in $G[X_S\cup W_1]$ as follows: Choose any set~$R_1$ of $14\eta^{1/2} k$ of the edges from the matching~$M_1$, let $X'=X\backslash V(R_1)$ and consider $G[X',W_1]$. We have $|X'|,|W_1|\geq (\tfrac{1}{2}\aI-12\eta^{1/2})k$ and, thus, may apply Lemma~\ref{l:eleven} to $G[X',W_1]$ to obtain a collection $R_2$ of edges from $G[V(M_1'),W_2]$ which form a red connected-matching on at least $(\aI-26\eta^{1/2})k$ vertices. Since $R_1$ and $R_2$ do not share any vertices but do belong to the same red-component of~$G$, the collection of edges $R_1\cup R_2$ forms a red connected-matching on at least $\aI k$ vertices, completing this case.

{\bf Case G.e:} Since $|Y_1|,|Y_2|\geq\tfrac{3}{4}(\aI-102\eta^{1/2})k$ and $G$ is $4\eta^4 k$-almost-complete, by Corollary~\ref{dirac1a}, $G[Y_1]$ contains a red connected-matching $R_1$ on at least $(\tfrac{3}{4}\aI-78\eta^{1/2})k$ vertices. Similarly, $G[Y_2]$ contains a red connected-matching $R_2$ on at least $(\tfrac{3}{4}\aI-78\eta^{1/2})k$ vertices. 

Thus, the existence of a triple $w\in W$, $y_1\in Y_1$, $y_2\in Y_2$ such that both the edges $wy_1$ and $wy_2$ are coloured red would give a red connected-matching on at least $\aI k$ vertices. Therefore, there exists a partition of $W$ into $W_1\cup W_2$ such that all edges present in $G[W_1,Y_1]$ and $G[W_2,Y_2]$ are coloured exclusively green.
\begin{figure}[!h]
\centering
\vspace{2mm}
\includegraphics[width=68mm, page=36]{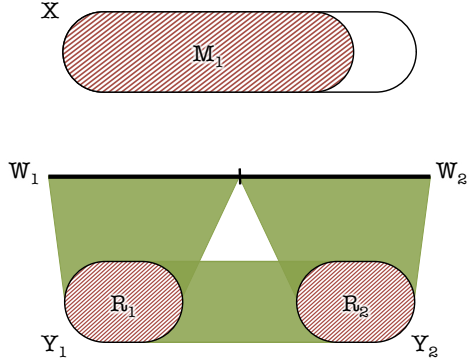}
\vspace{-3mm}\caption{Colouring of $G$ in Case G.e.}
\end{figure}

Recall from (\ref{z10-}) that $\aI>\tfrac{2}{3}\aIII-4\eta^{1/2}$. Thus, $|Y_1|,|Y_2|\geq\tfrac{3}{4}(\aI-102\eta^{1/2})k\geq(\half\aIII-80\eta^{1/2})k$. So, by Lemma~\ref{l:eleven}, $G[Y_1,Y_2]$ contains a green connected-matching $M_2$ on at least $(\aIII-164\eta^{1/2})k$ vertices which need not be odd.

We then consider two possibilities:
\begin{itemize}
\item[(i)] $|W_1|,|W_2|\geq 84\eta^{1/2}k$;
\item[(ii)] $\min\{|W_1|,|W_2|\}\leq 84\eta^{1/2}k$.
 \end{itemize} 

In subcase (i), provided $k\geq \eta^{-1/2}$, we may choose subsets $Y_1'\subset Y_1$, $Y_2'\subset Y_2$  such that $84\eta^{1/2}k\leq|Y_1|,|Y_2|\leq 86\eta^{1/2}k$. Then, since $|W_1|,|W_2|\geq84\eta^{1/2}k$, by Lemma~\ref{l:eleven}, $G[W_1,Y_1]$ contains a red connected-matching $E_1$ on at least $166\eta^{1/2}k$ vertices and  $G[W_2,Y_2]$ contains a red connected-matching $E_2$ on at least $166\eta^{1/2}k$ vertices.
Also, we have $|Y_1\backslash Y_1'|, |Y_2\backslash Y_2'|\geq (\half\aIII-166\eta^{1/2})k$ so, by Lemma~\ref{l:eleven}, $G[Y_1\backslash Y_1', Y_2\backslash Y_2']$ contains a green connected-matching $E_3$ on at least $(\aIII-330\eta^{1/2})k$ vertices. Together $E_1, E_2$ and $E_3$ form a green connected-matching on at least $\aIII k$ vertices although this connected-matching need not be odd.

The existence of green edge in $G[W_1,Y_2]$ or $G[W_2,Y_1]$ would give an odd green cycle in the same green component as $E_1\cup E_2\cup E_3$. Thus, we may assume that all edges present in $G[W_1,Y_2]$ and $G[W_2,Y_1]$ are coloured red.

\begin{figure}[!h]
\centering
\vspace{2mm}
\includegraphics[width=68mm, page=37]{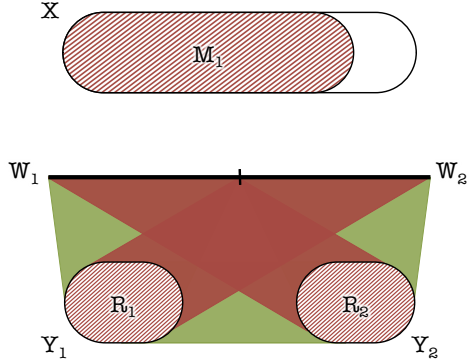}
\vspace{-3mm}\caption{Colouring of $G[W,Y]$ in Case G.e.i.}
\end{figure}

In that case, the existence of a red edge in $G[V(M_1), W_1]$ would give a red connected-matching on $M_1\cup R_1$ on at least $\aI k$ vertices. Similarly, the existence of a red edge in $G[V(M_1), W_2]$ would give a red connected-matching  $M_1\cup R_2$ on at least $\aI k$ vertices. Thus, we may assume that all edges present in $G[V(M_1),W]$ are coloured green, thus giving a green five-cycle in the same component as $E_1\cup E_2 \cup E_3$, completing this subcase.

In subcase (ii), without loss of generality, $|W_2|\leq 84\eta^{1/2}k$. Thus, after discarding at most $84\eta^{1/2}k$ vertices from $W$, we may assume that every vertex in $W$ belongs to the same green component as $M_2$ and that $|W|\geq (\aI-98\eta^{1/2})k$. 

Suppose there is a green edge in $G[W,Y_2]$. Then, the existence of $84\eta^{1/2}k$ mutually independent green edges in $G[V(M_1),W]$ would give an odd green connected-matching on at at least $\aIII k$ vertices. Thus, for now assume there exist subsets $X'\subseteq M_1$ and $W'\subseteq W$ such that $|X'|\geq(\tfrac{3}{4}\aI+2\eta^{1/2})k$ vertices, $|W'|\geq(\aI-182\eta^{1/2})k$ and all edges present in $G[X',W']$ are coloured red. Then, by Lemma~\ref{l:eleven}, $G[X',W']$ would contain a red connected-matching on at least $\aI k$ vertices. 

Therefore, we may instead assume that all edges present in $G[W,Y_2]$ are coloured red. But then, since $\eta\leq(\aI/320)^2$, $|W|\geq(\aI-98\eta^{1/2})k\geq(\half\aI+\eta^{1/2})k$ and $|Y_2|\geq\tfrac{3}{4}(\aI-102\eta^{1/2})k\geq(\half\aI+\eta^{1/2})k$. So, by Lemma~\ref{l:eleven}, $G[W,Y_2]$ contains a red connected-matching on at least $\aI k$ vertices, completing this case.

{\bf Case G.f:} Since $|X_1|,|X_2|\geq\tfrac{3}{4}(\aI-102\eta^{1/2})k$ and $G$ is $4\eta^4 k$-almost-complete, by Corollary~\ref{dirac1a}, $G[X_1]$ contains a red connected-matching $R_1$ on at least $(\tfrac{3}{4}\aI-78\eta^{1/2})k$ vertices. Similarly, $G[X_2]$ contains a red connected-matching $R_2$ on at least $(\tfrac{3}{4}\aI-78\eta^{1/2})k$ vertices. 

Thus, the existence of a triple $w\in W$, $x_1\in X_1$, $x_2\in X_2$ such that both the edges $wx_1$ and $wx_2$ are coloured red would give a red connected-matching on at least $\aI k$ vertices. Therefore, there exists a partition of $W$ into $W_1\cup W_2$ such that all edges present in $G[W_1,X_1]$ and $G[W_2,X_2]$ are coloured exclusively green.

Recall from (\ref{z10-}) that $\aI>\tfrac{2}{3}\aIII-4\eta^{1/2}$. Thus, $|X_1|,|X_2|\geq(\half\aIII-80\eta^{1/2})k$, so, by Lemma~\ref{l:eleven}, $G[X_1,X_2]$ contains a green connected-matching $M_1$ on at least $(\aIII-164\eta^{1/2})k$ vertices.

Since $\eta\leq\eta_D$, $(\aIII-164\eta^{1/2})k+(\tfrac{3}{4}\aI-3\eta^{1/16})k\geq \aIII k$. Thus, in order to avoid having a green odd connected-matching on at least $\aIII k$ vertices, $M_1$ and $M_2$ must be in different green components. Therefore, we may assume that all edges present in $G[W,V(M_2)]$ are coloured red. Then, since $|W|,|V(M_2)|\geq(\tfrac{3}{4}\aI-3\eta^{1/16})\geq(\half\aI+\eta^{1/2})k$, by Lemma~\ref{l:eleven}, $G[W,V(M_2)]$ contains a red connected-matching on at least $\aI k$ vertices, completing the proof in this case.

{\bf Case G.g:} Since $|X_1|\geq\tfrac{3}{4}(\aI-102\eta^{1/2})k$ and $G$ is $4\eta^4 k$-almost-complete, by Corollary~\ref{dirac1a}, $G[X_1]$ contains a red connected-matching $R_{11}$ on at least $(\tfrac{3}{4}\aI-78\eta^{1/2})k$ vertices. Similarly, $G[X_2]$ contains a red connected-matching $R_{12}$ on at least $(\tfrac{3}{4}\aI-78\eta^{1/2})k$ vertices, $G[Y_1]$ contains a red connected-matching $R_{21}$ on at least $(\tfrac{3}{4}\aI-78\eta^{1/2})k$ vertices and $G[Y_2]$ contains a red connected-matching $R_{22}$ on at least $(\tfrac{3}{4}\aI-78\eta^{1/2})k$ vertices. 

Thus, the existence of a triple $w\in W$, $x\in X$, $y\in Y$ such that both the edges $wx$ and $wy$ are coloured red would give a red connected-matching on at least $\aI k$ vertices. Therefore, there exists a partition of $W$ into $W_1\cup W_2$ such that all edges present in $G[W_1,X]$ and $G[W_2,Y]$ are coloured exclusively green. Thus, $G[W_1\cup X]$ and $G[W_2\cup Y]$ each  consists of a single odd green component.
\begin{figure}[!h]
\centering
\vspace{2mm}
\includegraphics[width=68mm, page=40]{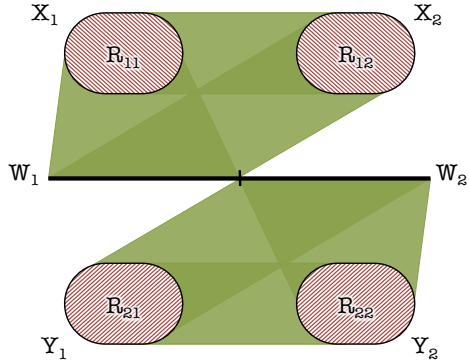}
\vspace{-3mm}\caption{Colouring of $G$ in Case G.g.}
\end{figure}

Without loss of generality, $|W_1|\geq\half(|W_1|+|W_2|)\geq \half(\aI-11.5\eta^{1/2})k\geq(\tfrac{1}{3}\aIII-24\eta^{1/2})k\geq200\eta^{1/2}k$. Thus, we may partition $W_1$ into $W_{11}\cup W_{12}$ such that $|W_{11}|,|W_{12}|\geq 100\eta^{1/2}$. 

Since $|X_1|,|X_2|\geq(\half\aIII-90\eta^{1/2})k$, we may partition $X_1$ into $X_{11}\cup X_{12}$ and $X_2$ $X_{21}\cup X_{22}$ such that $|X_{11}|,|X_{21}|\geq 100\eta^{1/2}k$ and $|X_{21}|,|X_{22}|\geq (\half\aIII-192\eta^{1/2})k$.
Then, by Lemma~\ref{l:eleven}, $G[X_{11},W_{11}]$ and $G[X_{21},W_{12}]$ each contains a green connected-matching on at least $198\eta^k$ vertices and $G[X_{12},X_{22}]$ contains a green connected-matching on at least $(\aIII-386\eta^{1/2})k$ vertices. These green connected-matchings share no vertices but belong to the same odd green component and thus form a green connected-matching on at least $\aIII k$ vertices completing the proof of this case.

\begin{center}
***
\end{center}

Recall that early in the proof, we assumed that $d(G_2)>\half(c-\aI-2\eta)k$. If instead we assume that $d(G_3)>\half(c-\aI-2\eta)k$, the result is the same but with the roles of $\aII$ and $\aIII$ and blue and green exchanged.
\qed

\label{Biiiend}

\section{Proof of the main result -- Setup}
\label{s:p10}

For $\aI,\aII,\aIII>0$,  we set $c=\aI+\max\{3\aI, 2\aII, 2\aIII\}$, 
$$ \eta=\frac{1}{2}\min\left\{\eta_{D}(\aI,\aII,\aIII), 10^{-50}, \left(\frac{\aI}{100}\right)^{128}, \left(\frac{\aII}{100}\right)^{128}, \left(\frac{\aIII}{100}\right)^{128} \right\}$$ 
and let~$k_0$ be the smallest integer such that
$$k_0\geq \max \left\{\left(c-\half{\eta}\right)k_{D}(\aI,\aII,\aIII,\eta), \frac{100}{\eta}\right\}.$$
We let
\vspace{-0.8em}
$$N=\llangle \aI n\rrangle+\max\left\{3\llangle \aI n\rrangle, 2\langle \aII n \rangle, 2\langle\aIII n \rangle\right\} - 3,$$ for some integer $n$ such that $N \geq  K_{\ref{l:sze}}(\eta^4,k_0)$ and
$$n>n^*=\max\{n_{\ref{th:blow-up}}(4,0,0,\eta), n_{\ref{th:blow-up}}(1,2,0,\eta), n_{\ref{th:blow-up}}(1,0,2,\eta),1/\eta, 100/\min\{\aI,\aII,\aIII\}\}$$
and consider a three-colouring of~$G=(V,E)$, the complete graph on~$N$ vertices. 

In order to prove Theorem C, we must prove that $G$ contains either a red cycle on $\llangle \aI n \rrangle$ vertices, a blue cycle on $\langle \aII n \rangle$ vertices or a green cycle on $\langle \aIII n \rangle$ vertices.

Recall that we use $G_1, G_2, G_3$ to refer to the monochromatic spanning subgraphs of $G$. That is,~$G_1$ (resp. $G_2, G_3$) has the same vertex set as~$G$ and includes as an edge any edge which in~$G$ is coloured red (resp. blue, green). 

By Theorem~\ref{l:sze}, there exists an $(\eta^4,G_1,G_2,G_3)$-regular partition $\Pi=(V_0,V_1,\dots,V_K)$ for some $K$ such that $k_0\leq K \leq K_{\ref{l:sze}}(\eta^4,k_0)$. Given such a partition, we define the $(\eta^4,\eta,\Pi)$-reduced-graph $\cG=(\cV,\cE)$ on~$K$ vertices as in Definition~\ref{reduced}. The result is a three-multicoloured graph $\cG=(\cV,\cE)$ with
\begin{align*}
\cV&=\{V_1,V_2,\dots,V_K\}, &
\cE&=\{V_iV_j : (V_i,V_j) \text{ is } (\eta^4,G_r)\text{-regular for }r=1,2,3\},
\end{align*}
such that a given edge $V_iV_j$ of~$\cG$ is coloured with every colour for which there are at least $\eta|V_i||V_j|$ edges of that colour between~$V_i$ and~$V_j$ in~$G$. 

In what follows, we will use $\cG_1, \cG_2, \cG_3$ to refer to the monochromatic spanning subgraphs of the reduced graph $\cG$. That is,~$\cG_1$ (resp. $\cG_2, \cG_3$) has the same vertex set as~$\cG$ and includes as an edge any edge which in~$\cG$ is coloured red (resp. blue, green). 

Note that, by scaling, we may assume that $\max\{\aI,\aII,\aIII\}= 1$. Thus, since $K\geq k_0 \geq 100/\eta$, we may fix an integer~$k$ such that 
\begin{align}
\left(c-\eta\right)k \leq K \leq \left(c-\half\eta\right)k,
\label{sizeK}
\end{align}
and may assume that $2k\leq K\leq 4k$, $2n\leq N\leq 4n$.

Notice, also, that since the partition is~$\eta^4$-regular, we have $|V_0|\leq \eta^4 N$ and, for $1\leq i \leq K$,
\begin{equation}
\label{NK}
(1-\eta^4)\frac{N}{K}\leq |V_i|\leq \frac{N}{K}.
\end{equation}

\hypertarget{reD}
Applying Theorem~\hyperlink{thD}{D}, we find that $\cG$ contains at least one of the following:
\begin{itemize}
\item [(i)]	a red connected-matching on at least $\aI
 k$ vertices;
\item [(ii)]  a blue connected-matching on at least $\aII k$ vertices;
\item [(iii)]  a green odd connected-matching on at least $\aIII k$ vertices;
\item [(iv)] subsets of vertices $\cW$, $\cX$ and $\cY$ such that $\cX\cup \cY\subseteq \cW$, $\cX\cap \cY=\emptyset$, $|\cW|\geq(c-\eta^{1/2})k$, every $\gamma$-component of $G[\cW]$ is odd, $G[\cX]$ contains a two-coloured spanning subgraph $H$ from $\cH_{1}\cup\cH_{2}$ and $G[Y]$ contains a a two-coloured spanning subgraph $K$ from $\cH_{1}\cup\cH_{2}$, where~{\phantom{nnn}}
\vspace{-2mm}
\begin{align*}
\cH_1&=\cH\left((\aI-2\eta^{1/64})k,(\half\alpha_{*}-2\eta^{1/64})k,4\eta^2 k,\eta^{1/64},\text{red},\gamma\right),\text{ } 
\\ \cH_2&=\cH\left((\alpha_{*}-2\eta^{1/64})k,(\half\aI-2\eta^{1/64})k,4\eta^2 k,\eta^{1/64},\gamma,\text{red}\right),
\end{align*}

\vspace{-3mm}
for $(\alpha_{*},\gamma)\in\{(\aII,\text{blue}),(\aIII,\text{green})\}$;
\item[(v)] 
disjoint subsets of vertices $\cX$ and $\cY$ such that $G[\cX]$ contains a two-coloured spanning subgraph~$H$ from $\cH_{2}^*\cup\cJ_b$ and $G[\cY]$ contains a two-coloured spanning subgraph $K$ from $\cH_{2}^*\cup\cJ_b$, where
\vspace{-2mm}
\begin{align*}
\cH_{2}^*&=\cH\left((\beta-2\eta^{1/32})k,(\half\aI-2\eta^{1/32})k,4\eta^4 k,\eta^{1/32},\gamma,\text{red}\right),\text{ } 
\\ \cJ_b&=\cJ\left((\aI-18\eta^{1/2}), 4\eta^4 k, \text{red}, \gamma\right),
\end{align*}

\vspace{-3mm}
for $\beta=\max\{\aII,\aIII\}$ and $\gamma\in\{\text{blue}, \text{green}\}$;
\item[(vi)]  a subgraph $H$ from $\cL=\cL\left((\half\alpha+\tfrac{1}{4}\eta)k,4\eta^4k, \text{red}, \text{blue}, \text{green}\right).$
\end{itemize}

Furthermore, 
\begin{itemize}
\item[(iv)] occurs only if $\aI\leq\max\{\aII,\aIII\}\leq\alpha_{*}+24\eta^{1/4}$. Additionally,  $H$ and $K$ belong to $\cH_{1}$ if $\alpha_{*}\leq(1-\eta^{1/16})\aI$ and belong to $\cH_{2}$ if $\aI\leq(1-\eta^{1/16})\alpha_{*}$;
 \item[(v)] occurs only if $\aI\leq\beta$. Additionally, $H$ and $K$ may belong to $\cH_2^*$ only if $\aI\leq(1-\eta^{1/16})\beta$ and may belong to $\cJ$ only if $\beta<(\tfrac{3}{2}+2\eta^{1/4})\aI$; and
 \item[(vi)] occurs only if $\aI\geq\max\{\aII,\aIII\}$.
 \end{itemize} 

Since $n>\max\{n_{\ref{th:blow-up}}(4,0,0,\eta), n_{\ref{th:blow-up}}(1,2,0,\eta), n_{\ref{th:blow-up}}(1,0,2,\eta)\}$ and $\eta<10^{-20}$, in cases (i)--(iii), Theorem~\ref{th:blow-up} gives a cycle of appropriate length, colour and parity to complete the proof. 
Thus, we need only concern ourselves with cases (iv)--(vi). We divide the remainder of the proof into three parts, each corresponding to one of the possible coloured structures.

\section{Proof of the main result -- Part I -- Case (iv)}
\label{s:p11}

Suppose that $\cG$ contains subsets of vertices $\cW$, $\cX$ and $\cY$ such that $\cX\cup \cY\subseteq \cW$, $\cX\cap \cY=\emptyset$, $|\cW|\geq(c-\eta^{1/2})k$, every blue-component of $G[\cW]$ is odd, $G[\cX]$ contains a red-blue-coloured spanning subgraph~$H$ from $\cH_{1}\cup\cH_{2}$ and $G[Y]$ contains a a two-coloured spanning subgraph $K$ from $\cH_{1}\cup\cH_{2}$, where
\begin{align*}
\cH_1&=\cH\left((\aI-2\eta^{1/64})k,(\half\aII-2\eta^{1/64})k,4\eta^2 k,\eta^{1/64},\text{red},\text{blue}\right),\text{ } 
\\ \cH_2&=\cH\left((\aII-2\eta^{1/64})k,(\half\aI-2\eta^{1/64})k,4\eta^2 k,\eta^{1/64},\text{blue},\text{red}\right).
\end{align*}
Recall from Theorem~D, that we may additionally assume that
\begin{equation}
\label{sizesI} 
\aI\leq\max\{\aII,\aIII\}\leq\aII+24\eta^{1/2}.
\end{equation}
We divide the proof that follows into three sub-parts depending on the colouring of the subgraphs~$H$~and~$K$, that is, whether $H$ and $K$ belong to $\cH_1$ or $\cH_2$:

\subsection*{Part I.A: $H, K \in \cH_1$.}

From Theorem~\hyperlink{reD}{D}, we know that this case can arise only when $\aI\geq(1-\eta^{1/16})\aII$. Thus, recalling~(\ref{sizesI}), we know that 
\begin{equation}
\label{sizesIA} 
\aII-\eta^{1/16}\leq\aI\leq\max\{\aII,\aIII\}\leq\aII+24\eta^{1/2}.
\end{equation}
We have a natural partition of
the vertex set~$\cV$ of~$\cG$ into $\cX _1 \cup \cX _2 \cup \cY _1 \cup \cY _2\cup \cZ$, where $\cX_1\cup\cX_2$ is the partition of the vertices of $H$ given by Definition~\ref{d:H} and  $\cY_1\cup\cY_2$ the corresponding partition of the vertices of $K$. Thus, $\cX_1\cup\cX_2\cup\cY_1\cup\cY_2\subseteq \cW$ and 
\begin{align}
\label{IA0a-}
|\cX_1|,|\cY_1| & \geq (\aI-2\eta^{1/64})n,
& |\cX_2|,|\cY_2| & \geq (\half \aII-2\eta^{1/64})n.
\end{align}
By the definition of $\cH_1$, we know that ~$\cG_1[\cX_1]$ is $(1-\eta^{1/64})$-complete and so, by Theorem~\ref{dirac}, it contains a red connected-matching on at least $|\cX_1|-1\geq(\aI-4\eta^{1/64})k$ vertices. Similarly,~$\cG[\cY_1]$ contains a red connected-matching on at least $(\aI-4\eta^{1/64})k$ vertices. Thus, the existence of a red edge in~$\cG[\cX_1,\cY_1]$ would imply the existence of a red-connected-matching on at least $\aI k$ vertices and, therefore, we may assume that there are no red edges present in~$\cG[\cX_1,\cY_1]$.

Again, by the definition of $\cH_1$, we know that~$\cG_2[\cX_1,\cX_2]$ is $(1-\eta^{1/64})$-complete and so, by Lemma~\ref{l:ten}, it contains a blue connected-matching on at least $(\aII-8\eta^{1/64})k$ vertices. Similarly,~$\cG_2[\cY_1,\cY_2]$ contains a blue connected-matching on at least $(\aII-8\eta^{1/64})k$ vertices. Thus, recalling that every component of~$\cG[\cW]$ may be assumed to be odd, the existence of a blue edge in $\cG[\cX_1\cup \cX_2,\cY_1\cup \cX_2]$ would imply the existence of a blue odd connected-matching on at least $\aII k$ vertices. Therefore, we may assume that there are no blue edges present in~$\cG[\cX_1\cup \cX_2,\cY_1\cup \cY_2]$.

Suppose there exists a red matching $\cR_1$ in $\cG[\cX_1,\cY_2]$ such that $8\eta^{1/64}k\leq V(\cR_1)\leq 10\eta^{1/64}k$. Then, recalling (\ref{IA0a-}), there exists a subset $\widetilde{\cX}_1$ of at least $(\aI-7\eta^{1/64})k$ vertices from $\cX_1$ such that $\cX_1$ and $V(\cR_1)$ share no vertices. By Theorem~\ref{dirac},~$\cG[\widetilde{\cX}_1]$ contains a red connected-matching $\cR_2$ on at least $|\widetilde{\cX_1}|-1\geq(\aI-8\eta^{1/64})k$ vertices. Observe then that $\cR_1$ and $\cR_2$ share no vertices and therefore, since~$\cG_1[\cX_1]$ is connected, form a red connected-matching on at least $\aI k$ vertices. Thus, after moving at most~$5\eta^{1/64}k$ vertices from each of $\cX_1, \cX_2, \cY_1$ and $\cY_2$ into $\cZ$, we may assume that there are no red edges present in~$\cG[\cX_1,\cY_2]$ or~$\cG[\cX_2,\cY_1]$.

In summary, moving vertices from $\cX_1\cup\cX_2\cup\cY_1\cup\cY_2$ to $\cZ$, we may now assume that we have a partition of $\cV(\cG)$ into $\cX _1 \cup \cX _2 \cup \cY _1 \cup \cY _2\cup \cZ$ with 
\begin{equation}
\left.
\label{IA0a}
\begin{aligned}
\quad\quad\quad\quad\quad\quad\quad\quad\,\,\,
(\aI-7\eta^{1/64})k\leq|\cX _1|&=|\cY _1|=p\leq \aI k, 
\quad\quad\quad\quad\quad\quad\quad\,\,\,\\
(\half\aII-7\eta^{1/64})k\leq|\cX _2|&=|\cY _2|=q\leq \half\aII k, 
\end{aligned}
\right\}\!
\end{equation}
such that 
\begin{itemize}
\labitem{HA1}{HM1} 
$\cG_1[\cX _1]$ and $\cG_1[\cY _1]$ are each $(1-2\eta^{1/16})$-complete (and thus connected), \\
$\cG_2[\cX _1]$ and $\cG_2[\cY _1]$ are each $2\eta^{1/16}$-sparse, \\
 $\cG_3[\cX _1]$ and $\cG_3[\cY _1]$ each contain no edges;

\labitem{HA2}{HM2} $\cG_1[\cX_1,\cX_2]$ and $\cG_1[\cY_1,\cY_2]$ are each   $2\eta^{1/16}$-sparse, \\
$\cG_2[\cX_1,\cX_2]$ and $\cG_2[\cY_1,\cY_2]$ are each $(1-2\eta^{1/16})$-complete (and thus connected),\\ 
$\cG_3[\cX_1,\cX_2]$ and $\cG_3[\cY_1,\cY_2]$ each contain no green edges;

\labitem{HA3}{HM4} $\cG[\cX_2]$ and $\cG[\cY_2]$ each contain no green edges, \\ all edges present in $\cG[\cX_1,\cY_2] \cup \cG[\cX_2,\cY_1] \cup \cG[\cX_1,\cY_1]$ are coloured exclusively green;

\labitem{HA4}{HM3} $\cG[\cX_1\cup\cX_2\cup\cY_1\cup\cY_2]$ is $4\eta^4 k$-almost-complete (and thus connected),  \\
every component of $\cG_2[\cX_1\cup \cX_2 \cup \cY_1 \cup \cY_2]$ is odd.
\end{itemize}
By (\ref{sizesIA}) and (\ref{IA0a}), since $\eta\leq\left(\frac{\aIII}{100}\right)^{128}$, we have $$|\cX_1|\geq(\aI-7\eta^{1/64})k\geq(\half\aIII+4\eta^2)k.$$
By (\ref{HM3}), $\cG[\cX_1\cup\cX_2\cup\cY_1\cup\cY_2]$ is $4\eta^2 k$-almost-complete, and by (\ref{HM4}) all edges present in $\cG[\cX_1,\cY_1]$ are coloured exclusively green. Thus, by Lemma~\ref{l:eleven}, $\cG[\cX_1,\cY_1]$ contains a green connected-matching $\cM$ on at least $\aIII k$ vertices. By (\ref{HM4}), all the vertices of $\cX_1\cup \cX_2 \cup \cY_1 \cup \cY_2$ belong to the same green component. If there existed a pair of green edges $xz$, $yz$ with $x\in\cX_1\cup\cX_2$, $y\in\cY_1\cup\cY_2$, $z\in\cZ$, then the component of $\cG_3$ containing $\cM$ would be odd. Thus, we may instead assume that we can partition $\cZ$ into $\cZ_X\cup\cZ_Y$ such that there are no green edges in $\cG[\cZ_X,\cX_1\cup\cX_2]\cup\cG[\cZ_Y,\cY_1\cup\cY_2]$. 

\begin{figure}[!h]
\centering
\vspace{-1mm}\hspace{2mm}
{\includegraphics[height=50mm, page=7]{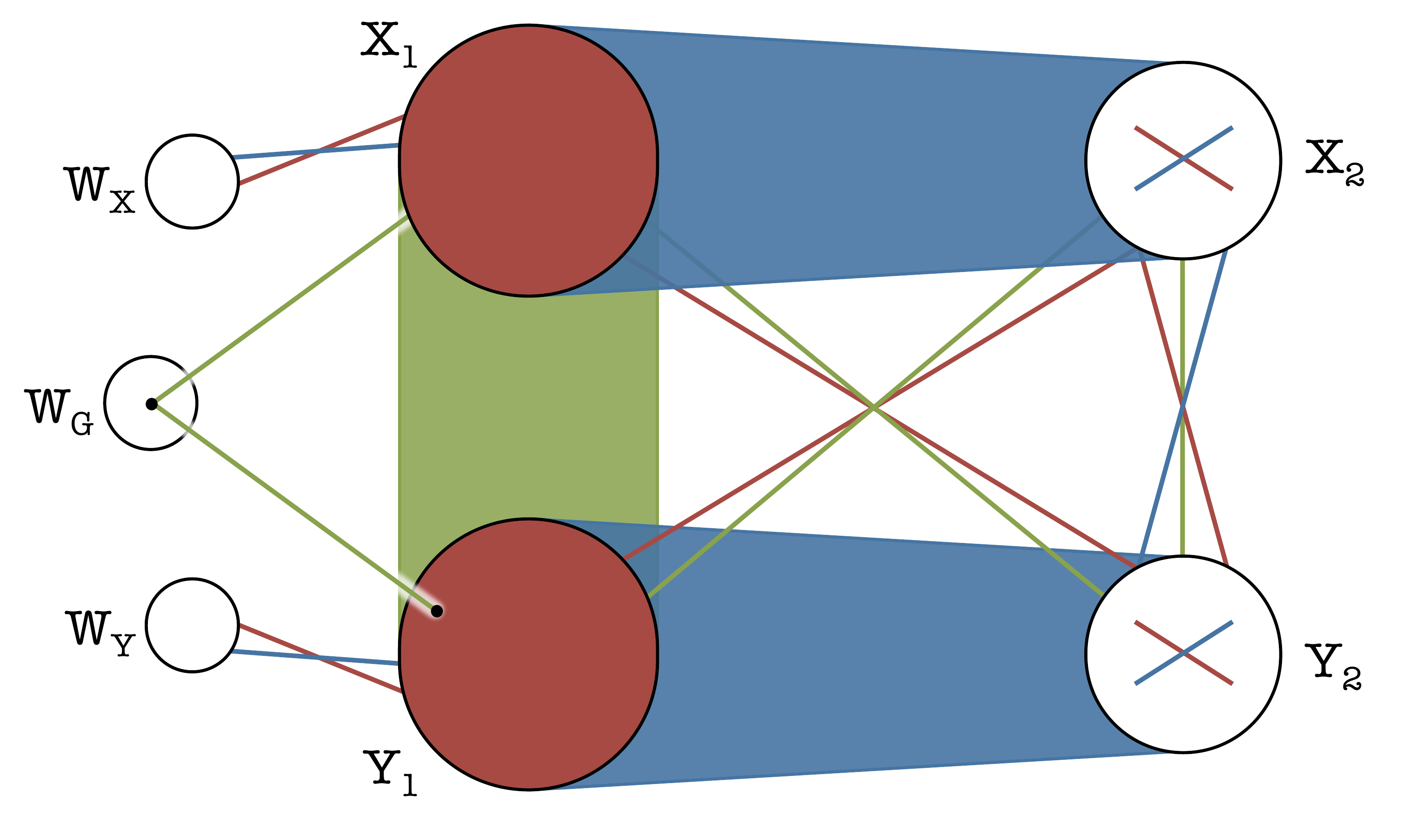}\hspace{18mm}}
\vspace{-1mm}\caption{Partition of the vertices of the reduced-graph. }
  \end{figure}
  
Recalling that $|V(\cG)|\geq(c-\eta)k\geq(4\alpha_1-\eta)k$, without loss of generality, we may assume that 
$$|\cX_1\cup\cX_2\cup \cZ_X|\geq(2\alpha_1-\eta)k$$ 
(since, if not, then $\cY_1 \cup \cY_2\cup \cZ_Y$ is that large instead). 
We further partition $\cZ_X$ into $\cZ_R\cup \cZ_B$ by defining
\begin{align*}
\cZ_B&=\{ z\in \cZ_X \text{ such that } z \text{ has at least } |\cX_1|-16\eta^{1/2} \text{ blue neighbours in } \cX_1\};\text{ and}
\\ \cZ_R&=\cZ\backslash \cZ_B=\{ z\in \cZ_X \text{ such that } z \text{ has at least } 16\eta^{1/2} \text{ red neighbours in } \cX_1\}.
\end{align*}

Given this definition, suppose that $|\cX_1\cup\cZ_R|\geq(\aI+\eta^{1/2})k$. Then, let $\widetilde{X}$ be any subset of $\cX_1\cup\cZ_R$ such that 
$$(\aI+\eta^{1/2})k\leq|\widetilde{X}|\leq(\aI+2\eta^{1/2})k$$
which includes every vertex of $\cX_1$. Given (\ref{IA0a}), we have 
$$|\cX_1\cap\widetilde{\cX}|\geq(\aI-7\eta^{1/64})k\,\,\,\text{and}\,\,\,|\cZ_R\cap \widetilde{\cX}|\leq 8\eta^{1/64} k.$$

By (\ref{HM1}), $\cG_1[\cX_1]$ is $(1-2\eta^{1/64})$-complete and so is $4\eta^{1/64}$-almost-complete. Thus, $\cG_1[\widetilde{\cX}]$ consists of at least $|\widetilde{\cX}|-7\eta^{1/64}n$ vertices of degree at least $|\widetilde{\cX}|-12\eta^{1/64}n$ and at most $8\eta^{1/64}n$ vertices of degree at at least $16\eta^{1/64}n$ and so, by Theorem~\ref{chv}, $\cG_1[\widetilde{\cX}]$ is Hamiltonian and, thus, contains a red connected-matching on at least $\aI k$ vertices.

Thus, we may, instead, suppose that $|\cX_2\cup\cZ_B|\geq(\aI-2\eta^{1/2})k\geq(\half\aII+\eta^{1/2})k$, in which case, we consider the blue graph $\cG_2[\cX_1,\cX_2\cup \cZ_B]$. Given the relative sizes of~$\cX_1$ and~$\cX_2\cup \cZ_B$ and the large minimum-degree of the graph, we can use Theorem~\ref{moonmoser} to give a blue connected-matching on at least~$\aII k $ vertices. Indeed, by~(\ref{sizesIA}) and (\ref{IA0a}), we have 
$|\cX_1|\geq(\aI-7\eta^{1/64})k\geq(\aII-8\eta^{1/64})k$ and may choose subsets $\widetilde{\cX}_1\subseteq \cX_1$, $\widetilde{\cX}_2\subseteq \cX_2\cup \cZ_B$ such that %$\wX_2$ includes every vertex of $X_2$ and
\begin{align*}
(\half\aII +\eta^{1/2})k\leq|\widetilde{\cX}_1|&=|\widetilde{\cX}_2|\leq(\half\aII +2\eta^{1/2})k, & |\widetilde{\cX}_2\cup \cZ_B|\leq8\eta^{1/64}k.
\end{align*}
Recall that $\cG_2[\cX_1,\cX_2]$ is $4\eta^{1/64}k$-almost-complete and that all vertices in $\cZ_B$ have blue degree at least $|\widetilde{\cX}_1|-16\eta^{1/64}k$ in $G[\widetilde{X}_1,\widetilde{X}_2]$. Thus, since $|\widetilde{\cX}_2\cup \cZ_B|\leq 8\eta^{1/64}k$ and $\eta\leq(\aII/100)^{128}$, for any pair of vertices $x_1\in\widetilde{\cX}_1$ and $x_2\in\widetilde{\cX}_2$, we have $$d(x_1)+d(x_2)\geq |\widetilde{\cX}_1|+|\widetilde{\cX}_2|-32\eta^{1/64}n\geq(\half\aII+\eta^{1/2}) k+1.$$ Therefore, by Theorem~\ref{moonmoser}, $\cG_2[\widetilde{\cX}_1,\widetilde{\cX}_2]$ contains a blue connected-matching on at least $ \aII k $ vertices, which is odd since every blue component of $\cG[\cW]$ is assumed to be odd, thus completing Part I.A.

\subsection*{Part I.B: $H \in \cH_1, K \in \cH_2$.}
From Theorem~\hyperlink{reD}{D}, we know that this case can arise only when $\aI$ and $\aII$ are close in size, specifically when 
\begin{equation}
\label{a12same}
(1-\eta^{1/16})\aI\leq\aII
\leq (1-\eta^{1/16})^{-1} \aI \leq (1+2\eta^{1/16}) \aI.
\end{equation}

Following the same argument as in Part I.A. but exchanging the roles of red and blue and the roles of~$\aI$ and $\aII$ when necessary, we may obtain a partition of $\cV(\cG)$ into $\cX _1 \cup \cX _2 \cup \cY _1 \cup \cY _2\cup \cZ$ with 
\begin{equation}
\left.
\label{IC0a}
\begin{aligned}
\quad\quad\quad\quad\quad\quad\quad\quad\,\,\,
(\aI-7\eta^{1/64})k\leq|\cX _1|&=p\leq \aI k, 
\quad\quad\quad\quad\quad\quad\quad\,\,\,\\
(\half\aII-7\eta^{1/64})k\leq|\cX _2|&=q\leq \half\aII k, 
\quad\quad\quad\quad\quad\quad\quad\quad\,\,\, \\
(\aII-7\eta^{1/64})k\leq|\cY _1|&=r\leq \aII k, 
\quad\quad\quad\quad\quad\quad\quad\,\,\,\\
(\half\aI-7\eta^{1/64})k\leq|\cY _2|&=s\leq \half\aI k, 
\end{aligned}
\right\}\!
\end{equation}
such that 

\begin{itemize}
\labitem{HB1}{HO1} 
$\cG_1[\cX _1]$ and $\cG_2[\cY _1]$ are each $(1-2\eta^{1/16})$-complete (and thus connected), \\
$\cG_2[\cX _1]$ and $\cG_1[\cY _1]$ are each $2\eta^{1/16}$-sparse, \\
 $\cG_3[\cX _1]$ and $\cG_3[\cY _1]$ each contain no edges;

\labitem{HB2}{HO2} $\cG_2[\cX_1,\cX_2]$ and $\cG_1[\cY_1,\cY_2]$ are each $(1-2\eta^{1/16})$-complete (and thus connected),\\
 $\cG_1[\cX_1,\cX_2]$ and $\cG_2[\cY_1,\cY_2]$ are each   $2\eta^{1/16}$-sparse, \\
$\cG_3[\cX_1,\cX_2]$ and $\cG_3[\cY_1,\cY_2]$ each contain no edges;

\labitem{HB3}{HO4} 
$\cG[\cX_2]$ and $\cG[\cY_2]$ each contain no green edges,
\\ all edges present in $\cG[\cX_1,\cY_2] \cup \cG[\cX_2,\cY_1] \cup \cG[\cX_1,\cY_1]$ are coloured exclusively green;

\labitem{HB4}{HO3} $\cG[\cX_1\cup\cX_2\cup\cY_1\cup\cY_2]$ is $4\eta^4 k$-almost-complete (and thus connected),  \\
every component of $\cG_2[\cX_1\cup \cX_2 \cup \cY_1 \cup \cY_2]$ is odd.
\end{itemize}

By (\ref{sizesI}) and (\ref{IC0a}), since $\eta\leq\left(\frac{\aI}{100}\right)^{128},\left(\frac{\aII}{100}\right)^{128}$, we have 
\begin{align*}
|\cX_1|&\geq(\aI-7\eta^{1/64})k\geq(\half\aIII+4\eta^2)k, \\
|\cY_1|&\geq(\aII-7\eta^{1/64})k\geq(\half\aIII+4\eta^2)k.
\end{align*}

By (\ref{HO3}), $\cG[\cX_1\cup\cX_2\cup\cY_1\cup\cY_2]$ is $4\eta^2 k$-almost-complete and by (\ref{HM4}) all edges present in $\cG[\cX_1,\cY_1]$ are coloured exclusively green. 
By the same argument given in Part I.A,
if there existed a pair of green edges $xz$, $yz$ with $x\in\cX_1\cup\cX_2$, $y\in\cY_1\cup\cY_2$, $z\in\cZ$, 
then $\cG$ would contain an odd green connected-matching on at least $\aIII k$ vertices.

Thus, again, we can partition $\cZ$ into $\cZ_X\cup\cZ_Y$ such that there are no green edges in~$\cG[\cZ_X,\cX_1\cup\cX_2]$ or in~$\cG[\cZ_Y,\cY_1\cup\cY_2]$. 
\begin{figure}[!h]
\centering
\vspace{-1mm}\hspace{2mm}
{\includegraphics[height=50mm, page=8]{Th-AB-Figs-Wide.pdf}\hspace{18mm}}
\vspace{-1mm}\caption{Partition of the vertices of the reduced-graph. }
  \end{figure}

Recalling that $|V(\cG)|\geq(c-\eta)k\geq(4\alpha_1-\eta)k$, without loss of generality, we may assume that at least one of 
$\cX_1\cup\cX_2\cup \cZ_X$ or $\cY_1 \cup \cY_2\cup \cZ_Y$ contains at least $(2\alpha_1-\eta)k$ vertices.

Suppose, for now, that  $$|\cX_1\cup\cX_2\cup \cZ_X|\geq (2\alpha_1-\eta)k\geq (\aI+\eta^{1/2})k+(\aII-3\eta^{1/16})k.$$
In that case, we further partition $\cZ_X$ into $\cZ_{XR}\cup \cZ_{XB}$ by defining
\begin{align*}
\cZ_{XB}&=\{ z\in \cZ_X \text{ such that } z \text{ has at least } |\cX_1|-16\eta^{1/2} \text{ blue neighbours in } \cX_1\};\text{ and}
\\ \cZ_{XR}&=\cZ\backslash \cZ_{XB}=\{ z\in \cZ_X \text{ such that } z \text{ has at least } 16\eta^{1/2} \text{ red neighbours in } \cX_1\}.
\end{align*}

Given this definition, if $|\cX_1\cup\cZ_{XR}|\geq(\aI+\eta^{1/2})k$, then, 
by the same argument given in Part I.A, $\cG[\cX_1\cup \cZ_{XR}]$ contains
a red connected-matching on at least $\aI k$ vertices.
Thus, we may, instead, suppose that $|\cX_2\cup\cZ_{XB}|\geq(\aI-2\eta^{1/2})k\geq(\half\aII+\eta^{1/2})k$, in which case, 
by the same argument given in Part I.A,
$\cG[\cX_1,\cX_2\cup\cX_{XB}]$
contains a blue connected-matching on at least $ \aII k $ vertices, which is odd since every blue component of $\cG[\cW]$ is assumed to be odd.
This would be sufficient to complete Part~I.B. Therefore we may instead assume that $$|\cY_1\cup\cY_2\cup \cZ_Y|\geq (2\alpha_1-\eta)k\geq(\aII+\eta^{1/2})k+(\aI-3\eta^{1/16})k.$$
In which case, we further partition $\cZ_Y$ into $\cZ_{YR}\cup \cZ_{YB}$ by defining
\begin{align*}
\cZ_{YR}&=\{ z\in \cZ_Y \text{ such that } z \text{ has at least } |\cY_1|-16\eta^{1/2} \text{ red neighbours in } \cY_1\};\text{ and}
\\ \cZ_{YB}&=\cZ_{Y}\backslash \cZ_{YR}=\{ z\in \cZ_Y \text{ such that } z \text{ has at least } 16\eta^{1/2} \text{ blue neighbours in } \cY_1\}.
\end{align*}
Given this definition, if $|\cY_1\cup\cZ_{YB}|\geq(\aII+\eta^{1/2})k$. Then, 
the same argument as used in Part I.A, when considering $\cX\cup\cZ_R$ gives a blue connected-matching in $\cG[\cY_1\cup\cZ_{YB}]$ on 
at least $\aII k$ vertices, which is odd since every blue component of $\cG[\cW]$ is assumed to be odd.
Thus, we may, instead, suppose that $|\cY_2\cup\cZ_{YR}|\geq(\aI-3\eta^{1/2})k\geq(\half\aI+\eta^{1/2})k$, in which case, 
the same argument as used in Part I.A, when considering $\cG[\cX_1,\cX_2\cup\cZ_B]$ gives a red connected-matching in $\cG[\cY_1,\cY_2\cup\cZ_{YR}]$ on
at least $ \aI k $ vertices, completing Part I.B.

\subsection*{Part I.C: $H, K \in \cH_2$.}

From Theorem~\hyperlink{reD}{D}, we know that this case can arise only when 
\begin{equation}
\label{a2big}
\aII\geq(1-\eta^{1/16})\aI.
\end{equation}
Following the same argument as in Part I.A. but exchanging the roles of red and blue and the roles of $\aI$ and $\aII$, we may obtain a partition of $\cV(\cG)$ into $\cX _1 \cup \cX _2 \cup \cY _1 \cup \cY _2\cup \cZ$ with 
\begin{equation}
\left.
\label{IB0a}
\begin{aligned}
\quad\quad\quad\quad\quad\quad\quad\quad\,\,\,
(\aII-7\eta^{1/64})k\leq|\cX _1|&=|\cY _1|=p\leq \aII k, 
\quad\quad\quad\quad\quad\quad\quad\,\,\,\\
(\half\aI-7\eta^{1/64})k\leq|\cX _2|&=|\cY _2|=q\leq \half\aI k, 
\end{aligned}
\right\}\!
\end{equation}
such that 
\begin{itemize}
\labitem{HC1}{HN1} 
$\cG_1[\cX _1]$ and $\cG_1[\cY _1]$ are each $2\eta^{1/16}$-sparse, \\
$\cG_2[\cX _1]$ and $\cG_2[\cY _1]$ are each $(1-2\eta^{1/16})$-complete (and thus connected), \\
 $\cG_3[\cX _1]$ and $\cG_3[\cY _1]$ each contain no edges;

\labitem{HC2}{HN2} $\cG_1[\cX_1,\cX_2]$ and $\cG_1[\cY_1,\cY_2]$ are each $(1-2\eta^{1/16})$-complete (and thus connected),\\
 $\cG_2[\cX_1,\cX_2]$ and $\cG_2[\cY_1,\cY_2]$ are each   $2\eta^{1/16}$-sparse, \\
$\cG_3[\cX_1,\cX_2]$ and $\cG_3[\cY_1,\cY_2]$ each contain no edges;

\labitem{HC3}{HN4} 
$\cG[\cX_2]$ and $\cG[\cY_2]$ each contain no green edges,
\\ all edges present in $\cG[\cX_1,\cY_2] \cup \cG[\cX_2,\cY_1] \cup \cG[\cX_1,\cY_1]$ are coloured exclusively green;

\labitem{HC4}{HN3} $\cG[\cX_1\cup\cX_2\cup\cY_1\cup\cY_2]$ is $4\eta^4 k$-almost-complete (and thus connected),  \\
every component of $\cG_2[\cX_1\cup \cX_2 \cup \cY_1 \cup \cY_2]$ is odd.
\end{itemize}

\begin{figure}[!h]
\centering
\includegraphics[width=64mm, page=31]{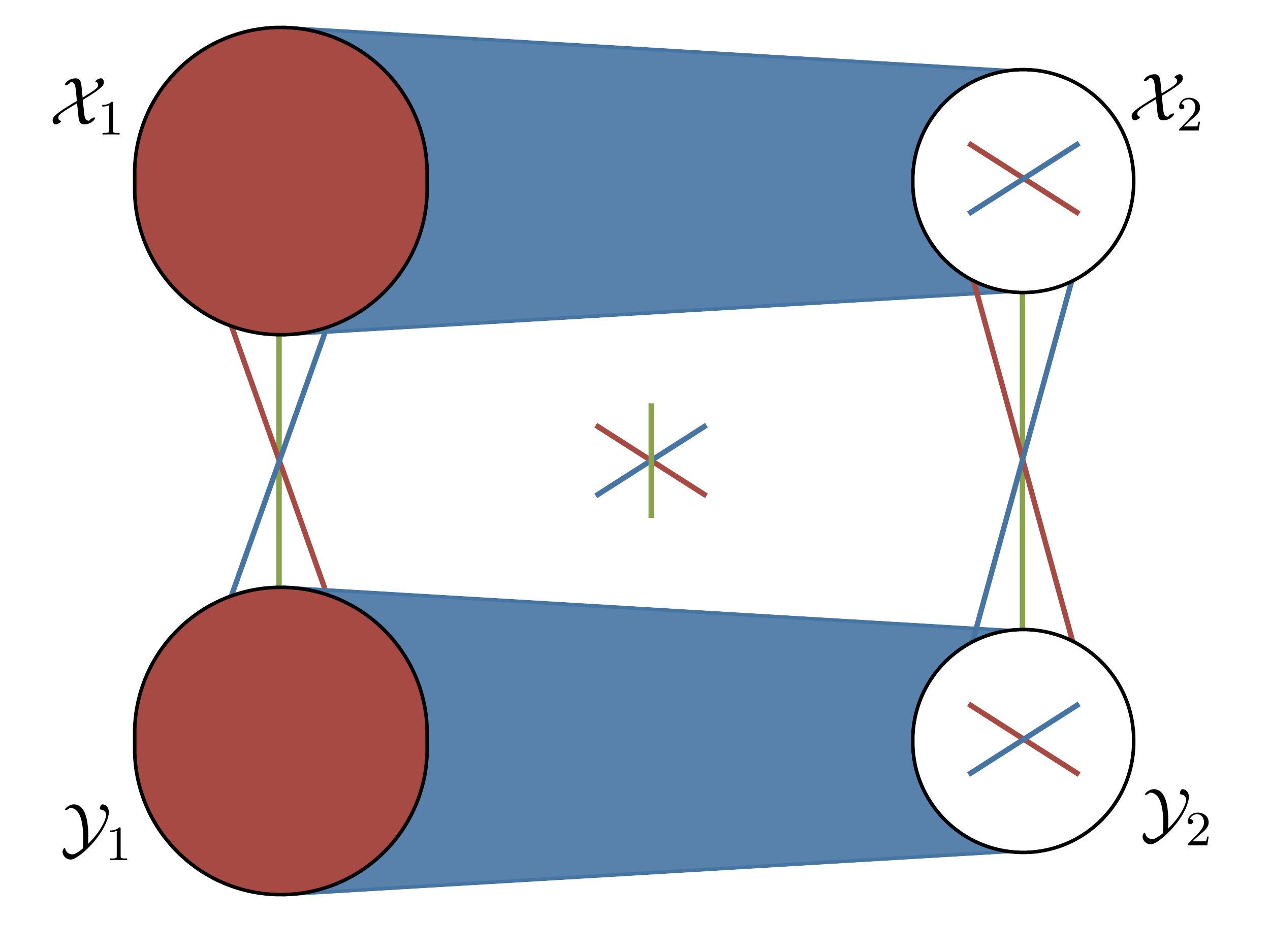}
\vspace{0mm}\caption{Coloured structure of the  reduced-graph in Part I.C.}  
\end{figure}

 The remainder of this section focuses on showing that the original graph must have a similar structure, which can then be exploited in order to force a cycle of appropriate length, colour and parity.

By definition, each vertex $V_{i}$ of $\cG=(\cV,\cE)$ represents a class of vertices of $G=(V,E)$. In what follows, we will refer to these classes as \emph{clusters} (of vertices of~$G$). Additionally, recall, from (\ref{NK}), that
$$(1-\eta^4)\frac{N}{K}\leq |V_i|\leq \frac{N}{K}.$$ 

Since $n> \max\{n_{\ref{th:blow-up}}(4,0,0,\eta), n_{\ref{th:blow-up}}(1,2,0,\eta), n_{\ref{th:blow-up}}(1,0,2,\eta)\}$, we can (as in the proof of Theorem~\ref{th:blow-up}) prove that $$
|V_i|\geq \left(1+\frac{\eta}{24}\right)\frac{n}{k}> \frac{n}{k}.$$

Thus, we can partition the vertices of~$G$ into sets $X_{1}, X_{2}, Y_{1}, Y_{2}$ and $Z$ corresponding to the partition of the vertices of~$\cG$ into $\cX_1, \cX_2, \cY_1,\cY_2$ and $\cZ$. Then,~$X_1, Y_1$ each contain~$p$ clusters of vertices and $X_2, Y_2$ each contain~$q$ clusters and, recalling (\ref{IB0a}), we have
\begin{equation}
\left.
\label{IB1} 
\begin{aligned}
\,\,\,\quad\quad\quad\quad\quad\quad\quad\quad\quad\,\,\,\,
|X_1|,|Y_1| & = p|V_1| \geq (\aII-7\eta^{1/64})n,
\quad\quad\quad\quad\quad\quad\quad\quad
\\
|X_2|,|Y_2| & = q|V_1|\geq (\half \aI-7\eta^{1/64})n.
\end{aligned}
\right\}\!
\end{equation}

In what follows, we will \textit{remove} vertices from $X_1\cup X_2\cup Y_1\cup Y_2$ by moving them into~$Z$ in order to show that, in what remains, $G[X_1\cup X_2\cup Y_1 \cup Y_2]$ has a particular coloured structure.  We begin by proving the below claim which essentially tells us that  $G$ has similar coloured structure to $\cG$:

\begin{claim}
\label{G-struct}
We can \textit{remove} at most $5\eta^{1/128}n$ vertices from each of~$X_1$ and~$Y_1$ and at most $2\eta^{1/128}n$ vertices from each of~$X_2$ and~$Y_2$ so that the following conditions~hold.
 \end{claim}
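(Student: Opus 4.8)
The plan is to push the coloured structure of the reduced-graph~$\cG$ recorded in~(\ref{HN1})--(\ref{HN4}) down to~$G$, the conditions claimed for $X_1,X_2,Y_1,Y_2$ being the exact $G$-analogues of the ones $\cX_1,\cX_2,\cY_1,\cY_2$ satisfy in~$\cG$. Three facts do the work: for each~$i$ and each colour~$r$, the pair $(V_i,V_j)$ is $(\eta^4,G_r)$-regular for all but at most $\eta^4 K$ clusters~$V_j$; by Definition~\ref{reduced}, the reduced-graph carries colour~$r$ on an edge $V_iV_j$ only when the colour-$r$ density of that pair is at least $\xi=\eta$; and~$G$ is complete, so bounding two of the three colour-degrees of a vertex inside a set automatically lower-bounds the third there. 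Throughout one uses that, by the choice of~$\eta$ in Section~\ref{s:p10}, $\eta^4$ and $\eta^2$ are far smaller than $\eta^{1/128}$, and that $K/p$ and $K/q$ are bounded in terms of $\aI,\aII,\aIII$ by~(\ref{IB0a}).

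First I would select the vertices to be \emph{removed}. For a colour~$r$ and a cluster~$V_i$, call $v\in V_i$ \emph{$r$-atypical} if the number of clusters~$V_j$ for which either $(V_i,V_j)$ fails to be $(\eta^4,G_r)$-regular, or it is $(\eta^4,G_r)$-regular with colour~$r$ absent from $V_iV_j$ in~$\cG$ but $v$ has more than $(\eta+\eta^4)|V_j|$ colour-$r$ neighbours in~$V_j$, exceeds $\eta^2K$. A single $(\eta^4,G_r)$-regular pair with colour~$r$ absent contributes at most $\eta^4|V_i|$ ``bad incidences'' at~$V_i$ (else that set of vertices would violate regularity), and the at most $\eta^4K$ irregular pairs at~$V_i$ contribute at most $\eta^4K|V_i|$ more, so the number of $r$-atypical $v\in V_i$ is at most $2\eta^2|V_i|$; over the three colours this marks at most $6\eta^2|V_i|$ vertices of~$V_i$. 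Removing every marked vertex deletes at most $6\eta^2|X_1|\le 6\eta^2\aII n$ vertices from~$X_1$, well within the allotted $5\eta^{1/128}n$, and the smaller cluster-count $q\le p$ of~$\cX_2$ and~$\cY_2$ gives the tighter $2\eta^{1/128}n$ bounds there; in particular the sizes of $X_1,X_2,Y_1,Y_2$ in~(\ref{IB1}) drop by at most $5\eta^{1/128}n$.

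It remains to verify the degree conditions. Fix a surviving $v\in V_i\in\cX_1$. Its red neighbours inside~$X_1$ come from: the at most $2\eta^{1/16}|\cX_1|$ clusters of~$\cX_1$ carrying red in~$\cG$, by the sparsity half of~(\ref{HN1}), contributing at most $2\eta^{1/16}|X_1|$ vertices; the at most $\eta^2 K$ clusters with respect to which~$v$ is bad, contributing at most $\eta^2K|V_1|\le\eta^{1/32}|X_1|$ vertices since $K/p$ is bounded; and the remaining clusters of~$\cX_1$, each contributing at most $(\eta+\eta^4)|V_1|$ by typicality, i.e.\ at most $2\eta|X_1|$ in total. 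Hence~$v$ has at most $2\eta^{1/32}|X_1|$ red, and symmetrically at most $2\eta^{1/32}|X_1|$ green, neighbours in~$X_1$ (red and green being absent from~$\cG[\cX_1]$ outside the same small exceptional families by~(\ref{HN1})), so, $G[X_1]$ being complete, at least $(1-5\eta^{1/32})|X_1|$ blue neighbours there. The bipartite clauses ($G_1[X_1,X_2]$ and $G_1[Y_1,Y_2]$ near-complete with $G_2,G_3$ sparse there; $G[X_1,Y_2]\cup G[X_2,Y_1]\cup G[X_1,Y_1]$ almost entirely green) follow identically from~(\ref{HN2}) and~(\ref{HN4}), and $G[X_1\cup X_2\cup Y_1\cup Y_2]$ is $4\eta^4 k$-almost-complete automatically. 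The parity statement transfers because each near-complete monochromatic piece such as $G_2[X_1]$ spans at least three vertices and hence contains a monochromatic triangle, making the relevant colour-components odd; and when this has to be upgraded to cycles of prescribed parity in the later part of the argument, a monochromatic edge of~$\cG$ supplies monochromatic paths of both parities between the two clusters via Lemma~\ref{longpath}.

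The one genuine difficulty here is bookkeeping: the exponents appearing in the conditions (the $\eta^{1/32}$'s and $\eta^{1/16}$'s, and the $\eta^{1/128}$ removal budget) must be fixed coherently so that each of the finitely many degree estimates above loses only a bounded power of~$\eta$ and the union over a single cluster of all the ``atypical'' sets still fits inside $5\eta^{1/128}n$ (resp.\ $2\eta^{1/128}n$). The soft but essential point is that completeness of~$G$ converts the purely negative information --- sparsity of the absent colours --- that survives the Regularity Lemma into the positive near-completeness that Part~I.C needs in order to force a cycle of the right colour, length and parity.
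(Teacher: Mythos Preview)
Your approach is correct and is a genuine alternative to the paper's proof. The paper argues by \emph{global edge-counting}: it bounds the total number of non-blue edges in $G[X_1]$ (separating contributions from intra-cluster edges, irregular pairs, regular pairs carrying red in~$\cG$, and regular pairs not carrying red), obtains $e(G_1[X_1])+e(G_3[X_1])\le 6\eta^{1/64}n^2$, and then removes the at most $3\eta^{1/128}n$ vertices whose non-blue degree exceeds $4\eta^{1/128}n$ --- a straight Markov argument. The same pattern is repeated for $G[X_1,X_2]$. Your route is the standard \emph{typical-vertex} alternative: strip from each cluster the vertices atypical for some colour, then bound directly the coloured degrees of every survivor. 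Both are textbook regularity techniques; the paper's is shorter and sidesteps the exponent bookkeeping you yourself flag as the ``one genuine difficulty'', while yours actually removes far fewer vertices ($O(\eta^2 n)$ rather than $O(\eta^{1/128}n)$) and gives cleaner per-vertex control.

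Two remarks. First, the claim as it stands in the paper asserts only (\ref{HN5Z}) and (\ref{HN5}): that $G_2[X_1],G_2[Y_1]$ are $4\eta^{1/128}n$-almost-complete and that $G_1[X_1,X_2],G_1[Y_1,Y_2]$ are $3\eta^{1/128}n$-almost-complete. Your discussion of the green cross-graphs $G[X_1,Y_1]$, $G[X_1,Y_2]$, $G[X_2,Y_1]$, of parity transfer, and of Lemma~\ref{longpath} is extraneous here --- those facts are established afterwards in Part~I.C by separate arguments (Claim~\ref{nogreen2} and the paragraph following it), not as part of this claim. Second, your degree estimate omits one contribution the paper counts explicitly: the at most $|V_i|\le N/K$ wrong-colour neighbours of~$v$ lying inside its own cluster~$V_i$. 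This is harmless since $K\ge k_0\ge 100/\eta$ forces $|V_i|\le\eta n/25$, well within your error budget, but it ought to be mentioned.
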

\begin{itemize}
\labitem{HC5}{HN5Z} $G_2[X_1]$ and $G_2[Y_1]$ are each $4\eta^{1/128}n$-almost-complete; and
\labitem{HC6}{HN5} $G_1[X_1,X_2]$ and $G_1[Y_1,Y_2]$ are each $3\eta^{1/128}n$-almost-complete. 
\end{itemize}

\begin{proof} Consider the complete three-coloured graph $G[X_1]$ and recall from (\ref{HN1}) and (\ref{HN3}) that~$\cG[\cX_1]$ contains only red and blue edges and is $4\eta^2k$-almost-complete. Given the structure of~$\cG$, we can bound the number of non-blue edges in $G[X_1]$ as follows:

Since regularity provides no indication as to the colours of the edges contained within each cluster, these could potentially all be non-blue. There are~$p$ clusters in $X_1$, each with at most $N/K$ vertices. Thus, there are at most $$p\binom{N/K}{2}$$ non-blue edges in~$X_1$ within the clusters.% of~$\cX_1$.

Now, consider a pair of clusters $(U_1,U_2)$ in $X_1$. If $(U_1, U_2)$ is not $\eta^4$-regular, then we can only trivially bound the number of non-blue edges in $G[U_1,U_2]$ by $|U_1||U_2|\leq (N/K)^2$. However, by~(\ref{HN3}), there are at most $4\eta^2 |\cX_1| k$ such pairs in $\cG$. Thus, we can bound the number of non-blue edges coming from non-regular pairs by
$$4\eta^2 pk \left(\frac{N}{K}\right)^2.$$

If the pair is regular and~$U_1$ and~$U_2$ are joined by a red edge in the  reduced-graph, then, again, we can only trivially bound the number of non-blue edges in $G[U_1,U_2]$ by $(N/K)^2$. However, by~(\ref{HN1}), $\cG_1[X_1]$ is $\eta^{1/64}$-sparse, so there are at most $\eta^{1/64}\binom{p}{2}$ red edges in~$\cG[X_1]$ and, thus, there are at most $$2\eta^{1/64}\binom{p}{2}\left(\frac{N}{K}\right)^2$$ non-blue edges in $G[X_1]$ corresponding to such pairs of clusters.

Finally, if the pair is regular and~$U_1$ and~$U_2$ are not joined by a red edge in the  reduced-graph, then the red density of the pair is at most~$\eta$ (since, if the density were higher, they would be joined by a red edge). Likewise, the green density of the pair is at most~$\eta$ (since there are no green edges in $\cG[X_1]$). Thus, there are at most $$2\eta\binom{p}{2}\left(\frac{N}{K}\right)^2$$ non-blue edges in $G[X_1]$ corresponding to  such pairs of clusters.

Summing the four possibilities above gives an upper bound of 
$$p \binom{N/K}{2} + 4\eta^2 pk \left(\frac{N}{K}\right)^2+2\eta^{1/64}\binom{p}{2}\left(\frac{N}{K}\right)^{2}+2\eta\binom{p}{2}\left(\frac{N}{K}\right)^{2} $$ non-red edges in $G[X_1]$.

Since $K\geq 2k, \eta^{-1}$, $N\leq 4n$ and $p\leq \aII k \leq k$, we obtain
$$e(G_1[X_1])+e(G_3[X_1])\leq [4\eta + 16\eta^{2} +4\eta^{1/64}+4 \eta]n^2\leq 6\eta^{1/64}n^2.$$
Since $G[X_1]$ is complete and contains at most $6\eta^{1/64}n^2$ non-blue edges, there are at most~$3\eta^{1/128}n$ vertices with blue degree at most $|X_1|-1-4\eta^{1/128}n$. Removing these vertices from $X_1$, that is, re-assigning these vertices to~$W$, gives a new~$X_1$  
such that every vertex in $G[X_1]$ has blue degree at least $|X_1|-1-4\eta^{1/128}n$. The same argument works for $G[Y_1]$, thus completing the proof of (\ref{HN5Z}).

Next, consider $G[X_1,X_2]$. Considering (\ref{HN2}), in a similar way to the above, we can bound the number of non-red edges in $G[X_1,X_2]$ by 

$$4\eta^2 pk \left(\frac{N}{K}\right)^2+2\eta^{1/64}pq\left(\frac{N}{K}\right)^{2}+2\eta pq\left(\frac{N}{K}\right)^{2}. $$

Where the first term bounds the number of non-red edges between non-regular pairs, the second bounds the number of non-red edges between pairs of clusters that are joined by a blue edge in the  reduced-graph and the second bounds the number of non-red edges between pairs of clusters that are not joined by a blue edge in the  reduced-graph.

Since $K\geq 2k$, $N\leq 4n$, $p \leq \aII k \leq k$ and $q\leq \half\aI k\leq\tfrac{1}{2}k$, we obtain
$$e(G_2[X_1,X_2])+e(G_3[X_1,X_2])\leq 6 \eta^{1/64}n^2.$$

Since $G[X_1,X_2]$ is complete and contains at most $6\eta^{1/64}n^2$ non-red edges, there are at most $2\eta^{1/128}n$ vertices in~$X_1$ with red degree to~$X_2$ at most $|X_2|-3\eta^{1/128}n$ and at most~$2\eta^{1/128}n$ vertices in~$X_2$ with red degree to~$X_1$ at most $|X_1|-3\eta^{1/128}n$. Removing these vertices results in every vertex in~$X_1$ having degree in $G_1[X_1,X_2]$ at least $|X_2|-2\eta^{1/128}n$ and every vertex in~$X_2$ having degree in $G_1[X_1,X_2]$ at least $|X_1|-2\eta^{1/128}n$.

We repeat the above for~$G[Y_1,Y_2]$, removing vertices such that every (remaining) vertex in~$Y_1$ has degree in $G_1[Y_1,Y_2]$ at least $|Y_2|-3\eta^{1/128}n$ and every (remaining) vertex in~$Y_2$ has degree in $G_1[Y_1,Y_2]$ at least $|Y_1|-3\eta^{1/128}n$, thus completing the proof of (\ref{HN5}).
\end{proof}

Having discarded some vertices, recalling~(\ref{IB1}), we  have
\begin{align}
\label{IB2}
|X_1|,|Y_1| & \geq (\aII-6\eta^{1/128})n,
& |X_2|,|Y_2| & \geq (\half \aI-3\eta^{1/128})n,
\end{align}
 and can proceed to the {end-game}. 
  
 The following pair of claims allow us to determine the colouring of $G[X_1,Y_1]$:
\begin{claim}
\label{nogreen2}
\hspace{-2.8mm} {\rm \bf a.} If there exist distinct vertices $x_1,x_2 \in X_1$ and $y_1, y_2\in Y_1$ such~that~$x_1y_1$ and~$x_2y_2$ are coloured blue, then~$G$ contains a blue cycle of length exactly~$\langle \aII n \rangle$.

{\rm \bf Claim~\ref{nogreen2}.b.} If there exist distinct vertices $x_1,x_2 \in X_1$ and $y_1, y_2\in Y_1$ such~that~$x_1y_1$ and~$x_2y_2$ are coloured red, then~$G$ contains a red cycle of length exactly~$\llangle \aI n \rrangle$.
\end{claim}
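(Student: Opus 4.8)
The plan is to treat the two supplied bichromatic edges $x_1y_1$ and $x_2y_2$ as bridges between the ``$X$-side'' and the ``$Y$-side'' of the structure recorded in Claim~\ref{G-struct} (together with (\ref{HN5Z}), (\ref{HN5}) and (\ref{IB2})), and to close them into a cycle of the prescribed length by inserting monochromatic paths of controlled length on each side. The cycle produced will always have length equal to the \emph{sum} of the lengths of the two inserted paths, so the parity requirement ($\langle\aII n\rangle$ odd in part~\textbf{(a)}, $\llangle\aI n\rrangle$ even in part~\textbf{(b)}) is met automatically by choosing those two lengths correctly; moreover the two paths are automatically vertex-disjoint since the $X$-side clusters and $Y$-side clusters are disjoint.

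For part~\textbf{(a)}: by (\ref{HN5Z}) the graphs $G_2[X_1]$ and $G_2[Y_1]$ are $4\eta^{1/128}n$-almost-complete, and by (\ref{IB2}) each has at least $(\aII-6\eta^{1/128})n$ vertices, so since $|X_1|+|Y_1|>\aII n\ge\langle\aII n\rangle$ I would choose integers $a,b$ with $a+b=\langle\aII n\rangle$, $a\le|X_1|$, $b\le|Y_1|$, and $a,b\ge 8\eta^{1/128}n+4$. Taking $S\subseteq X_1$ with $x_1,x_2\in S$ and $|S|=a$, the graph $G_2[S]$ is almost-complete on $a$ vertices, hence of minimum degree at least $\half a+1$, so Corollary~\ref{dirac2} yields a blue path on exactly $a$ vertices from $x_1$ to $x_2$; likewise $Y_1$ gives a blue path on exactly $b$ vertices from $y_1$ to $y_2$. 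Concatenating these two paths with the blue edges $x_2y_2$ and $y_1x_1$ produces a blue cycle on $a+b=\langle\aII n\rangle$ vertices.

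For part~\textbf{(b)}: here the red structure lies on the cuts $[X_1,X_2]$ and $[Y_1,Y_2]$, and $2|X_2|$ may be strictly smaller than $\llangle\aI n\rrangle$, so in general the red cycle must cross the cut between the two sides exactly twice --- once along each of $x_1y_1$ and $x_2y_2$. By (\ref{HN5}) the graphs $G_1[X_1,X_2]$ and $G_1[Y_1,Y_2]$ are $3\eta^{1/128}n$-almost-complete, with $|X_1|,|Y_1|\ge(\aII-6\eta^{1/128})n$ and $|X_2|,|Y_2|\ge(\half\aI-3\eta^{1/128})n$. I would pick positive integers $j,j'$ with $j+j'=\half\llangle\aI n\rrangle-1$, $j\le|X_2|$, $j'\le|Y_2|$ and $j,j'\ge 10\eta^{1/128}n$ (possible because $|X_2|+|Y_2|\ge(\aI-6\eta^{1/128})n>\half\aI n$). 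Choosing $S_2\subseteq X_2$ of size $j$ containing at least two common red neighbours of $x_1$ and $x_2$, and $S_1\subseteq X_1$ of size $j+\lceil 7\eta^{1/128}n\rceil$ containing $x_1,x_2$, the bipartite graph $G_1[S_1,S_2]$ is almost-complete, $|S_1|>|S_2|+1$, and every vertex of $S_2$ has at least $\half(|S_1|+|S_2|)+1$ red neighbours in $S_1$; so Lemma~\ref{bp-dir} gives a red path from $x_1$ to $x_2$ visiting every vertex of $S_2$, which by bipartite alternation has exactly $2j+1$ vertices. Symmetrically $G_1[Y_1,Y_2]$ yields a red path on exactly $2j'+1$ vertices from $y_1$ to $y_2$; joining the two with the red edges $x_2y_2$ and $y_1x_1$ gives a red cycle on $(2j+1)+(2j'+1)=\llangle\aI n\rrangle$ vertices.

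The main obstacle is part~\textbf{(b)}: one must notice that a single side may be too small to host the whole red cycle, which forces a route crossing the $X/Y$ cut exactly twice, and one then needs red paths of prescribed \emph{odd} length with \emph{both} endpoints in the larger part of an unbalanced almost-complete bipartite graph. This is exactly what Lemma~\ref{bp-dir} supplies once $S_1$ and $S_2$ are chosen with a size gap of order $\eta^{1/128}n$; the rest reduces to checking that the splitting integers $a,b$ (respectively $j,j'$) can be chosen within the stated bounds, which follows from $\eta$ being small relative to $\aI,\aII$ and from $n$ being large.
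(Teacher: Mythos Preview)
Your proposal is correct and follows essentially the same approach as the paper's proof: in part~(a) you restrict to subsets of $X_1$ and $Y_1$ of prescribed sizes and use Corollary~\ref{dirac2} to get blue Hamiltonian paths, and in part~(b) you use Lemma~\ref{bp-dir} on each of $G_1[X_1,X_2]$ and $G_1[Y_1,Y_2]$ to get red paths of prescribed odd length between the designated endpoints, then splice. The only cosmetic difference is that in part~(b) the paper keeps the full $X_1$ as the large side of the bipartite graph (taking $\widetilde X_2\subseteq X_2$ of size $\ell_1=\lfloor(\llangle\aI n\rrangle-2)/4\rfloor$ and using $|X_1|\gg\ell_1$ directly), whereas you also trim $X_1$ down to $S_1$ of size $j+\lceil 7\eta^{1/128}n\rceil$; both satisfy the hypotheses of Lemma~\ref{bp-dir}, so either works.
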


\begin{proof}
(a) Suppose there exist distinct vertices $x_1,x_2\in X_1$ and  $y_1,y_2\in Y_1$ such that the edges $x_1y_1$ and $x_2y_2$ are coloured blue. Then, let $\widetilde{X}_1$ be any set of $\half (\langle \aII n \rangle+1) $ vertices in~$X_1$ such that $x_1,x_2 \in \widetilde{X}_1$. 

By (\ref{HN5Z}), %every vertex in $X_1$ has degree at least $|X_1|-4\eta^{1/32}n$ in $G_1[X_1]$. Thus, 
every vertex in $\widetilde{X}_1$ has degree at least $|\widetilde{X}_1|-1-4\eta^{1/128}n$ in $G_1[\widetilde{X}_1]$. Since $\eta\leq(\aII/100)^{128}$, we have $|\widetilde{X}_1|-1-4\eta^{1/128}n \geq \half |\widetilde{X}_1| +2$. So, by Corollary~\ref{dirac2}, there exists a Hamiltonian path in $G_1[\widetilde{X}_1]$ between $x_1, x_2$, that is, there exists a blue path between~$x_1$ and $x_2$ in $G[X_1]$ on exactly $\half (\langle \aII n \rangle+1)$ vertices. 

Likewise, given any two vertices $y_1,y_2$ in~$Y_1$, there exists a blue path between~$y_1$ and~$y_2$ in~$G[Y_1]$ on exactly $\half (\langle \aII n \rangle-1)$ vertices.  
Combining the edges $x_1y_1$ and $x_2y_2$ with the blue paths gives a blue cycle on exactly $\langle \aII n \rangle$ vertices.

(b) Suppose there exist distinct vertices $x_1,x_2\in X_1$ and  $y_1,y_2\in Y_1$ such that $x_1y_1$ and $x_2y_2$ are coloured red. Then, let $\widetilde{X}_2$ be any set of 
$$\ell_1=\left\lfloor \frac{\llangle \aI n \rrangle-2}{4} \right\rfloor \geq 3\eta^{1/128} n +2 $$ vertices from $X_2$. By~(\ref{HN5}), $x_1$ and $x_2$ each have at least two neighbours in $\widetilde{X}_2$ and, since $\eta\leq(\aII/100)^{128}$, every vertex in $\widetilde{X}_2$ has degree at least 
$|X_1|-3\eta^{1/128}n\geq \half|X_1| +\half|\widetilde{X}_2| +1$ in $G[X_1,\widetilde{X}_2]$. Since $|X_1|>\ell_1+1$, by Lemma~\ref{bp-dir},  $G_1[X_1,\widetilde{X}_2]$ contains a path on exactly $2\ell_1+1$ vertices from~$x_1$ to~$x_2$.

Likewise, given $y_1, y_2 \in Y_1$, for  any set $\widetilde{Y}_2$ of 
$$\ell_2=\left\lceil \frac{\llangle \aII n \rrangle-2}{4} \right\rceil \geq 3\eta^{1/128} n +2 $$ vertices from $Y_2$, $G_1[Y_1,\widetilde{Y}_2]$ contains a a path on exactly $2\ell_2+1$ vertices from~$y_1$ to~$y_2$.

Combining the edges~$x_1y_1$,~$x_2y_2$ with the red paths found gives a red cycle on exactly $2\ell_1+2\ell_2+2=\llangle \aI n \rrangle$ vertices, completing the proof of the claim. 
\end{proof}

The existence of a red cycle on $\aIna$ vertices or a blue cycle on $\langle \aII n\rangle$ vertices would be sufficient to complete the proof of Theorem~C. Thus, there cannot exist such a pair of vertex-disjoint red edges or such a pair of vertex-disjoint blue edges in $G[X_1,Y_1]$.  Thus, after removing at most one vertex from each of~$X_1$ and~$Y_1$, we may assume that the green graph $G_3[X_1,Y_1]$ is complete. 
%
%\begin{figure}[!h]
%\centering
%\includegraphics[width=64mm, page=33]{Th-AB-Figs.pdf}
%\vspace{0mm}\caption{Colouring of $G$ after Claim~\ref{nogreen2}.}
%\end{figure}

Then, recalling~(\ref{IB2}), we have 
\begin{align}
\label{IB3}
|X_1|, |Y_1|&\geq (\aII-8\eta^{1/128})n,
& |X_2|, |Y_2|&\geq (\half \aI - 4\eta^{1/128})n.
\end{align}

We now consider $Z$. We claim that there can exist no vertex in $Z$ having a green edge to both $X_1$ and $Y_1$. Indeed, suppose that there existed such a vertex $z$ and vertices $x\in X_1$ and $y\in Y_1$ such that $xz$ and $yz$ are both coloured green. We know that $G_3[X_1,Y_1]$ is complete and that $|X_1|,|Y_1|\geq(\aII-8\eta^{1/128})n\geq(\aIII-9\eta^{1/128})n$. Thus, we can greedily construct a path on $\langle \aIII n \rangle -1$ vertices in $G_3[X_1,Y_1]$ from $x$ to $y$ which, together with the edges $xz$ and $yz$ gives a green cycle of length exactly~$\langle \aIII n \rangle$. 

\begin{figure}[!h]
\centering
\vspace{-1mm}\hspace{2mm}
{\includegraphics[height=50mm, page=9]{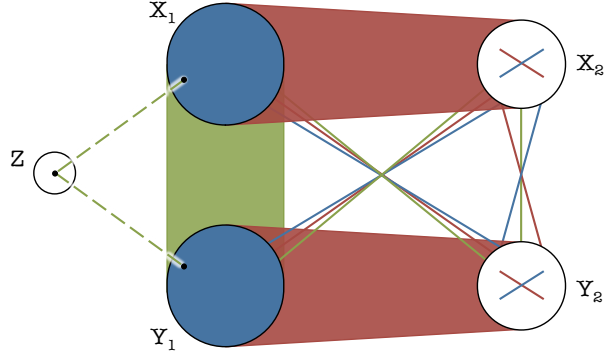}\hspace{18mm}}
\vspace{-1mm}\caption{Using $w\in W_G$ to construct an odd green cycle. }
  
\end{figure}

Thus, defining $Z_X$ to be the set of vertices in $Z$ having no green edges to $X_1\cup X_2$ and $Z_Y$ to be the set of vertices in $Z$ having no green edges to $Y_1\cup Y_2$, we see that, we may assume that $Z_X\cup Z_Y$ is a partition of $Z$.
Then, recalling that $|V(G)|\geq \llangle \aI n\rrangle+2\langle \aII n \rangle-3$, we may assume, without loss of generality that 
$$|X_1 \cup X_2 \cup Z_X|\geq \half\llangle \aI n \rrangle +  \langle \aII n \rangle -1.$$ 

\begin{figure}[!h]
\centering
\includegraphics[height=22mm, page=6]{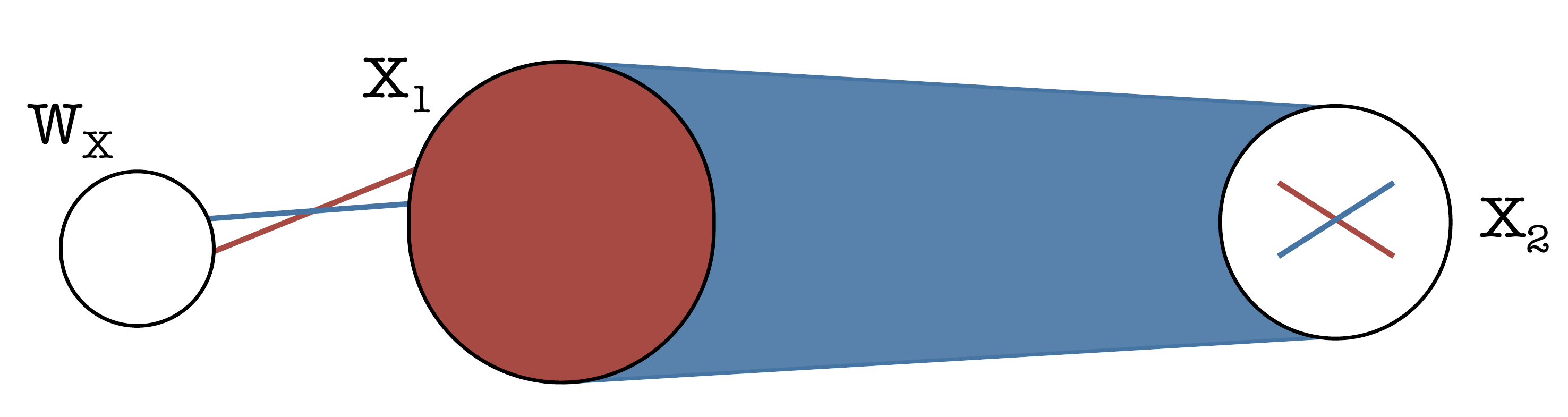}
\vspace{0mm}\caption{Colouring of $G[X_1\cup X_2\cup Z_X]$.}
  
\end{figure}

Given (\ref{HN5Z}) and (\ref{HN5}), we can obtain upper bounds on $|X_1|$, $|X_2|$, $|Y_1|$ and $|Y_2|$ as follows: By Corollary~\ref{dirac1a}, for every integer $m$ such that $8\eta^{1/128}n+2\leq m\leq |X_1|$, we know that $G_2[X_1]$ contains a blue cycle of length $m$. Thus, in order to avoid having a blue cycle of length $\langle \aII n \rangle$, we may assume that $|X_1|<\langle \aII n \rangle$. %Similarly, we may assume that $|Y_1|<\aIIna$. 
By Corollary~\ref{moonmoser2}, for every even integer $m$ such \mbox{that $12\eta^{1/128}n+2\leq m \leq 2\min\{|X_1|,|X_2|\}$}, we know that $G_1[X_1,X_2]$ contains a red cycle of length $m$. Recalling (\ref{a2big}) and (\ref{IB3}), we have $|X_1|\geq(\aII-8\eta^{1/128})n\geq\half\aI n$. Thus, in order to avoid having a red cycle on exactly $\aIna$ vertices, we may assume that $|X_2|<\half\aIna$.
% and, similarly, that $|Y_2|<\half\aIna$. 
In summary, we~have
\begin{equation}
\label{IB7}
\left.
\begin{aligned}
\,\,\,\,\quad\quad\quad\quad\quad\quad\quad\quad\quad\,\,\,\,
(\aII-8\eta^{1/128})n&\leq |X_1|< \langle \aII n \rangle, 
\quad\quad\quad\quad\quad\quad\quad\quad\quad\\
(\half \aI - 4\eta^{1/128})n&\leq |X_2|< \half\aIna.
\end{aligned}
\right\}\!
\end{equation}

We now let
\begin{align*}
Z_R&=\{ z\in Z_X \text{ such that } z \text{ has at least } |X_1|-16\eta^{1/128} \text{ red neighbours in } X_1\};\text{ and}
\\ Z_B&=Z\backslash Z_R=\{ z\in Z_X \text{ such that } z \text{ has at least } 16\eta^{1/128} \text{ blue neighbours in } X_1\}.
\end{align*}

\begin{figure}[!h]
\centering
\includegraphics[height=22mm, page=7]{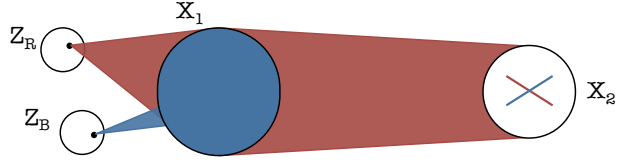}
\vspace{0mm}\caption{Partition of $Z_X$ into $Z_R\cup Z_B$.}
\end{figure}

In which case, we have $Z=Z_B\cup Z_R$ and, so, either $|Z_1\cup Z_B|\geq\langle \aII n \rangle$ or $|Z_2\cup Z_R|\geq\half\llangle \aI n \rrangle$.

If $|X_1\cup Z_B|\geq\langle \aII n \rangle$, then we show that $G_2[X_1\cup Z_B]$ contains a long blue cycle as follows: Let~$X$ be any set of~$\langle \aII n \rangle$ vertices from $X_1 \cup Z_B$ consisting of every vertex from~$X_1$ and~$\langle \aII n \rangle-|X_1|$ vertices from~$Z_B$. By (\ref{HN5Z}) and~(\ref{IB7}), the blue graph~$G_2[X]$ has at least $\langle \aII n \rangle-8\eta^{1/128}n$ vertices of degree at least $|X|-1-4\eta^{1/128}n$ and at most $8\eta^{1/128}n$ vertices of degree at least $16\eta^{1/128}n$. Thus, by Theorem~\ref{chv},~$G[X]$ contains a blue cycle on exactly~$\langle \aII n \rangle$ vertices.

Thus, we may, instead, assume that $|X_2\cup Z_R|\geq\half\llangle \aI n \rrangle$, in which case, we consider the red graph $G_2[X_1,X_2\cup Z_R]$. Given the relative sizes of~$X_1$ and~$X_2\cup Z_R$ and the large minimum-degree of the graph, we can use Theorem~\ref{moonmoser} to give a red cycle on exactly $\llangle \aI n \rrangle$ vertices as follows: By (\ref{a2big}) and~(\ref{IB7}), we have $|X_1|\geq\half\llangle\aI n \rrangle$ and may choose subsets $\widetilde{X}_1\subseteq X_1$, $\widetilde{X}_2\subseteq X_2\cup Z_R$ such that $\wX_2$ includes every vertex of $X_2$, $
|\widetilde{X}_1|=|\widetilde{X}_2|=\half\llangle\aI n \rrangle$ and $|\wX_2\cap Z_R|\leq6\eta^{1/128}n$. Recall, from (\ref{HN5}), that $G_1[X_1,X_2]$ is $3\eta^{1/128}k$-almost-complete and that, by definition, all vertices in $Z_R$ have red degree at least $|\widetilde{X}_1|-1-16\eta^{1/128}n$ in $G[\widetilde{X}_1,\widetilde{X}_2]$. Thus, since $|\wX_2 \cap Z_R|\leq 6\eta^{1/128}n$, for any pair of vertices $x_1\in\widetilde{X}_1$ and $x_2\in\widetilde{X}_2$, we have $d(x_1)+d(x_2)\geq\half\llangle\aI n \rrangle+1$. Therefore, by Theorem~\ref{moonmoser}, $G_1[\wX_1,\wX_2]$ contains a red cycle on exactly~$\llangle \aI n \rrangle$ vertices, thus completing Part I.C. and thus, also, Part I.

\section{Proof of the main result -- Part II -- Case (v)}
\label{s:p12}

Suppose $\cG$ contains 
disjoint subsets of vertices $\cX$ and $\cY$ such that $G[\cX]$ contains a two-coloured spanning subgraph~$H$ from $\cH_{2}^*\cup\cJ_b$ and $G[\cY]$ contains a two-coloured spanning subgraph $K$ from~\mbox{$\cH_{2}^*\cup\cJ_b$}, where
\vspace{-2mm}
\begin{align*}
\cH_{2}^*&=\cH\left((\beta-2\eta^{1/32})k,(\half\aI-2\eta^{1/32})k,4\eta^4 k,\eta^{1/32},\gamma,\text{red}\right),\text{ } 
\\ \cJ_b&=\cJ\left((\aI-18\eta^{1/2}), 4\eta^4 k, \text{red}, \gamma\right),
\end{align*}
and $\beta=\max\{\aII,\aIII\}$ and $\gamma\in\{\text{blue}, \text{green}\}$.

Additionally, from Theorem D, we know that $\aI\leq\beta$. We divide the proof that follows into three sub-parts depending on whether $H$ and $K$ belong to $\cH_2^*$ or~$\cJ_b$:

\subsection*{Part II.A: $H, K \in \cH_2^*$.}

In this case, recalling Theorem~\hyperlink{thD}{D}, we know that $\aI\leq(1-\eta^{1/16})\beta$.

We consider four subcases:

{\it Subcase i:  $\beta=\aII$, $\gamma=\text{blue}$.} 

Since $0<\eta<1$, we have $\cH_{2}^*\subseteq\cH_2$. Notice also that, we have $\aI\leq(1-\eta^{1/16})\aII\leq\aII$. Thus, we have $\aII\geq(1-\eta^{1/16})\aI$ and, in fact, this case has already been dealt with in Part I.C.
Note that, in Part I.C, we knew that all blue components were odd and that here we do not, however, we did use that information.

{\it Subcase ii:  $\beta=\aIII$, $\gamma=\text{blue}$.} 

The vertex set~$\cV$ of~$\cG$ has a natural partition into $\cX _1 \cup \cX _2 \cup \cY _1 \cup \cY _2\cup \cZ$, where $\cX_1\cup\cX_2$ is the partition of the vertices of $H$ given by Definition~\ref{d:H} and  $\cY_1\cup\cY_2$ the corresponding partition of the vertices of $K$. In particular, $|\cX_1|\geq (\aIII-2\eta^{1/32})n$.
By the definition of $\cH_2^*$, we know that  $\cG_2[\cX_1]$ is $(1-\eta^{1/32})$-complete and so, by Theorem~\ref{dirac}, it contains a blue odd connected-matching on at least $|\cX_1|-1\geq(\aIII-3\eta^{1/32})k$ vertices. Thus, in order to avoid having an odd blue connected-matching on at least $\aII k$ vertices, we may assume that $\aII\geq\aIII-3\eta^{1/32}$ and, therefore, $\aII\geq(\aI-3\eta^{1/32})$. We may then follow the same argument given in Part I.C to obtain a monochromatic cycle and complete the proof.

{\it Subcase iii:  $\beta=\aIII$, $\gamma=\text{green}$.} 

We have $\aIII\geq(1-\eta^{1/16})\aI$ and can follow the argument given in Part I.C with the roles of blue and green (and $\aII$ and $\aIII$) exchanged.

{\it Subcase iv:  $\beta=\aII$, $\gamma=\text{green}$.} 

The vertex set~$\cV$ of~$\cG$ has a natural partition into $\cX _1 \cup \cX _2 \cup \cY _1 \cup \cY _2\cup \cZ$, where $\cX_1\cup\cX_2$ is the partition of the vertices of $H$ given by Definition~\ref{d:H} and  $\cY_1\cup\cY_2$ the corresponding partition of the vertices of $K$. In particular, $|\cX_1|\geq (\aII-2\eta^{1/32})n$.
By the definition of $\cH_2^*$, we know that  $\cG_3[\cX_1]$ is $(1-\eta^{1/32})$-complete and so, by Theorem~\ref{dirac}, it contains a green odd connected-matching on at least $|\cX_1|-1\geq(\aII-3\eta^{1/32})k$ vertices. Thus, in order to avoid having an odd green connected-matching on at least $\aIII k$ vertices, we may assume that $\aIII\geq\aII-3\eta^{1/32}$ and, therefore, $\aIII\geq(\aI-3\eta^{1/32})$. We may then follow the same argument given in Part I.C with the roles of blue and green (and $\aII$ and $\aIII$) exchanged to obtain a monochromatic cycle and complete the proof.

\subsection*{Part II.B: $H, \in \cH_2^*, K\in\cJ_b$.}

In this case, recalling Theorem~\hyperlink{thD}{D}, we know that
\begin{equation}
\label{IIBSIZE}
\left.
\begin{aligned}
\,\,\,\,\quad\quad\quad\quad\quad\quad\quad\quad\quad\,\,\,\,
\aI&\leq(1-\eta^{1/16})\beta, 
\quad\quad\quad\quad\quad\quad\quad\quad\quad\quad\quad\quad\quad\\
\beta&<(\tfrac{3}{2}+2\eta^{1/4})\aI.
\end{aligned}
\right\}\!\!\!\!\!\!\!\!\!\!\!\!\!\!\!\!\!\!\!
\end{equation}
Suppose that $\gamma$ is green. The vertex set~$\cV$ of~$\cG$ has a natural partition into $\cX _1 \cup \cX _2 \cup \cY _1 \cup \cY _2\cup \cZ$ where $\cX_1\cup\cX_2$ is the partition of the vertices of $H$ given by Definition~\ref{d:H} and  $\cY_1\cup\cY_2$ the corresponding partition of the vertices of $K$ given by Definition~\ref{d:J}. 
Moving vertices from $\cX_1\cup\cX_2\cup\cY_1\cup\cY_2$ to $\cZ$, we may assume that 
\begin{equation}
\left.
\label{IIBS1}
\begin{aligned}
\quad\quad\quad\quad\quad\quad\quad\quad\,\,\,
(\beta-2\eta^{1/32})k & \leq |X_1| =p \leq \beta k,
\quad\quad\quad\quad\quad\quad\quad\quad\quad\quad\,\,\,\\
(\half\aI-2\eta^{1/32})k & \leq |X_2| =q \leq \half\aI k, \\
 (\aI-18\eta^{1/2}) & \leq |Y_{1}|=|Y_{2}| = r \leq \aI k
 \end{aligned}
\right\}\!\!\!\!\!\!\!\!\!\!\!
\end{equation}
and that
\begin{itemize}
\labitem{JB1}{JB1} $H$ and $K$ are each $4\eta^4 k$-almost-complete; and
\labitem{JB2}{JB2} {~}\!\!\!\!(a) 
$\cG[\cX_{1}]$ is $\eta^{1/32}$-sparse in red, contains no blue edges 
and is $(1-\eta^{1/32})$-complete in green,
\item[{~}] {~\,}(b) 
$\cG[\cX_1,\cX_2]$ is $(1-\eta^{1/32})$-complete in red, contains no blue edges 
and is $\eta^{1/32}$-sparse in green,
\item[{~}] {~\,}(c) 
all edges present in $\cG[\cY_1], \cG[\cY_2]$, are coloured exclusively red,  
\item[{~}] {~\,}(d) all edges present in $\cG[\cY_1,\cY_2]$,  are coloured exclusively green. 
\end{itemize}

By the definition of $\cH_2^*$, we know that  $\cG_3[\cX_1]$ is $(1-\eta^{1/32})$-complete and so, by Theorem~\ref{dirac}, it contains a green connected-matching on at least $|\cX_1|-1\geq(\beta-4\eta^{1/32})k$ vertices and also clearly contains an odd green cycle. By definition of $\cJ_b$, $\cG_3[Y_1,Y_3]$ is $4\eta^4 k$-almost-complete and so, by Lemma~\ref{l:eleven}, it contains a green connected-matching on at least $(2\aI-40\eta^{1/32})k$ vertices. Thus, there can be no green edges present in $\cG[X_1,Y_1\cup Y_2]$.

Again, by the definition of $\cH_2^*$, we know that $\cG_1[\cX_1,\cX_2]$ is $(1-\eta^{1/32})$-complete and so, by Lemma~\ref{l:ten}, it contains a red connected-matching on at least $(\aII-8\eta^{1/64})k$ vertices. 
By definition of $\cJ_b$, $\cG_1[Y_1]$ and $\cG_1[Y_2]$ are each $4\eta^4 k$-almost-complete and so, by Theorem~\ref{dirac}, each contains a red connected-matching on at least $(\aI-24\eta^{1/2})k$ vertices. Thus, there can be no red edges present in $G[X_1\cup X_2,Y_1\cup Y_2]$.

Thus, we know that 
\begin{itemize}
\labitem{JB3}{JB3}
all edges present in $\cG[X_1,Y_1\cup Y_2]$ are coloured exclusively blue.
\end{itemize}

\begin{figure}[!h]
\centering
\includegraphics[width=64mm, page=34]{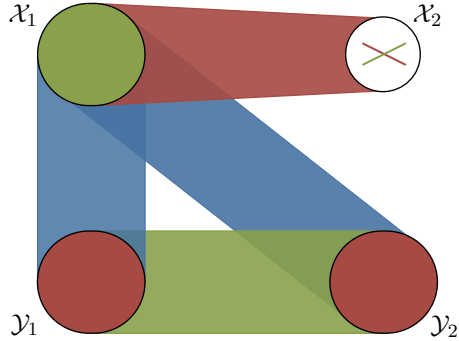}
\vspace{0mm}\caption{Coloured structure of the  reduced-graph in Part II.B.}  
\end{figure}

The remainder of this section focuses on showing that the original graph must have a similar structure, which can then be exploited in order to force a cycle of appropriate length, colour and parity.

As before, each vertex $V_{i}$ of $\cG=(\cV,\cE)$ represents a cluster of vertices of $G=(V,E)$. Additionally, recall, from (\ref{NK}), that
$$(1-\eta^4)\frac{N}{K}\leq |V_i|\leq \frac{N}{K}.$$ 

Since $n> \max\{n_{\ref{th:blow-up}}(4,0,0,\eta), n_{\ref{th:blow-up}}(1,2,0,\eta), n_{\ref{th:blow-up}}(1,0,2,\eta)\}$, we can (as in the proof of Theorem~\ref{th:blow-up}) prove that $$
|V_i|\geq \left(1+\frac{\eta}{24}\right)\frac{n}{k}> \frac{n}{k}.$$ 

Thus, we can partition the vertices of~$G$ into sets $X_{1}, X_{2}, Y_{1}, Y_{2}$ and $Z$  corresponding to the partition of the vertices of~$\cG$ into $\cX_1, \cX_2, \cY_1,\cY_2$ and $\cZ$. Then,~$X_1$ contains $p$ clusters, $X_2$ contains $q$ clusters and $Y_1$ and $Y_2$ each contain~$r$ clusters of vertices and we have
\begin{equation}
\left.
\label{IIB1}
\begin{aligned}
\quad\quad\quad\quad\quad\quad\quad\quad\,\,\,
|X_1|& = p|V_1| \geq (\beta-2\eta^{1/32})n,
\quad\quad\quad\quad\quad\quad\quad\quad\quad\quad\,\,\,\\
|X_2|& = 1|V_1| \geq (\half\aI-2\eta^{1/32})n,\\
|Y_1|=|Y_2| & = r|V_1| \geq (\aI-18\eta^{1/2})n.
 \end{aligned}
\right\}\!\!\!\!\!\!\!\!\!\!\!
\end{equation}

In what follows, we will \textit{remove} vertices from $X_1\cup X_2\cup Y_1\cup Y_2$ by moving them into~$Z$ in order to show that, in what remains, $G[X_1\cup X_2\cup Y_1 \cup Y_2]$ has a particular coloured structure.  We begin by proving the below claim which essentially tells us that  $G$ has similar coloured structure to $\cG$:

\begin{claim}
\label{G-structIIB}
We can \textit{remove} at most 
$9\eta^{1/2}+4\eta^{1/64}n$ vertices from~$X_1$,
at most 
$3\eta^{1/2}n$ vertices from~$X_2$, and
at most 
$16\eta^{1/2}n$ vertices from each of~$Y_1$ and~$Y_2$ such that
the following conditions~hold.
 \end{claim}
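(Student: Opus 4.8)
The plan is to repeat, almost verbatim, the regularity-counting argument used for Claim~\ref{G-struct} in Part~I.C; the only difference is the particular colour pattern of~$\cG$ that is being transferred to~$G$. Here that pattern is read off from the defining properties of $\cH_2^*$ and $\cJ_b$ together with (\ref{JB1}), (\ref{JB2}) and (\ref{JB3}): $\cG_3[\cX_1]$ is almost-complete in green, $\cG_1[\cX_1,\cX_2]$ is almost-complete in red, $\cG[\cY_1]$ and $\cG[\cY_2]$ carry only red, $\cG[\cY_1,\cY_2]$ carries only green, and $\cG[\cX_1,\cY_1\cup\cY_2]$ carries only blue. Accordingly there are six subgraphs of~$G$ to treat --- $G[X_1]$, $G[X_1,X_2]$, $G[Y_1]$, $G[Y_2]$, $G[Y_1,Y_2]$ and the bipartite graph $G[X_1,Y_1\cup Y_2]$ --- and the conditions the claim establishes are that $G_3[X_1]$, $G_1[X_1,X_2]$, $G_1[Y_1]$, $G_1[Y_2]$, $G_3[Y_1,Y_2]$ and $G_2[X_1,Y_1\cup Y_2]$ are almost-complete in green, red, red, red, green and blue respectively. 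For each such subgraph one bounds the number of edges of the ``wrong'' colour and then discards the few vertices meeting many of them.

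Consider, for instance, $G[X_1]$, where we want $G_3[X_1]$ almost-complete. Split the pairs of clusters inside $\cX_1$ into the four usual types: pairs inside a single cluster contribute at most $p\binom{N/K}{2}$ non-green edges; non-regular pairs, of which there are at most $4\eta^4 pk$ by the $4\eta^4 k$-almost-completeness of $\cG[\cX_1]$ coming from $\cH_2^*$, contribute at most $(N/K)^2$ each; regular pairs whose reduced-graph edge carries red or blue contribute at most $(N/K)^2$ each, and there are at most $\eta^{1/32}\binom{p}{2}$ of them since $\cG_1[\cX_1]$ is $\eta^{1/32}$-sparse and $\cG_2[\cX_1]$ is edgeless; and regular pairs whose edge carries only green contribute at most $2\eta(N/K)^2$ each, as the red and blue densities there are below $\xi=\eta$. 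Using $K\geq 2k$, $K\geq\eta^{-1}$, $N\leq 4n$ and $p\leq\beta k\leq k$ this sums to at most $O(\eta^{1/32})n^2$ non-green edges, so all but at most $O(\eta^{1/64})n$ vertices of $X_1$ have green degree at least $|X_1|-1-4\eta^{1/64}n$; remove these. The graphs $G[X_1,X_2]$, $G[Y_1]$, $G[Y_2]$, $G[Y_1,Y_2]$ and $G[X_1,Y_1\cup Y_2]$ are handled identically, with the within-cluster term absent from the bipartite ones. The only adjustments are: for the $\cJ_b$-parts there is no sparseness contribution, because every present edge of $\cG[\cY_1],\cG[\cY_2],\cG[\cY_1,\cY_2]$ carries exactly one colour, so the wrong-colour count is only $O(\eta)n^2$ and one removes merely $O(\eta^{1/2})n$ vertices from each of $Y_1,Y_2$; and for $G[X_1,Y_1\cup Y_2]$, which straddles the $\cH_2^*$ and $\cJ_b$ parts, one counts non-regular pairs using the $\eta^4 K$-almost-completeness of $\cG$ itself (a consequence of property~(iv) of Theorem~\ref{l:sze} and the completeness of~$G$), again giving $O(\eta)n^2$ wrong-coloured edges.

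It then remains to add up, for each of $X_1,X_2,Y_1,Y_2$, the vertices discarded across the (constantly many) subgraphs in which that part appears, together with the $O(\eta^{1/2})n$ vertices already moved to $\cZ$ when passing from (\ref{IIB1}) to (\ref{IIBS1}); one checks that these totals are at most $(9\eta^{1/2}+4\eta^{1/64})n$ from $X_1$, $3\eta^{1/2}n$ from $X_2$, and $16\eta^{1/2}n$ from each of $Y_1,Y_2$, and that subtracting these losses from (\ref{IIB1}) still leaves each surviving part large enough for the six almost-completeness conditions to hold and for the later applications of Corollary~\ref{dirac1a}, Theorem~\ref{moonmoser} and Lemma~\ref{l:eleven} to go through. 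The argument introduces no new idea beyond the counting of Claim~\ref{G-struct}; the only point requiring real care is this bookkeeping --- fixing an order for the removals and keeping every error term small enough (and every part large enough) throughout --- made slightly fiddlier here by the presence of the $\cJ_b$-governed parts and of the one subgraph that spans both halves of the structure.
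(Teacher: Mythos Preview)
Your proposal is correct and follows essentially the same approach as the paper: the same regularity-counting transfer argument as in Claim~\ref{G-struct}, applied to the six subgraphs $G[X_1]$, $G[X_1,X_2]$, $G[Y_1]$, $G[Y_2]$, $G[Y_1,Y_2]$, $G[X_1,Y_1\cup Y_2]$, with the $\cJ_b$-governed parts enjoying the simpler $O(\eta)n^2$ bound because the reduced-graph colouring there is exact rather than merely sparse. Your observation that the cross-term $G[X_1,Y_1\cup Y_2]$ requires the regularity bound on~$\cG$ itself (from property~(iv) of Theorem~\ref{l:sze}) rather than the $4\eta^4k$-almost-completeness of $H$ or $K$ separately is a point the paper glosses over; one small slip is your reference to vertices ``moved to $\cZ$ when passing from (\ref{IIB1}) to (\ref{IIBS1})''---these inequalities go the other way (cluster counts to vertex counts via $|V_i|\geq n/k$) and involve no removal---but this is immaterial to the argument.
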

\begin{itemize}
\labitem{JB4}{JB4} $G_1[X_1,X_2]$ is $2\eta^{1/2}$-almost-complete and  $G_1[Y_1]$ and  $G_1[Y_2]$ are each $6\eta^{1/2}$-almost-complete; 
\labitem{JB5}{JB5} $G_2[X_1,Y_1\cup Y_2]$ is $9\eta^{1/2}$-almost-complete; and
\labitem{JB6}{JB6} $G_3[X_1]$ is $4\eta^{1/64}$-almost-complete and $G_3[Y_1,Y_2]$ is $6\eta^{1/2}$-almost-complete.
\end{itemize}
\begin{proof}
The proof follows the same pattern as that of Claim~\ref{G-struct} but is made easier in parts by the colouring of the reduced-graph.

Consider $G[X_1,X_2]$. Recall from (\ref{JB1}) that $\cG[X_1,X_2]$ is $4\eta^4 k$-almost-complete and from (\ref{JB2}), that $\cG[X_1,X_2]$ is $(1-\eta^{1/32})$-complete in red, contains no blue edges and is $\eta^{1/32}$ sparse in green. We can bound the number of non-red edges in $G[X_1,X_2]$ by 
$$4\eta^4 pk \left(\frac{N}{K}\right)^2+2\eta^{1/32}pq\left(\frac{N}{K}\right)^{2}+2\eta pq\left(\frac{N}{K}\right)^{2},$$

where the first term bounds the number of non-red edges between non-regular pairs, the second bounds the number of non-red edges between pairs of clusters that are joined by a green edge in the  reduced-graph and the second bounds the number of non-red edges between pairs of clusters that are not joined by a green edge in the  reduced-graph.

Since $K\geq 2k$, $N\leq 4n$, $p \leq \beta k \leq k$ and $q\leq \half\aI k\leq\tfrac{1}{2}k$, we obtain
$$e(G_2[X_1,X_2])+e(G_3[X_1,X_2])\leq 6 \eta^{1/32}n^2.$$

Since $G[X_1,X_2]$ is complete and contains at most $6\eta^{1/32}n^2$ non-red edges, there are at most $3\eta^{1/64}n$ vertices in~$X_1$ with red degree to~$X_2$ at most $|X_2|-2\eta^{1/64}n$ and at most~$3\eta^{1/64}n$ vertices in~$X_2$ with red degree to~$X_1$ at most $|X_1|-2\eta^{1/64}n$. Removing these vertices results in every vertex in~$X_1$ having degree in $G_1[X_1,X_2]$ at least $|X_2|-2\eta^{1/64}n$ and every vertex in~$X_2$ having degree in $G_1[X_1,X_2]$ at least $|X_1|-2\eta^{1/64}n$.

Next, consider the complete three-coloured graph $G[Y_1]$. Recall from (\ref{JB1})and (\ref{JB2}) that there can be no blue or green edges present in $\cG[\cY_1]$ and that $\cG[\cY_1]$ is $4\eta^4k$-almost-complete. Given the structure of~$\cG$, we can bound the number of non-red edges in $G[Y_1]$ by
$$r \binom{N/K}{2} + 4\eta^4 rk \left(\frac{N}{K}\right)^2+2\eta\binom{r}{2}\left(\frac{N}{K}\right)^{2},$$ where the first term counts the number of non-red edges within the clusters, the second counts the number of non-red edges between non-regular pairs of clusters and the third counts the number of non-red edges between regular pairs.

Since $K\geq 2k, \eta^{-1}$, $N\leq 4n$ and $r\leq \aI k \leq k$, we obtain
$$e(G_2[Y_1])+e(G_3[Y_1])\leq [4\eta +16\eta^{4} +16 \eta]n^2\leq 32\eta n^2.$$

Since $G[Y_1]$ is complete and contains at most $32\eta n^2$ non-red edges, there are at most~$8\eta^{1/2}n$ vertices with red degree at most $|Y_1|-1-8\eta^{1/2}n$. Removing these vertices from $Y_1$, that is, re-assigning these vertices to~$Z$, gives a new~$Y_1$  
such that every vertex in $G[Y_1]$ has red degree at least $|Y_1|-1-8\eta^{1/2}n$. The same argument works for $G[Y_2]$, completing the proof of~(\ref{JB4}).

Next, we consider $G[X_1,Y_1\cup Y_2]$. By (\ref{JB2}) all the edges present in $\cG[X_1,Y_1\cup Y_2]$ are coloured exclusively blue. Thus, we can bound the number of non-blue edges in $G[X_1,Y_1\cup Y_2]$ by
$$4\eta^4 rk \left(\frac{N}{K}\right)^2+2\eta p(2r) \left(\frac{N}{K}\right)^{2}. $$

Where the first term bounds the number of non-blue edges between non-regular pairs, the second bounds the number of non-blue edges between regular pairs.

Since $K\geq 2k, \eta^{-1}$, $N\leq 4n$ and $p\leq \beta\leq k$, $r\leq \aI k \leq k$, we obtain
$$e(G_1[X_1,Y_1\cup Y_2])+e(G_3[X_1,Y_1\cup Y_2])\leq 36\eta n^2.$$

Since $G[X_1,Y_1\cup Y_2]$ is complete and contains at most $36\eta n^2$ non-blue edges, there are at most $6\eta^{1/2}n$ vertices in~$X_1$ with green degree to~$Y_1\cup Y_2$ at most $|Y_1+Y_2|-6\eta^{1/2}n$ and at most~$6\eta^{1/2}n$ vertices in~$Y_1\cup Y_2$ with blue degree to~$X_1$ at most $|X_1|-6\eta^{1/2}n$ Removing these vertices results in every vertex in~$X_1$ having degree in $G_2[X_1,Y_1\cup Y_2]$ at least $|Y_1\cup Y_2|-6\eta^{1/2}n$ and every vertex in~$Y_1\cup Y_2$ having degree in $G_2[X_1,Y_1\cup Y_2]$ at least $|X_1|-6\eta^{1/2}n$k, thus completing the proof of (\ref{JB5}).

Next, consider the complete three-coloured graph $G[X_1]$ and recall, from (\ref{JB2}), that $\cG[\cX_1]$ contains only red and green edges and is $4\eta^4 k$-almost-complete. Thus, we can bound the number of non-green edges in $G[X_1]$ by
$$p \binom{N/K}{2} + 4\eta^4 pk \left(\frac{N}{K}\right)^2+2\eta^{1/32}\binom{p}{2}\left(\frac{N}{K}\right)^{2}+2\eta\binom{p}{2}\left(\frac{N}{K}\right)^{2},$$
where the first term counts the number of non-green edges within the clusters,
the second counts the number of non-green edges between non-regular pairs,
the third counts the number of non-green edges between regular pairs which are joined by a red edge and 
the final term counts the number of non-green edges between regular pairs which are not joined by a red edge.

Since $K\geq 2k, \eta^{-1}$, $N\leq 4n$ and $p\leq \aIII k \leq k$, we obtain
$$e(G_1[X_1])+e(G_3[X_1])\leq 8 \eta^{1/32}n^2.$$

Since $G[X_1]$ is complete and contains at most $8\eta^{1/32}n^2$ non-green edges, there are at most~$4\eta^{1/64}n$ vertices with green degree at most $|X_1|-1-4\eta^{1/64}n$. Removing these vertices from $X_1$ gives a new~$X_1$  
such that every vertex in $G[X_1]$ has green degree at least $|X_1|-1-4\eta^{1/64}n$.

Finally, we consider $G[Y_1,Y_2]$, where we can bound the number of non-green edges in $G[Y_1,Y_2]$ by 
$$4\eta^4 rk \left(\frac{N}{K}\right)^2+2\eta r^2\left(\frac{N}{K}\right)^{2}. $$

Where the first term bounds the number of non-green edges between non-regular pairs and the second bounds the number of non-green edges between regular pairs.

Since $K\geq 2k, \eta^{-1}$, $N\leq 4n$ and $r\leq \aI k\leq k$, we obtain
$$e(G_1[Y_1,Y_2])+e(G_2[Y_1,Y_2])\leq 9\eta n^2.$$

Since $G[Y_1,Y_2]$ is complete and contains at most $9\eta n^2$ non-green edges, there are at most $3\eta^{1/2}n$ vertices in~$Y_1$ with green degree to~$Y_1$ at most $|Y_2|-3\eta^{1/2}n$ and at most~$3\eta^{1/2}n$ vertices in~$Y_2$ with blue degree to~$X_1$ at most $|Y_1|-3\eta^{1/2}n$ Removing these vertices results in every vertex in~$Y_1$ having degree in $G_3[Y_1,Y_2]$ at least $|Y_2|-3\eta^{1/2}n$ and every vertex in~$Y_2$ having degree in $G_3[Y_1,Y_2]$ at least $|Y_1|-3\eta^{1/2}n$k, thus completing the proof of (\ref{JB6}) and, also, of the claim.
\end{proof}

Having removed these vertices, recalling (\ref{IIB1}), we have

\begin{equation}
\left.
\label{IIB2}
\begin{aligned}
\quad\quad\quad\quad\quad\quad\quad\quad\,\,\,
|X_1|& = p|V_1| \geq (\beta-6\eta^{1/64})n,
\quad\quad\quad\quad\quad\quad\quad\quad\quad\quad\,\,\,\\
|X_2|& = q|V_1| \geq (\half\aI-4\eta^{1/32})n,\\
|Y_1|=|Y_2| & = r|V_1| \geq (\aI-34\eta^{1/2})n.
 \end{aligned}
\right\}\!\!\!\!\!\!\!\!\!\!\!
\end{equation}

We now consider the remaining vertices in $Z$ and begin by proving the following claim:
\begin{claim}
\label{claimIIBsplit1}
If there are vertices $z\in Z$, $x\in X_1$ and $y\in Y_1\cup Y_2$ such that both the edges $x  z$ and $y z$ are blue, then $G$ contains a blue cycle on exactly $\langle \aII n \rangle$ vertices.
\end{claim}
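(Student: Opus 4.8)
The plan is to mimic the cycle-construction arguments already used in Claim~\ref{nogreen2} and in Part~I.C, exploiting the almost-completeness facts (\ref{JB4})--(\ref{JB6}) together with the size bounds (\ref{IIB2}). Suppose we have $z\in Z$, $x\in X_1$ and $y\in Y_1\cup Y_2$ with $xz$ and $yz$ both blue; say $y\in Y_1$ (the case $y\in Y_2$ is identical, swapping the roles of $Y_1$ and $Y_2$). The idea is to build a long blue path from $x$ to $y$ that visits a prescribed number of vertices, so that closing it up through the path $x\,z\,y$ of length~$2$ gives a blue cycle of length exactly $\langle\aII n\rangle$. The blue path will consist of three pieces: a blue sub-path inside $G_3[X_1]$... no, wait --- blue is $G_2$; inside $X_1$ the blue graph is essentially empty (by (\ref{JB2})(a), $\cG[\cX_1]$ contains no blue edges, so $G_2[X_1]$ is sparse), so a blue path cannot live inside $X_1$. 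Instead the blue structure we have is the almost-complete bipartite blue graph $G_2[X_1,Y_1\cup Y_2]$ from (\ref{JB5}). So the plan is to route the blue path so that it alternates between $X_1$ and $Y_1\cup Y_2$.

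Concretely, first I would fix the target length. We want a blue cycle on $m:=\langle\aII n\rangle$ vertices, which (removing the two vertices $x,z$... actually $z$ and the edge pair) amounts to a blue $x$--$y$ path on $m-1$ vertices using only $X_1$ and $Y_1\cup Y_2$. Since $\beta=\aII$ in the relevant subcase (when $\gamma=\text{blue}$, which is forced here because we are producing a \emph{blue} cycle, so $\gamma=\text{blue}$ and hence $\beta=\max\{\aII,\aIII\}=\aII$, using $\aI\le\beta$), we have $|X_1|\ge(\aII-6\eta^{1/64})n$ and $|Y_1|,|Y_2|\ge(\aI-34\eta^{1/2})n$, so $|X_1|+|Y_1|$ comfortably exceeds $m$. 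Choose $\widetilde X_1\subseteq X_1$ containing $x$ and $\widetilde Y_1\subseteq Y_1$ containing $y$ with $|\widetilde X_1|=\lceil (m-1)/2\rceil$ and $|\widetilde Y_1|=\lfloor (m-1)/2\rfloor$, which is possible since both $|X_1|$ and $|Y_1|$ are at least $\tfrac12 m+1$ once $\eta$ is small (here one uses $\aII\ge(1-\eta^{1/16})\aI$ or, more directly, that $m=\langle\aII n\rangle\le\aII n$ while $|Y_1|\ge(\aI-34\eta^{1/2})n\ge\tfrac12\langle\aII n\rangle$ because $\aI\le\aII\le(\tfrac32+2\eta^{1/4})\aI$ by (\ref{IIBSIZE}); so $\aII\le\tfrac32\aI+\dots$ gives $\tfrac12\aII\le\aI$, hence $|Y_1|\ge\tfrac12 m+1$). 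By (\ref{JB5}), $G_2[\widetilde X_1,\widetilde Y_1]$ is $9\eta^{1/2}n$-almost-complete, and it is \emph{balanced} up to one vertex; applying Corollary~\ref{moonmoser2} (or, if the part sizes differ by one, Corollary~\ref{bp-dir2}/Lemma~\ref{bp-dir}) between $x$ and $y$ yields a blue Hamiltonian path of $\widetilde X_1\cup\widetilde Y_1$ from $x$ to $y$, of length $m-2$, i.e. on $m-1$ vertices. Adjoining the two blue edges $xz$ and $zy$ closes this into a blue cycle on exactly $m=\langle\aII n\rangle$ vertices.

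The main technical point I would need to be careful about is the parity/size bookkeeping: $\langle\aII n\rangle$ is odd, so $m-1$ is even, and a balanced bipartite graph on $m-1$ vertices has parts of size $(m-1)/2$ each --- so in fact the split is exactly balanced, which is the cleanest case and lets me invoke Corollary~\ref{moonmoser2} directly (after checking $9\eta^{1/2}n\le\tfrac14(m-1)-\tfrac12$, which holds since $\eta$ is tiny relative to $\aII$, using $\eta<(\aII/100)^{128}$). I also need $x$ and $y$ to survive into $\widetilde X_1,\widetilde Y_1$ and to have the required degree; since $x\in X_1$ has blue degree at least $|Y_1\cup Y_2|-9\eta^{1/2}n$ into $Y_1\cup Y_2$, restricting to $\widetilde Y_1$ still leaves $x$ with blue degree $\ge|\widetilde Y_1|-9\eta^{1/2}n\gg 2$, and symmetrically for $y$; so the minimum-degree hypothesis of Corollary~\ref{moonmoser2} is met at every vertex. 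The case $y\in Y_2$ is handled verbatim with $Y_2$ in place of $Y_1$, using that $G_2[X_1,Y_1\cup Y_2]$ almost-complete restricts to $G_2[X_1,Y_2]$ almost-complete. This completes the proof of the claim.

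The step I expect to be the only real obstacle is verifying the size inequality $|Y_1|\ge\tfrac12\langle\aII n\rangle+1$ (and likewise for $X_1$), i.e. that the $Y$-side is genuinely large enough to host half of the desired blue cycle; this is where the constraint $\beta<(\tfrac32+2\eta^{1/4})\aI$ from (\ref{IIBSIZE}) is essential, as without it $\aII$ could be nearly twice $\aI$ and $|Y_1|\approx\aI n$ would be too small. Everything else is a routine application of the Moon--Moser-type corollaries already assembled in Section~\ref{s:pre1}.
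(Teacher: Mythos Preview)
Your overall strategy is exactly the paper's: build a blue $x$--$y$ path of length $\langle\aII n\rangle-2$ inside the almost-complete blue bipartite graph $G_2[X_1,Y_1\cup Y_2]$ guaranteed by (\ref{JB5}), then close it through $z$. The size estimate you flag at the end (that $|Y_i|\ge\tfrac12\langle\aII n\rangle$) is indeed the point where (\ref{IIBSIZE}) is used, and you handle it correctly.

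There are, however, two slips. First, in this part of the paper $\gamma$ is \emph{green}, not blue (the sentence ``Suppose that $\gamma$ is green'' precedes the setup of (\ref{JB1})--(\ref{JB3})); the blue in $G[X_1,Y_1\cup Y_2]$ arises because those cross-edges are forced to be neither red nor green. Your inference that $\beta=\aII$ is therefore unjustified, though harmless: all you need is $\aII\le\beta$, which holds by definition.

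Second, and more seriously, your invocation of Corollary~\ref{moonmoser2} is wrong: that corollary yields a \emph{cycle} of prescribed even length, not a Hamiltonian path between prescribed endpoints $x$ and $y$. Your balanced choice $|\widetilde X_1|=|\widetilde Y_1|=(m-1)/2$ also fails the strict inequality $|X_1|>|X_2|$ required by Corollary~\ref{bp-dir2} (and is even further from the $|X_1|>|X_2|+1$ of Lemma~\ref{bp-dir}). The paper fixes this by keeping the two sides \emph{unbalanced}: take $\widetilde X\subseteq X_1$ of size about $(\aII-6\eta^{1/64})n$ and $\widetilde Y\subseteq Y_1\cup Y_2$ of size exactly $\lfloor\tfrac12\langle\aII n\rangle\rfloor$, so that $|\widetilde X|>|\widetilde Y|$ and every vertex of $\widetilde Y$ has blue degree at least $\tfrac12(|\widetilde X|+|\widetilde Y|+1)$ into $\widetilde X$. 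Then Corollary~\ref{bp-dir2} gives a blue $x$--$y$ path visiting every vertex of $\widetilde Y$, hence on exactly $2|\widetilde Y|=\langle\aII n\rangle-1$ vertices, and closing through $z$ finishes. Adjust your choice of $\widetilde X_1$ accordingly and use Corollary~\ref{bp-dir2} in place of Corollary~\ref{moonmoser2}.
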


\begin{proof}
By (\ref{IIBSIZE}) and (\ref{IIB2}), we know that $|X_1|,|Y_1\cup Y_2|\geq (\aII-6\eta^{1/64})n\geq \lfloor \half \langle \aIII n\rangle \rfloor$. We let $\widetilde{X}$ consist of $(\aII-6\eta^{1/64})n$ vertices from $X_1$ including $x$, let $\widetilde{Y}$ consist of $\lfloor \half \langle \aII n\rangle \rfloor$ vertices from $Y$ including $y$ and consider $G[\widetilde{X},\widetilde{Y}]$. Every vertex in $\widetilde{Y}$ has degree in $G_2[\widetilde{X},\widetilde{Y}]$ at least $(\aII-8\eta^{1/64})n\geq
\half(|\widetilde{X}|+|\widetilde{Y}|+1).$
Thus, by Corollary~\ref{bp-dir2}, there exists a blue path from $x$ to $y$ on exactly $\langle \aII n \rangle -1$ vertices which together with $x z$ and $y z$ forms a blue cycle on exactly $\langle \aII n \rangle$ vertices.
\end{proof}

Thus, we may partition the vertices of $Z$ into $Z_X$ and $Z_Y$ where there are no blue edges present in $G[Z_X,X_1]\cup G[Z_Y,Y_1\cup Y_2]$.

Recalling that $|V(G)|\geq 4\llangle \aI n \rrangle-3, \llangle \aI n \rrangle + 2\langle \aIII n \rangle -3$, we know that 
$$|X_1|+|X_2|+|Y_1|+|Y_2|+|Z_X|+|Z_Y|\geq 2.5\llangle \aI n \rrangle+\langle \aIII n \rangle -3.$$
Thus, we know that either $|X_1|+|X_2|+|Z_X|\geq \half \llangle \aI n \rrangle+\langle \aIII n \rangle -1$ or $|Y_1|+|Y_2|+|Z_Y|\geq 2 \llangle \aI n \rrangle -1$.

In the former case, we define a partition of $Z_X$ into $Z_R\cup Z_G$ by
\begin{align*}
Z_R&=\{ z\in Z_X \text{ such that } z \text{ has at least } |X_1|-12\eta^{1/64} \text{ red neighbours in } X_1\};\text{ and}
\\ Z_G&=Z\backslash Z_R=\{ z\in Z_X \text{ such that } z \text{ has at least } 12\eta^{1/64} \text{ green neighbours in } X_1\}.
\end{align*}

Since this is a partition, we have either $|X_1\cup Z_G|\geq\langle \aIII n \rangle$ or $|X_2\cup Z_R|\geq\half\llangle \aI n \rrangle$.

If $|X_1\cup Z_G|\geq\langle \aIII n \rangle$, then, following the  argument given in the penultimate paragraph of Part I.C, we can show that $G_3[X_1\cup Z_G]$ contains a green cycle on exactly $\langle \aIII n \rangle$ vertices. If $|X_2\cup Z_R|\geq\half\llangle \aI n \rrangle$, given the relative sizes of~$X_1$ and~$X_2\cup Z_R$ and the large minimum-degree of the graph, we may follow the same argument as given in the final paragraph of Part I.C. to find that $G[X_1,X_2\cup Z_R]$ contains a a red cycle on exactly $\llangle \aI n \rrangle$ vertices.

 Thus, we may, instead, assume that
 $$|Y_1|+|Y_2|+|Z_Y|\geq 2 \llangle \aI n \rrangle -1.$$ In that case, we can prove the following claim:
 \begin{claim}
If there exist vertices $z\in Z_Y$, $y_1\in Y_1$ and $y_2\in Y_2$ such that both the edges $y_1 z$ and $y_2 z$ are green, then $G$ contains a green cycle on exactly $\langle \aII n \rangle$ vertices.
\end{claim}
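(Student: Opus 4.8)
The plan is to prove the claim by the exact green analogue of Claim~\ref{claimIIBsplit1}: blow up the pair of green edges $y_1z$ and $y_2z$ into a green cycle on $\langle\aII n\rangle$ vertices by joining $y_1$ to $y_2$ by a green path on exactly $\langle\aII n\rangle-1$ vertices inside $Y_1\cup Y_2$, where we now exploit the green bipartite structure (by~(\ref{JB6}), $G_3[Y_1,Y_2]$ is $6\eta^{1/2}n$-almost-complete) in the role that the blue bipartite graph between $X_1$ and $Y_1\cup Y_2$ played there.

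First I would record the sizes. By~(\ref{IIBSIZE}) we have $\beta<(\tfrac32+2\eta^{1/4})\aI$, where $\beta=\max\{\aII,\aIII\}\geq\aII$, hence $\aI>\tfrac23\beta-\eta^{1/4}\beta\geq\tfrac23\aII-\eta^{1/4}\aII$; combined with~(\ref{IIB2}) this gives $|Y_1|,|Y_2|\geq(\aI-34\eta^{1/2})n$, and since $\eta$ is small this quantity exceeds $\lfloor\half\langle\aII n\rangle\rfloor+12\eta^{1/2}n+1$. I would then pick $\widetilde{Y}_2\subseteq Y_2$ containing $y_2$ with $|\widetilde{Y}_2|=\lfloor\half\langle\aII n\rangle\rfloor$, and take $Y_1$ (which contains $y_1$) as the larger side, so that $|Y_1|>|\widetilde{Y}_2|$. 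Since almost-completeness is hereditary, every vertex of $\widetilde{Y}_2$ keeps green degree at least $|Y_1|-6\eta^{1/2}n$ towards $Y_1$ in $G_3[Y_1,\widetilde{Y}_2]$, and the size gap above makes this at least $\tfrac12(|Y_1|+|\widetilde{Y}_2|+1)$.

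Now Corollary~\ref{bp-dir2} applies to $G_3[Y_1,\widetilde{Y}_2]$ and yields a green path from $y_1$ to $y_2$ that visits every vertex of $\widetilde{Y}_2$. Because the two endpoints of this path lie on opposite sides of the bipartition, the path meets $Y_1$ and $\widetilde{Y}_2$ in equally many vertices, so it has exactly $2|\widetilde{Y}_2|=2\lfloor\half\langle\aII n\rangle\rfloor=\langle\aII n\rangle-1$ vertices (using that $\langle\aII n\rangle$ is odd). Appending the two green edges $y_1z$ and $y_2z$ — recall $z\in Z_Y$, so $z\notin Y_1\cup Y_2$ — closes this path into a green cycle on exactly $\langle\aII n\rangle$ vertices, proving the claim.

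The only steps needing any care are verifying the degree inequality required by Corollary~\ref{bp-dir2} — this is precisely where~(\ref{IIBSIZE}) (through $\aI>\tfrac23\beta$) is used, to force $|Y_1|,|Y_2|$ to be large enough compared with $\aII n$ — and the parity bookkeeping that pins the path length down to exactly $\langle\aII n\rangle-1$. Neither presents a genuine obstacle; this is the same ``extend a matching edge through a near-complete bipartite graph to a path of prescribed even length'' manoeuvre already carried out for the blue cycle in Claim~\ref{claimIIBsplit1}.
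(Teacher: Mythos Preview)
Your proof is correct and follows essentially the same approach as the paper, which simply states that the proof ``follows the same steps as that of Claim~\ref{claimIIBsplit1}.'' You have correctly identified that the green bipartite structure $G_3[Y_1,Y_2]$ (from~(\ref{JB6})) plays the role that $G_2[X_1,Y_1\cup Y_2]$ played there, and your application of Corollary~\ref{bp-dir2} together with the size bound coming from~(\ref{IIBSIZE}) is exactly what is needed.
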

\begin{proof}
Follows the same steps as that of Claim~\ref{claimIIBsplit1}.
\end{proof}
Similarly, we can show that the presence of a blue edge in $G[Y_1\cup Y_2]$ would result in a blue cycle on exactly $\langle \aII n \rangle$ vertices and that the  presence of a green edge in $G[Y_1]$ or $G[Y_2]$ would result in a green cycle on exactly $\langle \aIII n \rangle$ vertices.

Thus, we may partition $Z_Y$ into $Z_1$ and $Z_2$ where all edges in $G[Z_1,Y_1]$ and $G[Z_2,Y_2]$ (and indeed also in $G[Y_1]$ and $G[Y_2]$) are red. Since we have $|Y_1|+|Y_2|+|Z_Y|\geq2 \llangle \aI n \rrangle -1$, we may, without loss of generality, assume that $|Y_1|+|Z_1|\geq\llangle\aI n \rrangle$.

By (\ref{IIB2}), we have $|Y_1|\geq(\aI -34\eta^{1/2})n$. Thus, we may choose subsets $\widetilde{Y}_1\subseteq Y_1$ and $\widetilde{Z}_1\subseteq Z_1$ such that $\widetilde{Y}_1$ includes every vertex of $Y_1$ and $|\widetilde{Y}_1|+|\widetilde{Z}_1|=\llangle\aI n\rrangle$. Then, $|\widetilde{Z}_1|\leq 36\eta^{1/2} n$. Thus, every vertex in~$\widetilde{Y}_1\cup\widetilde{Z}_1$ has degree in $\G_1[\widetilde{X}_1\cup\widetilde{Z}_1]$ at least $|\widetilde{X}_1\cup\widetilde{Z}_1|-36\eta^{1/2}n\geq\half|\widetilde{X}_1\cup\widetilde{Z}_1|$. Therefore, by Theorem~\ref{dirac}, $\G_1[\widetilde{X}_1\cup\widetilde{Z}_1]$ contains a red cycle of length exactly $\llangle \aI n \rrangle$, thus completing Part II.B.

\subsection*{Part II.C: $H, K \in \cJ_b$.}
In this case, recalling Theorem~\hyperlink{thD}{D}, we know that
$$\beta=\max\{\aII,\aIII\}<(\tfrac{3}{2}+2\eta^{1/4})\aI.$$
Suppose $\gamma$ is green. The vertex set~$\cV$ of~$\cG$ has a partition into $\cX _1 \cup \cX _2 \cup \cY _1 \cup \cY _2\cup \cZ$, where $\cX_1\cup\cX_2$ is the partition of the vertices of $H$ given by Definition~\ref{d:J} and  $\cY_1\cup\cY_2$ is the corresponding partition of the vertices of $K$. We then know that
\begin{itemize}
\item[(JC0)] $|\cX_{1}|,|\cX_{2}|,|\cY_{1}|,|\cY_{2}|\geq (\aI-18\eta^{1/2})$;
\item[(JC1)] $H$ and $K$ are each $4\eta^4 k$-almost-complete; and
\item[(JC2)] (a) all edges present in $\cG[\cX_1], \cG[\cX_2], \cG[\cY_1], \cG[\cY_2]$ are coloured exclusively red,  
\item[\phantom{(JC2)}] (b) all edges present in $\cG[\cX_1,\cX_2], \cG[\cY_1,\cY_2]$  are coloured exclusively green. 
\end{itemize}

Because $H$ is $4\eta^4 k$-almost-complete, and all edges in $\cH[\cX_1]$ are coloured red, by Theorem~\ref{dirac}, $\cG[\cX_1]$ contains a red connected-matching on at least $|\cX_1|-1\geq(\aI-20\eta^{1/2})k$ vertices. Similarly, each of $\cG[\cX_2]$, $\cG[\cY_1]$ and $\cG[\cY_2]$ contains red connected-matchings on at least $(\aI-20\eta^{1/2})k$ vertices. Thus, the existence of a red edge in $\cG[\cX_1\cup\cX_2,\cY_1\cup\cY_2]$ would imply the existence of a red connected-matching on at least $\aI k$ vertices and, therefore, we may assume that there are no red edges present in~$\cG[\cX_1\cup\cX_2,\cY_1\cup\cY_2]$.

%This year!
Because $H$ is
$4\eta^4 k$-almost-complete and all edges present in $\cG[\cX_1,\cX_2]$ are coloured green, by Lemma~\ref{l:ten}, $\cG_3[X_1,X_2]$ contains a green connected-matching $\cM$ on at least $(\aI-40\eta^{1/2})k\geq(\aIII+2\eta^{1/2})k$ vertices.

Suppose now that there exists a pair of green edges $x_1y_1^1$, $x_2y_1^2$ such that $x_1\in \cX_1, x_2\in \cX
_2, y_1^1, y_1^2\in \cY
_1$. Then, since $K$ is $4\eta^4 k$-almost-complete and all edges present in $\cG[\cY_1,\cY_2]$ are coloured green, $y_1^1$ and $y_1^2$ have a common neighbour $y_2\in\cY_2$. Similarly, there exists a green path $P=x_1 x_2' x_1' x_2$ in $G[\cX_1,\cX_2]$. Thus, $x_1y_1^1y_2y_1^2x_2x_1'x_2'x_1$ is an odd green cycle contained in the same component as $\cM$ and, thus, $\cG$ contains an odd green connected-matching on at least $\aIII k$ vertices.

Thus, there can exist no such pair, nor can their be any pairs of green edges $x_1y_2^1$, $x_2y_2^2$ such that $x_1\in \cX_1, x_2\in \cX
_2, y_2^1, y_2^2\in \cY
_1$. Therefore, we may assume, without loss of generality, that all edges present in $\cG[\cX_1,\cY_1]$ and $\cG[\cX_2,\cY_2]$ are coloured blue.

Thus, this case can be dealt with alongside the next one.

\section{Proof of the main result -- Part III -- Case (vi)}
\label{s:p13}

Suppose that $\cG$ contains a subgraph $H$ from
$$\cL=\cL\left(x,4\eta^4k, \text{red}, \text{blue}, \text{green}\right).$$
where
\begin{equation}
\label{SIZESIII}
\left.
\begin{aligned}
\!\!\!\!\!\!\!\!\!\!\!\!\!\! \textbf{either}\quad\quad x &\geq(\half\aI+\tfrac{1}{4}\eta)k \text{and} \max\{\aII,\aIII\}\leq\aI \\
\!\!\!\!\!\!\!\!\!\!\!\!\!\! \textbf{or}\quad\quad 
x&\geq(\aI-18\eta^{1/2})k \text{and} \max\{\aII,\aIII\}\leq(\tfrac{3}{2}+2\eta^{1/2})\aI. \quad\quad\quad\quad\quad 
\end{aligned}
\right\}\!\!\!\!\!\!\!\!\!\!\!\!
\end{equation}

Observe that in both cases, provided $k\geq 4/\eta$, since $\eta\leq \tfrac{1}{100}, (\tfrac{\aI}{100})^2, (\tfrac{\aII}{20})^4, (\tfrac{\aIII}{20})^4$, we have 
$$x\geq\max\{\half\aI k+1,(\half\aII+\eta^4)k,(\half\aIII+\eta^4)k\}.$$

Recalling the definition of $\cL$ and Part II.C above, we have a partition of the vertex set $\cV$ of $\cG$ into $\cX_{1}\cup \cX_{2}\cup \cY_{1}\cup \cY_{2}\cup \cZ$
such that
\begin{itemize}
\labitem{L0}{L0} $|\cX_{1}|, |\cX_{2}|, |\cY_{1}|, |\cY_{2}|\geq x$;
\labitem{L1}{L1} $\cG[\cX_{1}\cup \cX_{2}\cup \cY_{1}\cup \cY_{2}]$ is $4\eta^4k$-almost-complete; 
\labitem{L2}{L2} (a) all edges present in $\cG[X_1]$, $\cG[X_2]$, $\cG[Y_1]$ and $\cG[Y_2]$ are coloured exclusively red,
\item[{~}] (b) all edges present in $\cG[X_1,Y_1]$ and $\cG[X_2,Y_2]$ are coloured exclusively blue,
\item[{~}] (c) all edges present in $\cG[X_1,X_2]$ and $\cG[Y_1,Y_2]$ are coloured exclusively green,
\item[{~}] (d) there are no red edges present in $\cG[X_1,Y_2]$ and $\cG[X_2,Y_1]$.
\end{itemize}

\begin{figure}[!h]
\vspace{-1mm}
\centering
\includegraphics[width=64mm, page=1]{eoo-figs.pdf}
\vspace{-3mm}\caption{$H\in \cL(x_1,c,\text{red},\text{blue},\text{green})$.}
\end{figure}

Recall that $\cG$ is a three-edge-multicoloured graph, therefore we seek to strengthen the statements made in (\ref{L3}) to prescribe not only which colours of edges are present but also which are not.

By (\ref{L1}) and (\ref{L2}), $H$ is $4\eta^4 k$-almost-complete, and all edges in $H[\cX_1]$ are coloured red. Thus, by Theorem~\ref{dirac}, $\cG[\cX_1]$ contains a red connected-matching on at least $x-1\geq\half\aI k$ vertices. Similarly, each of $\cG[\cX_2]$, $\cG[\cY_1]$ and $\cG[\cY_2]$ contains red connected-matchings on at least $\half\aI k$ vertices. 
Thus, no red path can join two of $\cX_1, \cX_2, \cY_1, \cY_2$, since this would give a red connected-matching on at least~$\aI k$~vertices.

Similarly, because $H$ is
$4\eta^4 k$-almost-complete and all edges present in $H[\cX_1,\cY_1]$ are coloured blue, by Lemma~\ref{l:eleven}, $\cG_2[X_1,Y_1]$ contains a blue connected-matching on at least $2x-2\eta^4k\geq\aII k$ vertices. Likewise, $\cG_2[X_2,Y_2]$ contains a blue connected-matching on at least $\aII k$ vertices and $\cG_3[X_1,Y_1]$ and~$\cG_3[X_2,Y_2]$ each contain a green connected-matching on at least $\aIII k$ vertices.
Thus, 
no odd blue or green cycle can share any vertices with $\cX_1\cup\cX_2\cup\cY_1\cup\cY_2$. 

We proceed to show that the vertices of $\cZ$ can each be assigned to one of $\cX_1, \cX_2, \cY_1$ or~$\cY_2$ while maintaining these properties:
We begin by labelling the vertices of $\cZ$, $z_1,z_2,...,z_u$ and considering these in turn beginning with $z_1$.
We showed above that, for instance, there can be no red path connecting $\cX_1$ to $\cX_2$. Thus, we know that $z_1$ can have red edges to at most one of $\cX_1, \cX_2, \cY_1, \cY_2$. Therefore, suppose, without loss of generality, that it has no red edges to $\cX_2, \cY_1$ or $\cY_2$. 

Additionally, suppose that $z_1$ has a blue edge to $X_2$.
In that case, all edges in $\cG[z_1,Y_2]$ must be green in order to avoid having a blue triangle. Then, all edges in $\cG[z_1,Y_1]$ must be blue (in order to avoid a green triangle). Knowing this forces all edges in $\cG[X_2,Y_1]$ and $\cG[X_1,Y_2]$ to be green (in order to avoid a blue triangle or pentagon). Considering the colouring obtained, there can then be no blue or green edges in $\cG[z_1,X_1]$. Thus, all edges in $\cG[z_1,X_1]$ must be red. Adding $z_1$ to $X_1$ and exchanging the names of~$X_2$ and $Y_2$ gives a graph $H_1$ on vertex set $\cV(H)\cup\{ z_1 \}$ belonging to~$\cL=\cL\left(x,4\eta^4k, \text{red}, \text{blue}, \text{green}\right)$.

If, instead of supposing that $z_1$ has a blue edge to $X_2$, we suppose that it has a green edge to $Y_2$ we arrive at an analogous situation to the above. Thus, we may instead assume that all edges in $\cG[z_1, X_2]$ are green and all edges in $\cG[z_1,Y_2]$ are blue, in which case, a similar argument (with no need for re-naming) allows us to add $z_1$ to $X_1$, obtaining a a graph $H_1$ on vertex set $\cV(H)\cup\{ z_1 \}$ belonging to $\cL=\cL\left(x,4\eta^4k, \text{red}, \text{blue}, \text{green}\right)$.
Considering each of $z_1, z_2,...,z_u$ in $Z$ in turn allows us to add each vertex to either $\cX_1, \cX_2, \cY_1$ or $\cY_2$, showing that $\cG$ itself belongs to $\cL=\cL\left(x,4\eta^4k, \text{red}, \text{blue}, \text{green}\right)$.

Recall that, because $H$ is $4\eta^4 k$-almost-complete and all edges in $\cG[\cX_1]$ are coloured red, by Theorem~\ref{dirac}, $\cG[\cX_1]$ contains a red connected-matching on at least $|X_1|-1\geq\half\aI k$ vertices. This provides a simple upper bound $$|\cX_1|,|\cX_2|,|\cY_1|,|\cY_2|\leq\aI k+1.$$
Since $|\cV(G)|\geq(4\aI-\eta)k$, provided $k\geq(2/\eta)^{1/2}$, this also provides a corresponding upper bound
$$|\cX_1|,|\cX_2|,|\cY_1|,|\cY_2|\geq(\aI -2\eta^{1/2})k.$$
Discarding as few vertices as necessary, that is, returning them to $\cZ$, we may therefore assume that
$$(\aI -2\eta^{1/2})k\leq |\cX_1|=|\cX_2|=|\cY_1|=|\cY_2|=p\leq \aI k.$$
The remainder of this section focuses on showing that the original graph must have a similar structure, which can then be exploited in order to force a cycle of appropriate length, colour and parity.

Each vertex $V_{i}$ of $\cG=(\cV,\cE)$ represents a cluster of vertices of $G=(V,E)$. Recall, from (\ref{NK}), that
$$(1-\eta^4)\frac{N}{K}\leq |V_i|\leq \frac{N}{K},$$ 
and that, since $n> \max\{n_{\ref{th:blow-up}}(4,0,0,\eta), n_{\ref{th:blow-up}}(1,2,0,\eta), n_{\ref{th:blow-up}}(1,0,2,\eta)\}$, we can prove that $$
|V_i|\geq \left(1+\frac{\eta}{24}\right)\frac{n}{k}> \frac{n}{k}.$$

We partition the vertices of~$G$ into sets $X_{1}, X_{2}, Y_{1}, Y_{2}$ and $Z$  corresponding to the partition of the vertices of~$\cG$ into $\cX_1, \cX_2, \cY_1,\cY_2$ and $\cZ$. Then,~$X_1, X_2, Y_1$ and $Y_2$ each contain~$p$ clusters of vertices and we have
\begin{equation}
\label{III1} 
|X_1|,|Y_1|,|X_2|,|Y_2|  = p|V_1| \geq (\aI-2\eta^{1/2})n.
\end{equation}

In what follows, we will \textit{remove} vertices from $X_1\cup X_2\cup Y_1\cup Y_2$ by moving them into~$Z$ in order to show that, in what remains, $G[X_1\cup X_2\cup Y_1 \cup Y_2]$ has a particular coloured structure.  We begin by proving the below claim which essentially tells us that  $G$ has similar coloured structure to $\cG$:

\begin{claim}
\label{G-structL}
We can \textit{remove} at most $14\eta^{1/2}n$ vertices from each of~$X_1, X_2, Y_1$ and~$Y_2$  so that the following conditions~hold.
 \end{claim}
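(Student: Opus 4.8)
The plan is to run the same regularity-counting argument used for Claim~\ref{G-struct} and Claim~\ref{G-structIIB}, now exploiting the full colour information in (\ref{L2}): every edge inside $\cX_1,\cX_2,\cY_1,\cY_2$ is coloured \emph{exclusively} red, every edge of $\cG[\cX_1,\cY_1]$ and $\cG[\cX_2,\cY_2]$ \emph{exclusively} blue, and every edge of $\cG[\cX_1,\cX_2]$ and $\cG[\cY_1,\cY_2]$ \emph{exclusively} green. Accordingly the conditions to be established are the matching almost-completeness statements in $G$: that each of $G_1[X_1],G_1[X_2],G_1[Y_1],G_1[Y_2]$ is $O(\eta^{1/2})n$-almost-complete, that $G_2[X_1,Y_1]$ and $G_2[X_2,Y_2]$ are $O(\eta^{1/2})n$-almost-complete, and that $G_3[X_1,X_2]$ and $G_3[Y_1,Y_2]$ are $O(\eta^{1/2})n$-almost-complete.

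First I would bound the number of non-red edges inside $X_1$. These split into three types: edges lying inside a single cluster (at most $p\binom{N/K}{2}$), non-red edges between a non-$\eta^4$-regular pair of clusters (at most $4\eta^4 pk\,(N/K)^2$, using (\ref{L1})), and non-red edges between a regular pair — but since every edge of $\cG[\cX_1]$ is exclusively red, such a regular pair has blue- and green-density at most $\eta$, contributing at most $2\eta\binom{p}{2}(N/K)^2$. With $K\geq 2k,\eta^{-1}$, $N\leq 4n$ and $p\leq\aI k\leq k$ this totals $O(\eta)n^2$, so all but $O(\eta^{1/2})n$ vertices of $X_1$ have red degree at least $|X_1|-1-O(\eta^{1/2})n$; discard the exceptional ones into $Z$. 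The identical computation handles $X_2,Y_1,Y_2$. For the bipartite pairs the argument is even cleaner: for $G_2[X_1,Y_1]$, since $\cG[\cX_1,\cY_1]$ is exclusively blue, the non-blue edges come only from non-regular pairs ($4\eta^4 pk\,(N/K)^2$) and from the $\leq\eta$ red- and green-density of regular pairs ($2\eta p^2(N/K)^2$), again $O(\eta)n^2$ in total; remove the $O(\eta^{1/2})n$ vertices on each side with deficient blue degree. The same estimate applies to $G_2[X_2,Y_2]$, $G_3[X_1,X_2]$ and $G_3[Y_1,Y_2]$.

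Finally I would collect the bookkeeping. Each of $X_1,X_2,Y_1,Y_2$ is touched by at most three of these clean-ups (its own red condition plus the two bipartite conditions it participates in), each discarding $O(\eta^{1/2})n$ vertices, so the constants can be chosen so that the total removed from each set is at most $14\eta^{1/2}n$; since we only ever delete vertices and almost-completeness is hereditary, a deletion made for one condition never spoils another, so all conditions hold simultaneously at the end. The only point needing care — and the one I expect to be the main obstacle, though it is purely arithmetic — is threading the correct powers of $\eta$ through the ``regular but low-density'' estimates so that the removed-vertex counts and the final almost-completeness parameters come out exactly as claimed; this mirrors the corresponding steps in Claims~\ref{G-struct} and~\ref{G-structIIB} and introduces no new idea.
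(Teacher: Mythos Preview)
Your proposal is correct and follows essentially the same approach as the paper's proof: the paper carries out exactly the three-term count for non-red edges inside each $X_i,Y_i$ (yielding $\leq 18\eta n^2$ and the removal of at most $6\eta^{1/2}n$ vertices for~(\ref{L3})), the two-term count for the bipartite pairs (yielding $\leq 16\eta n^2$ and the removal of at most $4\eta^{1/2}n$ vertices per side for each of~(\ref{L4}) and~(\ref{L5})), and then sums $6+4+4=14$ per class. Your observation that each class participates in exactly one red and two bipartite clean-ups, and that almost-completeness is preserved under deletion, is precisely how the bookkeeping works.
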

\begin{itemize}
\labitem{L3}{L3} $G_1[X_1], G_1[X_2], G_1[Y_1]$ and $G_1[Y_2]$ are each $6\eta^{1/2}$-almost-complete; 
\labitem{L4}{L4} $G_2[X_1,Y_1]$ and $G_2[X_2,Y_2]$ are each $4\eta^{1/2}$-almost-complete; and
\labitem{L5}{L5} $G_3[X_1,X_2]$ and $G_3[Y_1,Y_2]$are each $4\eta^{1/2}$-almost-complete.
\end{itemize}
\begin{proof} Consider the complete three-coloured graph $G[X_1]$. Recall that there can be no blue or green edges present in $\cG[\cX_1]$ and that $\cG[\cX_1]$ is $4\eta^4k$-almost-complete. Given the structure of~$\cG$, we can bound the number of non-red edges in $G[X_1]$ by
$$p \binom{N/K}{2} + 4\eta^4 pk \left(\frac{N}{K}\right)^2+2\eta\binom{p}{2}\left(\frac{N}{K}\right)^{2}.$$ 
Since $K\geq 2k, \eta^{-1}$, $N\leq 4n$ and $p\leq \aI k \leq k$, we obtain
$$e(G_2[X_1])+e(G_3[X_1])\leq [4\eta + 16\eta^{2} +8 \eta]n^2\leq 18\eta n^2.$$

Since $G[X_1]$ is complete and contains at most $18\eta n^2$ non-red edges, there are at most~$6\eta^{1/2}n$ vertices with red degree at most $|X_1|-1-6\eta^{1/2}n$. Removing these vertices from $X_1$, that is, re-assigning these vertices to~$Z$, gives a new~$X_1$  
such that every vertex in $G[X_1]$ has red degree at least $|X_1|-6\eta^{1/2}n$. The same argument works for $G[X_2], G[Y_1], G[Y_2]$, thus completing the proof of (\ref{L3}).

Next, consider $G[X_1,Y_1]$.We can bound the number of non-red edges in $G[X_1,Y_1]$ by 

$$4\eta^4 pk \left(\frac{N}{K}\right)^2+2\eta p^2\left(\frac{N}{K}\right)^{2}. $$

Since $K\geq 2k, \eta^{-1}$, $N\leq 4n$ and $p\leq \aI k \leq k$, we obtain
$$e(G_1[X_1,Y_1])+e(G_3[X_1,Y_1])\leq 16\eta n^2.$$

Since $G[X_1,Y_1]$ is complete and contains at most $16\eta n^2$ non-blue edges, there are at most $4\eta^{1/2}n$ vertices in~$X_1$ with blue degree to~$Y_1$ at most $|Y_1|-4\eta^{1/2}n$ and at most~$4\eta^{1/2}n$ vertices in~$Y_1$ with blue degree to~$X_1$ at most $|X_1|-4\eta^{1/2}n$. Removing these vertices results in every vertex in~$X_1$ having degree in $G_2[X_1,Y_1]$ at least $|Y_1|-4\eta^{1/2}n$ and every vertex in~$Y_1$ having degree in $G_2[X_1,Y_1]$ at least~$|X_1|-4\eta^{1/2}n$.

We repeat the above for~$G[X_2,Y_2]$, removing at most 
$4\eta^{1/2}n$ 
vertices from each of $X_2, Y_2$ such that every (remaining) vertex in~$X_2$ has degree in $G_2[X_2,Y_2]$ at least $|Y_2|-4\eta^{1/2}n$ and every (remaining) vertex in~$X_2$ has degree in $G_2[X_2,Y_2]$ at least $|X_2|-4\eta^{1/2}n$ thus completing the proof of (\ref{L4}).
Finally, we repeat these steps for $G_3[X_1,X_2]$ and $G_3[Y_1,Y_2]$ removing at most $4\eta^{1/2}n$ vertices from each of $X_1,X_2,Y_1$ and $Y_2$ such that every (remaining) vertex in~$X_1$ has degree in $G_3[X_1,X_2]$ at least $|X_2|-4\eta^{1/2}n$, every (remaining) vertex in~$X_2$ has degree in $G_3[X_1,X_2]$ at least $|X_1|-4\eta^{1/2}n$,
every (remaining) vertex in~$Y_1$ has degree in $G_3[Y_1,Y_2]$ at least $|Y_2|-4\eta^{1/2}n$ and every (remaining) vertex in~$Y_2$ has degree in $G_3[Y_1,Y_2]$ at least $|Y_1|-4\eta^{1/2}n$,
 thus completing the proof of (\ref{L5}).
\end{proof}
Having removed these vertices, we have
\begin{equation}
\label{III2} 
|X_1|,|Y_1|,|X_2|,|Y_2|   \geq (\aI-16\eta^{1/2})n,
\end{equation}
We now define four (possibly overlapping) subsets of $Z$
\begin{align*}
Z_1^X&=\{ z\in Z \text{ such that } z \text{ has at least } 40\eta^{1/2} \text{ red neighbours in } X_1\}; \\
Z_2^X&=\{ z\in Z \text{ such that } z \text{ has at least } 40\eta^{1/2} \text{ red neighbours in } X_2\}; 
\end{align*}

{~}
\vspace{-11mm}
\begin{align*}
Z_1^Y&=\{ z\in Z \text{ such that } z \text{ has at least } 40\eta^{1/2} \text{ red neighbours in } Y_1\}; \\
Z_2^Y&=\{ z\in Z \text{ such that } z \text{ has at least } 40\eta^{1/2} \text{ red neighbours in } Y_2\},
\end{align*}
and proceed to prove that $Z\backslash (Z_1^X \cup Z_2^X \cup Z_1^Y \cup Z_2^Y)=\emptyset.$ 

Indeed, suppose $z$ is a vertex in $Z$ not belonging to any of the newly defined sets, then $z$ has only small red degree to each of $X_1,X_2,Y_1,Y_2$. Thus, in particular, $z$ must have a blue or green edge to each of $X_1, X_2, Y_1$ and $Y_2$. 

\begin{claim}
If there exist vertices $z\in Z\backslash (Z_1^X \cup Z_2^X \cup Z_1^Y \cup Z_2^Y)$, $x_1\in X_1$ and $x_2\in X_2$ such that both the edges $x_1 z$ and $x_2 z$ are green, then $G$ contains a green cycle on exactly $\langle \aIII n \rangle$ vertices.
\end{claim}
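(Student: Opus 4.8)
The plan is to exhibit the required green cycle explicitly as the vertex $z$ together with a green $x_1$--$x_2$ path of exactly $\langle\aIII n\rangle-1$ vertices inside the bipartite green graph $G_3[X_1,X_2]$. Since $\langle\aIII n\rangle$ is odd, $\langle\aIII n\rangle-1$ is even and equals $2\lfloor\tfrac12\langle\aIII n\rangle\rfloor$; any path in $G_3[X_1,X_2]$ from a vertex of $X_1$ to a vertex of $X_2$ alternates between the two sides and therefore has an even number of vertices, using equally many vertices of $X_1$ as of $X_2$. Thus it suffices to produce a green path from $x_1$ to $x_2$ visiting a prescribed set of $\lfloor\tfrac12\langle\aIII n\rangle\rfloor$ vertices of $X_2$ (including $x_2$); adjoining $z$ via the green edges $zx_1$ and $zx_2$ then yields a green cycle on $2\lfloor\tfrac12\langle\aIII n\rangle\rfloor+1=\langle\aIII n\rangle$ vertices.

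Concretely, I would first let $\widetilde{X}_2$ be any set of $\lfloor\tfrac12\langle\aIII n\rangle\rfloor$ vertices of $X_2$ containing $x_2$. This is possible: by (\ref{SIZESIII}) we have $\aIII\le(\tfrac32+2\eta^{1/2})\aI$ in either alternative, so $\lfloor\tfrac12\langle\aIII n\rangle\rfloor\le(\tfrac34+\eta^{1/2})\aI n$, which is comfortably below the bound $|X_2|\ge(\aI-16\eta^{1/2})n$ of (\ref{III2}) since $\eta$ is tiny relative to $\aI$ (recall $\eta\le\tfrac12(\aI/100)^{128}$); the same bounds give $|X_1|>|\widetilde{X}_2|$. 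Then I would apply Corollary~\ref{bp-dir2} to the bipartite graph $G_3[X_1,\widetilde{X}_2]$: by (\ref{L5}) the graph $G_3[X_1,X_2]$ is $4\eta^{1/2}n$-almost-complete, so every vertex of $\widetilde{X}_2$ has green degree into $X_1$ at least $|X_1|-4\eta^{1/2}n$, and using $|X_1|\ge(\aI-16\eta^{1/2})n$ together with the upper bound on $|\widetilde{X}_2|$ one checks that this exceeds $\tfrac12(|X_1|+|\widetilde{X}_2|+1)$; moreover $x_1$ and $x_2$ each have green degree at least $|\widetilde{X}_2|-4\eta^{1/2}n\ge2$ into the opposite side, and $|X_1|+|\widetilde X_2|\ge5$. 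Hence Corollary~\ref{bp-dir2} yields a green path from $x_1\in X_1$ to $x_2\in\widetilde{X}_2$ visiting every vertex of $\widetilde{X}_2$, and by the parity count above this path has exactly $2\lfloor\tfrac12\langle\aIII n\rangle\rfloor=\langle\aIII n\rangle-1$ vertices. Adjoining $z$ then completes a green cycle on exactly $\langle\aIII n\rangle$ vertices, as required.

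There is no genuine obstacle here; the argument is essentially a copy of the proof of Claim~\ref{claimIIBsplit1}, the one conceptual point being that a bipartite $X_1$--$X_2$ path always has an even number of vertices, so that inserting the ``extra'' vertex $z$ is exactly what forces the total to be odd. The only care needed is the elementary bookkeeping: checking that both alternatives of (\ref{SIZESIII}) make $X_1$ and $X_2$ large enough to contain the prescribed sets, and verifying the Moon--Moser-type degree hypothesis of Corollary~\ref{bp-dir2} — both of which follow immediately from $\eta$ having been chosen sufficiently small compared with $\aI$ and $\aIII$ and from $n$ being large.
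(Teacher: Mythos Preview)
Your proposal is correct and follows essentially the same approach as the paper: pick $\widetilde{X}_2\subseteq X_2$ of size $\lfloor\tfrac12\langle\aIII n\rangle\rfloor$ containing $x_2$, apply Corollary~\ref{bp-dir2} to the green bipartite graph between $X_1$ and $\widetilde{X}_2$ to get a green $x_1$--$x_2$ path on $\langle\aIII n\rangle-1$ vertices, and close the cycle through $z$. The only cosmetic difference is that the paper also trims $X_1$ down to a subset $\widetilde{X}_1$ of size roughly $\tfrac58\aIII n$ before invoking Corollary~\ref{bp-dir2}, whereas you keep all of $X_1$; your version is just as valid since the degree hypothesis $|X_1|-4\eta^{1/2}n\ge\tfrac12(|X_1|+|\widetilde{X}_2|+1)$ only gets easier when $|X_1|$ exceeds $|\widetilde{X}_2|$ by more.
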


\begin{proof}
Recalling (\ref{SIZESIII}) and (\ref{III2}), we know that $|X_1|,|X_2|\geq (\tfrac{5}{8}\aIII-16\eta^{1/2})n$. We let $\widetilde{X}_1$ consist  $(\tfrac{5}{8}\aIII-16\eta^{1/2})n$ vertices from $X_1$ including $x_1$, let $\widetilde{X}_2$ consist of $\lfloor \half \langle \aIII n\rangle \rfloor$ vertices from $X_2$ including $x_2$ and consider $G[\widetilde{X}_1,\widetilde{X}_2]$. Every vertex in $\widetilde{X}_2$ has degree in $G_3[\widetilde{X}_1,\widetilde{X}_2]$ at least $(\tfrac{5}{8}\aIII-22\eta^{1/2})n\geq
\half(|\widetilde{X_1}|+|\widetilde{X_2}|+1).$
Thus, by Corollary~\ref{bp-dir2}, there exists a green path from $x_1$ to $x_2$ on exactly $\langle \aIII n \rangle -1$ vertices which together with $x_1 z$ and $x_2 z$ forms a green cycle on exactly $\langle \aIII n \rangle$ vertices.
\end{proof}

Analogous results can be proved for similar pairs of green edges to $Y_1$ and $Y_2$ or blue edges to $X_1$ and~$Y_1$ or $X_2$ and $Y_2$. Thus, the existence of a green (resp. blue) edge in $G[z,X_1]$ implies that all edges in $G[z,X_1]$ and $G[z,Y_2]$ must be green (resp. blue)  and that all edges in $G[z,X_2]$ and $G[z,Y_1]$ must be blue (resp. green).
At this stage, the existence of a blue or green edge between $X_2$ and $Y_1$ could be used in a similar manner to prove the existence of a blue cycle on exactly $\langle \aII n \rangle$ vertices or a green cycle on exactly~$\langle \aIII n \rangle$ vertices.
Thus, all edges present in $G[X_2,Y_1]$ must be red. However, the existence of even two independent red edges in $G[X_2,Y_1]$ would imply the existence of a red cycle on exactly $\llangle \aI n \rrangle$ vertices in $G[X_2\cup Y_1]$. 

Indeed, suppose $x_1,x_2$ are distinct vertices in $X_2$ and $y_1, y_2$ are distinct vertices in $Y_1$ such that $x_1 y_1$ and $x_2 y_2$ are both coloured red and let $\widetilde{X}_2$ be any set of $\half \llangle \aI n \rrangle $ vertices in~$X_2$ such that $x_1,x_2 \in \widetilde{X}_2$. 
By (\ref{L3}), every vertex in $\widetilde{X}_2$ has degree at least $|\widetilde{X}_2|-8\eta^{1/2}n$ in $G_1[\widetilde{X}_2]$. Since $\eta\leq (\aI/100)^2$, we have $|\widetilde{X}_2|-8\eta^{1/2}n \geq \half |\widetilde{X}_2| +2$. So, by Corollary~\ref{dirac2}, there exists a Hamiltonian path in $G_1[\widetilde{X}_2]$ between $x_1, x_2$, that is, there exists a red path between~$x_1$ and $x_2$ in $G[X_1]$ on exactly $\half \llangle \aI n \rrangle$ vertices. Likewise, there exists a red path between~$y_1$ and~$y_2$ in~$G[Y_1]$ on exactly $\half \llangle \aI n \rrangle$ vertices.  Combining the edges $x_1y_1$ and $x_2y_2$ with the red paths gives a red cycle on exactly $\llangle \aI n \rrangle$ vertices.

Thus, in fact, all vertices in $Z$ belong to one of the previously defined sets $ Z_1^X$,  $Z_2^X$,  $Z_1^Y$ or $ Z_2^Y$. Therefore at least one of $X_1\cup Z_1^X, X_2\cup Z_2^X, Y_1\cup Z_1^Y$ or $Y_1\cup Z_2^Y$ contains at least $\llangle \aI n \rrangle$ vertices.

Without loss of generality, suppose that $|X_1\cup Z_1^X|\geq\llangle\aI n \rrangle$. We show that $G_1[X_1\cup Z_1^X]$ contains a long red cycle as follows: Let~$X$ be any set of~$\llangle \aI n \rrangle$ vertices from $X_1 \cup Z_1^X$ consisting of every vertex from~$X_1$ and~$\llangle \aI n \rrangle-|X_1|$ vertices from~$Z_1^X$. By (\ref{L3}) and~(\ref{III2}), the red graph~$G_1[X]$ has at least $\llangle \aI n \rrangle -16\eta^{1/128}n$ vertices of degree at least $|X|-6\eta^{1/2}n$ and at most $16\eta^{1/2}n$ vertices of degree at least $40\eta^{1/2}n$. Thus, by Theorem~\ref{chv},~$G[X]$ contains a red cycle on exactly~$\llangle \aI n \rrangle$ vertices, completing the proof of Theorem~C.

\qed

\section{Conclusions}

Together, \cite{KoSiSk}, \cite{BenSko}, \cite{DF1}, \cite{DF2} and this paper give exact values for the Ramsey number of any triple of sufficiently long cycles (except when all three cycles are even but of different lengths). We now discuss briefly what is known for four or more colours beginning with the case when all the cycles in question are of odd length.

In \cite{BonErd}, Bondy and Erd\H{o}s gave the following bounds for the \mbox{$r$-coloured} Ramsey number of odd cycles $$2^{r-1}(n-1)+1\leq R(C_n,C_n,\dots,C_n)\leq(r+2)!n$$
and conjectured that the lower bound gives the true value of the Ramsey number. 

In 2012, \L uczak, Simonovits and Skokan \cite{LSS} gave an improved asymptotic upper bound. For $n$ odd and $r\geq 4$, they proved that  $$R(C_n,C_n,\dots,C_n)\leq r2^r n + o(n)$$ as $n \rightarrow \infty$. 

Note that the conjecture still stands and has been confirmed for three colours by Kohayakawa, Simonovits, and Skokan \cite{KoSiSk}. The structures providing the lower bound are well known and easily constructed. For two colours, the structure is simply two classes of $n-1$ vertices coloured such that all edges within each class are coloured red and all edges between classes are coloured blue (see Figure~\ref{twocole}).

\begin{figure}[!h]
\centering
\includegraphics[width=70mm]{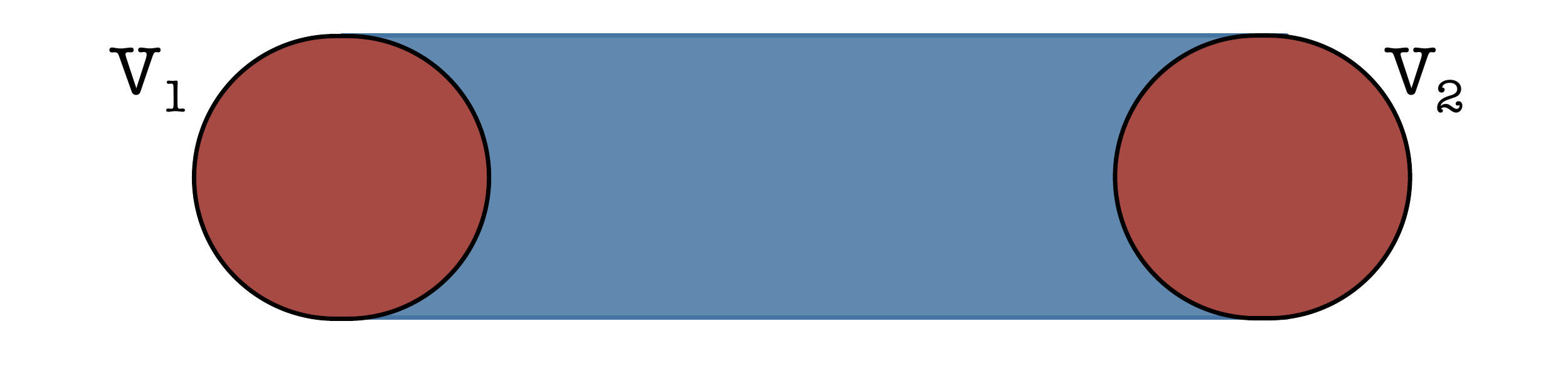}
\caption{Coloured structure giving the lower bound for two colours. }
\label{twocole}
\end{figure}

The relevant $r$-coloured structure is obtained by taking two copies of the $(r-1)$-coloured structure and colouring all edges between the copies with colour $r$ (see Figure~\ref{4cole}).

\begin{figure}[!h]
\centering
\includegraphics[height=50mm]{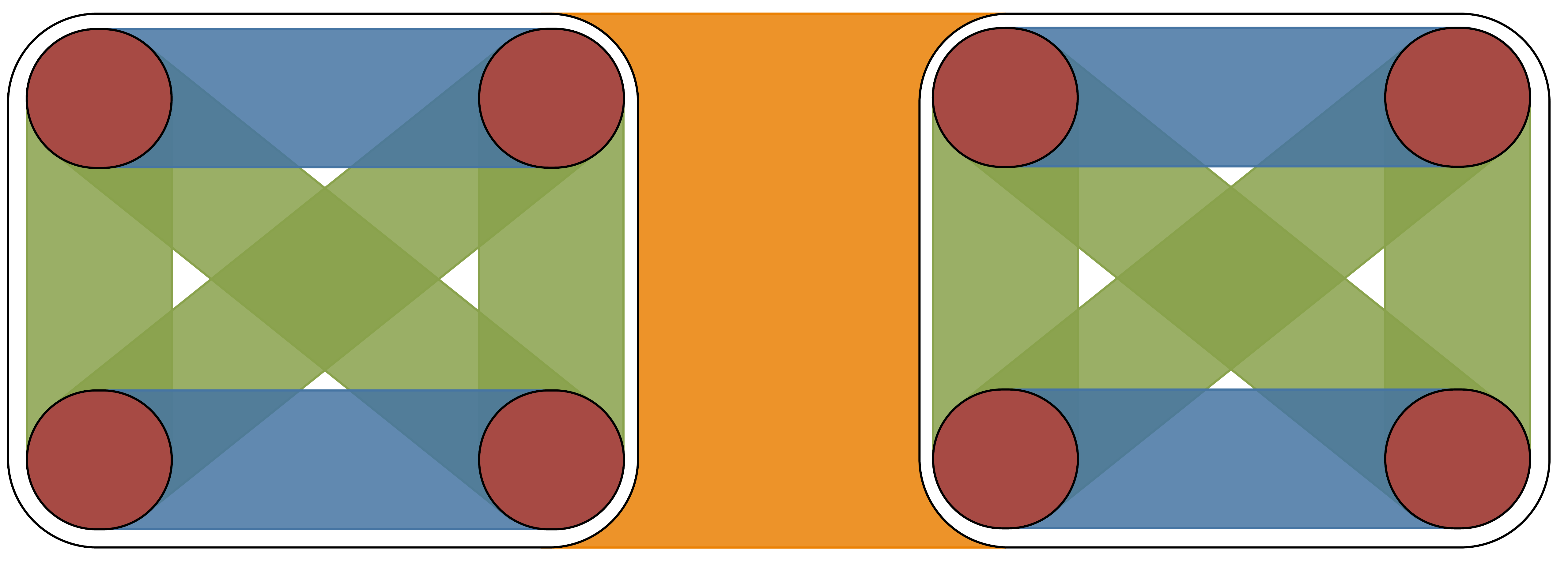}
\caption{Coloured structure giving the lower bound for  four colours. }
\label{4cole}
\end{figure}

Notice that these structures also give a lower bound  for the $r$-coloured Ramsey number  when the cycles have different lengths. Thus, for $n_1\geq n_2\geq \dots \geq n_r$ all odd, we have $$R(C_{n_1},C_{n_2},\dots,C_{n_r})\geq 2^{r-1}(n_{1}-1)+1.$$ 

In 1976, Erd\H{o}s, Faudree, Rousseau and Schelp \cite{EFRS1} considered the case when one cycle is much longer than the others, proving in the case of odd cycles that, if $n$ is much larger than $m,\ell, k$ all odd, then 
$$R(C_n,C_m,C_\ell,C_k)=8n-7,$$
thus showing that the above bound is tight in that case.

This `doubling-up' process can also be used to provide structures giving sensible lower bounds for mixed parity multicolour Ramsey numbers. For example, consider the case of two even and two odd cycles. The three-coloured graph shown in Figure~\ref{fig214-4} below was used earlier to provide a lower bound for $R(C_n,C_m,C_\ell)$ in the case that $n,m$ are even and $\ell$ is odd. Taking two copies of the graph and colouring all the edges between the copies with a fourth colour gives a four-coloured graph providing a lower bound for $R(C_n,C_m,C_\ell,C_k)$, in the case that $n,m$ are even and $\ell, k$ are odd.

\begin{figure}[!h]
\centering
\vspace{1.5mm}

\includegraphics[width=64mm, page=1]{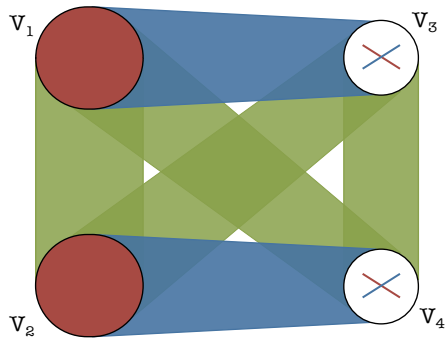}

\vspace{0.5mm}
\caption{Structure providing a lower bound for even-even-odd case.}
\label{fig214-4}
\end{figure}

As the number of colours increases, there are simply too many mixed parity cases to discuss each one here or to give conjectures for the exact or asymptotic Ramsey numbers. However, looking at the structures already seen for three colours and `doubling-up' would seem to be a good place to start.

For even cycles, this `doubling-up' is not an option and the Ramsey numbers grow more slowly as the number of colours increases. Indeed, \L uczak, Simonovits and Skokan \cite{LSS}, proved that the $r$-coloured Ramsey number for even cycles essentially grows no faster than linearly in $r$, proving that, for $n$ even, $$R(C_n,C_n,\dots,C_n)\leq rn + o(n)$$ as $n \rightarrow \infty$. 

Recall that Bondy and Erd\H{o}s \cite{BonErd} proved that, for $n\geq 3$ even, $$R(C_n,C_n)=\tfrac{3}{2}n-1.$$

The simple structure providing the lower bound is shown in Figure~\ref{fig214-5} below. It has two classes of vertices, the first containing $n-1$ vertices and the second $\half n -1$ vertices. It is coloured such that all edges within the first class receive one colour and all other edges receive the second.

\begin{figure}[!h]
\centering
\vspace{-1mm}
\includegraphics[width=64mm, page=1]{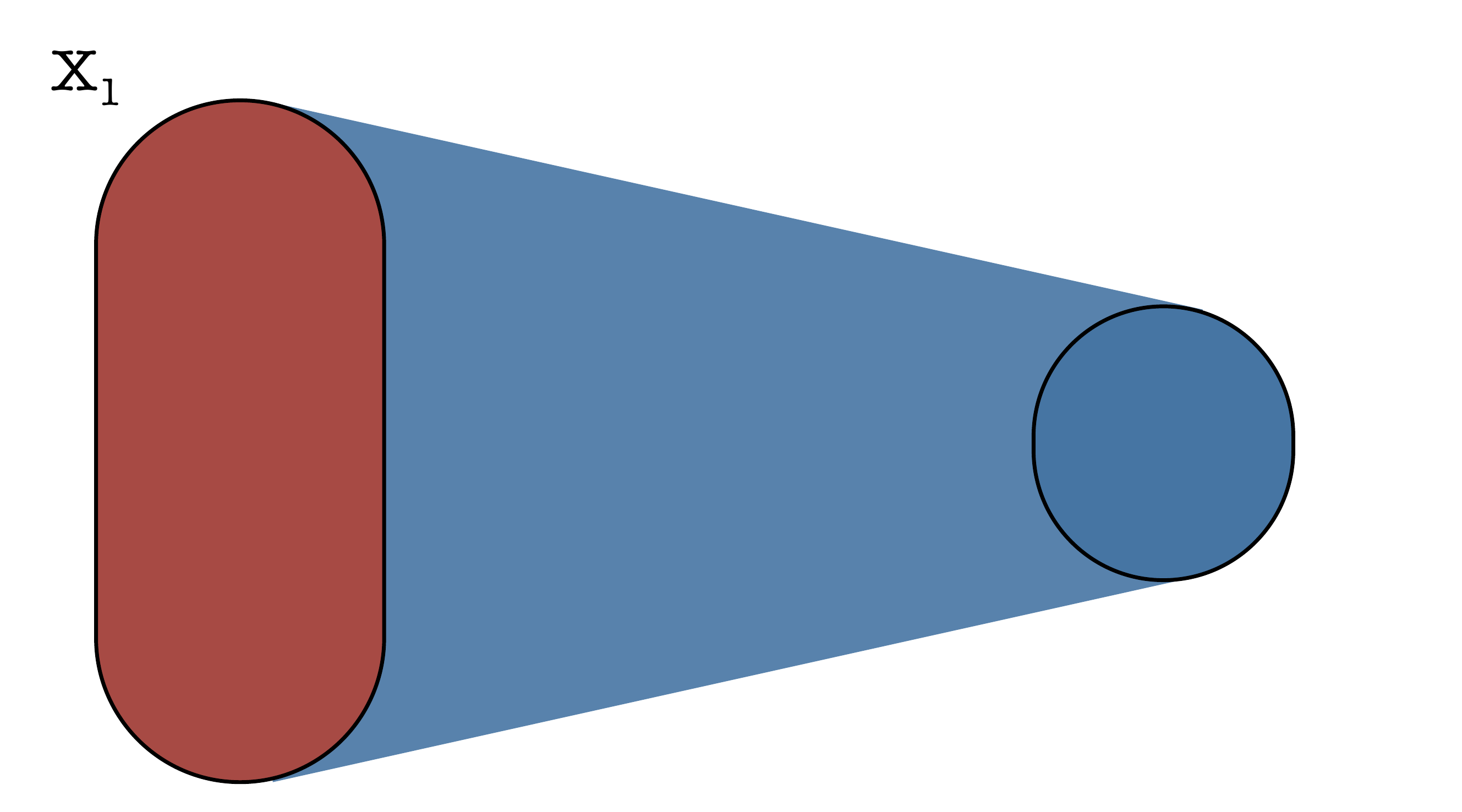}

\vspace{1mm}
\caption{Coloured structure giving the lower bound in the two coloured even case.}
\label{fig214-5}
\end{figure}

This structure is easily extended to give a lower bound for the multicoloured odd cycles (see Figure~\ref{fig214-6}) showing that for $r$ colours $$R(C_n,C_n,\dots,C_n)\geq \half(r+1)n - r +1.$$

\begin{figure}[!h]
\centering
\vspace{-1mm}
\includegraphics[width=64mm, page=1]{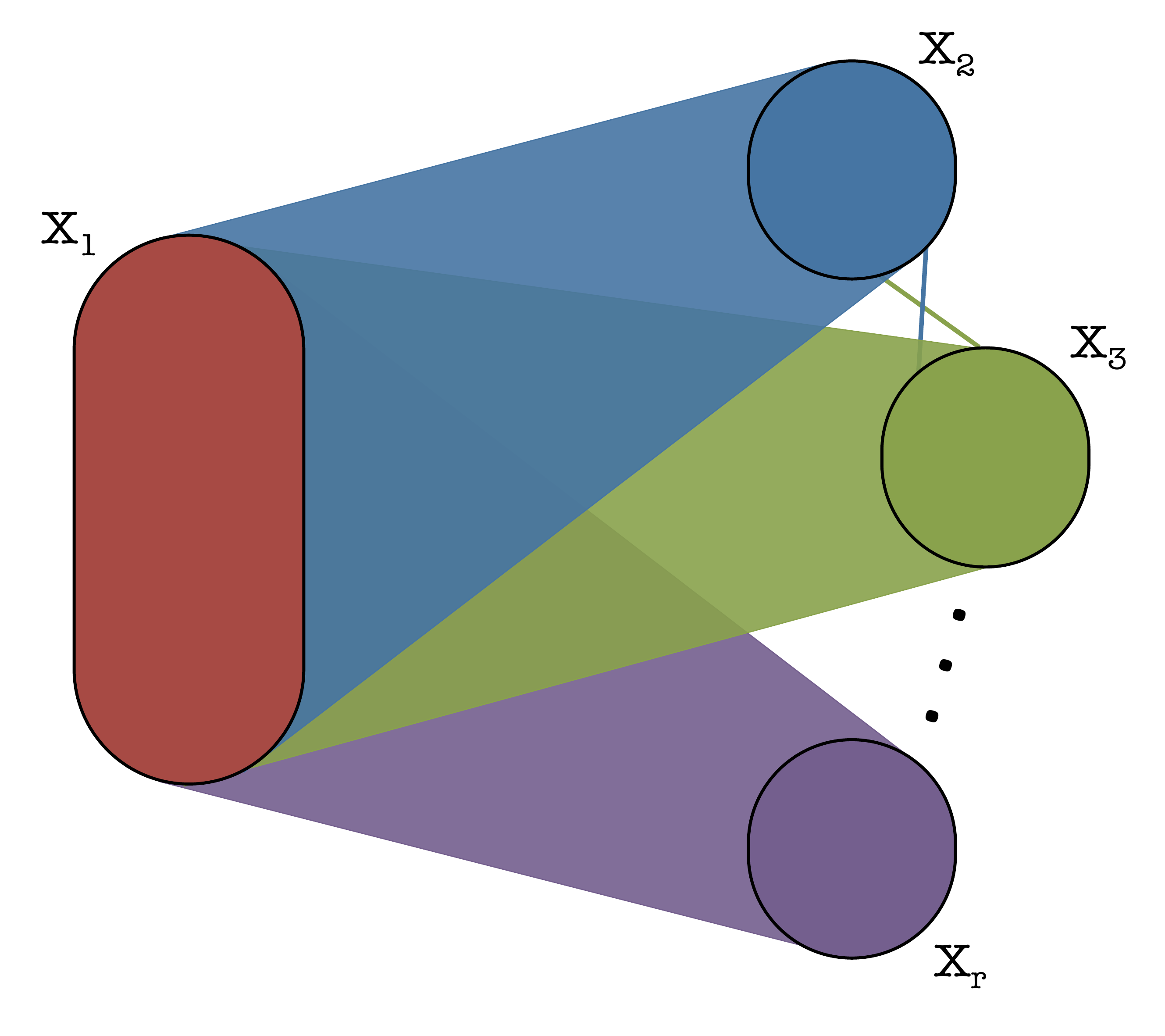}
\vspace{1mm}
\caption{Structure providing a lower bound for $r$-coloured even cycle result.}
\label{fig214-6}
\end{figure}

It can also be adapted to provide a lower bound in the case that the cycles are all even but are of different lengths, showing that, for $n_1\geq n_2,\dots,n_r$ all even, 
\begin{equation}
\label{lastoneprob}
R(C_{n_1},C_{n_2},\dots,C_{n_r})\geq n_1+\half n_2+\dots+\half n_r -r+1.
\end{equation}

Also in \cite{EFRS1},  Erd\H{o}s, Faudree, Rousseau and Schelp showed that, for $n$ much larger than $m,\ell,k$ all even,
\begin{align*}
R(C_n,C_m,C_\ell)&=n+\half n +\half \ell -2, \\
R(C_n,C_m,C_\ell,C_k)&=n+\half n +\half \ell + \half k-3.
\end{align*}
Thus, for two or three colours, the bound in (\ref{lastoneprob}) is tight when one of the cycles is much longer than the others. Notice, also, that this bound agrees with the asymptotic result of Figaj and \L uczak in~\cite{FL2007}. 

However, as shown by the exact result of Benevides and Skokan \cite{BenSko}, this bound can be beaten slightly in the case of three even cycles of equal length. In that case, the less easily extended structure shown in Figure~\ref{fig214-7} gives $R(C_n,C_n,C_n)=2n$.

\begin{figure}[!h]
\centering
\includegraphics[height=50mm, page=1]{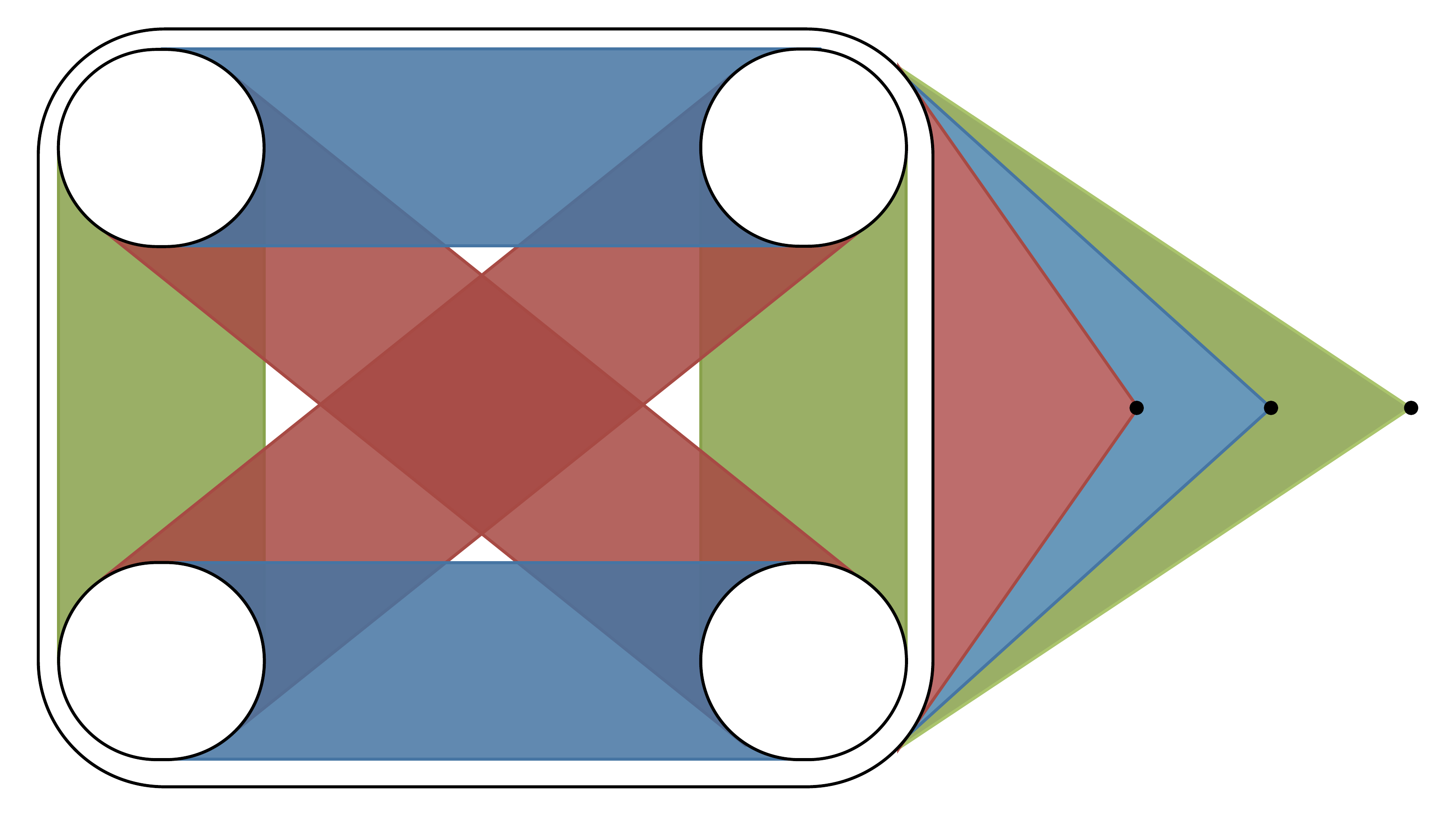}
\vspace{1mm}
\caption{Structure providing the lower bound in the paper of Benevides and Skokan.}
\label{fig214-7}
\end{figure}

Based on the results discussed, one might be tempted to conjecture an asymptotic r-colour result for even cycles of the form $$R(C_{\llangle\aI n\rrangle}, C_{\llangle\aII n\rrangle},\dots,C_{\llangle\alpha_r n\rrangle})=\half\left(\aI+\aII+\dots+\alpha_r+\max\{\aI,\aII,\dots,\alpha_r)\right)n + o(n).$$ 

However, in 2006, in the case of $r$ cycles of equal even length $n$, Sun Yongqi, Yang Yuansheng, Xu Feng and Li Bingxi \cite{SYFL} proved, that 
$$R(C_n,C_n,\dots,C_n)\geq (r-1)n-2r+4,$$
suggesting that the true form of such a result for even cycles is more complicated.

There is potential to apply the methods used in this chapter to the case of four or more colours but there are limitations which could make this quite difficult. For instance. two key sets of tools used to prove the stability result (Theorem B) were decompositions (which were used to find large two-coloured subgraphs within three-coloured graphs) and connectivity results (which reduce the difficulty of finding a connected-matching in a two-coloured graph). The most basic such connectivity result states that a two-coloured graph is connected in one of its colours. Such results do not apply (or are much more complicated) for three-coloured graphs. Therefore, an alternative approach or (even) more case analysis could well be necessary. 

\renewcommand{\baselinestretch}{1.15}\small\normalsize

\cleardoublepage
\phantomsection
\addcontentsline{toc}{section}{References}
\clearpage
\bibliographystyle{halpha2}
\bibliography{test}

\end{document}